\theoremstyle{plain}
\newtheorem{theorem}{Theorem}[section]
\newtheorem{lemma}[theorem]{Lemma}
\newtheorem{proposition}[theorem]{Proposition}
\newtheorem{corollary}[theorem]{Corollary}
\theoremstyle{definition}
\newtheorem{definition}[theorem]{Definition}
\newtheorem{remark}[theorem]{Remark}
\DeclareMathOperator{\Pic}{Pic}
\DeclareMathOperator{\Iim}{Im}
\DeclareMathOperator{\SL}{SL}
\DeclareMathOperator{\codim}{codim}
\newcommand{\OpIm}{\operatorname{Im}}
\newcommand{\cd}{\mathcal{D}}
  \newcommand{\om}{\omega}
  \newcommand{\del}{\delta}   \newcommand{\Del}{\Delta}
  \newcommand{\gam}{\gamma}   \newcommand{\Gam}{\Gamma}
  \def\b1{\text{\large 1}}  
  \def\from{\leftarrow}
  \def\ip<#1>{\langle#1\rangle}   
  \newcommand{\depth}{\operatorname{depth}}
  \newcommand{\Hom}{\mathscr{H}om}
  \newcommand{\Image}{\operatorname{Image}}
  \newcommand{\Invlt}{\operatorname{Inv.lt.}}
 \newcommand{\Dirlt}{\operatorname{Dir.lt.}}
  \newcommand{\Ker}{\operatorname{Ker}}
  \newcommand{\Lie}{\operatorname{Lie}}
  \newcommand{\Limit}{\operatorname{Limit}}
  \newcommand{\lt}{\operatorname{Limit}}
  \newcommand{\optop}{\operatorname{top}}
  \newcommand{\Tor}{\operatorname{Tor}}
 \newcommand{\gr}{\operatorname{gr}}
\newcommand{\beqn}{\begin{equation}}
\newcommand{\eeqn}{\end{equation}}
 \newcommand{\fp}{\mathfrak{p}}
\newcommand{\ft}{\mathfrak{t}}
 \newcommand{\fu}{\mathfrak{u}}
 \newcommand{\fv}{\mathfrak{v}}
 \newcommand{\fw}{\mathfrak{w}}
\newcommand{\bc}{\mathbb{C}}
\newcommand{\bp}{\mathbb{P}}
\newcommand{\bz}{\mathbb{Z}}
 \newcommand{\cc}{\mathcal{C}}
 \newcommand{\cf}{\mathcal{F}}
 \newcommand{\cg}{\mathcal{G}}
 \newcommand{\cl}{\mathcal{L}}
 \newcommand{\cm}{\mathcal{M}}
 \newcommand{\co}{\mathcal{O}}
 \newcommand{\cp}{\mathcal{P}}
 \newcommand{\cs}{\mathcal{S}}
\newcommand{\OO}{\mathscr{O}}
\newcommand{\ZZ}{\mathcal{Z}}
\newcommand{\seteq}{\mathbin{:=}}
\newcommand{\ext}{\mathscr{E}xt}
\newcommand{\tor}{\mathscr{T}or}
\newcommand{\home}{\mathscr{H}om}
\renewcommand{\O}{\mathscr{O}}
\renewcommand{\co}{\mathscr{O}}
\newcommand{\C}{{\mathbb C}}
\newcommand{\pt}{\operatorname{pt}}
\newcommand{\eqn}{\begin{eqnarray*}}
\newcommand{\eneqn}{\end{eqnarray*}}
\newcommand{\bl}{\bigl}
\newcommand{\eq}{\begin{eqnarray}}
\newcommand{\ssum}{\mathop{\mbox{\small$\sum$}}}
\newcommand{\Z}{{\mathbb{Z}\mspace{1mu}}}
\newcommand{\Rhom}{\mathrm{R}\kern-.2em\hom}
\newcommand{\scbul}{{\,\raise.4ex\hbox{$\scriptscriptstyle\bullet$}\,}}
\newcommand{\bwr}{{\mbox{\large$\wr$}}}
\newcommand{\db}[1]{\raisebox{-.5ex}[2ex][1.8ex]{$#1$}}
\begin{document}

\title{Positivity in $T$-Equivariant $K$-theory of flag varieties associated to
Kac-Moody groups}

\author{Shrawan Kumar\\
(with an appendix by M. Kashiwara)}

\maketitle

\section{Introduction}\label{intro}

Let $G$ be any symmetrizable Kac-Moody group over $\bc$ completed
along
 the negative roots and $G^{\min}\subset G$ be the `minimal' Kac-Moody group.
 Let $B$ be the standard (positive) Borel
subgroup, $B^{-}$ the standard negative Borel subgroup, $H=B\cap B^{-}$ the
standard maximal torus and $W$ the Weyl group.  Let
  $\bar{X} = G/B$
be the `thick' flag variety (introduced by Kashiwara) which contains the standard
KM flag
ind-variety
  $ X = G^{\min}/B.$ Let $T$ be the quotient torus
$H/Z(G^{\min})$, where $Z(G^{\min})$ is the center of $G^{\min}$. Then, the action
of $H$ on $\bar{X}$ (and $X$) descends to an action of $T$.
We denote the
representation ring of $T$ by $R(T)$.
  For any $w\in W$, we
have the Schubert cell
$
C_{w}:={BwB/B}\subset X ,
$
the Schubert variety
$
X_{w}:=\overline{C_w}\subset X ,
$
the opposite Schubert cell
$
C^{w}:={B^{-}wB/B}\subset \bar{X},
$
and the opposite Schubert variety
$
X^{w}:=\overline{C^w}\subset \bar{X}.
$ When $G$ is a (finite dimensional) semisimple group, it is referred to as the {\it finite case}.

 Let  $K^{\optop}_T(X)$ be the $T$-equivariant topological $K$-group of the
 ind-variety $X$.
 Let $\{\psi^w\}_{w\in W}$ be the `basis'  of $K^{\optop}_T(X)$ given by
 Kostant-Kumar (cf. Definition \ref{psibasis}).

Express the product in topological $K$-theory $K^{\optop}_T(X)$:
  \beqn \label{neqq1}
\psi^u\cdot\psi^v = \sum_w p^w_{u,v} \psi^w, \quad\text{for } p^w_{u,v}\in R(T).
  \eeqn
  Then, the following result is our main theorem (cf. Theorem \ref{verymain}).
  This was conjectured by
Graham-Kumar [GK, Conjecture 3.1] in the finite case and proved in this case by Anderson-Griffeth-Miller [AGM, Corollary 5.2].

\begin{theorem} \label{intromain} For any $u,v,w\in W$,
\[
 (-1)^{\ell (u)+\ell (v)+ \ell (w)} \,p^w_{u,v}\in \bz_+[(e^{-\alpha_1}-1), \dots,
 (e^{-\alpha_r}-1)],
 \]
 where $\{\alpha_1, \dots, \alpha_r\}$ are the simple roots, i.e., $ (-1)^{\ell (u)+\ell (v)+ \ell (w)} \,p^w_{u,v}$ is a polynomial 
in the variables $x_1=e^{-\alpha_1}-1, \dots,
 x_r=e^{-\alpha_r}-1$ with non-negative integral coefficients.
\end{theorem}
By a result of Kostant-Kumar [KK, Proposition 3.25], 
\beqn \label{eqn3.12}
K^{\optop}(X) \simeq \bz\otimes_{R(T)}\,  K^{\optop}_T(X),
\eeqn  
where $\bz$ is considered as an $R(T)$-module via the
evaluation at $1$ and $K^{\optop}(X)$ is the topological (non-equivariant) $K$-group of $X$. 
Thus, as an immediate consequence of the above theorem (by evaluating at $1$), we obtain
the following result (cf. Corollary \ref{maincor}). The following corollary  was conjectured by 
A.S. Buch in the finite case and proved in this case by  Brion [B].

\begin{corollary}  For any $u,v,w\in W$,
$$(-1)^{\ell (u)+\ell (v)+ \ell (w)}\,a^w_{u,v} \in \bz_+,$$
where $a^w_{u,v}$ are the structure constants of the product in $K^{\optop}(X)$
with respect to  the basis $\psi^w_o:=1\otimes \psi^w $.
\end{corollary}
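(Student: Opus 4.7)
The plan is to deduce the corollary directly from Theorem~\ref{intromain} by specializing the equivariant parameters to $1$. The non-equivariant topological $K$-theory $K^{\optop}(X)$ is recovered from $K^{\optop}_T(X)$ via the standard augmentation homomorphism $\epsilon\colon R(T) \to \bz$, $e^{\lambda}\mapsto 1$; algebraically this corresponds to the scalar extension $\bz \otimes_{R(T)} K^{\optop}_T(X) \to K^{\optop}(X)$, and by the very definition of $\psi^w_o$ in the corollary statement, this map sends $\psi^w$ to $\psi^w_o$. Since the scalar extension is a ring homomorphism, applying it to the product expansion $\psi^u\cdot\psi^v = \sum_w p^w_{u,v}\,\psi^w$ in $K^{\optop}_T(X)$ produces $\psi^u_o\cdot\psi^v_o = \sum_w \epsilon(p^w_{u,v})\,\psi^w_o$ in $K^{\optop}(X)$; comparing this with the defining expansion of $a^w_{u,v}$ yields the identity $a^w_{u,v} = \epsilon(p^w_{u,v})$.

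Now by Theorem~\ref{intromain}, I can write
\[
(-1)^{\ell(u)+\ell(v)+\ell(w)}\,p^w_{u,v} \;=\; \sum_{\mathbf{n}\in\bz_+^r} c_{\mathbf{n}} \prod_{i=1}^r \bigl(e^{-\alpha_i}-1\bigr)^{n_i}
\]
with each $c_{\mathbf{n}}\in\bz_+$. Because $\epsilon(e^{-\alpha_i}-1)=0$ for every simple root $\alpha_i$, every term with $\mathbf{n}\neq 0$ is annihilated under $\epsilon$, and only the constant coefficient $c_{\mathbf{0}}$ survives. Combining this with the identity $a^w_{u,v}=\epsilon(p^w_{u,v})$ gives
\[
(-1)^{\ell(u)+\ell(v)+\ell(w)}\,a^w_{u,v} \;=\; c_{\mathbf{0}} \;\in\; \bz_+,
\]
which is precisely the desired conclusion.

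Once Theorem~\ref{intromain} is in hand, the corollary is a purely formal specialization and there is no substantive obstacle. The only routine verification needed is that the forgetful/scalar-extension map $\bz \otimes_{R(T)} K^{\optop}_T(X) \to K^{\optop}(X)$ is a ring homomorphism carrying the equivariant basis $\{\psi^w\}$ to the non-equivariant basis $\{\psi^w_o\}$, which is standard and built into the notation of the corollary. All the genuine content of the positivity assertion is therefore concentrated in the equivariant statement of Theorem~\ref{intromain} itself.
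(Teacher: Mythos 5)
Your proposal is correct and follows essentially the same route as the paper: the paper also uses the isomorphism $K^{\optop}(X)\simeq \bz\otimes_{R(T)}K^{\optop}_T(X)$ of [KK, Proposition 3.25] (with $\bz$ an $R(T)$-module via evaluation at $1$) to get $a^w_{u,v}=p^w_{u,v}(1)$, and then evaluates the positive polynomial expression of Theorem \ref{verymain} at $1$, so that only the nonnegative constant term survives. Your augmentation map $\epsilon$ is exactly this evaluation, so the two arguments coincide.
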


Further,  Theorem \ref{intromain} also gives the positivity for  the multiplicative structure constants 
in the Schubert basis for the $T$-equivariant cohomology  $H^*_T(X, \bc)$  with complex coefficients  as described  below.

The representation ring $R(T)$ has a decreasing
filtration $\{R(T)_n\}_{n \geq 0}$, where  
$$R(T)_n:=\{f\in R(T): \text{mult}_1(f)\geq n\},$$
where $\text{mult}_1(f)$ denotes the multiplicity of the zero of $f$ at $1$. 

We first recall the following result from [KK, $\S\S$2.28 -- 2.30 and Theorem 3.13]. 
\begin{theorem} \label{gr}There exists a decreasing filtration $\{\mathcal{F}_n\}_{n \geq 0}$ of the ring $K^{\optop}_T(X)$ compatible with the filtration of $R(T)$  such that
there is a ring  isomorphism  of the associated graded ring
$$\beta:\bc\otimes_\bz\gr \left(K^{\optop}_T(X)\right) \simeq H^*_T(X, \bc).$$
  Moreover,
for any $w\in W$,   $\psi^w\in \mathcal{F}_{\ell(w)}$ and under this isomorphism,
$$\beta(\overline{\psi^w})={\hat{\varepsilon}}^w,$$
where $\overline{\psi^w}$ denotes the element $\psi^w$ (mod $ \mathcal{F}_{\ell(w)+1}$) in $\gr_{\ell(w)} \left(K^{\optop}_T(X)\right)$
and $ {\hat{\varepsilon}}^w$ is the (equivariant) Schubert basis of $H^*_T(X, \bc)$ as in [K, Theorem 11.3.9].
\end{theorem}

Express the  product in  $H^*_T(X)$:
  \[
{\hat{\varepsilon}}^u \cdot {\hat{\varepsilon}}^v = \sum_w h^w_{u,v} {\hat{\varepsilon}}^w, \quad\text{for } h^w_{u,v}\in S(\ft^*),
  \]
where $\ft$ is the Lie algebra of $T$ and $h^w_{u,v} $ is a homogeneous polynomial of degree $\ell (u)+\ell (v)- \ell (w)$.
 Combining Theorems \ref{intromain} and \ref{gr}, we obtain the following result proved by Graham [Gr].
 
\begin{theorem} \label{graham} For any $u,v,w\in W$,
\[
 h^w_{u,v}\in \bz_+[\alpha_1, \dots,
 \alpha_r],
 \]
i.e., $h^w_{u,v}$ is a homogeneous polynomial in $\{\alpha_1, \dots, \alpha_r\}$ of degree $\ell(u)+ \ell(v) -\ell(w)$ with non-negative integral coefficients.
\end{theorem} 

We can further specialize the above theorem to obtain the positivity for  the multiplicative structure constants $b^w_{u,v}$ 
in the standard Schubert basis $\{\varepsilon^w\}_{w\in W}$ obtained from specializing ${\hat{\varepsilon}}^w$ at $0$ for the singular (non-equivariant) cohomology $H^*(X, \bc)$ 
 because of the following result:

\beqn \label{eqn3.12'}
H^*(X, \bc) \simeq \bc\otimes_{S(\ft^*)}\, H^*_T(X, \bc),
\eeqn  
where $\bc$ is considered as an $S(\ft^*)$-module via the
evaluation at $0$ (cf. [K, Proposition 11.3.7]). We get the following corollary due to Kumar-Nori [KuN] from Theorem \ref{graham}
by evaluating at $0$.
\begin{corollary} \label{kun} For any $u,v,w\in W$,
\[
 b^w_{u,v}\in \bz_+ .
 \]
\end{corollary} 

The proof of Theorem \ref{intromain} relies heavily on algebro-geometric techniques. We realize the structure
constants $ p^w_{u,v}$ from equation \eqref{neqq1} as the coproduct structure constants in the structure sheaf
basis
$\{\co_{X_w}\}_{w\in W}$  of the $T$-equivariant
$K$-group $K^T_o(X)$ of finitely supported $T$-equivariant coherent sheaves on $X$
(cf Proposition \ref{n3.1}). Let $K^0_T(\bar{X})$ denote the Grothendieck group of
$T$-equivariant coherent $\co_{\bar{X}}$-modules $\cs$.
Then, there is a `natural' pairing (cf. Section \ref{sec5})
$$
\ip< \, ,\, > : K^0_T(\bar{X}) \otimes K^T_0(X) \to R(T),$$ coming from
the $T$-equivariant Euler-Poincar\'{e} characteristic.  For any character $e^{\lambda}$ of $H$, let
 $\mathcal{L}(\lambda)$  be the
$G$-equivariant line bundle on $\bar{X}$
associated to the character $e^{-\lambda}$ of $H$ (cf. $\S$2). 
Define
the $T$-equivariant coherent sheaf
$\xi^u := e^{-\rho} \cl (\rho )\om_{X^u}$ on $\bar{X}$, where
\[\om_{X^u} := \ext^{\ell (u)}_{\co_{\bar{X}}}
\bigl(\co_{X^u}, \co_{\bar{X}} \bigr)\otimes\cl (-2\rho )
  \]
  is the dualizing sheaf of $X^u$. We show that
the basis $\{[\xi^w]\}$  is dual to the basis
$\{[\co_{X_w}]\}_{w\in W}$ under the above pairing (cf. Proposition \ref{prop2.6}).

Following [AGM], we define the `mixing group' $\Gamma$ in Definition \ref{mixdef} and prove its
 connectedness  (cf. Lemma \ref{connected}).
Then, we prove  our main technical result
Theorem \ref{thma14} on vansihing of some Tor sheaves as well as some cohomology vanishing. The proof of its two parts are given
in Sections \ref{sec7} and \ref{sec3} respectively.

From the connectedness of $\Gamma$ and Theorem \ref{thma14}, we get Corollary \ref{cor3.9}. This corollary allows us 
to easily obtain our main theorem (Theorem  \ref{intromain}).

Rest of the paper is devoted to prove Theorem \ref{thma14}.

In Section \ref{sec7},
we prove various local Ext and Tor vanishing results crucially using the `Acyclicity Lemma'
of Peskine-Szpiro (cf. Corollary \ref{4.3}). The following is one of the
main results of this section (cf. Propositions \ref{propa17} and \ref{propa18}).

  \begin{proposition}  For any $u,w\in W$,
    \[
\ext^j_{\co_{\bar{X}}} (\co_{X^u}, \co_{X_w}) =0, \quad\text{ for all } j\neq \ell (u).
    \]
    Thus,
    \[\tor_j^{\co_{\bar{X}}} (\xi^u, \co_{X_w})
    =0,\,\,\,\text{ for \,all\,}  j> 0.\]
  \end{proposition}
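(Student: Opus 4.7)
The plan splits along the two assertions. For the Ext vanishing, I would work locally on $\bar{X}$. Because $\bar{X}$ is smooth (Kashiwara's construction), each stalk is regular, and because $X^u$ is Cohen--Macaulay of pure codimension $\ell(u)$ in $\bar{X}$ (the standard Kac--Moody analogue of the CM property of opposite Schubert varieties), the Auslander--Buchsbaum formula gives $\mathrm{pd}_{\co_{\bar X}}(\co_{X^u}) = \ell(u)$. This immediately yields $\ext^j_{\co_{\bar X}}(\co_{X^u},\co_{X_w}) = 0$ for every $j > \ell(u)$.

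For $j < \ell(u)$, I would fix a finite locally free resolution $F_\bullet \to \co_{X^u}$ of length $\ell(u)$ and analyze the cochain complex $\Hom_{\co_{\bar X}}(F_\bullet,\co_{X_w})$, whose cohomology is the Ext in question. Re-indexed as a chain complex, each term is a direct sum of copies of $\co_{X_w}$, hence has depth equal to that of $\co_{X_w}$ at the ambient local ring; by Cohen--Macaulayness of $X_w$ this depth is $\ell(w)$ at a closed point. The hypotheses of the Peskine--Szpiro Acyclicity Lemma (Corollary \ref{4.3}) are therefore satisfied, once we know that $X^u\cap X_w$ (nonempty precisely when $u\leq w$) has pure codimension $\ell(u)$ in $X_w$ -- the standard Richardson-type expected-intersection statement, proved by a dimension count using the Bruhat decomposition. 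The Acyclicity Lemma then forces the complex to be acyclic in cohomological degrees $<\ell(u)$, giving the first claim.

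For the Tor vanishing, observe that $\xi^u = e^{-\rho}\cl(-\rho)\otimes \ext^{\ell(u)}_{\co_{\bar X}}(\co_{X^u},\co_{\bar X})$ differs from $\ext^{\ell(u)}_{\co_{\bar X}}(\co_{X^u},\co_{\bar X})$ only by twisting with an invertible (hence flat) sheaf, so it suffices to show $\tor_j(\ext^{\ell(u)}(\co_{X^u},\co_{\bar X}),\co_{X_w}) = 0$ for $j>0$. By the CM property of $X^u$ in the smooth ambient $\bar{X}$, the complex $R\Hom_{\co_{\bar X}}(\co_{X^u},\co_{\bar X})$ is concentrated in cohomological degree $\ell(u)$. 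Since $\co_{X^u}$ is perfect, the derived base-change identity
\[
R\Hom_{\co_{\bar X}}(\co_{X^u},\co_{\bar X})\otimes^L_{\co_{\bar X}}\co_{X_w} \;\simeq\; R\Hom_{\co_{\bar X}}(\co_{X^u},\co_{X_w})
\]
holds, and by the Ext vanishing just established the right-hand side is also concentrated in degree $\ell(u)$. Shifting both sides by $[-\ell(u)]$ yields that $\ext^{\ell(u)}(\co_{X^u},\co_{\bar X})\otimes^L_{\co_{\bar X}}\co_{X_w}$ is concentrated in degree $0$, which is exactly the desired Tor vanishing.

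The hard part will be the depth/grade computation on the infinite-dimensional $\bar{X}$: one must justify Cohen--Macaulayness of $X^u$ and $X_w$ in the Kac--Moody setting, the pure expected codimension of $X^u\cap X_w$ in $X_w$, and apply the Peskine--Szpiro Acyclicity Lemma carefully over the formally smooth but infinite-dimensional ambient; once these local ingredients are in place, the remaining derived-category manipulation for the Tor statement is formal.
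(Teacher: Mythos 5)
Your overall skeleton matches the paper's: the below-$\ell(u)$ vanishing is reduced to the Peskine--Szpiro acyclicity lemma applied to the complex $\home_{\co_{\bar{X}}}(\cf_\bullet,\co_{X_w})$, viewed over the finite-dimensional CM variety $X_w$ with $d=\ell(u)$ and support of the cohomology in $X^u\cap X_w$ of codimension $\ell(u)$ (this is exactly how Corollary \ref{4.3} is used in the paper), and your derived base-change computation $R\home_{\co_{\bar{X}}}(\co_{X^u},\co_{\bar{X}})\otimes^L\co_{X_w}\simeq R\home_{\co_{\bar{X}}}(\co_{X^u},\co_{X_w})$ is a clean repackaging of Proposition \ref{propa17}, which proves the same identification $\tor_j^{\co_{\bar{X}}}(\xi^u,\co_{X_w})\simeq e^{-\rho}\cl(-\rho)\otimes\ext^{\ell(u)-j}_{\co_{\bar{X}}}(\co_{X^u},\co_{X_w})$ by an explicit direct-summand splitting argument.

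The genuine gap is in how you ground the two key inputs: that $\co_{X^u}$ admits a finite locally free resolution of length $\ell(u)$ (perfection, $\mathrm{pd}=\ell(u)$), and that $R\home_{\co_{\bar{X}}}(\co_{X^u},\co_{\bar{X}})$ is concentrated in degree $\ell(u)$. You derive both from ``$\bar{X}$ is smooth, so stalks are regular, plus Auslander--Buchsbaum.'' This does not work as stated: $\bar{X}$ is an infinite-dimensional scheme whose local rings are not Noetherian (locally $\bar{X}$ is an infinite-dimensional affine space), so regularity, depth counting, and Auslander--Buchsbaum in their classical form are unavailable, and ``CM of codimension $\ell(u)$ in a smooth ambient implies $\mathrm{pd}=\ell(u)$'' is precisely the kind of finite-dimensional shortcut the paper cannot take. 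The paper instead imports the existence of finite $T$-equivariant free resolutions on the opens $V^v$ from Kashiwara--Shimozono (Lemma \ref{2.4}) and the concentration $\ext^k_{\co_{\bar{X}}}(\co_{X^u},\co_{\bar{X}})=0$ for $k\neq\ell(u)$ from [KS] (identity \eqref{eq1.0}); the length-$\ell(u)$ resolution is then manufactured by the downward-induction splitting argument in the proofs of Proposition \ref{propa17} and Lemma \ref{4.5}, not by a depth computation. Without these inputs your claim $\mathrm{pd}_{\co_{\bar{X}}}(\co_{X^u})=\ell(u)$ is unsupported, and then both the hypothesis of the acyclicity lemma (the Hom complex must have length at most $\ell(u)$, or at least have no cohomology above $\ell(u)$) and the perfect-complex base change for the Tor statement are not yet justified. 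You do flag the infinite-dimensional issue as ``the hard part,'' but the mechanism you propose for it would fail; the correct substitute is the [KS] material just cited, after which the rest of your argument goes through essentially as in the paper.
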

This proposition allows us to prove the (a) part of Theorem \ref{thma14}.

We also prove the following local Tor vanishing result (cf. Lemma \ref{4.5} and Corollary \ref{newcor5.5}),
which is a certain cohomological
analogue of the proper intersection property of $X^u$ with $X_w$.

\begin{lemma}  For any $u,w\in W$,
  \[
\tor_j^{\co_{\bar{X}}} (\co_{X^u}, \co_{X_w}) = \tor_j^{\co_{\bar{X}}} (\co_{\partial X^u}, \co_{X_w}) =0, \qquad\text{for all $j>0$.}
  \]
  \end{lemma}
In Section \ref{sec1} we show that the Richardson varieties $X^v_w := X_w\cap X^v
\subset \bar{X}$ are irreducible,
  normal and Cohen-Macaulay, for short CM (cf. Proposition \ref{n5.6}). Then, we construct a
  desingularization  $Z^v_w$   of $X^v_w$ (cf. Theorem \ref{thm3}). In this section, we prove
  that various
  maps appearing in the big diagram in Section \ref{sec2} are smooth or flat morphisms. Though not used in the paper, we determine the dualizing sheaf of 
the Richardson varieties $X^v_w$ (cf. Lemma \ref{n7.3}).

  In Section \ref{sec2}, we introduce the crucial irreducible scheme ${\mathcal{Z}}$
  and its desingularization $f: \tilde{\mathcal{Z}} \to {\mathcal{Z}}.$ We also introduce
  a diviser $\partial{\mathcal{Z}}$ of ${\mathcal{Z}}$ and show that
    ${\mathcal{Z}}$ and  $\partial{\mathcal{Z}}$ are CM (cf. Propositions
    \ref{propn6.2} and  \ref{lem11} respectively). We further show that
    ${\mathcal{Z}}$ is irreducible and normal (cf. Lemma \ref{normal}). We show, in fact,  that 
${\mathcal{Z}}$ has rational singularities (cf. Proposition \ref{ratlsing}), which is crucially used in the proof of 
Theorem \ref{prop20}.

In Section \ref{sec4},  we
 use
the relative Kawamata-Viehweg vanishing theorem (cf. Theorem \ref{thm15})
 to obtain  two
crucial vanishing results on the higher direct images of the dualizing sheaf of $\tilde{\mathcal{Z}}$ 
twisted by $\partial \tilde{\mathcal{Z}}$ under $\tilde{\pi}$ and $f$, where $\partial \tilde{\mathcal{Z}} 
:=f^{-1} \partial {\mathcal{Z}}$ and $\tilde{\pi}: \tilde{\mathcal{Z}} \to \bar{\Gamma}$ is the map 
from the big diagram in Section 7 (cf.  Proposition  \ref{prop19} and Theorem \ref{prop20}
respectively).
This sets the stage to prove our main technical Theorem \ref{thma14} (b), which
is achieved in Section \ref{sec3}. 

Finally, we have included an appendix by M. Kashiwara where he determines the
dualizing sheaf of $X^u$.

An informed reader will notice many ideas taken from very interesting papers
[B] and [AGM]  by Brion and Anderson-Griffeth-Miller respectively. However, there
are  several technical difficulties to deal with arising from the infinite dimensional
setup, which has required various different formulations and more involved proofs.
Some of the major differences are:

(1) In the finite case one just works with the opposite Schubert varieties $X^u$ and
 their very explicit BSDH desingularizations. In our
 general symmetrizable Kac-Moody set up,
we need to consider the Richardson varieties $X^u_w$ and their desingularizations
 $Z^u_w$. Our desingularization $Z^u_w$ is not as explicit as the BSDH
 desingularization. Then, we need to draw upon the result due to Kumar-Schwede
 [KuS] that $X^u_w$ has Kawamata log terminal singularities (in particular, rational singularities) 
and use this result (together with a result due to Elkik) 
 to prove that  ${\mathcal{Z}}$ has rational singularities (cf. Proposition \ref{ratlsing}).

(2) Instead of considering just one flag variety in the finite case, we need to
consider the `thick' flag variety and the standard ind flag variety
and the pairing between them. Moreover, the identification of the basis of
$K_T^0(\bar{X})$ dual to the basis of $K^T_0({X})$ given by the structure sheaf
of the Schubert varieties $X_w$  is more delicate.

(3) In the finite case one uses Kleiman's transversality result for the flag
variety $X$. In our infinite
 case, to circumvent the absence of Kleiman's transversality result, we needed to
 prove various local Ext and Tor vanishing results.

We feel that some of the  local  Ext and Tor vanishing results and the results on the
geometry  of Richardson varieties (including the construction of their
desingularizations) proved in this paper are of independent interest.
\vskip2ex

\noindent
{\bf Acknowledgements.} I am very grateful to M. Kashiwara for many
helpful correspondences, for carefully reading a large part of the
 paper and making various suggestions for improvement in the exposition, and
 determining  the dualizing sheaf of the opposite
 Schubert varieties (contained in the appendix by him). It is my pleasure to thank
 M. Brion for pointing out his work on the construction of a desingularization
 of Richardson varieties in the finite case and some suggestions on an earlier
 draft of this paper;
  to N. Mohan Kumar for
 pointing out the `acyclicity lemma' of Peskine-Szpiro (which was also pointed out by
 Dima Arinkin) and his help with the proof of Theorem \ref{thm13} and some other
 helpful conversations; to E. Vasserot for going through the paper, and to the referee 
 for several useful suggestions to improve the exposition (including the shorter proof, 
than our original proof, of Proposition \ref{prop2.6} included here). The result on rational singularity of 
$\mathcal{Z}$ (cf. Proposition \ref{ratlsing}) is added here (during revision of the paper) from our recent joint work with 
S. Baldwin [BaK]. This result is used to give  a shorter proof of Theorem \ref{prop20} (b). 
This work was supported partially by the NSF grant DMS-1201310.

\section{Notation}\label{sec0}

We take the base field to be the field of complex numbers $\bc$. By a variety, we
 mean an algebraic variety over $\bc$, which is reduced but not necessarily irreducible.
For a scheme $X$ and a closed subscheme $Y$, $\co_X(-Y)$ denotes the ideal sheaf of $Y$ in $X$. 

Let $G$ be any symmetrizable Kac-Moody group over $\bc$ completed
along
 the negative roots (as opposed to completed along the positive roots as in [K,
 Chapter 6]) and $G^{\min}\subset G$ be the `minimal' Kac-Moody group  as in
 [K, \S7.4].  Let $B$ be the standard (positive) Borel
subgroup, $B^{-}$ the standard negative Borel subgroup, $H=B\cap B^{-}$ the
standard maximal torus and $W$ the Weyl group  (cf.
[K, Chapter 6]).  Let
  \[
\bar{X} = G/B
  \]
be the `thick' flag variety which contains the standard KM-flag
variety
  \[   X = G^{\min}/B.   \]
If $G$ is not of finite type, $\bar{X}$ is an infinite
dimensional  non quasi-compact  scheme (cf. [Ka, \S4]) and $X$ is an
ind-projective variety (cf. [K, \S7.1]). The group $G^{\min}$; in
particular, maximal torus $H$ acts on $\bar{X}$ and $X$. Let $T$ be the quotient
$H/Z(G^{\min})$, where $Z(G^{\min})$ is the center of $G^{\min}$. (Recall that,
by [K, Lemma 6.2.9(c)], $Z(G^{\min})=\{h\in H: e^{\alpha_i}(h)=1 \,\,
\text{for all the simple roots}\,\,\alpha_i\}.$)
Then, the action of $H$ on $\bar{X}$ (and $X$) descends to an action of $T$.

 For any $w\in W$, we
have the Schubert cell
$$
C_{w}:={BwB/B}\subset X ,
$$
the Schubert variety
$$
X_{w}:=\overline{C_w}\subset X ,
$$
the opposite Schubert cell
$$
C^{w}:={B^{-}wB/B}\subset \bar{X},
$$
and the opposite Schubert variety
$$
X^{w}:=\overline{C^w}\subset \bar{X},
$$
all endowed with the reduced subscheme structures. Then, $X_{w}$ is
a (finite dimensional) irreducible projective subvariety of $X$ and
$X^{w}$ is a finite codimensional irreducible subscheme
of $\bar{X}$ (cf. [K, Section 7.1] and [Ka, \S4]). For any integral
weight $\lambda$ (i.e., any character $e^{\lambda}$ of $H$), we have
a $G$-equivariant line bundle $\mathcal{L}(\lambda)$ on $\bar{X}$
associated to the character $e^{-\lambda}$ of $H$. Explicitly, the character 
 $e^{-\lambda}$ of $H$ extends uniquely to a character (still denoted by  $e^{-\lambda}$) 
of $B$ since $H\simeq B/U$, where $U$ is the unipotent radical of $B$. Now, let $\mathcal{L}(\lambda)$
be the line bundle over $\bar{X}=G/B$ associated to the principal $B$-bundle $G \to G/B$ via the
 one dimensional representation of $B$ given by the character  $e^{-\lambda}$ .

We denote the
representation ring of $T$ by $R(T)$.

Let $\{\alpha_1,\ldots,\alpha_{r}\}\subset \mathfrak{h}^{*}$ be the
set of simple roots,
$\{\alpha_1^{\vee},\ldots,\alpha^{\vee}_{r}\}\subset \mathfrak{h}$
the set of simple coroots and $\{s_1,\ldots, s_{r}\}\subset W$ the
corresponding simple reflections, where $\mathfrak{h}=\Lie H$. Let
$\rho\in \mathfrak{h}^{*}$ be any integral weight satisfying
$$
\rho(\alpha^{\vee}_{i})=1,\quad\text{for all}\quad 1\leq i\leq r.
$$

When $G$ is a finite dimensional semisimple group, $\rho$ is unique,
but for a general Kac-Moody group $G$, it may not be unique.

For any $v\leq w \in W$, consider the \emph{Richardson variety}
$$
X^{v}_{w}:=X^{v}\cap X_{w} \subset X
$$
and its boundary
$$\partial X^v_w := (\partial X^v)\cap X_w
$$
 both endowed with the reduced subvariety structures, where $\partial X^{v}:=X^{v}
 \backslash
C^v$. We also set $\partial
X_{w}:=X_{w}\backslash C_w$. (By [KuS, Proposition 5.3], $X^{v}_{w}$ and
$\partial X^v_w$, endowed with the scheme theoretic intersection structure, are
Frobenius split in char. $p>0$; in particular, they are reduced. More generally, any scheme theoretic intersection $X_{w_1}\cap \cdots \cap X_{w_m}\cap
X^{v_1} \cap \cdots \cap X^{v_n}$ is reduced by loc. cit.)

\section{Identification of the dual of the structure sheaf basis}\label{sec5}

  \begin{definition} \label{n2.1}
  For a quasi-compact scheme $Y$, an $\co_{Y}$-module $\cs$ is called {\it coherent}
  if it is finitely presented as an $\co_{Y}$-module and any $\co_{Y}$-submodule of finite
  type admits a
finite presentation.

  A subset $S\subset W$
is called an {\it ideal} if
 for $x\in S$ and $y\leq x\Rightarrow y\in S$. An $\co_{\bar{X}}$-module $\cs$ is called {\it coherent} if
   $\cs_{|V^S}$ is a
coherent $\co_{V^S}$-module for any finite ideal $S\subset W$, where $V^S$ is the quasi-compact open subset
 of $\bar{X}$ defined by
 $$V^S = \bigcup_{w\in S} wU^- B/B.$$
 Let $K^0_T(\bar{X})$ denote the Grothendieck group of
$T$-equivariant coherent $\co_{\bar{X}}$-modules $\cs$. Observe that since the coherence condition on 
$\cs$  is imposed only for  $\cs_{|V^S}$ for finite ideals $S\subset W$,  $K^0_T(\bar{X})$ can be thought 
of as the inverse limit of $K^0_T(V^S)$, as $S$ varies over the finite ideals of $W$ (cf. [KS, $\S$2]).

 Similarly,
define $K^T_0(X) := \Limit_{n\to\infty} K^T_0(X_n)$, where $\{
X_n\}_{n\geq 1}$ is the filtration of $X$ giving the ind-projective
variety structure (i.e., $X_n = \bigcup_{\ell (w)\leq n} BwB/B$) and
$K^T_0(X_n)$ is the Grothendieck group of $T$-equivariant coherent
sheaves on the projective variety $X_n$.

We also define
  \[
K^{\optop}_T(X) := \Invlt_{n\to\infty} K^{\optop}_T(X_n),
  \]
where $K^{\optop}_T(X_n)$ is the $T$-equivariant topological $K$-group of the
 projective variety $X_n$.

Let $*:K^{\optop}_T(X_n)\to K^{\optop}_T(X_n)$ be the involution induced from
the operation which takes a $T$-equivariant vector bundle to its dual. This,
 of course, induces the involution $*$ on $K^{\optop}_T(X)$.
\end{definition}
We  recall the `basis' $\{\psi^w\}_{w\in W}$ of $K^{\optop}_T(X)$ given by Kostant-Kumar. (Actually, our  $\psi^w = *\tau^{w^{-1}}$, where $\tau^w$ is the original `basis' given by them in [KK, $\S$3].)
\begin{definition}  \label{psibasis} For $w\in W$, fix a reduced decomposition
  $\fw =(s_{i_1}, \dots, s_{i_n})
  $ for $w$ (i.e., $w =s_{i_1} \dots s_{i_n}$ is a reduced decomposition) and
  let $\theta_{\fw} : Z_{\fw} \to X_w$ be the Bott-Samelson-Demazure-Hansen (for short
  BSDH) desingularization (cf. [K, \S7.1]).  By [KK, Proposition 3.35],
  $K^0_T(Z_{\fw}) \to K_T^{\optop}(Z_{\fw})$ is an isomorphism,  where
  $K^0_T(Z_{\fw})$ is the Grothendieck group associated to the semigroup of
  $T$-equivariant algebraic vector bundles on $Z_{\fw}$. (Observe that the action of $H$ on
$Z_{\fw}$ descends to an action of $T$.)

For any $\psi \in K^{\optop}_T(X)$ and $w\in W$, define the `virtual' Euler-Poincar\'e characteristic by
\[\tilde{\chi}(X_w, \psi):= \chi(Z_\fw, \theta_\fw^*(\psi))\in R(T).\]
By [KK, Proposition 3.36], $\tilde{\chi}(X_w, \psi)$ is well defined, i.e., it does not depend upon the particular
 choice of the reduced decomposition $\fw$ of $w$. 

Now, define $\psi^w \in K^{\optop}_T(X)$ as the unique element satisying 
\beqn 
\tilde{\chi}(X_v, \psi^w)= \delta_{v, w}, \,\,\,\text{for all} \,\, v\in W.
\eeqn
Such an element  $\psi^w$ exists and is uniqe. 
Moreover, $\{\psi^w\}_{w\in W}$ is a `basis' in the sense that any element of $K^{\optop}_T(X)$ is uniquely written as a linear
combination of  $\{\psi^w\}_{w\in W}$ with possibly infinitely many nonzero coefficients. 
Conversely, an arbitrary linear combination of $\psi^w$ is an element of  $K^{\optop}_T(X)$.
  \end{definition}
For any $w\in W$,
  \[  [\co_{X_w}] \in K^T_0(X).  \]

  \begin{lemma}  $\bigl\{ [\co_{X_w}]\bigr\}_{w\in W}$ forms a basis of $K^T_0(X)$ as an $R(T)$-module.
  \end{lemma}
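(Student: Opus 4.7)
Since $K^T_0(X) = \Limit_{n\to\infty} K^T_0(X_n)$ by definition, and the transition maps $K^T_0(X_n)\to K^T_0(X_{n+1})$ (closed-embedding pushforwards) send $[\mathcal{O}_{X_w}]$ to $[\mathcal{O}_{X_w}]$, it suffices to prove that $\{[\mathcal{O}_{X_w}]\}_{\ell(w)\le n}$ is an $R(T)$-basis of $K^T_0(X_n)$ for each $n\ge 0$ and pass to the colimit. I proceed by induction on $n$, exploiting the Bruhat stratification of $X_n$ by $T$-stable affine cells. The base case $n=0$ is immediate, since $X_0=\{eB\}$ is a $T$-fixed point and $K^T_0(X_0)=R(T)\cdot[\mathcal{O}_{X_e}]$.

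For the inductive step, put $U_n := X_n\setminus X_{n-1} = \bigsqcup_{\ell(w)=n} C_w$ and invoke the Quillen localization exact sequence
\[
K^T_0(X_{n-1}) \xrightarrow{i_*} K^T_0(X_n) \xrightarrow{j^*} K^T_0(U_n) \to 0.
\]
Each cell $C_w\cong\mathbb{A}^{\ell(w)}$ is a $T$-equivariant affine space with unique fixed point $wB$, so by $T$-equivariant homotopy invariance $K^T_0(C_w)=R(T)\cdot[\mathcal{O}_{C_w}]$, and hence $K^T_0(U_n)=\bigoplus_{\ell(w)=n} R(T)\cdot[\mathcal{O}_{C_w}]$ is a free $R(T)$-module. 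Since $X_w\cap U_n = C_w$ scheme-theoretically for $\ell(w)=n$, one has $j^*[\mathcal{O}_{X_w}]=[\mathcal{O}_{C_w}]$. Spanning is then immediate: any $\alpha\in K^T_0(X_n)$ may be adjusted by a unique $R(T)$-combination of top-length Schubert classes to land in $\Iim(i_*)$, which by induction lies in the $R(T)$-span of $\{[\mathcal{O}_{X_v}]\}_{\ell(v)\le n-1}$.

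For linear independence, apply $j^*$ to a hypothetical relation $\sum_w c_w[\mathcal{O}_{X_w}]=0$: this forces $c_w=0$ whenever $\ell(w)=n$, and reduces the problem to showing that the residual relation in $\Iim(i_*)$ pulls back to a vanishing relation in $K^T_0(X_{n-1})$. This pull-back step is the \emph{main obstacle}, since the localization sequence is only right exact in general ($K^T_1(U_n)$ may fail to vanish in equivariant $K$-theory) and injectivity of $i_*$ is not automatic. I would handle this in one of two ways: (i) an $R(T)$-rank/torsion-freeness count, observing that the cellular subquotients $K^T_0(C_w)\cong R(T)$ propagate freeness up the filtration so that any spanning set of cardinality $|\{w:\ell(w)\le n\}|$ is forced to be an $R(T)$-basis; or (ii) a direct pairing argument against classes in topological $K$-theory $K^0_T(X_n)$ via the equivariant Euler characteristic, anticipating the dual basis $\{[\xi^w]\}$ constructed later in Section \ref{sec5}. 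Either route closes the induction, after which passage to the direct limit yields the claimed basis of $K^T_0(X)$.
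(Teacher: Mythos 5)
Your overall strategy (induct along the Bruhat filtration $X_n$, use the localization sequence, and use that each cell $C_w\cong\mathbb{A}^{\ell(w)}$ has $K^T_0(C_w)=R(T)$) is exactly the standard argument, and it is essentially what lies behind the paper's one-line proof, which simply invokes the cellular fibration machinery of Chriss--Ginzburg [CG, \S 5.2.14 and Theorem 5.4.17]. The surjectivity/spanning half of your argument is fine, as is the passage to the direct limit.

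However, there is a genuine gap at the step you yourself flag: linear independence, equivalently the injectivity of $i_*\colon K^T_0(X_{n-1})\to K^T_0(X_n)$, is never actually proved. Your route (i) is circular as stated: the fact that a spanning set of cardinality $n$ in a free module of rank $n$ over a commutative ring is a basis can only be used once you already know $K^T_0(X_n)$ is free of rank $\#\{w:\ell(w)\le n\}$, and your claim that ``the cellular subquotients propagate freeness up the filtration'' requires precisely the splitting/injectivity of $i_*$ that is in question --- from right exactness alone you only get that $K^T_0(X_n)$ is a quotient of a free module of that rank, since the splitting of $j^*$ (which exists because $K^T_0(U_n)$ is free) gives $K^T_0(X_n)\cong \Iim(i_*)\oplus K^T_0(U_n)$ but says nothing about $\Iim(i_*)\cong K^T_0(X_{n-1})$. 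Your route (ii), pairing against the dual classes, is the viable one: the pairing of Lemma \ref{2.5} and the computation $\ip<[\xi^u],[\co_{X_w}]>=\del_{u,w}$ of Proposition \ref{prop2.6} are proved without using the present lemma, so testing a relation $\sum_w c_w[\co_{X_w}]=0$ against all $[\xi^u]$ would kill all coefficients and close the argument; but you only gesture at this and do not carry it out (nor check non-circularity). The paper avoids the issue entirely by citing [CG, Theorem 5.4.17], where the injectivity of $i_*$ for cellular fibrations is part of the packaged statement; if you want a self-contained proof, you should either reproduce that argument or complete your pairing argument explicitly.
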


  \begin{proof} By [CG, \S 5.2.14 and Theorem 5.4.17], the result follows.
  \end{proof}

  For $u\in W$, by [KS, \S 2], $\co_{X^u}$ is a coherent $\co_{\bar{X}}$-module.
  In particular, $\co_{\bar{X}}$ is a coherent $\co_{\bar{X}}$-module.

 Consider the quasi-compact open subset $V^u := uU^- B/B \subset \bar{X}$.
The following lemma is due to Kashiwara-Shimozono [KS, Lemma 8.1].

  \begin{lemma} \label{2.4}   Any $T$-equivariant coherent sheaf $\cs$ on $V^u$ admits a free resolution in
   ${Coh}_T (\co_{V^u}):$
   $$
    0 \to S_n\otimes \co_{V^u} \to \cdots\to S_1\otimes\co_{V^u}
    \to S_0\otimes\co_{V^u} \to \cs \to 0,
  $$
where $S_k$ are finite dimensional $T$-modules and ${Coh}_T
(\co_{V^u})$ denotes the abelian category of $T$-equivariant
coherent $\co_{V^u}$-modules. \qed
  \end{lemma}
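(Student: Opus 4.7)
The plan is to identify $V^{u}$ with the spectrum of a $T$-equivariant polynomial algebra on a countable sum of finite-dimensional $T$-weight spaces, and then reduce to the $T$-equivariant Hilbert syzygy theorem in finitely many variables. First, since $u\in N_{G}(T)$, the chart $V^{u}=uU^{-}B/B$ is a $T$-stable affine open of $\bar{X}$ with unique $T$-fixed point $u$, and $V^{u}\cong\operatorname{Spec}(R)$ with $R=\operatorname{Sym}(V^{\ast})$, where the cotangent space $V^{\ast}:=\mathfrak{m}_{u}/\mathfrak{m}_{u}^{2}$ is a countable direct sum of finite-dimensional $T$-weight spaces whose weights are the $u$-images of the positive roots and therefore lie in a strict open half-space of $\mathfrak{h}^{\ast}_{\mathbb{R}}$. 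In particular every element of $R$ has finite weight support. A $T$-equivariant coherent sheaf $\cs$ on $V^{u}$ corresponds to a $T$-equivariant finitely presented $R$-module $M=\Gamma(V^{u},\cs)$, which admits a $T$-equivariant finite presentation
\[\bigoplus_{i=1}^{a}R(\lambda_{i})\ \xrightarrow{\ \Phi\ }\ \bigoplus_{j=1}^{b}R(\mu_{j})\ \longrightarrow\ M\ \longrightarrow\ 0\]
whose matrix $\Phi$ has finitely many entries, each with finite weight support.

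Next, let $V'\subset V^{\ast}$ be the finite-dimensional $T$-submodule spanned by the weight components of all entries of $\Phi$, and choose a $T$-stable complement $V''\subset V^{\ast}$. Setting $R':=\operatorname{Sym}(V')$, one has $R\cong R'\otimes_{\bc}\operatorname{Sym}(V'')$, so $R$ is free as an $R'$-module. Let $M'$ denote the $T$-equivariant $R'$-module defined by the same matrix $\Phi$; then $M\cong M'\otimes_{R'}R$. Now the $T$-equivariant Hilbert syzygy theorem for the polynomial ring $R'$ in finitely many weight variables (constructed by iteratively lifting a $T$-weighted basis of $M'/(V')M'$ via graded Nakayama, with termination ensured by the fact that $R'$ has finite global dimension $\dim V'$) gives a finite $T$-equivariant free resolution
\[0\longrightarrow S_{n}\otimes R'\longrightarrow\cdots\longrightarrow S_{0}\otimes R'\longrightarrow M'\longrightarrow 0\]
with each $S_{k}$ a finite-dimensional $T$-module. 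Applying the exact functor $-\otimes_{R'}R$ then produces the desired finite $T$-equivariant free resolution of $M$, and hence of $\cs$, over $\co_{V^{u}}$.

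The main technical obstacle is the very first step: in the infinite-dimensional Kac-Moody pro-algebraic setting, one must verify that $V^{u}$ really is $\operatorname{Spec}$ of the uncompleted symmetric algebra on a direct sum of $T$-weight spaces (so that elements of $R$ genuinely have finite weight support, rather than being formal power series in countably many variables) and that a $T$-stable complement $V''$ to $V'$ exists inside $V^{\ast}$. Once these structural facts are in place, the reduction to a finite-dimensional polynomial-ring computation is formal, and the classical Hilbert syzygy theorem finishes the argument.
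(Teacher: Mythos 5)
The paper gives no proof of this lemma --- it is imported verbatim from Kashiwara--Shimozono [KS, Lemma 8.1] --- and your argument is essentially a correct reconstruction of theirs: identify $V^u\cong u\,\hat U^-x_o$ with $\operatorname{Spec}$ of the \emph{uncompleted} symmetric algebra on the restricted dual $\bigoplus_{\alpha>0}\mathfrak{g}_{-\alpha}^{*}$ (this structural fact, which you flag, is exactly what is established in [Ka, \S 4] and [KS, \S 2]), descend a finitely presented $T$-equivariant module along a finite-dimensional $T$-stable subspace $V'$ of the variables actually occurring in an equivariant presentation, resolve over $\operatorname{Sym}(V')$ by the graded equivariant Hilbert syzygy theorem, and base-change along the flat (free) extension $\operatorname{Sym}(V')\hookrightarrow \co(V^u)$. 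The only steps worth spelling out are that local finiteness of the rational $T$-action yields the equivariant finite presentation, and that pairing weights with $u\rho^{\vee}$ (positive on $u(\Delta^+)$, so weight vectors are automatically homogeneous) provides the positive grading that makes graded Nakayama and the minimal-resolution termination argument legitimate; both are routine.
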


Define a pairing
$$
\ip< \, ,\, > : K^0_T(\bar{X}) \otimes K^T_0(X) \to R(T),\,\,
\ip<[\cs], [\cf]>  = \sum_i (-1)^i \chi_T \bigl(X_n, \tor_
i^{\co_{\bar{X}}}
 (\cs,\cf ) \bigr),$$
if $\cs$ is a $T$-equivariant coherent sheaf on $\bar{X}$ and $\cf$
is a $T$-equivariant coherent sheaf on ${X}$ supported in $X_n$ (for
some $n$), where $\chi_T$ denotes the $T$-equivariant
Euler-Poincar\'{e} characteristic.

  \begin{lemma}  \label{2.5} The above pairing is well defined.
  \end{lemma}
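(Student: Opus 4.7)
The plan is to verify four things in turn: (a) the sheaves $\tor_i^{\co_{\bar{X}}}(\cs,\cf)$ are coherent, supported in $X_n$, and vanish for $i$ sufficiently large, so the sum in the definition has only finitely many nonzero terms; (b) each $\chi_T\bigl(X_n,\tor_i^{\co_{\bar X}}(\cs,\cf)\bigr)$ is a well-defined element of $R(T)$; (c) the sum is additive in each variable, so that it factors through $K^0_T(\bar X)\otimes K^T_0(X)$; and (d) the value is independent of the choice of $n$ with $\Supp\cf\subset X_n$. Items (b)--(d) are essentially formal once (a) is in hand; all the work is in (a).

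For (a), I would first cover the quasi-compact subset $X_n\subset \bar X$ by finitely many of the opens $V^u = uU^-B/B$. On each $V^u$, Lemma \ref{2.4} furnishes a \emph{finite} $T$-equivariant free resolution of $\cs|_{V^u}$; tensoring with $\cf|_{V^u}$ then computes $\tor_*^{\co_{\bar X}}(\cs,\cf)|_{V^u}$, showing that these Tor sheaves are coherent on $V^u$ and vanish in degrees exceeding the length of the resolution. Taking the maximum of these lengths over the finite cover yields a uniform bound $N$ with $\tor_i(\cs,\cf)=0$ on all of $X_n$ for $i>N$; coherence on $\bar X$ follows by gluing, and the support of each Tor lies in $\Supp\cs\cap\Supp\cf\subset X_n$.

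With (a) in hand the rest is routine: each Tor is a coherent $T$-equivariant sheaf on the projective variety $X_n$ (after, if necessary, the standard filtration of an $\co_{\bar X}$-module with set-theoretic support in $X_n$ by powers of the ideal of $X_n$, whose successive quotients are genuinely $\co_{X_n}$-coherent), hence its cohomology is a finite-dimensional graded $T$-module and $\chi_T\in R(T)$, giving (b); for (c), a short exact sequence in either slot produces a long exact sequence of Tor sheaves, and the uniform vanishing of (a) lets the alternating sum of $\chi_T$ telescope, yielding additivity and thus descent to the $K$-groups; for (d), the inclusion $X_n\hookrightarrow X_m$ is a closed immersion with exact pushforward, so $\chi_T$ is unchanged when computed on the larger projective variety. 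The main obstacle is the uniform Tor-vanishing in (a): without the bound $N$, the defining sum would not even be finite. It is precisely the interaction of the local finite free resolutions provided by Lemma \ref{2.4} with the quasi-compactness of each $X_n$ that supplies this bound.
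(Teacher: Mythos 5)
Your proof is correct and follows essentially the same route as the paper: both arguments hinge on Lemma \ref{2.4} providing finite free resolutions of $\cs$ on the opens $V^u$, with finitely many such opens (those with $\ell(u)\leq n$) covering $X_n$, yielding a uniform bound beyond which the Tor sheaves vanish, together with their $\co_{X_n}$-coherence and support in $X_n$, so that each $\chi_T$ lies in $R(T)$. Your additional checks (bilinearity via the long exact Tor sequence and independence of $n$) are routine points the paper leaves implicit, and your parenthetical filtration in step (b) is unnecessary since the Tor sheaves are automatically $\co_{X_n}$-modules, but neither affects correctness.
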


  \begin{proof}  By Lemma \ref{2.4}, for any $u\in W$, there exists $N(u)$
  (depending upon $\cs$) such that
  $\tor_j^{\co_{\bar{X}}}(\cs,\cf) =0$
for all $j > N(u)$ in the open set $V^u$.  Now, let $j > \max_{\ell
(u)\leq n} N(u)$,
 where $\cf$ has support in $X_n$.  Then,
  \[
\tor_j^{\co_{\bar{X}}} (\cs,\cf) = 0\quad\text on \quad
\bigcup_{\ell (u)\leq n} V^u
  \]
and hence $\tor_j^{\co_{\bar{X}}}(\cs,\cf)=0$ on $\bar{X}$, since
$BuB/B \subset uB^-B/B$ and hence supp $\cf\subset X_n \subset
\cup_{\ell (u)\leq n}\, V^u$.

Of course, for any $j\geq 0$, $\tor_j^{\co_{\bar{X}}}(\cs,\cf)$ is a
sheaf supported on $X_n$ and it is  $\co_{X_n}$-coherent on the open set $X_n\cap
V^u$ of $X_n$ for any $u\in W$.  Thus, $\tor_j^{\co_{\bar{X}}}
(\cs,\cf)$ is a $\co_{X_n}$-coherent sheaf and hence
  \[
\chi_T\bigl( \bar{X}, \tor_j^{\co_{\bar{X}}} (\cs,\cf)\bigr) =
\chi_T \bigl( X_n, \tor_j^{\co_{\bar{X}}} (\cs,\cf)\bigr)
  \]
  is well defined. This proves the lemma.
  \end{proof}
By [KS, Proof of Proposition 3.4], for any $u\in W$,
  \beqn\label{eq1.0}
\ext^k_{\co_{\bar{X}}} (\co_{X^u}, \co_{\bar{X}}) =0 \quad\forall k\neq \ell (u).
  \eeqn
Define the sheaf
  \beqn\label{neweq5'}
\om_{X^u} := \ext^{\ell (u)}_{\co_{\bar{X}}}
\bigl(\co_{X^u}, \co_{\bar{X}} \bigr)\otimes\cl (-2\rho ),
  \eeqn
  which, by the analogy with the Cohen-Macaulay (for short CM) schemes of finite type, will be called
  the {\it dualizing sheaf} of $X^u$.

Now, set the $T$-equivariant sheaf on $\bar{X}$
  \begin{align*}
\xi^u &:= e^{-\rho} \cl (\rho )\om_{X^u} \\
&= e^{-\rho} \cl (-\rho ) \ext^{\ell (u)}_{\co_{\bar{X}}}
(\co_{X^u}, \co_{\bar{X}} ).
  \end{align*}
By Theorem \ref{prop:main}, $\xi^u$ is the ideal sheaf of $\partial X^u$ in $X^u$.

By Lemma \ref{2.4}, for any $v\in W$, $\co_{X^u \cap V^v}$ admits the resolution
  \[
0 \to \cf_n \to \cdots \to \cf_0 \to \co_{X^u\cap V^v} \to 0
  \]
by free $\co_{V^v}$-modules of finite rank.  Thus, the sheaf
$\ext^{\ell (u)}_{\co_{\bar{X}}}(\co_{X^u}, \co_{\bar{X}})$ restricted to $V^v$
is given by the $\ell (u)$-th cohomology of the sheaf sequence
  \[
0 \leftarrow \home_{\co_{\bar{X}}} (\cf_n, \co_{\bar{X}}) \leftarrow
\home_{\co_{\bar{X}}} (\cf_{n-1}, \co_{\bar{X}}) \leftarrow \cdots \leftarrow
\home_{\co_{\bar{X}}} (\cf_0, \co_{\bar{X}})\leftarrow 0.
  \]
In particular, $\ext^{\ell (u)}_{\co_{\bar{X}}} (\co_{X^u},
\co_{\bar{X}})$ restricted to $V^v$ is $\co_{V^v}$-coherent and hence so is
$\xi^u$ as an $\co_{\bar{X}}$-module.  Hence,
$$[\ext^{\ell (u)}_{\co_{\bar{X}}} (\co_{X^u},
 \co_{\bar{X}})]\in K^0_T(\bar{X}).$$

  \begin{proposition} \label{prop2.6} For any $u,w\in W$,
    \[
\ip<[\xi^u], [\co_{X_w}]> = \delta_{u,w}.
   \]
  \end{proposition}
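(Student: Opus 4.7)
The plan is to reduce the pairing $\ip<[\xi^u],[\co_{X_w}]>$ to the $T$-equivariant Euler characteristic of a single sheaf supported on the Richardson variety $X^u_w = X^u\cap X_w$, and then to run a case analysis on the Bruhat relation between $u$ and $w$.

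First I would establish the higher $\tor$-vanishing
\[
\tor_i^{\co_{\bar{X}}}(\xi^u,\co_{X_w}) = 0 \quad \text{for all } i>0.
\]
Since $\xi^u$ equals, up to the line-bundle-and-character twist $e^{-\rho}\cl(-\rho)$, the sheaf $\ext^{\ell(u)}_{\co_{\bar{X}}}(\co_{X^u},\co_{\bar{X}})$, a local base-change argument combining the Kashiwara-Shimozono free resolution of $\co_{X^u}$ on each $V^v$ (Lemma \ref{2.4}) with the local Ext-vanishing $\ext^j_{\co_{\bar{X}}}(\co_{X^u},\co_{X_w})=0$ for $j\neq \ell(u)$ (proved in Section \ref{sec7}) should deliver both the desired $\tor$-vanishing and the identification
\[
\xi^u \otimes_{\co_{\bar{X}}} \co_{X_w} \;\cong\; e^{-\rho}\cl(-\rho)|_{X^u_w}\otimes \ext^{\ell(u)}_{\co_{X_w}}(\co_{X^u_w},\co_{X_w}).
\]
Granting these, the pairing collapses to $\ip<[\xi^u],[\co_{X_w}]>=\chi_T(X_w,\xi^u\otimes_{\co_{\bar{X}}}\co_{X_w})$.

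Now three cases arise. If $u\not\leq w$, the Richardson variety is empty, so the pairing is $0$. If $u=w$, then $X^w_w=\{wB/B\}$ is a single $T$-fixed point, and a direct fibre computation of $\xi^w$ at $wB/B$, using the description of $\om_{X^w}$ from Kashiwara's appendix, should produce exactly $1\in R(T)$ after the twists $e^{-\rho}$ and $\cl(\rho)|_{wB/B}$ cancel the $T$-character of $\om_{X^w}$ at the fixed point. If $u<w$, then by Proposition \ref{n5.6} the Richardson variety $X^u_w$ is irreducible, normal, and CM of positive dimension $d:=\ell(w)-\ell(u)$, and $\ext^{\ell(u)}_{\co_{X_w}}(\co_{X^u_w},\co_{X_w})$ is identified with the relative dualizing sheaf $\om_{X^u_w}\otimes\om_{X_w}^{-1}|_{X^u_w}$. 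Serre duality on $X^u_w$ then rewrites the pairing as the Euler characteristic of an explicit line bundle, and I expect its vanishing to follow from a Kawamata-Viehweg-type argument using the dualizing sheaf formula $\om_{X_w}\cong e^{\rho}\cl(-\rho)|_{X_w}\otimes\co_{X_w}(-\partial X_w)$ of Section \ref{sec4} to isolate an ample twist supported off the boundary.

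The main obstacle is this last case: it depends on the dualizing sheaf formulas for $X_w$ and $X^u_w$ developed only in Section \ref{sec4}, and on the fact that classical Grothendieck duality on the infinite-type scheme $\bar{X}$ is unavailable, forcing one to work through the thick flag variety with care. A secondary difficulty is the character bookkeeping at the fixed point in the case $u=w$: one must match $\om_{X^w}|_{wB/B}$ to the correct weight so that the full twist in $\xi^w$ integrates to the trivial character.
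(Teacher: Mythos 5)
Your reduction is fine as far as it goes: the Ext/Tor vanishing of Section \ref{sec7} (Propositions \ref{propa17} and \ref{propa18}) is proved independently of Proposition \ref{prop2.6}, so there is no circularity in invoking it, and together with Kashiwara's Theorem \ref{prop:main} it turns the pairing into $\chi_T\bigl(X^u_w,\,\co_{X^u_w}(-(\partial X^u)\cap X_w)\bigr)$; the cases $u\not\leq w$ (empty intersection) and $u=w$ (reduced $T$-fixed point, trivial weight by Lemma \ref{lem1} of the appendix) then come out correctly. The genuine gap is the case $u<w$, which is the heart of the statement, and the tool you propose cannot close it. A Kawamata--Viehweg type theorem kills \emph{higher} cohomology of $\omega\otimes(\text{nef and big})$; it can never force an Euler characteristic to vanish --- at best it shows $\chi=h^0$, which for an ``ample twist'' is typically positive, whereas you need $\chi=0$, i.e.\ also $H^0=0$, about which such theorems say nothing. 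Moreover the expected ample twist is not even there: by Lemma \ref{n7.3} the equivariant twists cancel and $\omega_{X^u_w}\simeq\co_{X^u_w}\bigl(-\partial X^u_w-X^u\cap\partial X_w\bigr)$, so Serre duality converts the ideal sheaf of $\partial X^u_w=(\partial X^u)\cap X_w$ into the ideal sheaf of the other boundary $X^u\cap\partial X_w$ --- an object of exactly the same kind, with no positivity gained. What the vanishing actually requires is an $H^0$-level input, e.g.\ that $H^i(X^u_w,\co_{X^u_w})\to H^i(\partial X^u_w,\co_{\partial X^u_w})$ is an isomorphism for all $i$ (in particular that $\partial X^u_w$ is nonempty and connected when $u<w$, and that structure sheaves of Richardson varieties and of their boundaries have no higher cohomology); this is substantial extra machinery, of Frobenius-splitting or rational-singularities type, that is neither sketched in your proposal nor developed in the paper for this purpose.

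For comparison, the paper proves Proposition \ref{prop2.6} by a purely $K$-theoretic induction on $\ell(w)$, with no Richardson-variety geometry: it uses the operators $D_i=\pi_i^*\circ\pi_{i!}$, the recursion $\langle[\xi^u],[\co_{X_w}]\rangle=\langle D_i[\xi^u],[\co_{X_{ws_i}}]\rangle$ for $ws_i<w$, and the formula $D_i[\xi^u]=[\xi^u]+[\xi^{us_i}]$ if $us_i<u$ and $D_i[\xi^u]=0$ otherwise, the base case $w=e$ being a direct free-resolution computation on $V^e$. If you wish to salvage your route, you must supply the cohomological statements about $X^u_w$ and $\partial X^u_w$ indicated above; replacing them by Serre duality plus Kawamata--Viehweg does not work.
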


  \begin{proof}\footnote{We thank the referee for this shorter proof than our original proof.}  By definition, 
\[
\ip<[\xi^u], [\co_{X_w}]> = \sum_i (-1)^i \chi_T \bigl( X_n, \tor_i^{\co_{\bar{X}}} (\xi^u,\co_{X_w})\bigr),
   \]
where $n$ is taken such that $n \geq \ell(w)$.
Thus, by (subsequent) Proposition \ref{propa18},
\beqn \label{neweqn1}
\ip<[\xi^u], [\co_{X_w}]> = \chi_T \bigl( X_n, \xi^u\otimes_{\co_{\bar{X}}} \,\co_{X_w}\bigr).
\eeqn
By Theorem \ref{prop:main} and Corollary \ref{newcor5.5}, we have the sheaf exact sequence:
\[0 \to \xi^u\otimes_{\co_{\bar{X}}}\,\co_{X_w} \to \co_{X^u}\otimes_{\co_{\bar{X}}}\,\co_{X_w} \to 
 \co_{\partial X^u}\otimes_{\co_{\bar{X}}}\,\co_{X_w} \to 0.\]
Thus,
\beqn\label{neweqn2}
\chi_T (X_n,  \xi^u\otimes_{\co_{\bar{X}}}\,\co_{X_w})=\chi_T (X_n,  \co_{X_w^u}) -
\chi_T\left(X_n,  \co_{(\partial X^u) \cap X_w}\right),
\eeqn
since $\co_Y\otimes_{\co_{\bar{X}}}\,\co_{Z} =\co_{Y\cap Z}.$ By Proposition \ref{n5.6}, when nonempty, 
$X^u_w$ is an irreducible variety and hence $(\partial X^u) \cap X_w =\cup_{w\geq v >u}\,X^v_w$ is 
connected (if nonempty) since $w\in X^v_w$
for all $u< v\leq w$. If $u \not\leq w$, $X^u_w$ is empty and hence by \eqref{neweqn1} - \eqref{neweqn2},
\[\ip<[\xi^u], [\co_{X_w}]> =0.\]
So, assume that $u\leq w$. In this case, $X^u_w$ is nonempty. Moreover, by [KuS, Corollary 3.2],
\[H^i(X_n,  \co_{X_w^u}) = 0, \,\,\forall i>0.\]
Also, by Corollary \ref{newcor5.5},
\[H^i (X_n,  \co_{(\partial X^u) \cap X_w}) = 0, \,\,\forall i>0.\]
Thus, for $u\leq w$,
\beqn\label{neweq3}
\chi_T (X_n,  \co_{X_w^u}) =1,
\eeqn
and for $u< w$, 
\beqn \label{neweqn4}
\chi_T\left(X_n,  \co_{(\partial X^u) \cap X_w}\right)= 1.
\eeqn
Thus, by  \eqref{neweqn1} - \eqref{neweqn2},
\[\ip<[\xi^u], [\co_{X_w}]> =0, \,\,\,\text{for}\,\, u<w.\]
Finally, take $u=w$. In this case 
\[\ip<[\xi^u], [\co_{X_w}]> =1.\]
This proves the proposition.\qed
\end{proof}

\section{Geometric identification of the $T$-equivariant $K$-theory
structure constants and  statements of the main results}\label{sec6}

Express the product in topological $K$-theory $K^{\optop}_T(X)$:
  \[
\psi^u\cdot\psi^v = \sum_w p^w_{u,v} \psi^w, \quad\text{for } p^w_{u,v}\in R(T).
  \]
(For fixed $u,v\in W$, infinitely many $p^w_{u,v} $ could be nonzero.)

Also, express the co-product in $K^T_0(X)$:
  \[
\Del_* [\co_{X_w}] = \sum_{u,v} q^w_{u,v} [\co_{X_u}]\otimes [\co_{X_v}] ,
  \]
where $\Del : X\to X\times X$ is the diagonal map.

  \begin{proposition} \label{n3.1} For all $u,v,w\in W$,
    \[  p^w_{u,v} = q^w_{u,v}.  \]
  \end{proposition}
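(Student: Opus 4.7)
The plan is to deduce the identity via the standard adjunction between the product in topological $K$-theory and the pushforward under the diagonal on coherent $K$-theory, combined with the duality established in Proposition \ref{prop2.6}. The basis $\{\psi^w\}_{w\in W}$ of $K^{\optop}_T(X)$ is, by construction (cf.\ [KK, \S3] and the twist defining $\psi^w = *\tau^{w^{-1}}$), the dual basis to $\{[\co_{X_w}]\}_{w\in W}$ under a natural cap-product pairing
\[
\langle\,\cdot\,,\,\cdot\,\rangle_{X} : K^{\optop}_T(X)\otimes K^T_0(X)\to R(T),
\]
obtained as the $T$-equivariant Euler-Poincar\'e characteristic of the cap product. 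So the first step is to record (or verify, in the ind-variety setting by passing to the filtration $\{X_n\}$) that $\langle \psi^u, [\co_{X_w}]\rangle_X = \delta_{u,w}$, and that the analogous statement holds on $X\times X$ for the external product basis $\{\psi^u\boxtimes\psi^v\}$ dual to $\{[\co_{X_u}]\boxtimes[\co_{X_v}]\}$.

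Next, I would use the basic adjunction: for the proper map $\Delta:X\to X\times X$ (proper on each $X_n$), one has
\[
\langle \Delta^{*}(\alpha),[\cf]\rangle_{X} \;=\; \langle \alpha,\Delta_{*}[\cf]\rangle_{X\times X},
\]
for $\alpha\in K^{\optop}_T(X\times X)$ and $\cf\in K^T_0(X)$ supported in some $X_n$. This is a consequence of the projection formula applied to $\Delta$, together with the compatibility of topological $K$-theory pullback with the cap-product pairing. Granting this, specialize to $\alpha=\psi^u\boxtimes\psi^v$ and $\cf = \co_{X_w}$.

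Applying the adjunction and the duality of the two bases, the left-hand side is
\[
\langle \Delta^{*}(\psi^u\boxtimes\psi^v),[\co_{X_w}]\rangle_X
= \langle \psi^u\cdot \psi^v,[\co_{X_w}]\rangle_X
= \sum_x p^x_{u,v}\langle \psi^x,[\co_{X_w}]\rangle_X
= p^w_{u,v},
\]
while the right-hand side is
\[
\langle \psi^u\boxtimes\psi^v,\Delta_{*}[\co_{X_w}]\rangle_{X\times X}
= \sum_{a,b} q^w_{a,b}\,\langle \psi^u\boxtimes\psi^v,[\co_{X_a}]\boxtimes[\co_{X_b}]\rangle_{X\times X}
= q^w_{u,v}.
\]
Comparing gives $p^w_{u,v}=q^w_{u,v}$.

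The main obstacle, and what I expect to require the most care, is checking that the pairing and adjunction make sense in the ind-variety setting with $K^{\optop}_T(X)=\varprojlim K^{\optop}_T(X_n)$ and $K^T_0(X)=\varinjlim K^T_0(X_n)$: namely, that cap products of a class in the inverse limit with a compactly-supported coherent class are well-defined, compatible under the closed inclusions $X_n\hookrightarrow X_{n+1}$, and that $\Delta:X\to X\times X$ satisfies the projection formula on each level $X_n\to X_n\times X_n$ with the right compatibility across $n$. Once these formal compatibilities are in place (all of which reduce to the standard finite-dimensional case on each $X_n$ where the diagonal is a closed embedding into a smooth ambient $T$-variety), the proof is a one-line adjunction as above.
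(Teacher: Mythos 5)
Your overall strategy --- dualize against the structure sheaf basis and apply the projection formula for $\Del$ --- is the natural formal argument and can in principle be completed, but as written it has a genuine gap: the two inputs you treat as immediate are exactly where the mathematical content lies. First, the duality $\langle \psi^u,[\co_{X_w}]\rangle_X=\delta_{u,w}$ is not ``by construction'': in [KK, \S 3] the basis $\tau^w$ is defined algebraically (as a dual basis relative to the nil-Hecke-type ring/fixed-point data), not as the dual of $\{[\co_{X_w}]\}$ under an Euler-characteristic pairing, and the twist $\psi^w=*\tau^{w^{-1}}$ does not make such a duality definitional either. Establishing it amounts to the computation the paper actually performs: the identity $\chi_T\bigl(\theta^*\psi^u\cdot[\co_{Z_{\fu}}]\bigr)=\delta_{u,\mu(\fu)}$ on the BSDH varieties (which needs [KK, Propositions 3.36, 3.39], the proof of [K, Corollary 8.1.10], and an extension to non-reduced words), combined with $\theta_![\co_{Z_{\fw}}]=[\co_{X_w}]$ from [K, Theorem 8.2.2]. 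Second, the pairing you invoke --- a cap product of a class in $K^{\optop}_T(X_n)$ with a coherent class on the singular projective variety $X_n$, together with its adjunction for $\Del_*$ --- is not available inside the paper's framework (there are no finite locally free resolutions on $X_n$, and the algebraic pairing of Section \ref{sec5} is against $K^0_T(\bar{X})$, not against topological classes); to make your displayed adjunction meaningful you would have to bring in equivariant topological $K$-homology and the compatibility of algebraic and topological pushforwards.

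The paper circumvents both issues by transporting everything to the smooth BSDH variety $Z_{\fw}$, where $K^0_T(Z_{\fw})\simeq K^{\optop}_T(Z_{\fw})$ by [KK, Proposition 3.35], so all products and Euler characteristics are algebraic: it expands $\Del_{\fw *}[\co_{Z_{\fw}}]=\sum \hat{q}^{\fw}_{\fu,\fv}[\co_{Z_{\fu}}]\boxtimes[\co_{Z_{\fv}}]$ and shows that both $p^w_{u,v}$ (via the pulled-back duality) and $q^w_{u,v}$ (via $\theta_!$ and the rationality of Schubert varieties) equal $\sum_{\mu(\fu)=u,\,\mu(\fv)=v}\hat{q}^{\fw}_{\fu,\fv}$. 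In effect the paper's proof is the rigorous implementation of your one-line adjunction carried out on the desingularization; if you wish to keep the more direct formulation, you must first construct the mixed pairing with its projection formula and then prove --- not assume --- the duality $\langle\psi^u,[\co_{X_w}]\rangle=\delta_{u,w}$ and its K\"unneth analogue on $X\times X$.
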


  \begin{proof}  For $w\in W$, fix a reduced decomposition
  $\fw =(s_{i_1}, \dots, s_{i_n})
  $ for $w$ and let $\theta=\theta_{\fw}:Z_\fw \to X_w$ be the 
  BSDH desingularization as in Definition \ref{psibasis}.
  By [KK, Proposition 3.39]
  (where $\chi_T$ is the $T$-equivariant Euler-Poincar\'e characteristic),
  \begin{align}\label{eq3.1}
\chi_T \Bigl( \theta^* (\psi^u \cdot \psi^v)\bigr) &= \chi_T\Biggl( \sum_{w_1} p^{w_1}_{u,v}
\, \theta^*(\psi^{w_1}) \Biggr) \notag\\
 &= p^w_{u,v} .
   \end{align}
On the other hand,
  \begin{align}\label{neweqn5}
\theta^*(\psi^u \cdot \psi^v) &= \theta^*\Del^* (\psi^u\boxtimes\psi^v)\notag\\
&= \Del^*_{\fw} (\theta\times\theta )^* (\psi^u \boxtimes\psi^v)\notag
\\
&= \Del^*_{\fw}\bigl(\theta^* \psi^u\boxtimes\theta^*\psi^v\bigr),
  \end{align}
  where $\Del_{\fw}: Z_{\fw}\to Z_{\fw}\times Z_{\fw}$
is the diagonal map.

In the following proof, for any morphism $f$ of schemes, we abbreviate $Rf_*$ by $f_!$.

Let $\pi : Z_{\fw} \to pt$ and let ${\Del_{\fw}}_*[\co_{Z_{\fw}}] = \sum_{\fu ,\fv
\leq\fw} \hat{q}^{\fw}_{\fu ,\fv}[\co_{Z_{\fu}}]\boxtimes [\co_{Z_{\fv}}]$ for
some unique $\hat{q}^{\fw}_{\fu ,\fv} \in R(T)$, where $\fu \leq \fw$ means that
$\fu$ is a subword of $\fw$.  (This decomposition is due to the fact that
$ [\co_{Z_{\fu}}]_{\fu\leq \fw}$ is an $R(T)$-basis of
$$K_0^T(Z_{\fw}) \simeq K^0_T(Z_{\fw}) \simeq K_T^{\optop}(Z_{\fw}),
 $$
 where $K_0^T(Z_{\fw})$ is the Grothendieck group associated to the semigroup of
  $T$-equivariant coherent sheaves on $Z_{\fw}$. For the latter isomorphism, see [KK, Proposition 3.35].)
Then,
  \begin{align}\label{e3.2}
\chi_T\Bigl( \theta^* (\psi^u\cdot \psi^v)\Bigr) &=
\pi_! \Bigl(\Del^*_{\fw} (\theta^*\psi^u\boxtimes\theta^*\psi^v )
\Bigr),\,\,\,\text{by}\,\,\eqref{neweqn5} \notag\\
 &= (\pi\times\pi )_!  \Bigl(\Del_{\fw *}\bigl(\Del^*_{\fw}
 (\theta^*\psi^u\boxtimes\theta^*\psi^v)\bigr)\Bigr)
 \notag\\
 &= (\pi\times\pi )_!  \bigl((\theta^*\psi^u\boxtimes\theta^*\psi^v\bigr) \cdot
 \bigl(\Del_{\fw *}[\co_{Z_{\fw}}])\bigr),\,\,\,\text{by the projection formula} \notag\\
 &= (\pi\times\pi )_!  \Bigl( (\theta^*\psi^u\boxtimes\theta^*\psi^v) \cdot
 \bigl(\sum_{\fu ,\fv} \hat{q}^{\fw}_{\fu ,\fv} [\co_{Z_{\fu}}]\boxtimes
 [\co_{Z_{\fv}}]\bigr)\Bigr),\,\,\text{for some}\,\,  \hat{q}^{\fw}_{\fu ,\fv}\in R(T) \notag\\
 &= \sum_{\fu ,\fv} \hat{q}^{\fw}_{\fu ,\fv} \,\chi_T\bigl( \theta^*\psi^u\cdot
 [\co_{Z_{\fu}}]\bigr) \chi_T\bigl(\theta^*\psi^v \cdot [\co_{Z_{\fv}}]\bigr) \notag\\
 &= \sum_{ \substack{\mu (\fu )=u\\ \mu (\fv )=v} } \hat{q}^{\fw}_{\fu ,\fv},
   \end{align}
where the last equality follows since
\beqn \label{e3.4'} \chi_T(\theta^*\psi^u\cdot
 [\co_{Z_{\fu}}])= \delta_{u, \mu(\fu)}, \eeqn
 where $\mu (\fu )$ denotes the Weyl group element $u$ if the standard map
 $Z_{\fu} \to G/B$ has image precisely equal to $X_u$. To prove the above identity
 \eqref{e3.4'},
 use [KK, Propositions 3.36, 3.39],
  and the proof of [K, Corollary 8.1.10]. (Actually, we need the extension
 of [KK, Proposition 3.36] for non-reduced words $\fv$, but the proof of
 this extension is identical.)

 From the identity
  \[
\Del_{\fw *} [\co_{Z_{\fw}}] = \sum_{\fu ,\fv\leq\fw} \hat{q}^{\fw}_{\fu ,\fv} [\co_{Z_{\fu}}]\boxtimes [\co_{Z_{\fv}}] ,
  \]
we get
  \begin{align}\label{e3.3}
\Del_* \theta_! [\co_{Z_{\fw}}] &= (\theta\times\theta )_!\, \Del_{\fw *}[\co_{Z_{\fw}}] \notag \\
 &= \sum_{\fu ,\fv} \hat{q}^{\fw}_{\fu ,\fv} \theta_![\co_{Z_{\fu}}]\boxtimes \theta_![\co_{Z_{\fv}}] \notag\\
 &= \sum_{u_1,v_1\leq w}\; \sum_{ \substack{ \mu (\fu )=u_1\\ \mu (\fv )=v_1} } \hat{q}^{\fw}_{\fu ,\fv} [\co_{X_{u_1}}]\boxtimes [\co_{X_{v_1}}],
   \end{align}
by [K, Theorem 8.2.2(c)].  Moreover, since
  \beqn \label{e3.4}
\Del_*\, \theta_! [\co_{Z_{\fw}}] = \Del_*\, [\co_{X_{w}}] = \sum\, q^w_{u_1,v_1}
[\co_{X_{u_1}}]\boxtimes [\co_{X_{v_1}}],
  \eeqn
we get (equating \eqref{e3.3} and \eqref{e3.4}) for any $u_1, v_1 \leq w :$
  \beqn \label{e3.5}
 q^w_{u_1, v_1} = \sum_{ \substack{ \mu (\fu )=u_1\\ \mu (\fv )=v_1} }
 \hat{q}^{\fw}_{\fu ,\fv}.
 \eeqn
Combining \eqref{eq3.1}, \eqref{e3.2} and \eqref{e3.5}, we get $p^w_{u,v} = q^w_{u,v}$.
 This proves the proposition.
 \end{proof}

\begin{lemma} (Due to M. Kashiwara)  The $R(T)$-span of $\{[\xi^u]\}_{u\in W}$ inside $K^{0}_T(\bar{X})$
(where we allow an arbitrary infinite sum, which makes sense as an element of $K^{0}_T(\bar{X})$) coincides with 
 $K^{0}_T(\bar{X})$. 
\end{lemma}
\begin{proof}
 To prove this, write $[\xi^u]$ as a linear
combination of $[\co_{X^v}]$ by Theorem \ref{prop:main}. Then, it is an upper triangular $R(T)$-matrix with
 diagonal terms equal to $1$. By [KS, \S2],  $[\co_{X^v}]$ is a `basis' of  $K^{0}_T(\bar{X})$. This proves the lemma.
\end{proof}
  
By Proposition \ref{prop2.6},
 $\{[\xi^u]\}_{u\in W}$ are independent over $R(T)$ even allowing infinite sums.

Now, express the product in $K^{0}_T(\bar{X})$:
  \[
[\xi^u]\cdot[\xi^v] =  \sum_w d^w_{u,v} [\xi^w] , \quad\text{for } d^w_{u,v}\in R(T).
  \]

Let $\bar{\Del} : \bar{X}\to \bar{X}\times \bar{X}$ be the diagonal map. Then,
\[[\xi^u]\cdot[\xi^v] = {\bar{\Del}}^*([\xi^u\boxtimes \xi^v]).\]
  \begin{lemma}  For all $u,v,w\in W$,
    \[  p^w_{u,v} = d^w_{u,v}.  \]
  \end{lemma}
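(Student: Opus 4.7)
The strategy is to pair both sides of the expansion $[\xi^u]\cdot[\xi^v] = a + \sum_{w'} d^{w'}_{u,v}[\xi^{w'}]$ against $[\co_{X_w}]$ under the pairing $\langle\cdot,\cdot\rangle\colon K^0_T(\bar{X})\otimes K^T_0(X)\to R(T)$ of Section~\ref{sec5}. Since $a\in \mathcal{S}^\perp$ and $\langle[\xi^{w'}],[\co_{X_w}]\rangle=\delta_{w',w}$ by Proposition~\ref{prop2.6}, the right side collapses to $d^w_{u,v}$. Given Proposition~\ref{n3.1} (which identifies $p^w_{u,v}=q^w_{u,v}$), it suffices to show that the left-hand pairing equals $q^w_{u,v}$.

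To evaluate $\langle\bar{\Delta}^*[\xi^u\boxtimes\xi^v],[\co_{X_w}]\rangle$, I would first set up the analogue of the pairing on the product: a well-defined pairing
\[
\langle\cdot,\cdot\rangle_{\bar X\times\bar X}\colon K^0_T(\bar{X}\times\bar{X})\otimes K^T_0(X\times X)\to R(T),
\]
defined by the same alternating sum of $T$-equivariant Euler characteristics of $\tor$-sheaves. Well-definedness is proved exactly as in Lemma~\ref{2.5}: the open subsets $V^v\times V^{v'}\subset\bar X\times\bar X$ provide quasi-compact neighborhoods on which $\xi^u\boxtimes\xi^v$ admits finite locally free resolutions (Lemma~\ref{2.4} applied on each factor), and any coherent sheaf on $X\times X$ is supported in some $X_n\times X_n$, which is covered by finitely many such opens.

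With this pairing in hand, I would establish two formal properties. First, a Künneth/multiplicativity property: for coherent $\cS_i$ on $\bar X$ and $\cF_i$ on $X$,
\[
\langle[\cS_1\boxtimes\cS_2],[\cF_1\boxtimes\cF_2]\rangle_{\bar X\times\bar X}
=\langle[\cS_1],[\cF_1]\rangle_{\bar X}\cdot\langle[\cS_2],[\cF_2]\rangle_{\bar X},
\]
obtained from the Künneth formula for $\tor$ and the multiplicativity of $\chi_T$ under external product. Second, an adjunction for the closed immersion $\bar\Delta$:
\[
\langle\bar\Delta^*[\cS],[\cF]\rangle_{\bar X}=\langle[\cS],\bar\Delta_*[\cF]\rangle_{\bar X\times\bar X},
\]
which follows from the projection formula $\bar\Delta_*(\bar\Delta^*[\cS]\otimes^L[\cF])=[\cS]\otimes^L\bar\Delta_*[\cF]$ for the closed immersion $\bar\Delta$, together with the fact that pushforward along a closed immersion does not change the $T$-equivariant Euler characteristic.

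Now, $\bar\Delta_*[\co_{X_w}]$ is supported on the diagonal of $X$ and agrees with $\Delta_*[\co_{X_w}]\in K^T_0(X\times X)$; by definition of the $q^w_{u',v'}$,
\[
\bar\Delta_*[\co_{X_w}]=\sum_{u',v'}q^w_{u',v'}\,[\co_{X_{u'}}\boxtimes\co_{X_{v'}}].
\]
Combining this with the adjunction and Künneth properties, and applying Proposition~\ref{prop2.6} on each factor, I obtain
\[
\langle\bar\Delta^*[\xi^u\boxtimes\xi^v],[\co_{X_w}]\rangle
=\sum_{u',v'}q^w_{u',v'}\,\langle[\xi^u],[\co_{X_{u'}}]\rangle\,\langle[\xi^v],[\co_{X_{v'}}]\rangle
=q^w_{u,v}=p^w_{u,v},
\]
the last equality by Proposition~\ref{n3.1}. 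The main technical obstacle is verifying well-definedness of $\langle\cdot,\cdot\rangle_{\bar X\times\bar X}$ and the projection-formula adjunction for $\bar\Delta$ in the non-quasi-compact, infinite-dimensional setting; once these formal properties are in place, the identity $d^w_{u,v}=p^w_{u,v}$ is a dual-basis bookkeeping.
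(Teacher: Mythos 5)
Your proposal is correct and follows essentially the same route as the paper: the paper's proof also pairs $\bar{\Delta}^*([\xi^u\boxtimes\xi^v])$ against $[\co_{X_w}]$, moves $\bar{\Delta}$ across the pairing, expands $\bar{\Delta}_*[\co_{X_w}]$ via Proposition \ref{n3.1}, and concludes with the duality of Proposition \ref{prop2.6} on each side. The only difference is that you spell out the formal ingredients (well-definedness of the pairing on $\bar{X}\times\bar{X}$, the K\"unneth multiplicativity, and the adjunction for $\bar{\Delta}$) which the paper uses implicitly.
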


  \begin{proof}  For any $w\in W$,
  \begin{align*}
  \langle{\bar{\Del}}^*([\xi^u\boxtimes \xi^v]), [\co_{X_w}]\rangle &=
  \langle [\xi^u\boxtimes \xi^v], {{\Del}}_*[\co_{X_w}]\rangle\\
  &=\langle [\xi^u\boxtimes \xi^v],\sum_{u',v'}p^w_{u',v'} [\co_{X_{u'}}]\otimes
  [\co_{X_{v'}}]\rangle, \,\,\,\text{by Proposition \ref{n3.1}}\\
  &=p^w_{u,v}, \,\,\,\text{by Proposition \ref{prop2.6}}.
  \end{align*}
  On the other hand,
  \begin{align*}
  \langle{\bar{\Del}}^* ([\xi^u\boxtimes \xi^v]), [\co_{X_w}]\rangle  &=
  \langle [\xi^u] \cdot [\xi^v], [\co_{X_w}]\rangle \\
  &=d^w_{u,v}, \,\,\, \text{by Proposition \ref{prop2.6} again}.
\end{align*}
This proves the lemma.
  \end{proof}

Fix a large $N$ and let
  \[
\bp = (\bp^N)^{r} \qquad\text{($r =  \dim T$)} .
  \]
For any ${\bf j} = (j_1, \dots , j_{r}) \in [N]^{r}$, where $[N] = \{ 0,1,\dots ,N\}$, set
  \[
\bp^{\bf j} = \bp^{N-j_1}\times \cdots \times \bp^{N-j_{r}} .
  \]

{\it We fix an identification $T\simeq (\bc^*)^{r}$ throughout the paper satisfying
the condition that for any positive root $\alpha$, the character $e^\alpha$
(under the identification) is given by $z_1^{d_1(\alpha)}\dots  z_r^{d_r(\alpha)}$
for some $d_i(\alpha) \geq 0$, where $(z_1, \dots, z_r)$ are the standard coordinates
on  $(\bc^*)^{r}$ .} One such identification $T\simeq (\bc^*)^{r}$ is given by
$t \mapsto \bigl(e^{\alpha_1}(t), \dots, e^{\alpha_r}(t)\bigr).$ {\it This will be our
default choice.}

Let $E(T)_{\bp}:= (\bc^{N+1}\setminus \{0\})^r$ be the total space of the
standard principal $T$-bundle $E(T)_{\bp} \to \bp$. We can view
$E(T)_{\bp} \to\bp$ as a finite dimensional
 approximation of the classifying bundle for $T$.   Let $\pi : X_{\bp} :=
 E(T)_\bp\times^T X \to
 \bp$ be the fibration with fiber $X=G/B$ associated to the principal $T$-bundle
 $E(T)_{\bp} \to \bp$, where we twist the standard action of $T$ on $X$ via
  \beqn \label{e3.5.1}
 t\odot x=t^{-1}x.
 \eeqn
 For any $T$-subscheme $Y\subset X$, we denote
 $Y_{\bp} := E(T)_\bp \times^T Y\subset X_{\bp}$.

The following theorem follows easily by using [CG, \S5.2.14] together with
[CG, Theorem 5.4.17] applied to the vector bundles $(BwB/B)_\bp \to \bp$.
 \begin{theorem}
$K_0(X_{\bp}):=\lt_{n\to\infty} K_0((X_n)_{\bp})$
 is a free module over the ring $K_0(\bp ) = K^0(\bp )$ with basis
 $\{[\co_{(X_w)_{\bp}}]\}_{w\in W}$, where $K_0$ (resp. $K^0$) denotes the
 Grothendieck group
 associated to the semigroup of coherent sheaves (resp. locally free sheaves).

Thus, $K_0(X_{\bp})$ has a $\bz$-basis
  \[
\bigl\{ \pi_X^* ([\co_{\bp^{\bf j}}])\cdot [\co_{(X_w)_{\bp}}]
\bigr\}_{{\bf j}\in [N]^{r}, w\in W } \,,
  \]
  where we view $[\co_{\bp^{\bf j}}]$ as an element of $K_0(\bp)=K^0(\bp)$. \qed
  \end{theorem}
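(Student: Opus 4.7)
The plan is to apply the cellular fibration lemma of Chriss--Ginzburg to each finite-dimensional approximation $\tilde{X}_n$ separately, and then pass to the direct limit. The essential input is that the Schubert cell $C_w = BwB/B \cong \mathbb{A}^{\ell(w)}$ carries a \emph{linear} action of $T$ (through the positive roots corresponding to the unipotent subgroup that sweeps out $C_w$). Consequently, the Borel construction $\widetilde{C_w} = E(T)_{\bp}\times^T C_w$ is naturally a rank-$\ell(w)$ vector bundle over $\bp$, and similarly the affine bundle structure on $\widetilde{BwB/B}\to\bp$ is the vector bundle asserted in the hint.

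Concretely, I would first linearly order $\{w\in W:\ell(w)\le n\}=\{w_1,\dots,w_m\}$ refining the Bruhat order, and set $Y_k=\tilde{X}_{w_1}\cup\cdots\cup\tilde{X}_{w_k}\subset\tilde{X}_n$. Since $X_{w_k}\setminus(\bigcup_{j<k}X_{w_j})=C_{w_k}$ with its $T$-stable affine structure, the locally closed stratum $Y_k\setminus Y_{k-1}$ is precisely the vector bundle $\widetilde{C_{w_k}}\to\bp$. This realises $\tilde{X}_n\to\bp$ as a cellular fibration in the sense of [CG, \S5.2.14, Theorem 5.4.17], and the cellular fibration lemma then yields that $K_0(\tilde{X}_n)$ is a free $K_0(\bp)$-module on the classes $\{[\co_{\tilde{X}_w}]\}_{\ell(w)\le n}$, the key inductive step being the long exact sequence associated with $Y_{k-1}\hookrightarrow Y_k\hookleftarrow Y_k\setminus Y_{k-1}$ together with the Thom isomorphism $K_0(\bp)\simeq K_0(\widetilde{C_{w_k}})$ for the vector bundle stratum.

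Taking the direct limit as $n\to\infty$ commutes with the direct-limit definition of $K_0(\tilde{X})$, and since each inclusion $\tilde{X}_n\hookrightarrow\tilde{X}_{n+1}$ preserves the basis (it simply adjoins the new classes $[\co_{\tilde{X}_w}]$ with $\ell(w)=n+1$), we obtain the first assertion: $K_0(\tilde{X})$ is a free $K_0(\bp)$-module with basis $\{[\co_{\tilde{X}_w}]\}_{w\in W}$. For the second assertion, I would invoke the classical computation $K_0(\bp^N)=K^0(\bp^N)\simeq\bz[x]/(x^{N+1})$ with $\bz$-basis $\{[\co_{\bp^{N-j}}]\}_{0\le j\le N}$ and then apply the K\"unneth formula for products of smooth projective varieties to conclude that $\{[\co_{\bp^{\bf j}}]\}_{{\bf j}\in[N]^r}$ is a $\bz$-basis of $K_0(\bp)$; multiplying through yields the announced $\bz$-basis of $K_0(\tilde{X})$.

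The main obstacle, as usual when using [CG], is the bookkeeping: checking that the successive filtration quotients $Y_k\setminus Y_{k-1}$ really are the claimed vector bundles (rather than merely affine bundles), that the Thom-isomorphism identifications are compatible with the obvious pushforwards $[\co_{\widetilde{C_{w_k}}}]\mapsto[\co_{\tilde{X}_{w_k}}]$ up to the correct powers of $[\co_{\tilde{X}_{w_j}}]$ with $j<k$, and that passage to the direct limit preserves freeness. Beyond this, the proof is essentially a direct invocation of the cited results of Chriss--Ginzburg and carries no additional difficulty.
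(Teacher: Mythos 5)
Your proposal is correct and is essentially the paper's own argument: the paper proves this exactly by invoking the cellular fibration lemma of Chriss--Ginzburg [CG, \S5.2.14 and Theorem 5.4.17] applied to the vector bundles $\widetilde{BwB/B}\to\bp$, which is precisely the stratification-plus-Thom-isomorphism argument you spell out, followed by the same passage to the limit and the standard basis of $K_0(\bp)$. You have merely written out the bookkeeping that the paper leaves implicit.
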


Let $Y:=X\times X$.  The diagonal map $\Del : X\to Y$ gives rise to the embedding
  \[
\tilde{\Del} : X_{\bp} \to Y_{\bp} = E(T)_\bp \times^T Y\simeq
X_{\bp}\times_\bp X_{\bp}.
  \]
Thus, we get (denoting the projection $Y_{\bp} \to \bp$ by $\pi_Y$)
 \beqn \label{e0.0}
\tilde{\Del}_* [\co_{(X_w)_{\bp}}] =
\sum_{ \substack{ u,v\in W\\ {\bf j}\in [N]^{r}} }
c^w_{u,v}({\bf j}) \pi_Y^*([\co_{\bp^{\bf j}}])\cdot  [\co_{(X_u\times X_v)_\bp}]
\in K_0(Y_{\bp}),
   \eeqn
for $c^w_{u,v}({\bf j}) \in\bz$.  Let
$$\bp_{\bf j} = \bp^{j_1}\times \cdots \times \bp^{j_{r}},$$
 $$\partial\bp_{\bf j} = \bigl( \bp^{j_1-1}\times \bp^{j_2}\times \cdots \times \bp^{j_{r}}
 \bigr) \cup \cdots \cup \bigl(\bp^{j_1}\times\cdots\times \bp^{j_{r -1}}
 \times\bp^{j_{r}-1}\bigr),$$
 where we interpret $\bp^{-1}=\emptyset$. It is easy to see that, under the standard pairing
 on $K^0(\bp )$,
  \beqn \label{e3.6}
\ip< [\co_{\bp^{\bf j}}] , [\co_{\bp_{{\bf j}'}} (-\partial \bp_{{\bf j}'})]> =
\del_{{\bf j},{\bf j}'}.
  \eeqn
Alternatively, it is a special case of [GK, Proposition 2.1 and \S6.1].

Let $\bar{Y}= \bar{X}\times \bar{X}$ and $K^0({\bar{Y}_\bp})$ denote
the Grothendieck group
 associated to the semigroup of coherent $\co_{{\bar{Y}_\bp}}$-modules $\cs$, i.e.,
 those $\co_{\bar{Y}_\bp}$-modules $\cs$ such that $\cs_{|{(V^{S_1}\times
 V^{S_2})}_\bp}$ is a coherent $\co_{{(V^{S_1}\times
 V^{S_2})}_\bp}$-module for all finite ideals $S_1,S_2 \subset  W$. Also, let
 $\hat{\cl}(\rho \boxtimes \rho)$ be the line bundle on $\bar{Y}_\bp$
 defined as
 \[E(T)_\bp \times^T e^{-2\rho} \bigl(\cl(-\rho)\boxtimes \cl(-\rho)\bigr) \to
 \bar{Y}_\bp,\]
 where the action of $T$ on the line bundle $e^{-2\rho} \bigl(\cl(-\rho)\boxtimes
 \cl(-\rho)\bigr)$ over $\bar{Y}$ is also twisted the same way as in \eqref{e3.5.1}.
  \begin{lemma} \label{lemma3.3} With the notation as above,
  \[
c^w_{u,v}({\bf j}) = \Big\langle\pi_{\bar{Y}}^*[\co_{\bp_{\bf j}}(-\partial\bp_{\bf j})]\cdot
[\widetilde{\xi^u\boxtimes\xi^v}], \tilde{\Del}_*[\co_{(X_w)_{\bp}}]\Big\rangle ,
  \]
where  $\pi_{\bar{Y}}: \bar{Y}_{\bp} \to \bp$ is the projection,  the coherent sheaf  $\widetilde{\xi^u\boxtimes\xi^v}$ on
$\bar{Y}_\bp$  is defined as:
\[ \hat{\cl}(\rho \boxtimes \rho)\otimes \ext^{\ell (u)+\ell(v)}_{\co_{\bar{Y}_\bp}}
\bigl(\co_{{(X^u\times X^v)}_\bp}, \co_{\bar{Y}_\bp} \bigr),\]
and the pairing
$$\ip<\, ,\, >: K^0(\bar{Y}_\bp)\otimes K_0(Y_{\bp})\to\bz $$
 is similar to the pairing defined earlier.  Specifically,
 \[ \ip<[\cs] ,[\cf]> = \sum_i (-1)^i\, \chi (\bar{Y}_\bp,
\tor_i^{\co_{\bar{Y}_\bp}} (\cs ,\cf)),
  \]
  where $\chi$ is the Euler-Poincar\'e characteristic.
  \end{lemma}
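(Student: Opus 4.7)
The plan is to substitute the expansion \eqref{e0.0} of $\tilde{\Del}_*[\co_{\tilde{X}_w}]$ into the right-hand side of the asserted identity, reducing the lemma to the orthogonality relation
\[
\ip<\pi^*[\co_{\bp_{\bf j}}(-\partial\bp_{\bf j})]\cdot[\widetilde{\xi^u\boxtimes\xi^v}],\; \pi^*[\co_{\bp^{{\bf j}'}}]\cdot[\co_{\widetilde{X_{u'}\times X_{v'}}}]> = \delta_{{\bf j},{\bf j}'}\,\delta_{u,u'}\,\delta_{v,v'}.
\]
I would establish this by factoring the pairing into a piece on the base $\bp$, handled by \eqref{e3.6}, and a piece on the fibre $\bar{Y}=\bar{X}\times\bar{X}$, handled by Proposition \ref{prop2.6} together with a Künneth argument.

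To carry out the factorization, I apply the projection formula to the flat morphism $\pi\colon\tilde{\bar Y}\to\bp$ (with fibre $\bar{Y}$) to rewrite the left side as
\[
\chi\Bigl(\bp,\; [\co_{\bp_{\bf j}}(-\partial\bp_{\bf j})]\cdot[\co_{\bp^{{\bf j}'}}]\cdot R\pi_{*}\bigl([\widetilde{\xi^u\boxtimes\xi^v}]\otimes^{\bullet}[\co_{\widetilde{X_{u'}\times X_{v'}}}]\bigr)\Bigr),
\]
where the derived pushforward class is interpreted as the appropriate alternating sum. By construction, $\widetilde{\xi^u\boxtimes\xi^v}$, together with its twist by $\hat{\cl}(\rho\boxtimes\rho)$, agrees with the Borel construction (using the twisted $T$-action \eqref{e3.5.1}) of the $T$-equivariant coherent sheaf $\xi^u\boxtimes\xi^v$ on $\bar{Y}$, and similarly for $\co_{\widetilde{X_{u'}\times X_{v'}}}$. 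Under this identification, $R\pi_{*}$ of the Borel construction is precisely the image in $K^{0}(\bp)$ of the $T$-equivariant Euler characteristic in $R(T)$ under the natural character map $R(T)\to K^{0}(\bp)$ induced by the finite-dimensional approximation $E(T)_{\bp}\to\bp$.

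It therefore suffices to compute the $T$-equivariant pairing $\ip<[\xi^u\boxtimes\xi^v],\,[\co_{X_{u'}\times X_{v'}}]>\in R(T)$. Applying the Künneth formula for $\tor$-sheaves on $\bar{X}\times\bar{X}$ to $\xi^u\boxtimes\xi^v$ against $\co_{X_{u'}}\boxtimes\co_{X_{v'}}$, the alternating sum factors as a product of two individual $T$-equivariant pairings on $\bar{X}$, and Proposition \ref{prop2.6} immediately yields $\delta_{u,u'}\delta_{v,v'}\in R(T)$, a constant element. Its image in $K^{0}(\bp)$ is $\delta_{u,u'}\delta_{v,v'}[\co_{\bp}]$, and combining with \eqref{e3.6} completes the verification. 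The main obstacle I anticipate is the careful justification in the previous paragraph that $R\pi_{*}$ of Borel constructions corresponds to the character-map image of the $T$-equivariant pairing, and that the $\hat{\cl}(\rho\boxtimes\rho)$-twist and the twisted $T$-action \eqref{e3.5.1} have been booked correctly so that $\widetilde{\xi^u\boxtimes\xi^v}$ really is the Borel construction of $\xi^u\boxtimes\xi^v$; once this dictionary is in place, the Künneth step and \eqref{e3.6} reduce the computation to orthogonalities already in hand.
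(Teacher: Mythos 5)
Your proposal is correct and follows essentially the same route as the paper: the paper's proof is precisely the substitution of the expansion \eqref{e0.0} into the pairing, followed by invoking the orthogonality coming from Proposition \ref{prop2.6} and the identity \eqref{e3.6}. Your additional paragraphs (projection formula along $\pi$, the Borel-construction dictionary for $R\pi_*$, and the K\"unneth factorization on $\bar{X}\times\bar{X}$) simply make explicit the mechanism behind the orthogonality that the paper cites in one line, and they are sound.
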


  \begin{proof}
    \begin{align*}
\Big\langle \pi_{\bar{Y}}^*[\co_{\bp_{\bf j}}(-\partial\bp_{\bf j})] & \cdot
[\widetilde{\xi^u\boxtimes\xi^v}], \tilde{\Del}_*[\co_{(X_w)_{\bp}}]\Big\rangle\\
 &= \Big\langle \pi_{\bar{Y}}^*[\co_{\bp_{\bf j}}(-\partial\bp_{\bf j})]\cdot
 [\widetilde{\xi^u\boxtimes\xi^v}],
 \sum_{ \substack{ u', v'\in W,\\ {\bf j}'\in [N]^{r}}} c^w_{u',v'}({\bf j}')\,
 \pi_Y^*([\co_{\bp^{{\bf j}'}}])[\co_{{(X_{u'}\times X_{v'})}_\bp}]\Big\rangle\\
 &= c^w_{u,v}({\bf j}),\,\,\,\text{by Proposition \ref{prop2.6} and the identity
 \eqref{e3.6}.}
   \end{align*}
This proves the lemma.
  \end{proof}

  \begin{definition}[Mixing group] \label{mixdef} Let $T$ act on $B$ via the  inverse conjugation, i.e.,
    \[  t\cdot b = t^{-1}bt, \;\; t\in T, b\in B.   \]
Consider the ind-group scheme (over $\bp$):
  \[  B_\bp = E(T)_\bp \times^T B \to \bp .   \]

Note that $B_\bp$ is not a principal $B$-bundle since there is no
right action of $B$ on $B_\bp$.  Let $\Gam_0$ be the group of global sections
of the bundle $B_\bp$ under the pointwise multiplication.  (Recall that $\Gam_0$ can be identified with the set of regular maps
$f:  E(T)_\bp \to B$ such that $f(e\cdot t)=t^{-1}\cdot f(e), \forall e\in  E(T)_\bp$ and $t\in T$.)  Since $GL(N+1)^{r}$
acts canonically on $B_\bp$
 compatible with its action on $\bp = (\bp^N)^{r}$, it also acts on $\Gam_0$ via the pull-back.
 Let $\Gam_B$ be the semi-direct product $\Gam_0\rtimes GL(N+1)^{r}$:
  \[
1 \to \Gam_0 \to \Gam_B \to GL(N+1)^{r} \to 1.
  \]

Then, $\Gam_B$ acts on $X_{\bp}$ with orbits precisely equal to
$\{ {(BwB/B)}_\bp\}_{w\in W}$, where the action of the subgroup $\Gamma_0$
is via the standard action of $B$ on $X$. This follows from the following lemma.

  \end{definition}

\begin{lemma} \label{lem6}
For any $\bar{e}\in \mathbb{P}$ and any $b$ in the fiber of
$B_\bp$ over $\bar{e}$, there exists a
section $\gamma\in \Gamma_{0}$ such that $\gamma(\bar{e})=b$.
\end{lemma}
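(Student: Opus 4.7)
The plan is to translate the lemma into a statement about $T$-equivariant morphisms $E(T)_\bp\to B$ (with $T$ acting on $B$ by inverse conjugation), since these are precisely what sections $\gam\in\Gam_0$ are. Fix a lift $e\in E(T)_\bp$ of $\bar e$ and write $b=[e,b_0]$ for some $b_0\in B$. Producing the desired $\gam$ is then equivalent to constructing a $T$-equivariant morphism $f:E(T)_\bp\to B$ with $f(e)=b_0$, since the associated section satisfies $\gam(\bar e)=[e,f(e)]=[e,b_0]=b$.

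To produce $f$, I would use the scheme-theoretic factorization $B=H\cdot\prod_{\al>0}U_\al$ (in any chosen ordering of positive roots), with $U_\al\simeq\mathbb{A}^1$ via a root homomorphism $x_\al$. The inverse-conjugation action of $T$ preserves this decomposition, acting trivially on $H$ and with weight $-\al$ on each $U_\al$ (since $t^{-1}x_\al(z)t=x_\al(e^{-\al}(t)z)$). Consequently, a $T$-equivariant morphism $f$ into $B$ is specified independently by (i) a morphism $\bp\to H$, which must be constant since $\bp$ is projective and $H$ is affine, and (ii) for each positive root $\al$, a global section of the line bundle $\cl_\al:=E(T)_\bp\times^T\bc_{-\al}$ on $\bp$. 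Writing $b_0=h_0\cdot\prod_{\al>0}x_\al(c_\al)$ (with only finitely many $c_\al\neq 0$), I would take the $H$-component to be the constant map $h_0$ and, for each relevant $\al$, a section of $\cl_\al$ whose value at $\bar e$ (under the trivialization induced by $e$) equals $c_\al$.

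The main technical point is surjectivity of the evaluation map $H^0(\bp,\cl_\al)\to(\cl_\al)_{\bar e}$ for every positive root $\al$ and every $\bar e\in\bp$. This uses the standing identification $T\simeq(\bc^*)^r$ via $t\mapsto(e^{\al_1}(t),\ldots,e^{\al_r}(t))$: writing $\al=\sum n_i\al_i$ with $n_i\geq 0$, the character $e^{-\al}$ pulls back to $(z_1,\ldots,z_r)\mapsto z_1^{-n_1}\cdots z_r^{-n_r}$, so $\cl_\al$ is the line bundle $\co_\bp(n_1,\ldots,n_r)$ on $\bp=(\bp^N)^r$. This bundle has non-negative bidegree and is therefore globally generated, with surjective evaluation at every point. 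This is precisely where the hypothesis ``$d_i(\al)\geq 0$'' imposed in the default choice of identification is indispensable; without it $\cl_\al$ could have mixed-sign degrees and fail to have the required global sections. Assembling the constant $H$-component with the chosen sections over all the positive roots yields the required morphism $f$, hence the section $\gam$. The Kac-Moody subtlety that $\prod_{\al>0}U_\al$ is an infinite (ind-)product is harmless here, because $b_0$ involves only finitely many nontrivial root factors and the remaining components of $f$ can simply be taken to be zero.
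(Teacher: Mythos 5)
Your main geometric input is exactly the one the paper uses: for every positive root $\al$, the bundle $E(T)_\bp\times^T\bc_{-\al}$ is $\co_{\bp}(n_1,\dots,n_r)$ with all $n_i\ge 0$ by the standing choice of identification $T\simeq(\bc^*)^r$, hence globally generated, so a $T$-equivariant function on $E(T)_\bp$ of weight $-\al$ can be found taking any prescribed value over $\bar e$; and the $H$-component can be taken constant because inverse conjugation acts trivially on $H$. Up to this point your argument and the paper's proof coincide.

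The gap is the structure theory you invoke for $B$. For a general symmetrizable Kac--Moody group the unipotent radical $U$ of $B$ is \emph{not} $\prod_{\al>0}U_\al$ with each $U_\al\simeq\mathbb{A}^1$: positive imaginary roots contribute to $\Lie U$ (with multiplicity possibly $>1$) but carry no one-parameter root subgroups, and even the real root groups do not furnish a direct-product normal form for $U$ in a fixed ordering of roots (commutators of real root elements move into imaginary directions). So the step ``write $b_0=h_0\cdot\prod_{\al>0}x_\al(c_\al)$ with only finitely many $c_\al\neq 0$'' is unjustified; this is exactly the finite-dimensional fact that does not survive in the Kac--Moody setting, and your closing remark that the infinite product is ``harmless'' presupposes the factorization you have not established. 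The repair is precisely the paper's remaining step: $U$ is \emph{generated} by the real root subgroups $U_\al$, so $b_0=h_0\,x_{\beta_1}(c_1)\cdots x_{\beta_k}(c_k)$ for finitely many positive real roots $\beta_i$ (possibly repeated, in no canonical order); for each $i$ choose a section of $\co(-\beta_i)$ whose value over $\bar e$ is $c_i$, and multiply the resulting elements of $\Gamma_0$ pointwise, using that $\Gamma_0$ is a group because $T$ acts on $B$ by group automorphisms. With that substitution your construction becomes the paper's proof.
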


\begin{proof} For a character $\lambda$ of $T$, let $\co(\lambda)$ be the line
bundle on $\bp$ associated to the principal $T$-bundle $E(T)_\bp \to \bp$ via
 the character $\lambda$ of $T$. For any positive real root $\alpha$, let $U_\alpha
 \subset U$ be the corresponding one parameter subgroup (cf. [K, \S6.1.5(a)]), where
 $U$ is the unipotent radical of $B$.
 Then, $B_\bp$ contains the subbundle $H\times \co(-\alpha)$. By the assumption
 on the identification $T\simeq (\bc^*)^r$, for any positive root $\alpha$,
 $\co(-\alpha)$ is globally generated. Thus, $\Gamma_{0}(\bar{e})\supset H\times
 U_\alpha$. Since $\Gamma_{0}$ is a group and by [K, Definition 6.2.7] the group $U$ is generated by
 the subgroups $\{U_\alpha\}$, where $\alpha$ runs over the positive real roots,
 we get the lemma.
\end{proof}

\begin{lemma}\label{connected} $\Gamma_B$ is connected.
\end{lemma}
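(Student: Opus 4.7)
The strategy is to reduce via the extension $1 \to \Gamma_0 \to \Gamma_B \to GL(N+1)^r \to 1$ to the connectedness of $\Gamma_0$, and then exploit the semidirect decomposition $B = H \ltimes U$. Since $GL(N+1)^r$ is a product of the connected groups $GL(N+1)$ it is connected, so it suffices to show $\Gamma_0 = \Gamma(\bp, \tilde{B})$ is connected. The inverse-conjugation action of $T$ on $B$ preserves both factors in $B = H \ltimes U$; on the abelian torus $H$ the action is trivial, so $\tilde{H} = \bp \times H$ is the trivial bundle and its global sections are morphisms $\bp \to H$. Since $\bp$ is projective and $H$ is affine, these morphisms are constant, so this contribution to $\Gamma_0$ is the connected torus $H$.

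For the unipotent part, enumerate the positive real roots as $\alpha_1, \alpha_2, \ldots$ by nondecreasing height and set $U^{(n)} := U_{\alpha_1} \cdot U_{\alpha_2} \cdots U_{\alpha_n} \subset U$. The product map $U_{\alpha_1} \times \cdots \times U_{\alpha_n} \to U^{(n)}$ is a $T$-equivariant isomorphism of varieties (not of groups), so $\tilde{U}^{(n)}$ is the total space of the vector bundle $\co(-\alpha_1) \oplus \cdots \oplus \co(-\alpha_n)$ on $\bp$. By the argument of Lemma \ref{lem6}, together with the normalization of the identification $T \simeq (\bc^*)^r$, each line bundle $\co(-\alpha_i)$ is globally generated, and hence $\Gamma(\bp, \tilde{U}^{(n)}) = \bigoplus_{i=1}^n H^0(\bp, \co(-\alpha_i))$ is a finite-dimensional, and therefore connected, affine space. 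Since $\bp$ is quasi-compact, any section of $\tilde{U}$ factors through some $\tilde{U}^{(n)}$, giving $\Gamma(\bp, \tilde{U}) = \bigcup_n \Gamma(\bp, \tilde{U}^{(n)})$, a nested union of connected sets all containing the identity, hence connected.

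Combining, the set-theoretic decomposition $B = H \cdot U$ yields $\Gamma_0 = H \cdot \Gamma(\bp, \tilde{U})$ as the product of two connected sets, so $\Gamma_0$, and therefore $\Gamma_B$, is connected. The main technical point is dealing with the ind-scheme structure of $U$ in the infinite-type Kac-Moody setting; this is resolved by the quasi-compactness of $\bp$, which lets us reduce to finite-dimensional $T$-stable pieces $\tilde{U}^{(n)}$ on which the associated bundle is a direct sum of globally generated line bundles.
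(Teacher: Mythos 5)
Your reduction to $\Gamma_0$, and your treatment of the $H$-factor (trivial $T$-action on the abelian $H$, so sections of $\bp\times H$ are constant), are fine and agree with the paper's first step. The gap is in the unipotent part. Your argument rests on the exhaustion $U=\bigcup_n U^{(n)}$ with $U^{(n)}=U_{\alpha_1}\cdots U_{\alpha_n}$, each positive real root subgroup occurring \emph{once}, in height order, and on the consequent claim that every section of $\tilde U$ factors through some $\tilde U^{(n)}$. Outside finite type this exhaustion is false: $U$ is only \emph{generated} by its real root subgroups, not directly spanned by them, and products of real root elements acquire components in imaginary root directions which cannot be rewritten as an ordered product with each real root used at most once. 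Concretely, for affine $SL_2$ take the commutator $g$ of $\left(\begin{smallmatrix}1&t\\0&1\end{smallmatrix}\right)\in U_{\alpha_1+\delta}$ and $\left(\begin{smallmatrix}1&0\\ t&1\end{smallmatrix}\right)\in U_{\alpha_0}$, namely $g=\left(\begin{smallmatrix}1+t^2+t^4&-t^3\\ t^3&1-t^2\end{smallmatrix}\right)\in U\subset SL_2(\bc[t])$. In any product of elements of \emph{distinct} positive real root subgroups, the only factors not congruent to the identity mod $t^2$ come from $U_{\alpha_1},U_{\alpha_0},U_{\alpha_1+\delta}$; comparing with $g\equiv I \pmod{t^2}$ forces their parameters to vanish, and then all remaining factors have off-diagonal entries in $t^2\bc[t]$, so the $(1,1)$-entry of the product lies in $1+t^4\bc[t]$, whereas $g_{11}\equiv 1+t^2\pmod{t^3}$. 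Hence $g\notin\bigcup_n U^{(n)}$ (for any ordering with single occurrences, not just the height order). Since by Lemma \ref{lem6} evaluation of global sections at a point of $\bp$ hits every element of the fiber, there are sections of $\tilde U$ that do not factor through any $\tilde U^{(n)}$, and the quasi-compactness step collapses.

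There are also secondary points you leave unaddressed even where the construction makes sense: $\{\alpha_1,\dots,\alpha_n\}$ is not closed under addition, so $U^{(n)}$ is not a subgroup, and you neither show it is closed in $U$ nor that the multiplication map is an isomorphism onto it --- which is exactly what the identification $\tilde U^{(n)}\simeq \co(-\alpha_1)\oplus\cdots\oplus\co(-\alpha_n)$ and the ``factors through by quasi-compactness'' step would require. The paper sidesteps all of this by treating $U$ as a whole: it invokes the contraction of $U$ in the analytic topology from [K, Proposition 7.4.17], which is compatible with the $T$-twist, to conclude that $\Gamma\bigl(E(T)_\bp\times^T U\bigr)$ is contractible, hence connected; this is what takes care of the elements with imaginary-direction components that defeat any naive decomposition of $U$ into products of real root subgroups. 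If you insist on an algebraic route, you would have to work with images of arbitrary words $U_{\beta_1}\cdots U_{\beta_n}$ (repetitions allowed), but then the section spaces are no longer visibly affine spaces and the factorization claim still requires an argument about the ind-structure of $\tilde U$.
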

\begin{proof} It suffices to show that $\Gamma_0$ is connected. But, $\Gamma_0
\simeq H \times \Gamma \bigl(E(T)_\bp \times^T U\bigr)$,
where  $\Gamma \bigl(E(T)_\bp \times^T U\bigr)$ denotes the group of sections
of the bundle $E(T)_\bp \times^T U  \to \bp$. Thus, it suffices
 to show that $\Gamma \bigl(E(T)_\bp \times^T U\bigr)$ is connected. Using the
$T$-equivariant 
 contraction of $U$ (in the analytic topology) given in [K, Proposition 7.4.17],
 it is easy to see that $\Gamma \bigl(E(T)_\bp \times^T U\bigr)$ is contractible.
 In particular, it is connected.
 \end{proof}

Similarly, we define $\Gam_{B\times B}$ by replacing $B$ by $B\times B$ and $T$ by
 the diagonal $\Delta T\subset T\times T$ and we abbreviate it by $\Gam$. Observe that
Lemmas \ref{lem6} and \ref{connected} remain true (by the same proof) for $\Gamma_B$ 
replaced by $\Gamma$. (For the proof of Lemma \ref{lem6}, observe that the weights of
 $U_\alpha\times U_\beta$ under the $\Delta T$-action are $\alpha, \beta$. Similarly, for the
 proof of Lemma \ref{connected}, observe that $U\times U$ is contractible under a $T\times  T$
(in particular, $\Delta T$)-equivariant contraction.)

  \begin{proposition} \label{prop4.9} For any coherent sheaf $\cs$ on $\bp$, and any $u,v\in W$,
    \[
\pi^*[\cs]\cdot [\widetilde{\xi^u\boxtimes\xi^v}] = [\pi^*(\cs )
\underset{\co_{\bar{Y}_\bp}}{\otimes} (\widetilde{\xi^u\boxtimes\xi^v})] \in
K^0(\bar{Y}_\bp),
  \]
where we abbreviate $\pi_{\bar{Y}}$ by $\pi$ and $\pi^* (\cs ) := \co_{\bar{Y}_\bp} \underset{\co_{\bp}}{\otimes} \cs$.

In particular,
\[
\pi^*[\co_{\bp_{\bf j}} (-\partial\bp_{\bf j})] \cdot
[\widetilde{\xi^u\boxtimes\xi^v}] =
\bigl[\pi^*\bigl( \co_{\bp_{\bf j}} (-\partial\bp_{\bf j})\bigr)
\otimes_{\co_{\bar{Y}_\bp}} (\widetilde{\xi^u\boxtimes\xi^v})\bigr].
  \]
  \end{proposition}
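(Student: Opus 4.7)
The plan is to reduce the proposition to a Tor-vanishing statement, which will then follow from the flatness of $\widetilde{\xi^u\boxtimes\xi^v}$ over the base $\bp$. Since $\bp=(\bp^N)^r$ is smooth projective, $\cs$ admits a finite resolution $0\to\mathcal{E}_n\to\cdots\to\mathcal{E}_0\to\cs\to 0$ by locally free $\co_\bp$-modules of finite rank. The projection $\pi:\tilde{\bar{Y}}\to\bp$ is a Zariski-locally trivial fibration with fiber $\bar Y$, hence flat, so $\pi^*\mathcal{E}_\bullet\to\pi^*\cs$ is a finite locally free resolution on $\tilde{\bar{Y}}$. By the definition of the product in $K^{0}(\tilde{\bar{Y}})$ via a locally free resolution of one factor,
\[
\pi^*[\cs]\cdot[\widetilde{\xi^u\boxtimes\xi^v}] = \sum_{i\geq 0}(-1)^i\bigl[\tor_i^{\co_{\tilde{\bar{Y}}}}(\pi^*\cs,\,\widetilde{\xi^u\boxtimes\xi^v})\bigr],
\]
so it suffices to prove that $\tor_i^{\co_{\tilde{\bar{Y}}}}(\pi^*\cs,\widetilde{\xi^u\boxtimes\xi^v})=0$ for all $i>0$.

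The key point is to show $\widetilde{\xi^u\boxtimes\xi^v}$ is flat over $\pi^{-1}\co_{\bp}$. I would cover $\bp$ by Zariski opens $\{U_\alpha\}$ trivializing the principal $T$-bundle $E(T)_\bp\to\bp$. Over each such $U_\alpha$ one obtains an isomorphism $\pi^{-1}(U_\alpha)\simeq U_\alpha\times\bar{Y}$ under which both the line bundle $\hat{\cl}(\rho\boxtimes\rho)$ and the subscheme $\widetilde{X^u\times X^v}$ become pulled back from $\bar Y$ via the second projection; applying flat base change for $\mathscr{E}xt$ (with base $U_\alpha$, which is flat over $\bc$) to the factor $\ext^{\ell(u)+\ell(v)}_{\co_{\tilde{\bar{Y}}}}(\co_{\widetilde{X^u\times X^v}},\co_{\tilde{\bar{Y}}})$ identifies $\widetilde{\xi^u\boxtimes\xi^v}|_{\pi^{-1}(U_\alpha)}$ with the pullback of a coherent sheaf on $\bar Y$ along $U_\alpha\times\bar Y\to\bar Y$. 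Any such pullback is flat over $U_\alpha$, so $\widetilde{\xi^u\boxtimes\xi^v}$ is flat over $\co_{\bp}$.

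Combining these two ingredients, the flatness of $\pi$ gives the natural identification $\pi^*\mathcal{E}_i\otimes_{\co_{\tilde{\bar{Y}}}}\widetilde{\xi^u\boxtimes\xi^v}\cong\pi^{-1}\mathcal{E}_i\otimes_{\pi^{-1}\co_\bp}\widetilde{\xi^u\boxtimes\xi^v}$, so the complex $\pi^*\mathcal{E}_\bullet\otimes_{\co_{\tilde{\bar{Y}}}}\widetilde{\xi^u\boxtimes\xi^v}$ coincides with $\pi^{-1}\mathcal{E}_\bullet\otimes_{\pi^{-1}\co_\bp}\widetilde{\xi^u\boxtimes\xi^v}$. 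By the flatness proved above, this complex is acyclic in positive degrees with zeroth cohomology $\pi^*(\cs)\otimes_{\co_{\tilde{\bar{Y}}}}\widetilde{\xi^u\boxtimes\xi^v}$, yielding the required Tor-vanishing and hence the proposition. The main obstacle is the flatness verification in the second paragraph: one must keep careful track of the $T$-twist inherent in the Borel-type construction $E(T)_\bp\times^T(-)$ and exploit Zariski-local triviality of $E(T)_\bp$, after which everything reduces to the formal statement that a sheaf pulled back along a projection is flat over the other factor.
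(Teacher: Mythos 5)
Your argument is correct and follows essentially the same route as the paper: both reduce the identity to the vanishing of $\tor_i^{\co_{\tilde{\bar{Y}}}}\bigl(\pi^*\cs,\widetilde{\xi^u\boxtimes\xi^v}\bigr)$ for $i>0$ and then work locally over the base, where $\pi^*\cs$ and $\widetilde{\xi^u\boxtimes\xi^v}$ become pullbacks from the two complementary factors of $\bp\times\bar{Y}$. Packaging the key point as flatness of $\widetilde{\xi^u\boxtimes\xi^v}$ over $\co_{\bp}$ (together with a global finite locally free resolution of $\cs$ on $\bp$) is just a mild reformulation of the paper's appeal to the algebraic fact that $\Tor_i^{R\boxtimes S}(M\boxtimes S, R\boxtimes N)=0$ for all $i>0$.
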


  \begin{proof}  By definition,
   \[
\pi^*[\cs]\cdot [\widetilde{\xi^u\boxtimes\xi^v}] = \sum_{i\geq 0} (-1)^i
\bigl[\tor_i^{\co_{\bar{Y}_\bp}} (\pi^* (\cs ), \widetilde{\xi^u\boxtimes\xi^v})
\bigr].
  \]
Thus, it suffices to prove that
  \[
\tor_i^{\co_{\bar{Y}_\bp}} (\pi^*\cs , \widetilde{\xi^u\boxtimes\xi^v}) =0, \;\forall i>0 .
  \]
Since the question is local in the base, we can assume
that $\bar{Y}_\bp\cong \bp \times \bar{Y}$.  Observe that, locally on the base,
  \begin{gather*}
\pi^*\cs \simeq \cs\boxtimes \co_{\bar{Y}}\quad\text{ and }\\
\widetilde{\xi^u\boxtimes\xi^v} = \co_{\bp}\boxtimes (\xi^u\boxtimes\xi^v),
  \end{gather*}
  where $\cs\boxtimes \co_{\bar{Y}}$ means $\cs \otimes_\bc \co_{\bar{Y}}$ etc.
Now, the result follows since, for algebras $R$ and $S$ over a field $k$ and $R$-module $M$, $S$-module $N$,
  \[
\Tor_i^{R\boxtimes S} (M\boxtimes S, R\boxtimes N) = 0,\quad\text{ for all }i>0.
  \]
  \end{proof}

The following is our main technical result. The proof of its two parts are given
in Sections \ref{sec7} and \ref{sec3} respectively.

  \begin{theorem}  \label{thma14}  For general $\gam\in\Gam = \Gam_{B\times B}$,
  any $u,v,w\in W$, and ${\bf j}\in [N]^r$,
  \[
\tor_i^{\co_{\bar{Y}_\bp}}\bigl( \pi^* (\co_{\bp_{\bf j}} (-\partial\bp_{\bf j}))\otimes
(\widetilde{\xi^u\boxtimes\xi^v}), \gam_* \tilde{\Del}_*\co_{(X_w)_{\bp}}\bigr) = 0,
 \,\,\text{ for all}\, i>0,\tag{$a$}\]
 where we view any element $\gamma \in \Gamma$ as an automorphism of the scheme
 $\bar{Y}_\bp$.
\vskip1ex

(b) Assume that $c^w_{u,v}({\bf j}) \neq 0$, where $c^w_{u,v}({\bf j}) \neq 0$ is defined by the identity \eqref{e0.0}. Then, 
\[H^p\bigl( \bar{Y}_\bp, \pi^* (\co_{\bp_{\bf j}}(-\partial\bp_{\bf j})) \otimes
(\widetilde{\xi^u\boxtimes\xi^v})\otimes \gam_* \tilde{\Del}_*
\co_{(X_w)_{\bp}}\bigr)
=0\]
 for all $p\neq |{\bf j}| +\ell (w)-(\ell (u)+\ell (v)),$ where $|{\bf j}|
 :=\sum^{r}_{i=1} j_i$.
  \end{theorem}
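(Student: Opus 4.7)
The plan is to prove parts (a) and (b) separately, each by reducing to major technical results developed elsewhere in the paper. For part (a), the approach is to first invoke the proposition immediately preceding Theorem \ref{thma14}, which shows that $\pi^*\cs\otimes\widetilde{\xi^u\boxtimes\xi^v}$ already represents the derived product, so the $\pi^*(\co_{\bp_{\bf j}}(-\partial\bp_{\bf j}))$ factor plays no role in the Tor-vanishing question. It then suffices to show
\[
\tor_i^{\co_{\tilde{\bar{Y}}}}\bigl(\widetilde{\xi^u\boxtimes\xi^v},\,\gamma_*\tilde{\Del}_*\co_{\tilde{X}_w}\bigr)=0\quad\text{for }i>0,
\]
and because Tor is a local invariant and $\tilde{\bar{Y}}\to\bp$ is locally trivial with fiber $\bar{X}\times\bar{X}$, one may trivialize the $E(T)_\bp$-bundle on small opens of $\bp$ and reduce to a Tor computation on $\bar{X}\times\bar{X}$ of $\xi^u\boxtimes\xi^v$ against $(\gamma\cdot\Delta)_*\co_{X_w}$. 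The central input is the local Ext/Tor vanishing proposition $\tor_j^{\co_{\bar{X}}}(\xi^u,\co_{X_w})=0$ for $j>0$ (combined with its K\"unneth extension to the product $\bar{X}\times\bar{X}$), together with a general-position argument for $\gamma\in\Gamma$ substituting for Kleiman's transversality (unavailable on $\bar{X}$). The underlying technical engine will be the Peskine--Szpiro Acyclicity Lemma (Corollary \ref{4.3}), which converts local depth estimates at the intersection strata into the desired vanishing.

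For part (b), once (a) is established, the derived and underived tensor products coincide, so the cohomology group in question equals $H^p$ of the ordinary triple tensor product. The plan is to identify
\[
\pi^*(\co_{\bp_{\bf j}}(-\partial\bp_{\bf j}))\otimes(\widetilde{\xi^u\boxtimes\xi^v})\otimes\gamma_*\tilde{\Del}_*\co_{\tilde{X}_w},
\]
up to a line-bundle twist and a cohomological-degree shift by $|{\bf j}|+\ell(w)-\ell(u)-\ell(v)$, with the pushforward from $\tilde{\mathcal{Z}}$ of $\omega_{\tilde{\mathcal{Z}}}(\partial\tilde{\mathcal{Z}})$. The dualizing-sheaf identifications of Section \ref{sec4} (Lemmas \ref{lem21}, \ref{lem24}, \ref{n7.3}) allow one to rewrite $\widetilde{\xi^u\boxtimes\xi^v}\otimes\gamma_*\tilde{\Del}_*\co_{\tilde{X}_w}$ in terms of the dualizing complex of the appropriate Richardson-type stratum sitting inside $\mathcal{Z}$, while the factor $\pi^*(\co_{\bp_{\bf j}}(-\partial\bp_{\bf j}))$ produces the boundary twist $\partial\tilde{\mathcal{Z}}$ along the base direction. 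Then Proposition \ref{prop19} and Theorem \ref{prop20} yield
\[
R^p\tilde{\pi}_{*}\bigl(\omega_{\tilde{\mathcal{Z}}}(\partial\tilde{\mathcal{Z}})\bigr)=0\quad\text{and}\quad R^pf_{*}\bigl(\omega_{\tilde{\mathcal{Z}}}(\partial\tilde{\mathcal{Z}})\bigr)=0\quad\text{for }p>0,
\]
and a Leray spectral sequence argument (using these two vanishings in the factorization $\tilde{\mathcal{Z}}\to\mathcal{Z}\to\bar{\Gamma}\to\bp$) collapses the global cohomology to the single expected degree predicted by the codimension count.

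The main obstacle will be part (b), specifically the precise identification of $\omega_{\tilde{\mathcal{Z}}}$ and the bookkeeping of all the twists so that the final answer is exactly $\omega_{\tilde{\mathcal{Z}}}(\partial\tilde{\mathcal{Z}})$. Because our desingularization $Z^v_w$ of the Richardson variety is not as explicit as a BSDH resolution, the computation of $\omega_{\tilde{\mathcal{Z}}}$ cannot be done by direct local coordinates; it must be extracted from the Kumar--Schwede result that $X^v_w$ has Kawamata log-terminal singularities, which is also what permits the relative Kawamata--Viehweg theorem (Theorem \ref{thm15}) to apply. For part (a), the subtler point is the absence of Kleiman transversality on $\bar{X}$: the substitute is the Acyclicity Lemma together with the connectedness of $\Gamma$ (Lemma \ref{connected}) and the local, inductive nature of the Tor computation, which together force the intersections to have the expected cohomological dimension without invoking any geometric transversality statement.
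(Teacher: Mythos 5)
Your outline of part (a) is essentially the paper's argument: trivialize over $\bp$, use the resolution coming from Proposition \ref{propa18} (i.e.\ $\tor_{j>0}^{\co_{\bar X}}(\xi^u,\co_{X_w})=0$, itself proved via the Peskine--Szpiro acyclicity lemma), and then a general-position argument in $\gamma$. The one point you should make precise is what that general-position step actually is: it is not the acyclicity lemma plus connectedness of $\Gamma$ (connectedness is only used later, for the sign constancy in Corollary \ref{cor3.9}), but the Anderson--Griffeth--Miller generic homological transversality theorem [AGM, Theorem 2.3] applied to the (finite-dimensional quotient of the) group $\Gamma$ acting on $\tilde Y_w=\widetilde{X_w\times X_w}$ with orbit closures $\widetilde{X_x\times X_y}$ (Lemma \ref{lem6}); Proposition \ref{propa18} supplies the hypothesis that the sheaf is homologically transverse to all orbit closures, and [AGM] then produces the dense open set of $\gamma$.

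Part (b) as you propose it has a genuine gap. The sheaf in (b), restricted to its support $\bar N_\gamma=(X^{u,v})_{\bf j}\cap\gamma\tilde\Delta(\tilde X_w)$, is (by Kashiwara's Theorem \ref{prop:main}, $\xi^u\simeq\co_{X^u}(-\partial X^u)$) the \emph{ideal sheaf} $\co_{\bar N_\gamma}(-\bar M_\gamma)$, and this is not identified with $\omega_{\tilde{\mathcal Z}}(\partial\tilde{\mathcal Z})$ ``up to a line-bundle twist and a degree shift'': the relation between $\co_{\bar N_\gamma}(-\bar M_\gamma)$ and $\omega_{\bar N_\gamma}(\bar M_\gamma)$ is Serre duality, not a twist, and the duality step is exactly where the single degree $|{\bf j}|+\ell(w)-\ell(u)-\ell(v)$ comes from (it is $\dim\bar N_\gamma$ for general $\gamma$). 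Carrying this out requires the CM-ness and expected dimension of $\bar N_\gamma$ and $\bar M_\gamma$ for general $\gamma$ (Corollary \ref{coro12}, which in turn needs the surjectivity of $\pi$ when $c^w_{u,v}({\bf j})\neq 0$) together with the local-to-global Ext spectral sequence. Moreover, a Leray collapse along $\tilde{\mathcal Z}\to\mathcal Z\to\bar\Gamma\to\bp$ computes cohomology over the whole base and cannot yield a statement about a \emph{general fiber} of $\pi$; the paper instead restricts $\omega_{\mathcal Z}(\partial\mathcal Z)$ to general fibers (the identification $\omega_{\mathcal Z}(\partial\mathcal Z)|_{\pi^{-1}(\gamma^{-1})}\simeq\omega_{\bar N_\gamma}(\bar M_\gamma)$ needs flatness of $\pi$ over an open set and a local argument), and deduces $H^{p>0}(\bar N_\gamma,\omega_{\bar N_\gamma}(\bar M_\gamma))=0$ from $R^{p}\pi_*\bigl(\omega_{\mathcal Z}(\partial\mathcal Z)\bigr)=0$ (Corollary \ref{lem14}, itself obtained from Proposition \ref{prop19} and Theorem \ref{prop20} via the Grothendieck spectral sequence) by semicontinuity. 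Without the duality step and the base-change/semicontinuity step your plan proves a vanishing for the wrong sheaf and over the wrong space.
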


Since $\Gamma$ is connected, we get the following result as an immediate corollary
of Lemma \ref{lemma3.3}, Proposition \ref{prop4.9} and Theorem \ref{thma14}.

  \begin{corollary}\label{cor3.9}
  \[
(-1)^{\ell (w)-\ell (u)-\ell (v)+|{\bf j}|} c^w_{u,v}({\bf j})\in\bz_+.
  \]

  \end{corollary}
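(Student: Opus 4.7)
The plan is to combine Lemma \ref{lemma3.3}, the connectedness of $\Gamma$ from Lemma \ref{connected}, and Theorem \ref{thma14}; granted these, the corollary is essentially a sign-bookkeeping exercise. I would begin from the formula in Lemma \ref{lemma3.3}:
\[c^w_{u,v}({\bf j}) = \Big\langle \pi^*[\co_{\bp_{\bf j}}(-\partial\bp_{\bf j})] \cdot [\widetilde{\xi^u\boxtimes\xi^v}],\ \tilde{\Del}_*[\co_{\tilde{X}_w}]\Big\rangle.\]
The first move is to replace $\tilde{\Del}_*[\co_{\tilde{X}_w}]$ by $[\gamma_*\tilde{\Del}_*\co_{\tilde{X}_w}]$ for a general $\gamma\in\Gamma$ without altering the value of the pairing. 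This is legitimate because $\Gamma$ is connected (Lemma \ref{connected}): for a connected algebraic group acting by automorphisms on a scheme, translation preserves the class in the (non-equivariant) Grothendieck group, since the family $\{\gamma_*\tilde{\Del}_*\co_{\tilde{X}_w}\}_{\gamma\in\Gamma}$ is flat over $\Gamma$ and specializes at $\gamma=e$ to $\tilde{\Del}_*\co_{\tilde{X}_w}$.

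Once this replacement is in place, for general $\gamma$ Theorem \ref{thma14}(a) kills all higher Tors of $\pi^*(\co_{\bp_{\bf j}}(-\partial\bp_{\bf j}))\otimes(\widetilde{\xi^u\boxtimes\xi^v})$ with $\gamma_*\tilde{\Del}_*\co_{\tilde{X}_w}$, so the alternating sum defining the pairing collapses to a single Euler characteristic:
\[c^w_{u,v}({\bf j}) = \chi\bigl(\tilde{\bar{Y}},\ \pi^*(\co_{\bp_{\bf j}}(-\partial\bp_{\bf j})) \otimes (\widetilde{\xi^u\boxtimes\xi^v}) \otimes \gamma_*\tilde{\Del}_*\co_{\tilde{X}_w}\bigr).\]
Next, Theorem \ref{thma14}(b) says the only possibly nonzero sheaf cohomology of this tensor product lies in degree $p_0 := |{\bf j}| + \ell(w) - \ell(u) - \ell(v)$. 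Hence
\[c^w_{u,v}({\bf j}) = (-1)^{p_0}\, \dim_\bc H^{p_0}\bigl(\tilde{\bar{Y}},\ \pi^*(\co_{\bp_{\bf j}}(-\partial\bp_{\bf j})) \otimes (\widetilde{\xi^u\boxtimes\xi^v}) \otimes \gamma_*\tilde{\Del}_*\co_{\tilde{X}_w}\bigr),\]
from which $(-1)^{p_0} c^w_{u,v}({\bf j}) \in \bz_+$ is immediate.

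The genuine work is all packaged inside Theorem \ref{thma14}; once that is granted, the only substantive step in this corollary is the $\Gamma$-invariance of the $K$-theory class, which in turn rests solely on the connectedness statement of Lemma \ref{connected}. The remainder is purely formal, so I would expect no real obstacle at this stage of the argument.
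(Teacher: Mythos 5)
Your argument is correct and is essentially the paper's own proof: the corollary is stated there as an immediate consequence of Lemma \ref{lemma3.3}, Theorem \ref{thma14}, and the connectedness of $\Gamma$ (Lemma \ref{connected}), used exactly as you use it to replace $\tilde{\Del}_*[\co_{\tilde{X}_w}]$ by its general translate before invoking the Tor-vanishing and the concentration of cohomology in degree $|{\bf j}|+\ell(w)-\ell(u)-\ell(v)$. Your flat-family justification of the $\gamma$-invariance of the class is a standard way of making precise the paper's one-line appeal to connectedness, so there is no substantive difference.
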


Recall the definition of the structure constants $p^w_{u,v}\in R(T)$
 for the product in $K^{\optop}_T(X)$ from
the beginning of this section. The following lemma follows easily from
Proposition \ref{n3.1}, identity  \eqref{e0.0} and
[GK, Lemma 6.2] (also see [AGM, $\S$3]).
\begin{lemma} \label{lem3.10}For any $u,v,w\in W$, we can choose
large enough $N$ (depending upon $u,v,w$) and express (by [GK, Proposition 2.2(c) and Theorem 5.1] valid in the Kac-Moody case as well)
\beqn \label{maineq1} p^w_{u,v}=\sum_{{\bf j}\in [N]^r}\, p^w_{u,v}({\bf j})
 (e^{-\alpha_1}-1)^{j_1}
\dots (e^{-\alpha_r}-1)^{j_r},\eeqn
for some unique $p^w_{u,v}({\bf j})\in \bz$,
where ${\bf j}= (j_1, \dots, j_r).$
Then,
\beqn \label{maineq2}
 p^w_{u,v}({\bf j}) =
(-1)^{|{\bf j}|} c^w_{u,v}({\bf j}).
\eeqn
\end{lemma}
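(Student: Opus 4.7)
The strategy is to apply the Borel-type functor $\cs \mapsto E(T)_\bp\times^T \cs$ to the equivariant coproduct identity, and then compare the resulting identity in $K_0(\tilde Y)$ with the definition of $c^w_{u,v}({\bf j})$ after reading off coefficients in the $K^0(\bp)$-basis $\{[\co_{\widetilde{X_u\times X_v}}]\}$.

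First I would invoke Proposition \ref{n3.1} to rewrite the equivariant coproduct as $\Delta_*[\co_{X_w}] = \sum_{u,v} p^w_{u,v}\, [\co_{X_u}] \otimes [\co_{X_v}]$ in $K^T_0(X\times X)$. Applying the Borel functor with the twisted action \eqref{e3.5.1}, which is exact and compatible with push-forward along the $T$-equivariant map $\Delta$ and with the $R(T)$-module structure, yields
\[
\tilde{\Delta}_*[\co_{\tilde{X}_w}] = \sum_{u,v} \pi^*\bigl(\phi(p^w_{u,v})\bigr) \cdot [\co_{\widetilde{X_u\times X_v}}]
\quad \text{in } K_0(\tilde Y),
\]
where $\phi : R(T) \to K^0(\bp)$ is the induced ring map, sending a character to the class of its associated line bundle on $\bp$.

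The crucial step is to identify $\phi(e^{-\alpha_i})$ explicitly. Owing to the twist \eqref{e3.5.1} and the default identification $T \simeq (\bc^*)^r$ via $t \mapsto (e^{\alpha_1}(t),\dots,e^{\alpha_r}(t))$, the character $e^{-\alpha_i}$ corresponds under $\phi$ to the tautological line bundle $\co(-1)_i$ on the $i$-th factor of $\bp = (\bp^N)^r$. Using the standard exact sequence $0 \to \co(-1) \to \co \to \co_H \to 0$ on $\bp^N$ (for a hyperplane $H \cong \bp^{N-1}$), this gives $\phi(e^{-\alpha_i}-1) = -[\co_{H_i}]$, and multiplying over $i$ (with transverse intersections of the hyperplanes),
\[
\phi\Bigl(\prod_{i=1}^r(e^{-\alpha_i}-1)^{j_i}\Bigr) = (-1)^{|{\bf j}|}\,[\co_{\bp^{\bf j}}].
\]

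Substituting the expansion \eqref{maineq1} into the displayed identity then gives $\phi(p^w_{u,v}) = \sum_{\bf j}(-1)^{|{\bf j}|}\, p^w_{u,v}({\bf j})\,[\co_{\bp^{\bf j}}]$, while comparing with \eqref{e0.0} and using the freeness of $K_0(\tilde Y)$ over $K^0(\bp)$ with basis $\{[\co_{\widetilde{X_u\times X_v}}]\}$ (the theorem preceding Lemma \ref{lemma3.3}) yields $\phi(p^w_{u,v}) = \sum_{\bf j} c^w_{u,v}({\bf j})\,[\co_{\bp^{\bf j}}]$. Since $\{[\co_{\bp^{\bf j}}]\}_{{\bf j}\in[N]^r}$ is a $\bz$-basis of $K^0(\bp)$ (alternatively, one may extract coefficients by pairing against $[\co_{\bp_{\bf j}}(-\partial\bp_{\bf j})]$ via \eqref{e3.6}), equating the two expressions for $\phi(p^w_{u,v})$ gives \eqref{maineq2}. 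The main obstacle is the sign bookkeeping in identifying $\phi(e^{-\alpha_i})=[\co(-1)_i]$: one has to keep track of both the twist $t\odot x = t^{-1}x$ and the default identification of $T$ with $(\bc^*)^r$; once this is settled, the rest of the argument is routine linear algebra in $K^0(\bp)$.
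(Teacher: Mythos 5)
Your proposal is correct and is essentially the argument the paper itself invokes: the paper disposes of this lemma by citing [GK, Lemma 6.2] (and [AGM, \S 3]), and what you spell out — applying the mixing construction $E(T)_\bp\times^T(-)$ to the coproduct identity of Proposition \ref{n3.1}, identifying $\phi(e^{-\alpha_i}-1)=-[\co_{H_i}]$ (so $\phi\bigl(\prod_i(e^{-\alpha_i}-1)^{j_i}\bigr)=(-1)^{|{\bf j}|}[\co_{\bp^{\bf j}}]$) via the twist \eqref{e3.5.1} together with the default identification $T\simeq(\bc^*)^r$, and then comparing coefficients in the free $K^0(\bp)$-basis using \eqref{e0.0} and \eqref{e3.6} — is exactly that computation, with the sign bookkeeping done correctly.
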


As an immediate consequence of Corollary \ref{cor3.9} and Lemma \ref{lem3.10}, we
get the following main theorem of this paper, which was conjectured  by
Graham-Kumar [GK, Conjecture 3.1] in the finite case and proved in this case by Anderson-Griffeth-Miller [AGM, Corollary 5.2].
\begin{theorem} \label{verymain} For any $u,v,w\in W$, and any symmetrizable Kac-Moody group
$G$, the structure constants in $K^{\optop}_T(X)$ satisfy:
\beqn \label{eq3.11}
 (-1)^{\ell (u)+\ell (v)+ \ell (w)} \,p^w_{u,v}\in \bz_+[(e^{-\alpha_1}-1), \dots,
 (e^{-\alpha_r}-1)].
 \eeqn
 \qed
\end{theorem}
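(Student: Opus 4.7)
My plan is to combine Lemma \ref{lem3.10} with Corollary \ref{cor3.9}; at this stage in the paper the heavy lifting has already been done and what remains is a sign chase.

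First, by Lemma \ref{lem3.10}, for any fixed $u,v,w\in W$ we may choose $N$ large enough that
$$p^w_{u,v} \;=\; \sum_{{\bf j}\in [N]^r} p^w_{u,v}({\bf j})\,(e^{-\alpha_1}-1)^{j_1}\cdots (e^{-\alpha_r}-1)^{j_r},$$
with $p^w_{u,v}({\bf j}) = (-1)^{|{\bf j}|}\,c^w_{u,v}({\bf j})\in\bz$. To prove the positivity statement \eqref{eq3.11}, it therefore suffices to check that each coefficient $(-1)^{\ell(u)+\ell(v)+\ell(w)}\,p^w_{u,v}({\bf j})$ is a nonnegative integer.

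Second, substituting the formula from Lemma \ref{lem3.10} and using that exponents of $-1$ only matter modulo $2$, I compute
$$(-1)^{\ell(u)+\ell(v)+\ell(w)}\,p^w_{u,v}({\bf j}) \;=\; (-1)^{\ell(u)+\ell(v)+\ell(w)+|{\bf j}|}\,c^w_{u,v}({\bf j}) \;=\; (-1)^{\ell(w)-\ell(u)-\ell(v)+|{\bf j}|}\,c^w_{u,v}({\bf j}),$$
the last equality because the two exponents differ by $2(\ell(u)+\ell(v))$. By Corollary \ref{cor3.9} this quantity lies in $\bz_+$, which proves the theorem.

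Of course, the real content is upstream of this derivation: the positivity of $(-1)^{\ell(w)-\ell(u)-\ell(v)+|{\bf j}|}c^w_{u,v}({\bf j})$ encoded in Corollary \ref{cor3.9} is extracted from Theorem \ref{thma14} together with the connectedness of the mixing group $\Gamma$ (Lemma \ref{connected}). Part (a) of Theorem \ref{thma14} collapses the derived tensor product to a single sheaf, so that for generic $\gamma\in\Gamma$ the integer $c^w_{u,v}({\bf j})$ is realized, up to sign, as an Euler characteristic; part (b) then concentrates that Euler characteristic in a single cohomological degree of the expected parity. The genuine obstacle in the paper is therefore establishing Theorem \ref{thma14}(b), handled in Section \ref{sec3} via the Kawamata-Viehweg vanishing results on the desingularization $\tilde{\mathcal{Z}}$. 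Given that input, the present theorem reduces, as above, to the elementary sign reconciliation.
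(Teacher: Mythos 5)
Your proposal is correct and is exactly the paper's argument: the paper derives Theorem \ref{verymain} as an immediate consequence of Lemma \ref{lem3.10} and Corollary \ref{cor3.9}, with the same sign reconciliation $(-1)^{\ell(u)+\ell(v)+\ell(w)+|{\bf j}|}=(-1)^{\ell(w)-\ell(u)-\ell(v)+|{\bf j}|}$ that you carry out. Your closing remarks on where the real work lies (Theorem \ref{thma14} and Lemma \ref{connected}) also match the paper's structure.
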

Recall that
\beqn \label{eq3.12}
K^{\optop}(X) \simeq \bz\otimes_{R(T)}\,  K^{\optop}_T(X)
\eeqn (cf.
[KK, Proposition 3.25]), where $\bz$ is considered as an $R(T)$-module via the
evaluation at $1$. Express the product in $K^{\optop}(X)$ in the `basis'
$\{\psi^u_o:= 1\otimes \psi^u\}_{u\in W}$:
\[
\psi^u_o\cdot\psi^v_o = \sum_w a^w_{u,v} \psi^w_o, \quad\text{for } a^w_{u,v}\in
\bz.
  \]
  Then, by the isomorphism \eqref{eq3.12},
  $$a^w_{u,v}=p^w_{u,v}(1).$$
Thus, from Theorem \ref{verymain}, we immediately obtain
the following result which was conjectured by A.S. Buch in the finite case and proved in this case by Brion [B].

\begin{corollary}  \label{maincor} For any $u,v,w\in W$,
$$(-1)^{\ell (u)+\ell (v)+ \ell (w)}\,a^w_{u,v} \in \bz_+.$$
\end{corollary}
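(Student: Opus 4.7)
The plan is to deduce Corollary \ref{maincor} as a direct specialization of Theorem \ref{verymain}, using the identification \eqref{eq3.12} of the non-equivariant topological $K$-theory as the base change of the $T$-equivariant one along the augmentation $R(T) \to \bz$, $e^{\lambda} \mapsto 1$. There is essentially no geometric content to add beyond Theorem \ref{verymain}; the work is bookkeeping about how the augmentation interacts with the polynomial expansion \eqref{maineq1}.

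First I would recall that by [KK, Proposition 3.25] we have $K^{\optop}(X) \simeq \bz \otimes_{R(T)} K^{\optop}_T(X)$, with $\bz$ an $R(T)$-module via evaluation at $1 \in T$, and that under this map the classes $\psi^w \in K^{\optop}_T(X)$ are sent to $\psi^w_o \in K^{\optop}(X)$. Applying the augmentation $\varepsilon : R(T) \to \bz$ (sending $e^{\lambda} \mapsto 1$) to the identity
\[
\psi^u \cdot \psi^v = \sum_w p^w_{u,v}\, \psi^w
\]
and comparing with $\psi^u_o \cdot \psi^v_o = \sum_w a^w_{u,v}\, \psi^w_o$ yields
\[
a^w_{u,v} = \varepsilon(p^w_{u,v}) = p^w_{u,v}\big|_{e^{-\alpha_1} = \cdots = e^{-\alpha_r} = 1}.
\]

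Next I would invoke Theorem \ref{verymain}: the element $(-1)^{\ell(u) + \ell(v) + \ell(w)}\, p^w_{u,v}$ lies in $\bz_+[(e^{-\alpha_1}-1), \dots, (e^{-\alpha_r}-1)]$. Under the augmentation, each generator $(e^{-\alpha_i} - 1)$ maps to $0$, so only the constant term of this polynomial survives. Since all coefficients of the polynomial are in $\bz_+$ by Theorem \ref{verymain}, the constant term is a non-negative integer. In the notation of Lemma \ref{lem3.10}, this is saying
\[
(-1)^{\ell(u)+\ell(v)+\ell(w)}\, a^w_{u,v} = (-1)^{\ell(u)+\ell(v)+\ell(w)}\, p^w_{u,v}(\mathbf{0}) \in \bz_+,
\]
which is precisely the desired conclusion.

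Since the corollary is a one-line specialization, there is no substantial obstacle to overcome: all the difficulty is already packaged into Theorem \ref{verymain} (and hence into Theorem \ref{newmaintech} and the geometric constructions of Sections \ref{sec1}--\ref{sec3}). The only point worth double-checking is that the $R(T)$-module structure on $\bz$ used in the isomorphism \eqref{eq3.12} really is the augmentation $e^{\lambda} \mapsto 1$, so that setting each $(e^{-\alpha_i} - 1)$ to zero coincides with base change; this is recorded in [KK, Proposition 3.25] and is explicitly stated in the text preceding the corollary.
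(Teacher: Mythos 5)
Your argument is correct and is essentially identical to the paper's: the paper also uses the isomorphism \eqref{eq3.12} to get $a^w_{u,v}=p^w_{u,v}(1)$ and then evaluates the positive polynomial expression of Theorem \ref{verymain} at $1$ (i.e., sets each $e^{-\alpha_i}-1$ to $0$), so only the nonnegative constant term survives. Nothing is missing.
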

\begin{remark}
 We conjecture\footnote{This conjecture has now been proved by Baldwin-Kumar [BaK].} that the analogue of 
Theorem \ref{verymain} is true  for the `basis' $\xi^u$ replaced
by the structure sheaf `basis' $\{\phi^u=[\co_{X^u}]\}_{u\in W}$ of $K_T^0(\bar{X})$.
 In the finite case,
this was conjectured by Griffeth-Ram [GR] and proved in this case by Anderson-Griffeth-Miller [AGM, Corollary 5.3].

For the affine Kac-Moody group $G=\widehat{\SL_N}$ associated to $\SL_N$, and its standard maximal parahoric subgroup 
$P$, let $\bar{\mathcal{X}}:= G/P$ be the corresponding infinite Grassmannian. Then, $K^0(\bar{\mathcal{X}})$
has the structure sheaf `basis' $\{[\co_{X^u}]\}_{u\in W/W_o}$ over $\bz$, where $W$ is the (affine) Weyl group of $G$ and $W_o=S_N$ is the Weyl group of $\SL_N$. Write, for any  $u,v\in W/W_o$,
$$[\co_{X^u}]\cdot [\co_{X^v}]= \sum_{w\in W/W_o}\,  b^w_{u,v}[\co_{X^w}], \,\,\,\text{for some unique integers}\,\, b^w_{u,v}.$$
Now, Lam-Schilling-Shimozono conjecture  [LSS, Conjectures 7.20 (2) and 7.21 (3)] the following:
$$(-1)^{\ell (u)+\ell (v)+ \ell (w)}\,b^w_{u,v} \in \bz_+,$$
if $u,v, w$ are the minimal coset representatives in their cosets.
\end{remark}

\section{Study of some $\ext$ and $\tor$ functors and proof of Theorem
 \ref{thma14} (a)}\label{sec7}

  \begin{proposition} \label{propa17} For any $j\in\bz$ and $u,w\in W$, as
  $T$-equivariant sheaves,
    \[
\tor_j^{\co_{\bar{X}}} (\xi^u, \co_{X_w}) \simeq e^{-\rho} \cl (-\rho )
\underset{\co_{\bar{X}}}{\otimes} \bigl( \ext^{\ell (u)-j}_{\co_{\bar{X}}} (\co_{X^u}, \co_{X_w})\bigr) .
  \]
In particular, $\ext^j_{\co_{\bar{X}}} (\co_{X^u}, \co_{X_w})=0$,\,\, for all $j>
\ell (u)$.
  \end{proposition}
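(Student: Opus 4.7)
The strategy is to reduce the identity to the computation of the same complex of sheaves in two different ways, using the key vanishing \eqref{eq1.0} which asserts that $R\home_{\co_{\bar{X}}}(\co_{X^u}, \co_{\bar{X}})$ has cohomology concentrated in degree $\ell(u)$.

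First, I would work locally on each open set $V^v$, where by Lemma \ref{2.4} there is a $T$-equivariant locally free resolution
\[
\cdots \to \cf_2 \to \cf_1 \to \cf_0 \to \co_{X^u} \to 0.
\]
The identity \eqref{eq1.0} forces this resolution to have length $\ell(u)$ up to locally free direct summands: applying $\home_{\co_{\bar{X}}}(-,\co_{\bar{X}})$ and using that $\ext^k_{\co_{\bar{X}}}(\co_{X^u},\co_{\bar{X}})=0$ for $k\ne \ell(u)$, one obtains an exact sequence
\[
0 \to \cf_0^{\vee} \to \cf_1^{\vee} \to \cdots \to \cf_{\ell(u)-1}^{\vee} \to \cf_{\ell(u)}^{\vee} \to \ext^{\ell(u)}_{\co_{\bar{X}}}(\co_{X^u},\co_{\bar{X}}) \to 0,
\]
where $\cf_i^{\vee} := \home_{\co_{\bar{X}}}(\cf_i,\co_{\bar{X}})$. (One can truncate $\cf_\bullet$ at stage $\ell(u)$ and replace $\cf_{\ell(u)}$ by the kernel $\cf_{\ell(u)-1}\to\cf_{\ell(u)-2}$; the resulting module is locally free since its dual $\cf_{\ell(u)}^{\vee}/\mathrm{im}(\cf_{\ell(u)-1}^{\vee})=\ext^{\ell(u)}$ has the expected vanishing of higher Ext against anything.) Thus the reversed complex $\cf_{\ell(u)}^{\vee}\leftarrow\cdots\leftarrow\cf_0^{\vee}$ is a locally free resolution of $\ext^{\ell(u)}_{\co_{\bar{X}}}(\co_{X^u},\co_{\bar{X}})$ with $\cf_i^{\vee}$ sitting in homological degree $\ell(u)-i$.

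Next I would compute both sides from this common resolution. Since each $\cf_i$ is locally free, $\home_{\co_{\bar{X}}}(\cf_i,\co_{X_w}) \simeq \cf_i^{\vee}\otimes_{\co_{\bar{X}}}\co_{X_w}$, so
\[
\ext^{\ell(u)-j}_{\co_{\bar{X}}}(\co_{X^u}, \co_{X_w}) = H^{\ell(u)-j}\bigl(\cf_\bullet^{\vee}\otimes_{\co_{\bar{X}}}\co_{X_w}\bigr)
\]
with $\cf_i^{\vee}\otimes\co_{X_w}$ in cohomological degree $i$. On the other hand, tensoring the above resolution of $\ext^{\ell(u)}(\co_{X^u},\co_{\bar{X}})$ by $\co_{X_w}$ and taking $j$-th homology gives $\tor_j^{\co_{\bar{X}}}(\ext^{\ell(u)}(\co_{X^u},\co_{\bar{X}}), \co_{X_w})$ as the homology at the term $\cf_{\ell(u)-j}^{\vee}\otimes\co_{X_w}$. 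The sub and quotient relations computing the two cohomology groups at that position coincide, giving
\[
\tor_j^{\co_{\bar{X}}}\bigl(\ext^{\ell(u)}_{\co_{\bar{X}}}(\co_{X^u},\co_{\bar{X}}),\,\co_{X_w}\bigr) \simeq \ext^{\ell(u)-j}_{\co_{\bar{X}}}(\co_{X^u}, \co_{X_w}).
\]
Tensoring with the invertible sheaf $e^{-\rho}\cl(-\rho)$ (which is $T$-equivariantly flat) yields the stated isomorphism for $\xi^u = e^{-\rho}\cl(-\rho)\otimes\ext^{\ell(u)}(\co_{X^u},\co_{\bar{X}})$. The construction is natural on each $V^v$ and $T$-equivariant, so the local isomorphisms glue.

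Finally, the ``in particular'' assertion is immediate from the isomorphism: $\tor_j^{\co_{\bar{X}}}(\xi^u,\co_{X_w})=0$ for $j<0$ forces $\ext^k_{\co_{\bar{X}}}(\co_{X^u},\co_{X_w})=0$ for $k=\ell(u)-j>\ell(u)$. The main technical obstacle is ensuring that the truncation at step $\ell(u)$ actually produces a locally free module, i.e., that $\co_{X^u}$ has projective dimension $\le\ell(u)$ locally on $\bar{X}$; this is however already guaranteed by \eqref{eq1.0} together with the regularity of the local rings of $\bar{X}$ appearing through Lemma \ref{2.4}, since $\ext$-vanishing against $\co_{\bar{X}}$ (a faithful enough test module locally) detects the global projective dimension.
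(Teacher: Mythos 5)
Your overall strategy is the same as the paper's: work on each $V^v$ with the finite free resolution from Lemma \ref{2.4}, use the vanishing \eqref{eq1.0} to convert it into a length-$\ell(u)$ resolution by (direct summands of) free sheaves, and then observe that the complex computing $\tor_j$ of $\ext^{\ell(u)}_{\co_{\bar{X}}}(\co_{X^u},\co_{\bar{X}})$ against $\co_{X_w}$ is term-by-term the complex $\home_{\co_{\bar{X}}}(\cf_\bullet,\co_{X_w})$ computing $\ext^{\ell(u)-j}$. However, there is a genuine gap exactly at the step you yourself single out as the main obstacle: the claim that the $\ell(u)$-th syzygy $K=\Ker(\cf_{\ell(u)-1}\to\cf_{\ell(u)-2})$ is locally free (equivalently, a direct summand of a free sheaf of finite rank). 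Your in-line parenthetical justification is circular: identifying $\cf_{\ell(u)}^{\vee}/\operatorname{Im}(\cf_{\ell(u)-1}^{\vee})$ with $\ext^{\ell(u)}$ already presupposes that the dual complex terminates at stage $\ell(u)$, which is what the projectivity of $K$ is supposed to deliver. Your closing appeal to ``regularity of the local rings of $\bar{X}$'' and to a general principle that $\ext$-vanishing against $\co_{\bar{X}}$ detects projective dimension is not available here: the local rings of the thick flag variety are localizations of polynomial rings in infinitely many variables, hence non-Noetherian and of infinite dimension, and no such regularity statement (nor the Noetherian-local fact that $\operatorname{pd}M=\max\{k:\Ext^k(M,R)\neq 0\}$ for finitely generated $M$) is established or invoked anywhere in the paper.

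The claim is nevertheless true, and the correct justification uses only two inputs you already have: the \emph{finiteness} of the resolution in Lemma \ref{2.4} (which you should not write with dots on the left) and \eqref{eq1.0}, combined in a descending splitting induction. Starting from the top of the finite resolution, each successive syzygy (equivalently, each kernel $\Ker\del^*_j$ in the dual complex for $j\geq\ell(u)$) is shown to be a direct summand of the corresponding free sheaf, because the relevant extension group is a summand of $\ext^{j+1}_{\co_{\bar{X}}}(\co_{X^u},\text{free})=0$ for $j\geq \ell(u)$; this argument needs no Noetherian or regularity hypotheses, and it is precisely the induction the paper carries out (on the dual complex) in its proof of Proposition \ref{propa17}. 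Once this is supplied, the remainder of your argument — the identification of the two complexes via $K^{\vee}\otimes\co_{X_w}\simeq\home(K,\co_{X_w})$, the twist by the invertible sheaf $e^{-\rho}\cl(-\rho)$, the gluing over the $V^v$, and the deduction of the vanishing $\ext^j_{\co_{\bar{X}}}(\co_{X^u},\co_{X_w})=0$ for $j>\ell(u)$ from $\tor_j=0$ for $j<0$ — is correct and essentially coincides with the paper's proof, differing only in that you split the resolution of $\co_{X^u}$ while the paper splits its dual.
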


  \begin{proof}  By definition,
  \[
\xi^u = e^{-\rho} \cl (-\rho )\, \ext^{\ell (u)}_{\co_{\bar{X}}} (\co_{X^u}, \co_{\bar{X}} ).
  \]

By Lemma \ref{2.4}, $\co_{X^u\cap V^v}$ admits a $T$-equivariant resolution (for any $v\in W$):
  \beqn \label{e4.1}
0 \to \cf_n \overset{\del_{n-1}}{\longrightarrow} \cdots \overset{\del_0}
{\longrightarrow} \cf_0 \rightarrow \co_{X^u\cap V^v} \to 0
  \eeqn
by $T$-equivariant free $\co_{V^v}$-modules of finite rank.

Since $\cm_j := \ext^j_{\co_{\bar{X}}} (\co_{X^u}, \co_{\bar{X}})=0$ for all
$j\neq \ell (u)$ (cf. the identity \eqref{eq1.0}), the dual complex
  \beqn\label{e4.2}
0 \leftarrow \cf^*_n \overset{\del^*_{n-1}}{\longleftarrow} \cf^*_{n-1} \leftarrow
 \cdots \leftarrow \cf^*_{\ell (u)} \leftarrow \cdots
 \overset{\del^*_0}{\longleftarrow} \cf^*_0 \leftarrow 0
  \eeqn
 gives rise to the resolution
  \[
0\from \cm_{\ell (u)} := \frac{\Ker \del^*_{\ell (u)}}{\Image \del^*_{\ell (u)-1}}
\from \Ker \del^*_{\ell (u)} \from \cf^*_{\ell (u)-1} \from
\cdots \from \cf_0^* \from 0,
  \]
where $\cf^*_i := \home_{\co_{V^v}} (\cf_i, \co_{V^v})$.

We next claim that $\Ker \del^*_j$ is a $\co_{V^v}$-module direct summand of $\cf^*_j$ for all $j\geq\ell (u)$:

We prove this by downward induction on $j$. Since \eqref{e4.2} has cohomology only
in degree $\ell (u)$, if $n>\ell (u)$, $\Image \del^*_{n-1} = \cf^*_n$ and
hence $\Ker \del^*_{n-1}$ is a direct summand $\co_{V^v}$-submodule of $\cf^*_{n-1}$.  Thus, $\Ker \del^*_{n-2}$ is a direct summand of $\cf^*_{n-2}$ if $n-2 \geq \ell (u)$.  Continuing this way, we see that $\Ker \del^*_{\ell (u)}$ is a direct summand $\co_{V^v}$-submodule of $\cf^*_{\ell (u)}$.

Thus, we get a projective resolution:
  \[
0 \to \cp_{\ell (u)} \to \cdots \to \cp_1 \to \cp_0 \to \cm_{\ell (u)} \to 0,
  \]
where $\cp_0 := \Ker \del^*_{\ell (u)}$, $\cp_i := \cf^*_{\ell (u)-i}$ for
$1\leq i\leq\ell (u)$.  Hence, restricted to the open subset $V^v$, $\tor_*^{\co_{\bar{X}}} (\xi^u, \co_{X_w})$ is the homology of the complex
  \[
0 \to \bigl( e^{-\rho}\cl (-\rho )\, \cp_{\ell (u)}\bigr)\underset{\co_{V^v}}
{\otimes} \co_{X_w} \to \cdots \to \bigl( e^{-\rho}\cl (-\rho )\, \cp_0\bigr)
\underset{\co_{V^v}}{\otimes} \co_{X_w} \to 0.
  \]
Now, we show that the $j$-th homology of the complex
  \[
\cc: \qquad\qquad 0\to\cp_{\ell (u)}\underset{\co_{V^v}}{\otimes}\co_{X_w}
 \to \cdots \to \cp_0\underset{\co_{V^v}}{\otimes}\co_{X_w} \to 0
  \]
is isomorphic with $\ext^{\ell (u)-j}_{\co_{\bar{X}}} (\co_{X^u}, \co_{X_w})$:

 Since
  \[
\cp_i \underset{\co_{V^v}}{\otimes}\co_{X_w} \simeq \home_{\co_{V^v}}
(\cf_{\ell (u)-i}, \co_{X_w}),\,\,\,\text{for all} \,\, i\geq 1 ,
  \]
we get
  \beqn \label{e4.3}
\mathscr{H}_j(\cc ) \simeq \ext^{\ell (u)-j}_{\co_{V^v}} (\co_{X^u}, \co_{X_w}), \quad\text{ for all }j\geq 2.
  \eeqn
Moreover, since $\cp_0$ is a direct summand of $\cf^*_{\ell (u)}$, we get
  \beqn \label{e4.4}
\mathscr{H}_1(\cc ) \simeq \ext^{\ell (u)-1}_{\co_{V^v}} (\co_{X^u}, \co_{X_w}).
  \eeqn
Now,
  \beqn \label{e4.5}
\mathscr{H}_0(\cc ) = \frac{\cp_0\otimes_{\co_{V^v}}\co_{X_w}}
{\Image (\cp_1\otimes_{\co_{V^v}}\co_{X_w})} \simeq \ext^{\ell (u)}_{\co_{V^v}} (\co_{X^u}, \co_{X_w}),
  \eeqn
since $\Ker \del^*_{\ell (u)}$ is a direct summand of $\cf^*_{\ell (u)}$ and $\Ker \del^*_{\ell (u)+1} =$ $\Image \del^*_{\ell (u)}$ is a direct summand of $\cf^*_{\ell (u)+1}$.

Finally,
  \beqn \label{e4.6}
\ext^j_{\co_{V^v}} (\co_{X^u}, \co_{X_w}) =0, \quad\text{ for all }j>\ell (u).
  \eeqn
To prove this, observe that, for $j>\ell (u)$,
  \[
0\to\Ker \del^*_j \to \cf^*_j \overset{\del^*_j}{\longrightarrow} \Image \del^*_j = \Ker \del^*_{j+1} \to 0
  \]
is a split exact sequence since $\Ker \del^*_{j+1}$ is projective.  Thus,
  \[
0 \to \Ker \del^*_j \underset{\co_{V^v}}{\otimes} \co_{X_w} \to \cf^*_j
\underset{\co_{V^v}}{\otimes}\co_{X_w} \to (\Image \del^*_j) \underset{\co_{V^v}}{\otimes}\co_{X_w} \to 0
  \]
is exact.  Moreover, $\Image  \del^*_j \hookrightarrow \cf^*_{j+1}$ is a direct summand and hence
  \[
\Image \del^*_j \underset{\co_{V^v}}{\otimes} \co_{X_w} \hookrightarrow \cf^*_{j+1}
\underset{\co_{V^v}}{\otimes} \co_{X_w}.
  \]
From this \eqref{e4.6}  follows.

Combining \eqref{e4.3} -- \eqref{e4.6}, we get the proposition.
  \end{proof}

The following is a minor generalization of the `acyclicity lemma' of Peskine-Szpiro
[PS, Lemme 1.8].

  \begin{lemma} \label{4.4} Let $R$ be a local noetherian CM domain and let
   \beqn
0 \to F_n \to F_{n-1} \to \cdots \to F_0 \to 0 \tag{$*$}
  \eeqn
be a complex of finitely-generated free $R$-modules.  Fix a positive integer
$d>0$. Assume:

 (a) some irreducible component $Z$ of the support
 of $M := \oplus_{i\geq 1} H_i(F_*)$ has codimension $\geq d$ in Spec $R$, and
  \beqn
F_i =0 , \quad\text{ for all } i>d. \tag{b}
  \eeqn
Then,
  \[   H_i(F_*) = 0 , \quad\text{ for all }i>0.   \]
  \end{lemma}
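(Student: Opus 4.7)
The plan is to reduce to the classical Peskine--Szpiro acyclicity lemma [PS, Lemme 1.8] by localizing at the generic point of the distinguished component $Z$. Suppose for contradiction that $M \neq 0$, and let $p \subset R$ be the prime ideal corresponding to the generic point of the irreducible component $Z$ guaranteed by hypothesis (a), so that $\mathrm{ht}(p) = \codim(Z) \geq d$. I would then apply Peskine--Szpiro to the localized complex $(F_\bullet)_p$ of free $R_p$-modules, and derive a contradiction.

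First I would verify the depth hypothesis on the terms of $(F_\bullet)_p$. Since $R$ is CM, so is $R_p$, and $\depth(R_p) = \dim(R_p) = \mathrm{ht}(p) \geq d$. Each $(F_i)_p$ is a finitely generated free $R_p$-module, so $\depth((F_i)_p) = \depth(R_p) \geq d$ whenever $F_i \neq 0$. By hypothesis (b), $(F_i)_p = 0$ for $i > d$, so the depth inequality $\depth((F_i)_p) \geq i$ holds vacuously in that range; for $1 \leq i \leq d$ it follows from $\depth((F_i)_p) \geq d \geq i$.

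Next I would check the depth-zero condition on the higher homology. Because $p$ is the generic point of an irreducible component of $\Supp(M)$, it is a minimal element of $\Supp_R(M)$, so the nonzero module $M_p$ is supported solely at the maximal ideal $pR_p$ of $R_p$; hence $M_p$ has finite length and in particular $\depth_{R_p}(M_p) = 0$. Each $H_i((F_\bullet)_p) = H_i(F_\bullet)_p$ is a direct summand of $M_p$ as an $R_p$-module, so it is either zero or itself of depth $0$. Peskine--Szpiro now yields $H_i((F_\bullet)_p) = 0$ for all $i \geq 1$, whence $M_p = 0$, contradicting $p \in Z \subset \Supp(M)$.

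The main step, and the only place the CM hypothesis is essential, is the translation of the geometric codimension bound in (a) into the depth bound required by Peskine--Szpiro for free modules: over a CM local ring, free modules have depth equal to the dimension, so the inequality $\mathrm{ht}(p) \geq d$ immediately upgrades to $\depth((F_i)_p) \geq i$ for all $1 \leq i \leq d$. Without this upgrade one could not invoke the classical acyclicity lemma, and the codimension assumption on $\Supp(M)$ alone would be insufficient.
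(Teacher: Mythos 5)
Your proposal is correct and follows essentially the same route as the paper: localize at the minimal prime corresponding to the component $Z$, use the CM hypothesis to get $\depth$ of the (localized) free terms at least $d$, observe that the localized higher homology has depth $0$, and invoke the Peskine--Szpiro acyclicity lemma to reach a contradiction. The only cosmetic difference is that you verify the depth-zero condition summand-by-summand on each $H_i(F_\bullet)_{\mathfrak p}$, while the paper states it directly for $M\otimes_R R_{\mathfrak p}$.
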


  \begin{proof}  Assume, if possible, that $M\neq 0$.  Let $I\subset R$ be the
  annihilator of $M$ and let $\fp$ be the (minimal) prime ideal containing $I$
  corresponding to $Z$. Then,
    \beqn \label{e4.13}
M\otimes_R R_{\fp} \neq 0,
\eeqn
and

    \beqn \label{e4.14}
\depth \bigl( M\otimes_R R_{\fp}) =0.
  \eeqn
Next observe that
  \begin{align*}
\depth (F_*\otimes_R R_{\fp}) &= \depth R_{\fp}\\
&:= \depth_\fp\, R_{\fp} \\
&= \codim (\fp R_{\fp}), \;\text{ since $R_{\fp}$ is CM}\\
&= \codim (\fp )\\
&\geq d.
  \end{align*}

Now, by applying the acyclicity lemma of Peskine-Szpiro [PS, Lemme 1.8] to the
complex $F_*\otimes_R R_{\fp}$ and using the identities \eqref{e4.13},\eqref{e4.14},
we get a contradiction.

Thus, $M=0$, proving the lemma.
    \end{proof}

\begin{corollary}  \label{4.3} Let $Y$ be an irreducible CM variety
   and $d>0$ a positive integer. Let
  \[
0 \from \cg^n \overset{\delta^{n-1}}{\longleftarrow} \cg^{n-1} \from \cdots
\overset{\delta^0}{\longleftarrow} \cg^0 \from 0
  \]
be a complex of locally free $\co_Y$-modules of finite rank satisfying the following:

\vskip1ex

1) The support of the sheaf $\oplus_{i<d} \mathscr{H}^i (\cg^*)$ has an irreducible
component of codimension $\geq d$ in $Y$.

2) The sheaf $\mathscr{H}^j (\cg^*)=0$, for all $j>d$.

Then, $\mathscr{H}^j(\cg^*)=0$ for all
$j<d$ as well.
   \end{corollary}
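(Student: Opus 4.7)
The plan is to derive Corollary~\ref{4.3} from Lemma~\ref{4.4} by a local argument: I would argue by contradiction, localize at a high-codimension component of the support of $\bigoplus_{j<d}\mathscr{H}^j(\cg^*)$, and reindex $\cg^*$ into a chain complex to which Lemma~\ref{4.4} applies. So I would begin by assuming, for contradiction, that some $\mathscr{H}^j(\cg^*)\neq 0$ for $j<d$. Then $\bigoplus_{j<d}\mathscr{H}^j(\cg^*)$ has non-empty support in $Y$, and hypothesis~(1) furnishes an irreducible component $Z$ with $\codim_Y Z\geq d$. I would then pass to the local ring $R:=\co_{Y,Z}$ at the generic point of $Z$; because $Y$ is CM and irreducible, $R$ is a Noetherian local CM domain, of Krull dimension $\codim_Y Z\geq d$.

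Next, to fit the framework of Lemma~\ref{4.4}, I would reindex and truncate: set
\[
F_i:=(\cg^{d-i})_Z \quad\text{for }0\leq i\leq d,\qquad F_i:=0\ \text{otherwise},
\]
with differential $F_i\to F_{i-1}$ obtained by localizing $\delta^{d-i}$. Each $F_i$ is then a finitely generated free $R$-module, and $F_i=0$ for $i>d$ by construction, giving condition~(b) of Lemma~\ref{4.4}. A direct cycle-and-boundary computation yields $H_i(F_*)=\mathscr{H}^{d-i}(\cg^*)_Z$ for $1\leq i\leq d$, so
\[
M:=\bigoplus_{i\geq 1}H_i(F_*)=\bigoplus_{j<d}\mathscr{H}^j(\cg^*)_Z.
\]
Now $M\neq 0$ because $Z$ lies in the relevant support in $Y$; moreover, since $Z$ is a maximal irreducible closed subset of that support, no other component of the support contains $Z$, whence $\Supp_R M=\{\mathfrak{m}\}$ has codimension $\dim R\geq d$ in $\operatorname{Spec} R$, verifying condition~(a) of Lemma~\ref{4.4}. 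The lemma then forces $H_i(F_*)=0$ for all $i\geq 1$, i.e.\ $M=0$, contradicting $M\neq 0$.

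The hard part will be the reindexing/truncation step itself: one must convert the cochain complex $\cg^*$, which may have length $n\gg d$, into a chain complex concentrated in degrees $[0,d]$ so as to satisfy condition~(b) of Lemma~\ref{4.4}, without losing any of the cohomology in degrees $j<d$ that we aim to kill. This is permissible because Lemma~\ref{4.4} demands acyclicity only in strictly positive chain-degrees, which after the reindexing $i\leftrightarrow d-i$ correspond exactly to the cohomology degrees $j<d$ of $\cg^*$; hypothesis~(2) is the conceptual reason that simply discarding $\cg^{d+1},\ldots,\cg^n$ is cost-free here. A secondary point requiring care is the identification $\Supp_R M=\{\mathfrak{m}\}$, which relies on the maximality of $Z$ as a component of the support of $\bigoplus_{j<d}\mathscr{H}^j(\cg^*)$ in $Y$.
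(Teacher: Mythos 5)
Your proof is correct, and it rests on the same key ingredient as the paper, namely Lemma \ref{4.4} (the Peskine--Szpiro acyclicity lemma), applied after localizing at the generic point of the codimension $\geq d$ component $Z$; your verification that the stalks are free over the local CM domain $\co_{Y,Z}$, that $\Supp_R M=\{\mathfrak{m}\}$ by maximality of $Z$, and that $H_i(F_*)\simeq \mathscr{H}^{d-i}(\cg^*)_Z$ for $i\geq 1$ is exactly what is needed and is left implicit in the paper's terse ``apply the last lemma.'' Where you genuinely diverge is in the truncation step: the paper uses hypothesis (2) together with a downward-induction splitting argument (as in Proposition \ref{propa17}) to show that $\Ker\delta^j$ is a direct summand of $\cg^j$ for $j\geq d$, thereby replacing $\cg^*$ by a locally free complex of length $d$ with \emph{unchanged} cohomology in all degrees, whereas you perform the brutal truncation, simply discarding $\cg^{d+1},\dots,\cg^n$. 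This changes $\mathscr{H}^d$ into a cokernel and kills the higher cohomology, but since Lemma \ref{4.4} only involves $\oplus_{i\geq 1}H_i(F_*)$, i.e.\ the original cohomology in degrees $<d$, nothing is lost. In particular, contrary to your closing remark, your argument never actually invokes hypothesis (2): the discard is cost-free in degrees $<d$ regardless, so your proof is slightly more general (and shorter), while the paper's splitting route is the one that requires (2) and has the advantage of preserving the full cohomology, consistent with the summand technique it reuses elsewhere (e.g.\ in Proposition \ref{propa17} and Lemma \ref{4.5}).
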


\begin{proof}  We first claim by downward induction that $\Ker \delta^j$ is a
   direct summand of $\cg^j$, for any $j\geq d$.  The proof is similar to that
   given in the proof of Proposition  \ref{propa17}.  Thus,
   $$H^*(\cg^*) \simeq H^*(\cf^*),$$ where
    \[
\cf^i = \cg^i \,\, \text{for all } i<d , \,\,
\cf^d = \Ker \delta^d, \,\,\text{and}\, \,
\cf^i = 0 \,\,\,\text{for   }\, i>d.
  \]
Thus,  we can assume that $\cg^i=0$, for all $i>d$.  Now, we apply the last lemma
to get the result.
   \end{proof}

  \begin{proposition} \label{propa18}  For any $u,w\in W$,
    \[
\ext^j_{\co_{\bar{X}}} (\co_{X^u}, \co_{X_w}) =0, \quad\text{ for all } j<\ell (u).
    \]
    Thus,
    \[\tor_j^{\co_{\bar{X}}} (\xi^u, \co_{X_w})
    =0,\,\,\,\text{ for \,all\,}  j> 0.\]
  \end{proposition}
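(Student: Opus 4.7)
The plan is to deduce the Ext vanishing from the acyclicity result Corollary~\ref{4.3} applied to essentially the same complex already built on each chart $V^v$ in the proof of Proposition~\ref{propa17}, and then to read off the Tor statement directly from the isomorphism established there. The high-level picture: the complex computing $\ext^\bullet_{\co_{\bar X}}(\co_{X^u},\co_{X_w})|_{V^v}$ is a bounded complex of locally free sheaves on the irreducible Cohen--Macaulay open set $X_w\cap V^v$ of dimension $\ell(w)$; its top-degree cohomology vanishes by the ``in particular'' clause of Proposition~\ref{propa17}, and the remaining cohomology is supported on $X^u_w\cap V^v$, which has codimension $\ell(u)$ by Proposition~\ref{n5.6}. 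These are precisely the inputs required to conclude by Corollary~\ref{4.3}.

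In detail, fix $v\in W$; it suffices to show $\ext^j_{\co_{\bar X}}(\co_{X^u},\co_{X_w})|_{V^v}=0$ for $j<\ell(u)$. From the proof of Proposition~\ref{propa17}, on $V^v$ there is a projective resolution
\[
0\to \cp_{\ell(u)}\to\cdots\to\cp_0\to \ext^{\ell(u)}_{\co_{V^v}}(\co_{X^u},\co_{V^v})\to 0,
\]
with each $\cp_i$ a direct summand of a finitely generated free $\co_{V^v}$-module. Tensoring with $\co_{X_w}$ and reindexing as a cochain complex $\cg^\bullet$ with $\cg^i:=\cp_{\ell(u)-i}\otimes_{\co_{V^v}}\co_{X_w}$, one obtains a bounded complex of locally free $\co_{X_w\cap V^v}$-modules (with $\cg^i=0$ for $i>\ell(u)$) whose cohomology is $\mathscr{H}^i(\cg^\bullet)=\ext^i_{\co_{\bar X}}(\co_{X^u},\co_{X_w})$. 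I would then apply Corollary~\ref{4.3} to $\cg^\bullet$ on $Y:=X_w\cap V^v$ with $d:=\ell(u)$: hypothesis~(2) reduces to $\ext^j(\co_{X^u},\co_{X_w})=0$ for $j>\ell(u)$, which is the ``in particular'' part of Proposition~\ref{propa17}; hypothesis~(1) is verified because the support of $\bigoplus_{i<\ell(u)}\mathscr{H}^i(\cg^\bullet)$ lies in $X^u\cap X_w\cap V^v=X^u_w\cap V^v$, an irreducible subset of dimension $\ell(w)-\ell(u)$ (Proposition~\ref{n5.6}), i.e.\ of codimension $\ell(u)$ in $Y$. The corollary then yields $\mathscr{H}^i(\cg^\bullet)=0$ for $i<\ell(u)$, which is the desired vanishing on $V^v$; the empty case $X^u_w\cap V^v=\emptyset$ is trivial.

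The Tor vanishing then follows at once from the isomorphism
\[
\tor_j^{\co_{\bar X}}(\xi^u,\co_{X_w})\simeq e^{-\rho}\cl(-\rho)\otimes \ext^{\ell(u)-j}_{\co_{\bar X}}(\co_{X^u},\co_{X_w})
\]
of Proposition~\ref{propa17}: for $j>0$ the exponent $\ell(u)-j$ is strictly less than $\ell(u)$ (or negative), so the Ext vanishes by what was just proved. The step I expect to be the main obstacle is the clean verification of hypothesis~(1) of Corollary~\ref{4.3}: this rests on Cohen--Macaulayness of $X_w$ (standard for Kac--Moody Schubert varieties) and on the correct dimension $\ell(w)-\ell(u)$ of the Richardson intersection $X^u_w$, both of which are geometric inputs drawn from Section~\ref{sec1} and the Schubert-variety literature rather than facts produced inside the present argument.
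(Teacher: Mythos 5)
Your proposal is correct and is essentially the paper's own argument: both deduce the Ext vanishing from Corollary \ref{4.3} with $d=\ell(u)$, using the irreducibility and Cohen--Macaulayness of $X_w\cap V^v$, the fact that $X^u\cap X_w$ has codimension $\ell(u)$ in $X_w$, and the vanishing $\ext^j_{\co_{\bar X}}(\co_{X^u},\co_{X_w})=0$ for $j>\ell(u)$ from Proposition \ref{propa17}, and then obtain the Tor vanishing from the isomorphism in Proposition \ref{propa17}. The only cosmetic difference is that you apply the corollary to the truncated complex $\cp_{\bullet}\otimes_{\co_{V^v}}\co_{X_w}$ already constructed in the proof of Proposition \ref{propa17}, whereas the paper applies it to the full complex $\home_{\co_{\bar X}}(\cf_{\bullet},\co_{X_w})$ (its stated choice ``$d=\ell(w)$'' being a slip for $d=\ell(u)$, which is the value you correctly use).
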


  \begin{proof}  We can, of course, replace $\bar{X}$ by $V^v$ (for $v\in W$).  Consider a locally $\co_{V^v}$-free resolution of finite rank:
    \[
0 \to \cf_n \to \cf_{n-1} \to \cdots \to \cf_0 \to \co_{X^u\cap V^v} \to 0.
  \]
Then, restricted to the open set $V^v$, $\ext^j_{\co_{\bar{X}}}(\co_{X^u}, \co_{X_w})$ is the $j$-th cohomology of the complex
  \[
0 \from \home_{\co_{\bar{X}}} (\cf_n, \co_{X_w}) \from \cdots \from \home_{\co_{\bar{X}}} (\cf_0, \co_{X_w}) \from 0.
  \]
Since $\cf_j$ is $\co_{\bar{X}}$-free,
  \[
\home_{\co_{\bar{X}}} (\cf_j, \co_{X_w}) \simeq \home_{\co_{X_w}} (\cf_j\underset{\co_{\bar{X}}}{\otimes}\co_{X_w}, \co_{X_w}).
  \]

Now, the first part of the proposition follows from Corollary
\ref{4.3} applied to $d=\ell(u)$  and Proposition \ref{propa17},
 by observing that the sheaf $\ext^j_{\co_{\bar{X}}} (\co_{X^u}, \co_{X_w})$
 has support in $X^u\cap X_w$, $X_w$ is an irreducible  CM variety
 (cf. [K, Theorem 8.2.2(c)]), and for
 $u\leq w$, $\codim_{X_w}(X^u\cap X_w) = \ell (u)$ (see [K, Lemma 7.3.10]).

 The second assertion of the proposition follows from the first part and Proposition
 \ref{propa17}.
    \end{proof}
As a consequence of Proposition \ref{propa18}, we prove Theorem \ref{thma14} (a).

\vskip1ex
  \noindent
  {\it Proof of Theorem \ref{thma14} (a).}
   Since the assertion is local in $\bp$, we can assume that $\bar{Y}_\bp
   \simeq \bp\times \bar{Y}$.  Thus,
  \beqn \label{e4.7}
\pi^*\, \co_{\bp_{\bf j}}(-\partial\bp_{\bf j}) \simeq \co_{\bp_{\bf j}}
(-\partial\bp_{\bf j})\boxtimes \co_{\bar{Y}}
  \eeqn
  \beqn \label{e4.8}
\widetilde{\xi^u\boxtimes\xi^v} \cong \co_{\bp}\boxtimes (\xi^u\boxtimes\xi^v)
  \eeqn
  \beqn \label{e4.9}
\co_{{(X_w\times X_w)}_\bp} \simeq \co_{\bp}\boxtimes \bigl( \co_{X_w}\boxtimes\co_{X_w}\bigr) .
  \eeqn
We assert that for any $\co_{(Y_w)_{\bp}}$-module $\cs$ (where $(Y_w)_{\bp}:=
 {(X_w \times X_w)}_\bp$)
  \beqn \label{e4.11}
\tor_i^{\co_{\bar{Y}_\bp}}\Bigl( \pi^* \bigl(\co_{\bp_{\bf j}}
(-\partial\bp_{\bf j})\bigr) \otimes
(\widetilde{\xi^u\boxtimes\xi^v}), \cs\bigr)
\simeq \tor_i^{\co_{(Y_w)_{\bp}}} \Bigl( \co_{(Y_w)_{\bp}}
\underset{\co_{\bar{Y}_\bp}}{\otimes}
\bigl(\pi^* \co_{\bp_{\bf j}}(-\partial\bp_{\bf j}) \otimes
(\widetilde{\xi^u\boxtimes\xi^v})\bigr) , \cs\Bigr).
   \eeqn
To prove \eqref{e4.11}, from Proposition \ref{propa18} and the isomorphisms
\eqref{e4.7} --\eqref{e4.9},
 it suffices to observe the following (where we take $R=\co_{\bar{Y}_\bp}, S={\co_{(Y_w)_{\bp}}}, 
M= \pi^* \bigl(\co_{\bp_{\bf j}}
(-\partial\bp_{\bf j})\bigr) \otimes
(\widetilde{\xi^u\boxtimes\xi^v})$ and  $N= \cs$).

 Let $R, S$ be commutative rings with ring homomorphism $R\to S$, $M$
 an $R$-module and $N$ an $S$-module, then 
$N \otimes_S (S\otimes_R M) \simeq N\otimes_R M.$ This gives rise to the following  isomorphism provided
$ \Tor_j^R(S,M) =0,\, \forall j>0$.
  \beqn \label{neweqn6} \Tor_i^R(M,N)\simeq \Tor_i^S(S\otimes_R M,N).
\eeqn

 Clearly,
 \begin{align*}
\tor_i^{\co_{{(Y_w)_{\bp}}}}&\Bigl(\co_{{(Y_w)_{\bp}}}\underset{\co_{\bar{Y}_\bp}}{\otimes}
\bigl(\pi^*\co_{\bp_{\bf j}} (-\partial\bp_{\bf j}) \otimes
(\widetilde{\xi^u\boxtimes\xi^v})\bigl) , \gam_* \tilde{\Del}_*\co_{(X_w)_{\bp}}\Bigr)\\
& \simeq \tor_i^{\co_{{(Y_w)_{\bp}}}}\Bigl((\gamma^{-1})_*\bigl(\co_{{(Y_w)_{\bp}}}
\underset{\co_{\bar{Y}_\bp}}{\otimes}
\bigl(\pi^*\co_{\bp_{\bf j}} (-\partial\bp_{\bf j}) \otimes
(\widetilde{\xi^u\boxtimes\xi^v})\bigr)\bigr) , \tilde{\Del}_*\co_{(X_w)_{\bp}}\Bigr).
\end{align*}

By Lemma \ref{lem6}, the closures of $\Gamma$-orbits in $(Y_w)_{\bp}$ are precisely
$(X_x\times X_y)_\bp$, for
$x,y \leq w$. By Proposition \ref{propa18} and the isomorphism \eqref{neweqn6} (applied to 
 $R=\co_{\bar{X}}, S=\co_{X_w}, 
M= \xi^u$
 and  $N= \co_{X_x}$), we get 
\beqn \label{neweqn7} \tor_j^{\co_{X_w}} (\co_{X_w}\otimes_{\co_{\bar{X}}}\,\xi^u, \co_{X_x}) = 0, \,\, \forall x\leq w, j\geq 1.
\eeqn
 Further, by the identities \eqref{e4.7} - \eqref{e4.9} and \eqref{neweqn7},
 $\mathcal{F}:= \co_{{(Y_w)_{\bp}}}\underset{\co_{\bar{Y}_\bp}}{\otimes}
\bigl(\pi^*\co_{\bp_{\bf j}} (-\partial\bp_{\bf j}) \otimes
(\widetilde{\xi^u\boxtimes\xi^v})\bigl)$ is homologically transverse to the
 $\Gamma$-orbit closures in $(Y_w)_{\bp}$. Thus,
applying [AGM, Theorem 2.3] (with
their $G=\Gamma$, $X = (Y_w)_{\bp}$, $\mathcal{E}= \tilde{\Del}_*\co_{(X_w)_{\bp}}$,
and their
$\mathcal{F}$ as the above $\mathcal{F}$) (a result originally due to Sierra [Si, Theorem 1.2])  we get the following identity:
 \beqn\label{e4.12}
\tor_i^{\co_{{(Y_w)_{\bp}}}}\Bigl(\co_{{(Y_w)_{\bp}}}\underset{\co_{\bar{Y}_\bp}}{\otimes}
\bigl(\pi^*\co_{\bp_{\bf j}} (-\partial\bp_{\bf j}) \otimes
(\widetilde{\xi^u\boxtimes\xi^v})\bigl) , \gam_* \tilde{\Del}_*\co_{(X_w)_{\bp}}\Bigr)
=0,
\,\,\text{for all}\,  i>0.
  \eeqn
(Observe that
 even though $\Gamma$ is infinite dimensional,
its action on ${(Y_w)_{\bp}}$ factors through the action of a finite dimensional quotient group
$\bar{\Gamma}$  of $\Gamma$.) 

Observe that $\gam (\tilde{\Delta}(X_w)_{\bp}) \subset (Y_w)_{\bp}$ and thus
by \eqref{e4.11}-\eqref{e4.12}, we get
  \[
\tor_i^{\co_{\bar{Y}_\bp}}\bigl( \pi^* (\co_{\bp_{\bf j}} (-\partial\bp_{\bf j})) \otimes
(\widetilde{\xi^u\boxtimes\xi^v}),
 \gam_*\tilde{\Del}_* \co_{(X_w)_{\bp}}\bigr) =0, \,\,\text{for all}\,  i>0.
  \]
  This proves Theorem \ref{thma14} (a).
  \qed

   \begin{lemma} \label{4.5} For any $u,w\in W$,
  \[
\tor_j^{\co_{\bar{X}}} (\co_{X^u}, \co_{X_w}) = 0, \qquad\text{for all $j>0$.}
  \]
  \end{lemma}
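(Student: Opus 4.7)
The plan is to mimic the proof of Proposition \ref{propa18} with the $\home$-complex replaced by the $\otimes$-complex. Since the statement is local on $\bar X$, I fix $v\in W$ and work on $V^v$. If $u\not\leq w$, then $X^u\cap X_w=\emptyset$ and each $\tor_j^{\co_{\bar X}}(\co_{X^u},\co_{X_w})$ is supported on the empty set, so the lemma is trivial; I therefore assume $u\leq w$. The crux is to produce a locally free resolution of $\co_{X^u}$ on $V^v$ of length exactly $\ell(u)$, so that Corollary \ref{4.3} (with $d=\ell(u)$) can be applied to the tensored complex against $\co_{X_w}$.

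I would obtain this length-$\ell(u)$ resolution by dualizing the auxiliary resolution $0\to \cp_{\ell(u)}\to\cdots\to\cp_0\to \cm_{\ell(u)}\to 0$ of $\cm_{\ell(u)}:=\ext^{\ell(u)}_{\co_{\bar X}}(\co_{X^u},\co_{\bar X})$ constructed inside the proof of Proposition \ref{propa17} from the Ext-vanishing \eqref{eq1.0} and the downward-induction splitting argument. Dualizing $\cp_\bullet$ term by term gives a length-$\ell(u)$ complex of locally free sheaves whose $i$th cohomology is $\ext^i_{\co_{\bar X}}(\cm_{\ell(u)},\co_{\bar X})$. Using Grothendieck--Serre biduality on $V^v\cong U^-B/B$ (which has regular local rings) together with the Cohen--Macaulay property of $X^u$ (implicit in \eqref{eq1.0} and made explicit in the appendix by Kashiwara), this cohomology vanishes for $i\ne\ell(u)$ and equals $\co_{X^u}\otimes\cl(2\rho)$ for $i=\ell(u)$. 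Reindexing homologically yields a length-$\ell(u)$ locally free resolution $\cq_\bullet\to\co_{X^u}$ on $V^v$ (up to a line-bundle twist, which is immaterial for the computation of Tor).

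With $\cq_\bullet$ in hand, the proof closes in parallel with Proposition \ref{propa18}. Tensor $\cq_\bullet$ with $\co_{X_w}$ to obtain a length-$\ell(u)$ complex of locally free $\co_{X_w}$-modules on $X_w\cap V^v$; its homology in degree $j$ is $\tor_j^{\co_{\bar X}}(\co_{X^u},\co_{X_w})$. Reindex cohomologically by $\cg^i:=\cq_{\ell(u)-i}\otimes_{\co_{\bar X}}\co_{X_w}$, so that $\mathscr{H}^{\ell(u)}(\cg^\bullet)=\co_{X^u}\otimes\co_{X_w}$ and $\mathscr{H}^{\ell(u)-j}(\cg^\bullet)=\tor_j(\co_{X^u},\co_{X_w})$ for $j\geq 1$. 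Apply Corollary \ref{4.3} with $Y=X_w\cap V^v$ (irreducible and CM by [K, Theorem 8.2.2(c)]) and $d=\ell(u)$: the support of $\bigoplus_{i<\ell(u)}\mathscr{H}^i$ lies in $X^u\cap X_w$, whose codimension in $X_w$ equals $\ell(u)$ by [K, Lemma 7.3.10], while the hypothesis $\mathscr{H}^j=0$ for $j>\ell(u)$ is automatic since the complex is zero in those degrees. The corollary then gives $\mathscr{H}^i=0$ for $i<\ell(u)$, i.e.\ $\tor_j^{\co_{\bar X}}(\co_{X^u},\co_{X_w})=0$ for all $j>0$.

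The main obstacle is the biduality step in paragraph two, where in the Kac-Moody setting one cannot invoke the usual finite-dimensional Auslander--Buchsbaum argument and must instead import the CM property of $X^u$ externally (from Kashiwara--Shimozono or from the appendix). Once that is granted, the rest is a direct transposition of the Peskine--Szpiro acyclicity technique that was already used for $\ext$ in Proposition \ref{propa18}.
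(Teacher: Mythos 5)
Your overall strategy matches the paper's: produce a locally free resolution of $\co_{X^u}$ on $V^v$ of length exactly $\ell(u)$, tensor it with $\co_{X_w}$, and feed the resulting length-$\ell(u)$ complex of free $\co_{X_w\cap V^v}$-modules (whose higher homology is supported in $X^u\cap X_w$, of codimension $\ell(u)$ in the irreducible CM variety $X_w$) into the Peskine--Szpiro acyclicity statement (Lemma \ref{4.4}/Corollary \ref{4.3}). That endgame, including the choice $d=\ell(u)$ and the trivial reduction to $u\le w$, is correct and is exactly how the paper closes the proof.

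The gap is in how you produce the short resolution. You dualize the length-$\ell(u)$ resolution $\cp_\bullet\to\cm_{\ell(u)}$ from the proof of Proposition \ref{propa17} and then invoke ``Grothendieck--Serre biduality on $V^v$, which has regular local rings, together with the CM property of $X^u$'' to get $\ext^i_{\co_{\bar X}}(\cm_{\ell(u)},\co_{\bar X})=0$ for $i\ne\ell(u)$ and $\cong\co_{X^u}$ for $i=\ell(u)$ (incidentally, no twist by $\cl(2\rho)$ appears when both duals are taken into $\co_{\bar X}$, though as you say a line-bundle twist would be harmless). This is precisely where the standard references do not apply: $V^v$ is an infinite-dimensional scheme with non-Noetherian local rings, so ``regular local ring'' and the usual dualizing-complex/biduality machinery are not available off the shelf, and the CM property of $X^u$ --- which in this paper is encoded exactly by the Ext-vanishing \eqref{eq1.0} --- does not by itself yield the double-Ext statement without an argument. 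The statement you need is true and provable with bare hands from \eqref{eq1.0}: dualizing $\cp_\bullet$ term by term returns the tail $\cf_{\ell(u)-1}\to\cdots\to\cf_0$ of the original resolution of $\co_{X^u\cap V^v}$ with $(\Ker \del^*_{\ell(u)})^*$ (a split summand of $\cf_{\ell(u)}$) on top; exactness of the original resolution, plus $\home_{\co_{V^v}}(\cm_{\ell(u)},\co_{V^v})=0$ because $\cm_{\ell(u)}$ is killed by the ideal of $X^u$ while $\co_{V^v}$ is a domain, then give the claimed vanishing and the identification of the top cohomology with $\co_{X^u}$. But this computation is essentially the paper's own proof of the lemma, which avoids duality theory altogether and shows directly, by downward induction using \eqref{eq1.0}, that $\OpIm\del_i$ is a direct summand of $\cf_i$ for all $i\ge\ell(u)$, thereby truncating the given resolution to length $\ell(u)$. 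So as written your key step is asserted rather than proved; supplying the missing argument turns your proposal into the paper's proof.
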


  \begin{proof}  We can of course replace $\bar{X}$ by the open set $V^v$
  (for  $v\in W$) and consider the free resolution by $\co_{V^v}$-modules of
  finite rank:
    \[
0 \to \cf_n \overset{\del_{n-1}}{\longrightarrow} \cf_{n-1} \to \cdots
\overset{\del_0}{\longrightarrow} \cf_0 \to \co_{X^u\cap V^v} \to 0.
  \]
By downward induction, we show that $\cd_i := \OpIm \del_i$ is a direct
summand of $\cf_i$, for all $i\geq \ell (u)$. Of course, the assertion holds
for $i=n$.  By induction, assume that $\cd_{i+1}$ is a direct summand (where
 $i\geq \ell (u)$).  Thus,
  \beqn
\tag{$\cc_1$} 0\to \cd_{i+1}^{\bot} \overset{\del_i}{\longrightarrow}
\cf_i \to \cdots \to \cf_0 \to \co_{X^u\cap V^v} \to 0
  \eeqn
is a free resolution, where $\cd^{\bot}_{i+1}$ is any $\co_{V^v}$-submodule of
$\cf_{i+1}$ such that $\cd_{i+1} \oplus \cd^{\bot}_{i+1} = \cf_{i+1}$.

Consider the short exact sequence:
  \beqn
\tag{$\cc_2$} 0\to \cd_{i+1}^{\bot} \overset{\del_i}{\longrightarrow}
\cf_i \to \cf_i/ \del_i\bigl(\cd_{i+1}^{\bot}) \to 0.
  \eeqn

This gives rise to the exact sequence:
  \begin{align*}
0 \to &\home_{\co_{V^v}}\bigl(\cf_i/\del_i (\cd^{\bot}_{i+1}), \co_{V^v}\bigr)
\to \home_{\co_{V^v}}\bigl(\cf_i, \co_{V^v}\bigr) \overset{\del^*_i}{\longrightarrow}\\
&\home_{\co_{V^v}}\bigl(\cd^{\bot}_{i+1}, \co_{V^v}\bigr) \to \ext^1_{\co_{V^v}}
\bigl(\cf_i/\del_i (\cd^{\bot}_{i+1}), \co_{V^v}\bigr) \to 0,
  \end{align*}
where the last zero is due to the fact that $\cf_i$ is $\co_{V^v}$-free.

From the resolution ($\cc_1$) and the identity \eqref{eq1.0} (since $i\geq \ell (u)$ by
assumption), we see that the above map $\del^*_i$ is surjective.  Hence,
  \[
\ext^1_{\co_{V^v}} \bigl(\cf_i/\del_i (\cd^{\bot}_{i+1}), \co_{V^v}\bigr) =0
  \]
and hence
 \[
\ext^1_{\co_{V^v}} \bigl(\cf_i/\del_i (\cd^{\bot}_{i+1}), \cd^{\bot}_{i+1}\bigr) =0,
  \]
since $\cd^{\bot}_{i+1}$ is a free $\co_{V^v}$-module.

Thus, the short exact sequence ($\cc_2$) splits.  In particular, $\cd_i =
\OpIm \del_i$ is a direct summand.  This completes the induction and hence we
 get a free resolution:
  \beqn
\tag{$\cc_3$}  0 \to \cd^{\bot}_{\ell (u)} \to \cf_{\ell (u)-1} \to \cdots \to
\cf_0 \to \co_{X^u\cap V^v} \to 0.
  \eeqn

In particular,
  \[
\tor^{\co_{\bar{X}}}_j (\co_{X^u}, \co_{X_w})=0, \quad\text{for all } j> \ell (u).
  \]
Of course, $\tor^{\co_{\bar{X}}}_j (\co_{X^u}, \co_{X_w})$, restricted to $V^v$, is the
$j$-th homology of the chain complex (which is a complex of finitely generated free $\co_{X_w\cap V^v}$-modules)
  \beqn
\tag{$\cc_4$}  0 \to \cd^{\bot}_{\ell (u)} \otimes_{\co_{V^v}} \co_{X_w\cap V^v}
\to \cf_{\ell (u)-1} \otimes_{\co_{V^v}} \co_{X_w\cap V^v} \to \cdots \to
\cf_0 \otimes_{\co_{V^v}} \co_{X_w\cap V^v} \to 0.
  \eeqn
Clearly, the support of the homology $\oplus_{i\geq 1} \mathscr{H}_i (\cc_4)$ is
contained
in $X^u\cap X_w$.  As observed in the proof of Proposition \ref{propa18},
$X^u\cap X_w$ is of codimension $\ell (u)$
in $X_w$.

Thus, by Lemma \ref{4.4} with $d=\ell(u)$,
  \[  \mathscr{H}_i (\cc_4) =0, \qquad\text{for all } i>0.   \]
This proves the lemma.
  \end{proof}

\begin{remark} As pointed out by the referee, the above lemma can also be deduced from Proposition \ref{propa18} by using Theorem 
\ref{prop:main} and the long exact sequence for $\tor$.
\end{remark}
As a consequence of the above Lemma \ref{4.5}, we get the following generalization.
\begin{corollary} \label{newcor5.5} For any finite union $Y=\cup_{i=1}^k\, X^{v_i}$ of opposite Schubert varieties, and any $w\in W$,

\vskip1ex

(a)  $
\tor_j^{\co_{\bar{X}}} (\co_{Y}, \co_{X_w}) = 0,$ \,\,for all $j>0$.
  
(b) $H^j(X_n, \co_{Y \cap X_w}) =0,$ \,\, for all $j>0$,
\noindent
where $n$ is any positive integer such that $X_n\supset X_w$.

\vskip1ex

In particular, the lemma applies to $Y=\partial X^u$. 
\end{corollary}
\begin{proof} (a): We prove (a) by double induction on the number of components $k$ of $Y$ and the dimension of 
$Y \cap X_w$ (i.e., the largest dimension of the irreducible components of $Y \cap X_w$; we declare the dimension of 
the empty space to be $-1$). If $Y$ has one component, i.e., $k=1$, then (a) follows
from Lemma \ref{4.5}. If dim $(Y \cap X_w)= -1$ (i.e., $Y \cap X_w$ is empty), then clearly 
\beqn \label{newequation8} 
\tor_j^{\co_{\bar{X}}} (\co_{Y}, \co_{X_w}) = 0, \,\, \text{for all $j\geq 0$}.
\eeqn
So, assume that $k\geq 2$ and $Y \cap X_w$ is nonempty. We can assume that $v_1$ is not larger than any $v_i$, 
for $i\geq 2$ (for otherwise we can drop $X^{v_1}$ from the union without changing $Y$). Let $Y_1 :=X^{v_1}$ and 
$Y_2 :=\cup_{i\geq 2}\, X^{v_i}$. Then, if $Y_1 \cap X_w$ is nonempty, $Y_1 \cap X_w = X^{v_1}_w$ properly contains $ Y_1 \cap Y_2\cap X_w$, since 
$v_1 \in X^{v_1}_w$ but $v_1 \notin Y_2\cap X_w$. In particular, $X^{v_1}_w$ being irreducible, 
\beqn \label{newequation9} 
\dim (Y\cap X_w) \geq \dim (Y_1\cap X_w) > \dim (Y_1\cap Y_2\cap X_w).
\eeqn
The short exact sequence of sheaves:
\[\co_Y \to \co_{Y_1}\oplus \co_{Y_2} \to \co_{Y_1\cap Y_2} \to 0\]
yields the long exact sequence
\begin{align} \label{newequation10} 
\cdots \to \tor_{j+1}^{\co_{\bar{X}}}\left(\co_{Y_1\cap Y_2} , \co_{X_w}\right) \to &\tor_{j}^{\co_{\bar{X}}}\left(\co_{Y} , 
\co_{X_w}\right)\to \tor_{j}^{\co_{\bar{X}}}\left(\co_{Y_1}\oplus \co_{Y_2} , \co_{X_w}\right) \to \\
&\tor_{j}^{\co_{\bar{X}}}\left(\co_{Y_1\cap Y_2} , \co_{X_w}\right) \to \cdots . \notag
\end{align}
Now, since $Y_2$ has $k-1$ components, induction on the number of components gives
\beqn\label{neweqn11}
\tor_{j}^{\co_{\bar{X}}}\left(\co_{Y_1}\oplus \co_{Y_2} , \co_{X_w}\right) =0, \qquad\text{for all } j>0. 
\eeqn
Since the scheme theoretic intersection $Y_1\cap Y_2$ is reduced (cf. \S2) and it is a finite union of $X^u$ s with 
$\dim (Y\cap X_w) > \dim (Y_1\cap Y_2\cap X_w)$ (by equation \eqref{newequation9} ), by induction we get
\beqn\label{neweqn12}
\tor_{j}^{\co_{\bar{X}}}\left(\co_{Y_1 \cap Y_2} , \co_{X_w}\right) =0, \qquad\text{for all } j>0. 
\eeqn
So, from the equations \eqref{neweqn11} - \eqref{neweqn12}  and the exact sequence \eqref{newequation10}, we get (a).

\vskip1ex

(b) We use the same induction as in (a). For $k=1$, i.e.,  $Y\cap X_w= X_w^{v_1}$, the result is a particular case of 
[KuS, Corollary 3.2]. Now, take any $Y=\cup_{i=1}^k\, X^{v_i}$ and let $Y_1, Y_2$ be as in the (a)-part. By the (a)-part, we have the sheaf exact sequence:

\[
\xymatrix{
0 \ar[r]& \co_Y\otimes_{\co_{\bar{X}}}\,\co_{X_w}\ar[d]^{\wr}\ar[r] &   \left(\co_{Y_1}\oplus \co_{Y_2}\right) \otimes_{\co_{\bar{X}}}\,\co_{X_w}  
 \ar[d]^{\wr}\ar[r] &    \co_{Y_1\cap Y_2}\otimes_{\co_{\bar{X}}}\,\co_{X_w}\ar[d]^{\wr}\ar[r] & 0\\
0 \ar[r]& \co_{Y\cap X_w}\ar[r] &   \left(\co_{Y_1\cap X_w}\oplus \co_{Y_2\cap X_w}\right) \ar[r] &    \co_{Y_1\cap Y_2\cap X_w} \ar[r] & 0.
}
\]
The corresponding long exact cohomology sequence gives:
\begin{align*} \dots \to H^{j-1}\left(X_n, \co_{Y_1 \cap X_w}\oplus\co_{Y_2 \cap X_w} \right) \to & H^{j-1}\left(X_n, \co_{Y_1 \cap Y_2\cap X_w}\right) \to 
H^{j}\left(X_n, \co_{Y \cap X_w} \right) \to \\
&H^{j}\left(X_n, \co_{Y_1 \cap X_w}\oplus\co_{Y_2 \cap X_w} \right) \to \dots .
\end{align*}
By induction, 
\[H^{j}\left(X_n, \co_{Y_1 \cap X_w} \oplus \co_{Y_2 \cap X_w} \right) =0,\,\,\forall j>0, \,\,\,\text{and}\,\,  H^{j-1}\left(X_n, \co_{Y_1 \cap Y_2\cap X_w} \right) =0,\,\,\forall j>1.
\]
Thus, from the above long exact sequence,
\[H^{j}\left(X_n, \co_{Y\cap X_w}\right)=0,\,\,\forall j>1.
\]
Write $Y_1\cap Y_2= \cup_{l=1}^d\, X^{u_l}$. Hence, $Y_1\cap Y_2\cap X_w= \cup_{l=1}^d\, X^{u_l}_w$. Thus, if nonempty, 
$Y_1\cap Y_2\cap X_w$ is connected as each of  $X^{u_j}_w$ contains $w$. This gives that 
\[H^{0}\left(X_n, \co_{Y_1 \cap X_w} \oplus \co_{Y_2 \cap X_w} \right) \to H^{0}\left(X_n, \co_{Y_1 \cap Y_2\cap X_w}\right) 
\]
is surjective, which gives the vanishing of $H^{1}\left(X_n, \co_{Y \cap X_w}\right).$ This proves the (b)-part.
\end{proof}

As a consequence of the above Lemma \ref{4.5}, we get the following.

  \begin{lemma}  \label{4.6} For any $u,w\in W$ and any $j\geq 0$,
   \beqn  \label{eq4.6.1}
\ext^j_{\co_{X_w}} \bigl( \co_{X^u\cap X_w}, \co_{X_w}\bigr) =0, \qquad\text{for }
 j\neq \ell (u).
  \eeqn
Moreover,
 \beqn \label{e4.6.2}
\ext^j_{\co_{\bar{X}}} (\co_{X^u}, \co_{\bar{X}}) \otimes_{\co_{\bar{X}}}
 \co_{X_w} \simeq \ext^j_{\co_{X_w}} \bigl( \co_{X^u\cap X_w}, \co_{X_w}\bigr) .
  \eeqn
  \end{lemma}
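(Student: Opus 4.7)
The plan is to build, on each quasi-compact open $V^v \subset \bar{X}$, a single locally free resolution of $\co_{X^u \cap V^v}$ and use it to compute both sides of \eqref{e4.6.2} simultaneously while extracting \eqref{eq4.6.1} along the way. Fix $v \in W$ and a locally free finite-rank $\co_{V^v}$-resolution $\cf_\bullet \to \co_{X^u \cap V^v}$, which exists by Lemma \ref{2.4}. The first key input is Lemma \ref{4.5}: it forces $\cf_\bullet \otimes_{\co_{V^v}} \co_{X_w \cap V^v} \to \co_{X^u \cap X_w \cap V^v}$ to be a locally free $\co_{X_w \cap V^v}$-resolution. Since each $\cf_i$ is finite and free, adjunction together with the elementary identity $\home_R(R^n, R') = \home_R(R^n, R) \otimes_R R'$ yields
\[
\ext^j_{\co_{X_w}}(\co_{X^u \cap X_w}, \co_{X_w})\big|_{V^v} \simeq H^j\bigl(G^\bullet \otimes_{\co_{V^v}} \co_{X_w \cap V^v}\bigr),
\]
where $G^\bullet := \home_{\co_{V^v}}(\cf_\bullet, \co_{V^v})$.

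The next step is to analyze $G^\bullet$ itself. By the identity \eqref{eq1.0}, its cohomology is concentrated in degree $\ell(u)$, where it equals $\ext^{\ell(u)}_{\co_{\bar{X}}}(\co_{X^u}, \co_{\bar{X}})|_{V^v}$. Since $G^\bullet$ is a bounded complex of finite free $\co_{V^v}$-modules, it is quasi-isomorphic in the derived category to $\ext^{\ell(u)}_{\co_{\bar{X}}}(\co_{X^u}, \co_{\bar{X}})[-\ell(u)]$, and the standard hyper-Tor argument (identical in spirit to the truncation trick used in the proof of Proposition \ref{propa17}) gives
\[
H^j\bigl(G^\bullet \otimes_{\co_{V^v}} \co_{X_w \cap V^v}\bigr) \simeq \tor_{\ell(u)-j}^{\co_{V^v}}\bigl(\ext^{\ell(u)}_{\co_{\bar{X}}}(\co_{X^u}, \co_{\bar{X}}),\, \co_{X_w \cap V^v}\bigr).
\]

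To conclude, I invoke Proposition \ref{propa18}. Since $\xi^u = e^{-\rho}\cl(-\rho) \otimes \ext^{\ell(u)}_{\co_{\bar{X}}}(\co_{X^u}, \co_{\bar{X}})$ differs from $\ext^{\ell(u)}_{\co_{\bar{X}}}(\co_{X^u}, \co_{\bar{X}})$ by a line bundle twist, the vanishing $\tor_i^{\co_{\bar{X}}}(\xi^u, \co_{X_w}) = 0$ for $i > 0$ transfers to the same vanishing with $\ext^{\ell(u)}_{\co_{\bar{X}}}(\co_{X^u}, \co_{\bar{X}})$ in place of $\xi^u$. The right-hand Tor in the display above then vanishes for $j \neq \ell(u)$, proving \eqref{eq4.6.1}; for $j = \ell(u)$, it collapses to the ordinary tensor product $\ext^{\ell(u)}_{\co_{\bar{X}}}(\co_{X^u}, \co_{\bar{X}}) \otimes_{\co_{V^v}} \co_{X_w \cap V^v}$. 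Combined with the fact that both sides of \eqref{e4.6.2} vanish for $j \neq \ell(u)$ (the left by \eqref{eq1.0}, the right by \eqref{eq4.6.1}), this establishes \eqref{e4.6.2}. No step is particularly delicate; the only non-routine verification is the hyper-Tor identity for $G^\bullet$, which is a standard consequence of the cohomology of $G^\bullet$ being concentrated in a single degree. One could alternatively organize the first assertion around the Cartan--Eilenberg change-of-rings spectral sequence $E_2^{p,q} = \ext^p_{\co_{X_w}}(\tor_q^{\co_{\bar{X}}}(\co_{X^u}, \co_{X_w}), \co_{X_w}) \Rightarrow \ext^{p+q}_{\co_{\bar{X}}}(\co_{X^u}, \co_{X_w})$, which degenerates by Lemma \ref{4.5}; but the resolution-based approach above has the advantage of proving both parts at once.
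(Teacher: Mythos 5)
Your argument is correct, and it reaches both assertions by a route that is close to, but organized differently from, the paper's. The shared skeleton is the same: restrict to $V^v$, use Lemma \ref{2.4} to get a finite locally free resolution $\cf_\bullet$ of $\co_{X^u\cap V^v}$, use Lemma \ref{4.5} to see that $\cf_\bullet\otimes_{\co_{V^v}}\co_{X_w\cap V^v}$ resolves $\co_{X^u\cap X_w\cap V^v}$, and use the finite-free adjunction $\home_{\co_{X_w}}(\cf_i\otimes\co_{X_w},\co_{X_w})\simeq \home_{\co_{V^v}}(\cf_i,\co_{V^v})\otimes\co_{X_w}$ (the paper's \eqref{e4.6.3}) so that one complex computes both sides. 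The differences: (i) for \eqref{eq4.6.1} the paper applies the acyclicity Lemma \ref{4.4} afresh over the CM variety $X_w$, using $\codim_{X_w}(X^u\cap X_w)=\ell(u)$ and the specially trimmed length-$\ell(u)$ resolution ($\cc_3$) built in the proof of Lemma \ref{4.5}; you instead import the already-proved Tor vanishing of Proposition \ref{propa18} (transferred from $\xi^u$ to $\ext^{\ell(u)}_{\co_{\bar{X}}}(\co_{X^u},\co_{\bar{X}})$ by an invertible twist, which is legitimate and non-circular since \ref{propa18} precedes this lemma) and deduce \eqref{eq4.6.1} and \eqref{e4.6.2} simultaneously; (ii) you replace the paper's explicit splitting bookkeeping (direct summand kernels in degrees $\ge\ell(u)$, as in Proposition \ref{propa17}) by the observation that $G^\bullet=\home_{\co_{V^v}}(\cf_\bullet,\co_{V^v})$ is a bounded complex of flats with cohomology concentrated in degree $\ell(u)$ by \eqref{eq1.0}, hence $H^j(G^\bullet\otimes\co_{X_w})\simeq\tor^{\co_{V^v}}_{\ell(u)-j}\bigl(\ext^{\ell(u)}_{\co_{\bar{X}}}(\co_{X^u},\co_{\bar{X}}),\co_{X_w}\bigr)$, a canonical identification that also glues over the $V^v$; (iii) the logical direction between ``$\ext_{\co_{X_w}}$-vanishing'' and ``Tor-vanishing'' is reversed: the paper proves \eqref{eq4.6.1} first and uses it to get the exactness needed for \eqref{e4.6.2}, while you go from Tor-vanishing to both. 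What each buys: the paper's route is self-contained at the level of explicit complexes and does not lean on Proposition \ref{propa18}; yours is shorter, avoids a second run of the Peskine--Szpiro argument and of the CM/codimension count (those are hidden inside \ref{propa18}), at the cost of invoking the hyper-Tor (K-flatness) identification, which you correctly note can be replaced by the truncation trick of Proposition \ref{propa17}. Your closing remark about the change-of-rings spectral sequence degenerating via Lemma \ref{4.5} is also a valid alternative for \eqref{eq4.6.1}, though by itself it would not yield \eqref{e4.6.2}, as you acknowledge.
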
.

  \begin{proof}  Again we can replace $\bar{X}$ by $V^v$ (for $v\in W$).
  Consider
a $\co_{V^v}$-free resolution (cf. the proof of Lemma \ref{4.5}, specifically $\cc_3$):
  \[
0 \to \cf_{\ell (u)} \to \cdots \to \cf_0 \to \co_{X^u\cap V^v} \to 0.
  \]

By Lemma \ref{4.5}, the following is a free $\co_{X_w\cap V^v}$-module resolution:
  \beqn \label{neweqn15}
0 \to \cf_{\ell (u)} \otimes_{\co_{V^v}} \co_{X_w\cap V^v} \to \cdots \to
\cf_0\otimes_{\co_{V^v}} \co_{X_w\cap V^v} \to
 \co_{X^u\cap V^v} \otimes_{\co_{V^v}}\co_{X_w\cap V^v} \to 0.
   \eeqn

Observe that $\co_{X^u\cap V^v} \otimes_{\co_{V^v}}\co_{X_w\cap V^v}$
$\simeq \co_{X^u\cap X_w\cap V^v}$, being the definition of the scheme theoretic intersection.  Thus, $\ext^j_{\co_{X_w}}
\bigl( \co_{X^u\cap X_w}, \co_{X_w}\bigr)$, restricted to the open set
$X_w\cap V^v$, is the $j$-th cohomology of the cochain complex:
  \[
0 \leftarrow \home_{\co_{X_w\cap V^v}} \Bigl( \cf_{\ell (u)} \otimes_{\co_{V^v}}
\co_{X_w\cap V^v}, \co_{X_w\cap V^v}\Bigr) \leftarrow \cdots
\leftarrow \home_{\co_{X_w\cap V^v}} \Bigl( \cf_0 \otimes_{\co_{V^v}}
\co_{X_w\cap V^v}, \co_{X_w\cap V^v}\Bigr) \leftarrow 0.
  \]
Since $\ext^{j}_{\co_{X_w}} \big( \co_{X^u\cap X_w}, \co_{X_w}\bigr)$ has support in $X^u\cap X_w$ and $X^u\cap X_w$ has codimension 
$\ell(u)$ in $X_w$ (see the proof of Proposition \ref{propa18},  by Lemma \ref{4.4}, we get $\ext^j_{\co_{X_w}} \big( \co_{X^u\cap X_w},
\co_{X_w}\bigr) =0,$ for any
 $j\neq \ell (u)$.
This proves \eqref{eq4.6.1}.

For any $i$,
  \beqn \label{e4.6.3}
\home_{\co_{X_w\cap V^v}} \Bigl( \cf_i \otimes_{\co_{V^v}} \co_{X_w\cap V^v},
\co_{X_w\cap V^v}\Bigr) \simeq \home_{\co_{V^v}} ( \cf_i, \co_{V^v})
 \otimes_{\co_{V^v}} \co_{X_w\cap V^v}.
  \eeqn
Further, by the identity \eqref{eq1.0},
  \[
0 \leftarrow \ext^{\ell (u)}_{\co_{V^v}} \bigl(\co_{X^u\cap V^v}, \co_{V^v}\bigr)
 \leftarrow \home_{\co_{V^v}} (\cf_{\ell (u)}, \co_{V^v}) \leftarrow \cdots
\leftarrow \home_{\co_{V^v}} ( \cf_0 ,  \co_{V^v}) \leftarrow 0
  \]
is a free $\co_{V^v}$-module resolution of $\ext^{\ell (u)}_{\co_{V^v}}
(\co_{X^u\cap V^v}, \co_{V^v})$.  Hence, by the resolution \eqref{neweqn15} and the isomorphism \eqref{e4.6.3}, we get
  \beqn \label{neweqn16}
\ext^{\ell(u)-j}_{\co_{X_w}} \big( \co_{X^u\cap X_w},
\co_{X_w}\bigr)\simeq \tor_j^{\co_{\bar{X}}} \bigl( \ext^{\ell (u)}_{\co_{\bar{X}}} (\co_{X^u},
\co_{\bar{X}}), \co_{X_w}\bigr), \qquad\text{for all } j\geq 0.
  \eeqn
Thus, by the isomorphism \eqref{neweqn16},
  \[
\ext^{\ell (u)}_{\co_{X_w}} \bigl(\co_{X^u\cap X_w}, \co_{X_w}\bigr) \simeq
\ext^{\ell (u)}_{\co_{\bar{X}}} (\co_{X^u}, \co_{\bar{X}}) \otimes_{\co_{\bar{X}}}
 \co_{X_w} .
  \]

  This proves \eqref{e4.6.2}, by using the identity \eqref{eq1.0} and
  \eqref{eq4.6.1}.
   \end{proof}

\begin{lemma}  For any $v\leq w$ and $u\in W$,
  \[
\tor^{\co_{X_w}}_i \bigl(\co_{X^u\cap X_w}, \co_{X_v}\bigr) =0, \quad\text{for all }
i>0.
  \]
  \end{lemma}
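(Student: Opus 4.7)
The plan is to reduce the local Tor computation over $\co_{X_w}$ to the analogous computation over $\co_{\bar X}$, and then appeal twice to Lemma \ref{4.5}. Since $v\leq w$ we have a closed embedding $X_v\hookrightarrow X_w\hookrightarrow \bar X$, so $\co_{X_v}$ is naturally an $\co_{X_w}$-module via the surjection $\co_{X_w}\twoheadrightarrow \co_{X_v}$, and the Tor in question makes sense.

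Working locally on an open subset $V^{v'}\subset \bar{X}$, I would take a finite free $\co_{V^{v'}}$-resolution $\cf_{\bullet}\to \co_{X^u\cap V^{v'}}$ of the sort used throughout Section \ref{sec7} (via Lemma \ref{2.4}). By Lemma \ref{4.5} applied to $u$ and $w$, we have $\tor^{\co_{\bar X}}_j(\co_{X^u},\co_{X_w})=0$ for all $j>0$. Thus, tensoring the resolution $\cf_\bullet$ with $\co_{X_w}$ over $\co_{\bar X}$ yields an exact complex, so
\[
\cf_\bullet\otimes_{\co_{\bar X}}\co_{X_w}\;\longrightarrow\;\co_{X^u\cap X_w}
\]
is a free $\co_{X_w\cap V^{v'}}$-resolution of $\co_{X^u\cap X_w}$ on the open set $X_w\cap V^{v'}$.

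Using this resolution to compute $\tor^{\co_{X_w}}_i(\co_{X^u\cap X_w},\co_{X_v})$ gives, via the obvious identity $(\cf_j\otimes_{\co_{\bar X}}\co_{X_w})\otimes_{\co_{X_w}}\co_{X_v}\simeq \cf_j\otimes_{\co_{\bar X}}\co_{X_v}$ (restriction of scalars), the isomorphism
\[
\tor^{\co_{X_w}}_i\bigl(\co_{X^u\cap X_w},\co_{X_v}\bigr)\;\simeq\;H_i\bigl(\cf_\bullet\otimes_{\co_{\bar X}}\co_{X_v}\bigr)\;=\;\tor^{\co_{\bar X}}_i(\co_{X^u},\co_{X_v}).
\]
This is the change-of-rings identity one expects when the intermediate Tor groups vanish, and here it is visibly true from the explicit resolution.

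Now I invoke Lemma \ref{4.5} a second time, this time applied to the pair $(u,v)$: it gives $\tor^{\co_{\bar X}}_i(\co_{X^u},\co_{X_v})=0$ for all $i>0$. Combining with the isomorphism above completes the proof. I do not expect any real obstacle here: the only non-formal input is Lemma \ref{4.5}, which is already established, and the rest is a routine change-of-rings argument made explicit via free resolutions (exactly in the spirit of the arguments in the proofs of Lemmas \ref{4.5} and \ref{4.6}).
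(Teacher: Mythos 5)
Your proposal is correct and follows essentially the same route as the paper: take a finite free resolution of $\co_{X^u}$ locally on $V^{v'}$, use Lemma \ref{4.5} for the pair $(u,w)$ to see that tensoring with $\co_{X_w}$ yields a free $\co_{X_w}$-resolution of $\co_{X^u\cap X_w}$, then base-change to $\co_{X_v}$ and apply Lemma \ref{4.5} again (now for $(u,v)$) to conclude the vanishing. This matches the paper's argument step for step.
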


    \begin{proof}  We can replace $\bar{X}$ by $V^\theta$ (for $\theta\in W$).
    Take a $\co_{\bar{X}}$-free resolution (see ($\cc_3$) of Lemma \ref{4.5}):
  \[
0 \to \cf_{\ell (u)} \to \cdots \to \cf_1 \to \cf_0 \to \co_{X^u} \to 0.
  \]
By Lemma \ref{4.5},
  \beqn
0 \to \cf_{\ell (u)} \otimes_{\co_{\bar{X}}} \co_{X_w} \to \cdots \to \cf_0
 \otimes_{\co_{\bar{X}}} \co_{X_w} \to \co_{X^u\cap X_w} \to 0 \tag{$\cs_1$}
  \eeqn
is a $\co_{X_w}$-free resolution of $\co_{X^u\cap X_w}$.  Thus,
by the base extension (cf. [L, \S3, Chap. XVI]),  $\tor_i^{\co_{X_w}}
\bigl(\co_{X^u\cap X_w}, \co_{X_v}\bigr)$ is the $i$-th homology of the complex:
  \[
0 \to \cf_{\ell (u)} \otimes_{\co_{\bar{X}}} \co_{X_v} \to \cdots \to \cf_0
\otimes_{\co_{\bar{X}}} \co_{X_v} \to 0.
  \]

From the exactness of ($\cs_1$) for $w$ replaced by $v$, we get the lemma.
  \end{proof}

\section{Desingularization of Richardson varieties and flatness for the
$\Gamma$-action}\label{sec1}

Let $S\subset W$ be a finite ideal and, as in Definition \ref{n2.1}, let $V^{s}$
be the corresponding
$B^{-}$-stable open subset $\cup_{w\in S}\,( w\ B^-\cdot x_{o})$ of
$\bar{X}$, where $x_o$ is the base point $1.B$ of $\bar{X}$.
It is a $B^{-}$-stable subset, since by [KS, \S2],
$$
V^{S}=\bigcup\limits_{w\in S}B^{-} wx_{o}.
$$
\begin{lemma}\label{basic} For any $v\in W$ and any finite ideal $S\subset W$
containing $v$,
there exists a closed normal subgroup
${N}^-_S$ of $B^-$ of finite codimension such that the quotient
 $ Y^v(S):={N}^-_S\setminus X^{v}(S)$ acquires a canonical structure of a
 $B^-$-scheme of finite type over the base field $\bc$ under the left multiplication action of
 $B^-$ on $Y^v(S)$, so that the quotient  $q: X^{v}(S) \to
 Y^{v}(S)$ is a principal ${N}^-_S$-bundle, where $X^{v}(S):=X^v\cap V^S$.

 Of course, the map $q$ is $B^-$-equivariant.
 \end{lemma}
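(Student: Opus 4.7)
The plan is to exploit the pro-unipotent structure of $U^-$, where $B^-=H\ltimes U^-$. Since $U^-$ is pro-unipotent and completed along negative roots (cf.\ [K, Ch.\ 6], [Ka]), it has a canonical descending filtration by closed normal subgroups $U^-_{>n}$ of finite codimension, namely the subgroups generated by the root subgroups $U_\alpha$ for negative roots $\alpha$ of height strictly below $-n$, with $\bigcap_n U^-_{>n}=\{1\}$. Each $U^-_{>n}$ is normalized by $H$ (which preserves every root subgroup) and by $U^-$ itself, since $[U_\beta,U_\alpha]$ is contained in the product of $U_{i\alpha+j\beta}$ with $i,j\geq 1$, and for $\beta\in\Phi^-$ every such $i\alpha+j\beta$ has strictly smaller height than $\alpha$; therefore $U^-_{>n}$ is normal in $B^-$.

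To choose $n$, note that for each $w\in W$ the set of negative roots $\alpha$ with $w^{-1}(\alpha)$ positive is finite (the inversion set of $w^{-1}$), so for $n$ large enough depending on the finite set $S$ one has $w^{-1}U^-_{>n}w\subset U^-$ for every $w\in S$, and in fact $w^{-1}U^-_{>n}w$ is a closed normal subgroup of $U^-$ of finite codimension. Take $N_S^-:=U^-_{>n}$ for such an $n$. Under the coordinate patch $V^w\simeq U^-$ given by $u\mapsto wux_o$ (cf.\ [Ka, \S 4]), the $N_S^-$-action on $V^w$ translates into left multiplication on $U^-$ by the finite-codimensional subgroup $w^{-1}N_S^-w$, since $n\cdot (wux_o)=w(w^{-1}nw)u\,x_o$. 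Consequently on each $V^w$ the quotient $N_S^-\setminus V^w$ exists and is canonically isomorphic to $U^-/(w^{-1}N_S^-w)$, a smooth affine variety of finite type, and $V^w\to N_S^-\setminus V^w$ is a trivial principal $N_S^-$-bundle (a section being produced from any triangular decomposition of $U^-$ along the height filtration). These local quotients glue on overlaps, as the $N_S^-$-action on $V^S=\bigcup_{w\in S}V^w$ comes from the global $B^-$-action on $\bar X$; one obtains a principal $N_S^-$-bundle $V^S\to N_S^-\setminus V^S$ with a finite-type base. Restricting to the closed $B^-$-stable subscheme $X^v(S)=X^v\cap V^S$ then yields $Y^v(S):=N_S^-\setminus X^v(S)$ as a closed subscheme of $N_S^-\setminus V^S$, of finite type over $\bc$. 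Normality of $N_S^-$ in $B^-$ lets the $B^-$-action descend through $B^-/N_S^-$, and $q$ is $B^-$-equivariant by construction and a principal $N_S^-$-bundle by base change.

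The main technical obstacle is the scheme-theoretic verification that the quotient $V^w/N_S^-$ is compatible with the pro-scheme structure on $V^w$ given in [Ka, \S 4], i.e.\ that quotienting the pro-unipotent group $U^-$ by a closed normal subgroup of finite codimension really produces a principal bundle of schemes (rather than only an fppf stack), and that this construction is functorial under change of patch $w\in S$. Once these points are settled, gluing and descent of the $B^-$-structure to $Y^v(S)$ are routine, and the equivariance and bundle properties of $q$ follow immediately.
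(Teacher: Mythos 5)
Your construction is correct, but it takes a genuinely more explicit route than the paper. The paper argues globally: it notes that $U^-_u:=U^-\cap uU^-u^{-1}$ acts simply transitively on the cell $C^u$, so that $U^-_S:=\bigcap_{u\in S}U^-_u$ acts freely on $V^S$ with all orbits closed (a non-closed orbit would produce a point with nontrivial isotropy), hence freely and properly on $X^v(S)$; it then takes \emph{any} closed normal subgroup $N^-_S$ of $B^-$ of finite codimension contained in $U^-_S$ and quotients. You instead choose the concrete congruence subgroup $N^-_S=U^-_{>n}$ of the height filtration and verify everything patch by patch on Kashiwara's charts $V^w\simeq U^-$, $u\mapsto wux_o$, where the action becomes left translation by $w^{-1}N^-_Sw\subset U^-$; observe that your hypothesis $w^{-1}N^-_Sw\subset U^-$ for all $w\in S$ is literally the paper's condition $N^-_S\subset U^-_S$, so your subgroup is one admissible choice in the paper's argument, and your explicit construction supplies the existence of such a subgroup, which the paper leaves implicit. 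What your route buys is that freeness and local triviality of $q$ are visible in coordinates; what the paper's route buys is that it bypasses the patchwise gluing by quotienting a free and proper action once and for all. The technical point you flag is genuinely fine and can be settled as follows: since there are only finitely many negative roots of height at least $-n$, there are only finitely many negative roots $\beta$ with $w\beta<0$ of height at least $-n$ (they are $w^{-1}$ of the former), so $w^{-1}U^-_{>n}w$ contains a congruence subgroup $U^-_{>m}$ for $m\gg0$; the patch quotient therefore reduces to the quotient of the finite-dimensional unipotent group $U^-/U^-_{>m}$ by a closed subgroup, which is an affine space and a trivial principal bundle, and the gluing over $V^w\cap V^{w'}$ is routine because the equivalence relation is global and the quotient maps are open. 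Two small corrections of phrasing: $w^{-1}N^-_Sw$ need not be normal in $U^-$ (normality is not needed for a coset-space quotient, so state it as the coset space $(w^{-1}N^-_Sw)\backslash U^-$ rather than a group quotient), and, as the paper's remark after the lemma points out, the glued $Y^v(S)$ may fail to be separated for an arbitrary admissible $N^-_S$ --- the lemma does not claim separatedness, so this does not affect your proof.
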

 \begin{proof} For any $u\in W$, the subgroup $U^-_u:=U^-\cap uU^-u^{-1}$ acts freely
and transitively on $C^u$ via left multiplication, since the opposite cell $C^u=(U^-\cap uU^-u^{-1})uB/B$. Thus,
$U^-_S:=\cap_{u\in S} \,U^-_u$ acts freely on $V^S$. Clearly,  $w\ B^-\cdot x_{o}$, for any $w\in S$, is stable under 
$U_S^-$ and, further, each orbit of $U_S^-$ in the open subset $w\ B^-\cdot x_{o}$ of $\bar{X}$  is closed in $w\ B^-\cdot x_{o}$ (use [K, 6.1.5(c)]). Thus, each orbit of 
$U_S^-$ is closed in their union $V^S$.    In fact, $U_S^-$ acts properly on $V^S$. Hence,
 $U^-_S$  acts freely and properly
on $ X^{v}(S)$. Take any closed normal subgroup
${N}^-_S$ of $B^-$ of finite codimension contained in $U^-_S$. Then, the quotient
 $ Y^v(S):={N}^-_S\setminus X^{v}(S)$ acquires a canonical structure of a
 $B^-$-scheme of finite type over $\bc$ under the left
 multiplication action of
 $B^-$ on $Y^v(S)$, so that the quotient  $q: X^{v}(S) \to
 Y^{v}(S)$ is a principal ${N}^-_S$-bundle. 
 \end{proof}
 \begin{remark} (a) The above lemma allows us to define various local properties
 of $X^v$. In particular, a point $x\in X^v$ is called {\it normal} (resp. {\it CM})
 if the corresponding point in the quotient $Y^{v}(S)$ has that property, where $S$
 is a finite ideal such that $x\in X^v(S).$ Clearly, the property does not depend
 upon the choice of $S$ and ${N}^-_S$.

 (b) It is possible that the scheme $Y^v(S)$ is not separated. However,
 as observed by M. Kashiwara,  we can choose
 our closed normal subgroup
${N}^-_S$ of $B^-$ of finite codimension contained in $U^-_S$ appropriately so that
$Y^v(S)$ is indeed separated. In fact, we give the following more general result due to him.
\end{remark}
Let $\bf{k}$ be a field and let $\{S_\lambda\}_{\lambda \in \Lambda}$ be a filtrant projective system of 
quasi-compact $\bf{k}$-schemes locally of finite type over $\bf{k}$ . Assume that $f_{\lambda, \mu}:S_\mu 
\to S_\lambda$  is an affine morphism.
Set  $S=  \Invlt_{\lambda}\,S_\lambda $ and  let $p_\lambda:S\to S_\lambda$ be the canonical projection.
\begin{lemma} (due to M. Kashiwara) If $S$ is separated, then $S_\lambda$ is separated for some $\lambda$.
\end{lemma}
\begin{proof} Take a smallest element $\lambda_o\in \Lambda$. It is enough to show that for a pair of affine open subsets 
$U_o,V_o$ of $S_{\lambda_o}$,  $U_\lambda\cap V_\lambda\to U_\lambda\times V_\lambda$ is a closed 
embedding for some $\lambda$, where $U_\lambda :=f_{\lambda_o,\lambda}^{-1}(U_o)$ and  $V_\lambda 
:=f_{\lambda_o,\lambda}^{-1}(V_o)$. 

Note that $U_\lambda\cap V_\lambda$ is quasi-compact and of finite type over $\bf{k}$. Set $U=
p_{\lambda_o}^{-1}(U_o)$ and $V=
p_{\lambda_o}^{-1}(V_o)$. Since $S$ is separated,  $U \cap V \to U\times V$ is a closed 
embedding. In particular,  $U\cap V$ is affine. 

We have projective system of schemes $\{U_\lambda\cap V_\lambda\}_{\lambda \in \Lambda}$ and 
$\{U_\lambda\}_{\lambda \in \Lambda}$, and a projective system of morphisms 
$\{U_\lambda\cap V_\lambda \to U_\lambda\}_{\lambda \in \Lambda}$. Since 
 $\Invlt_{\lambda}\,(U_\lambda \cap V_\lambda) \simeq U\cap V$ is affine, the morphism 
 $\Invlt_{\lambda}\,(U_\lambda \cap V_\lambda) \to  \Invlt_{\lambda}\,(U_\lambda)$ is an affine morphism. Hence,
  $U_{\lambda_1} \cap V_{\lambda_1}\to U_{\lambda_1}$ is an affine morphism for some 
$\lambda_1$ by [GD, Theorem 8.10.5]. Hence, $U_{\lambda_1} \cap V_{\lambda_1}$ is affine. Now, by the assumption, 
$\co_S(U)\otimes \co_S(V)\to \co_S(U\cap V)$ is surjective. Since $U_o\cap V_o\to U_o$ is of finite type, 
$U\cap V\to U$ is of finite type. Hence, $\co_S(U\cap V)$ is an $\co_S(U)$-algebra of finite type. Since 
$\co_S(U)\otimes \co_S(V) \simeq	 \Dirlt_\lambda \bigl(\co_S(U)\otimes \co_{S_\lambda}(V_\lambda)\bigr)$, 
there exists $\lambda_2\to \lambda_1$ such that  $\co_S(U)\otimes \co_{S_{\lambda_2}}(V_{\lambda_2}) \to 
\co_S(U\cap V)$ is surjective. This means that $U\cap V \to U\times V_{\lambda_2}$ is a closed embedding. Now, 
consider the projective system
$$U_\lambda\cap V_\lambda\to U_\lambda\times V_{\lambda_2}.$$
Its projective limit with respect to $\lambda$ is isomorphic to $U\cap V\to U\times V_{\lambda_2},$ which is a 
closed embedding. Hence, again by loc. cit., $U_\lambda\cap V_{\lambda_3}\to U_{\lambda_3}\times V_{\lambda_2}$
is a closed embedding for some $\lambda_3 \to \lambda_2$. Then, $U_{\lambda_3}\cap V_{\lambda_3}\to U_{\lambda_3}\times V_{\lambda_3}$
is a closed embedding.
\end{proof}

\begin{theorem}\label{thm2}
For any $v\in W$ and any finite ideal $S\subset  W$ containing $v$,
there exists a smooth irreducible $B^{-}$-scheme $Z^{v}(S)$ and a
projective  $B^{-}$-equivariant morphism
$$
\pi^{v}_S:Z^{v}(S)\to X^{v}(S),
$$
satisfying the following conditions:

(a) The restriction  ${(\pi^{v}_S)}^{-1}(C^v)\to
  C^v$
is an isomorphism.

(b) $\partial Z^v(S):={(\pi^{v}_S)}^{-1}(\partial X^{v}(S))$ is a divisor with
  simple normal crossings, where
  $X^{v}(S):=
X^{v}\cap V^{S}$ and $
\partial X^{v}(S):= (\partial
X^{v})\cap V^{S}$.

(Here smoothness of $Z^v(S)$ means that there exists a closed subgroup ${N}^-_S$
of $B^-$ of finite codimension which acts freely and properly on $Z^v(S)$, such
 that the quotient is a smooth scheme of finite type over $\bc$.)
\end{theorem}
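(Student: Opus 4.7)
The plan is to first reduce the problem to a finite-type situation via Lemma \ref{basic}, and then construct a desingularization by a Bott-Samelson-Demazure-Hansen (BSDH) style tower adapted to the opposite Schubert setting.

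By Lemma \ref{basic}, there is a closed normal subgroup $N_S^- \subseteq B^-$ of finite codimension such that $Y^v(S) := N_S^-\backslash X^v(S)$ is a $B^-/N_S^-$-scheme of finite type over $\bc$ and $q: X^v(S) \to Y^v(S)$ is a principal $N_S^-$-bundle. Since the theorem's notion of smoothness for $Z^v(S)$ is defined precisely as smoothness of a corresponding $N_S^-$-quotient, it suffices to construct a $(B^-/N_S^-)$-equivariant smooth projective morphism $\widetilde{Y}^v(S) \to Y^v(S)$ satisfying the analogues of (a) and (b), and then define $Z^v(S) := X^v(S) \times_{Y^v(S)} \widetilde{Y}^v(S)$ as the pullback; the $B^-$-action on $X^v(S)$ lifts canonically to $Z^v(S)$, and properties (a), (b), projectivity, and equivariance all transfer through this pullback. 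The scheme $Y^v(S)$ is irreducible, finite-dimensional, and has finitely many $B^-/N_S^-$-orbits, namely the images of $C^x$ for $x \in S$ with $x \geq v$, with the image of $C^v$ as the unique open orbit.

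To construct $\widetilde Y^v(S)$ I would use a BSDH-style construction as follows: fix a reduced expression $\mathfrak{v} = (s_{j_1}, \ldots, s_{j_m})$ for $v$, an element $w \in W$ with $v \leq w$ and $S \subseteq \{x : x \leq w\}$, and a reduced expression $\mathfrak{w}$ for $w$; then form a fiber-product of BSDH varieties (in the spirit of Brion's construction of Richardson-variety desingularizations, but passed through the $N_S^-$-quotient) whose image in $\bar X$, after quotienting by $N_S^-$ in the target, equals $Y^v(S)$. Smoothness of $\widetilde Y^v(S)$ is automatic from the iterated $\mathbb{P}^1$-bundle structure of BSDH. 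Property (a) holds because over $C^v/N_S^-$ the tower parametrizes reduced expressions of $v$ uniquely, giving an isomorphism. Property (b) follows because $\partial \widetilde Y^v(S)$ decomposes as a union of smooth divisors $D_i$ where the $i$-th factor of the tower degenerates into a $B^-$-coset, and transversality of these $D_i$ is a consequence of the iterated $\mathbb P^1$-bundle structure, yielding simple normal crossings.

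The main obstacle is that $\bar X$ and $X^v$ are infinite-dimensional in the non-finite Kac-Moody case, so the naive negative BSDH associated to $\mathfrak{v}$ alone has dimension $\ell(v)$, which is too small to cover the infinite-dimensional $X^v(S)$. The resolution is to carry out the entire construction at the level of the $N_S^-$-quotient, where $Y^v(S)$ has the correct finite dimension, and to enlarge the tower by incorporating the combinatorics of $S$ and the reduced expression $\mathfrak w$ so that the fiber product has the matching dimension and surjects onto $Y^v(S)$. A secondary technical point is verifying that $\pi^v_S$ is projective rather than merely proper; this should follow because BSDH towers come with natural projective embeddings via the standard line bundles on flag varieties, and these embeddings descend through the $N_S^-$-quotient.
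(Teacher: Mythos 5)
Your first step --- passing to the finite-type quotient $Y^v(S)=N^-_S\backslash X^v(S)$ of Lemma \ref{basic}, desingularizing there equivariantly for $B^-/N^-_S$, and pulling back along the principal $N^-_S$-bundle $q$ --- is exactly the paper's reduction, and that part of your argument is fine. The genuine gap is in the step that actually carries the proof: the construction of the desingularization $\widetilde{Y}^v(S)\to Y^v(S)$. Your plan is a BSDH-type fiber product ``in the spirit of Brion,'' but no tower is ever defined; the sentence ``enlarge the tower by incorporating the combinatorics of $S$ and the reduced expression $\mathfrak{w}$ so that the fiber product has the matching dimension and surjects onto $Y^v(S)$'' is the whole content of the key step, and it is precisely what cannot be done naively in the Kac--Moody setting. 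A BSDH variety for $B$ maps onto a finite-dimensional Schubert variety $X_w$, not onto (a quotient of) the finite-codimensional open piece $X^v\cap V^S$; a ``negative'' BSDH tower built from the minimal parabolics $P_i^-$ does not even admit a well-defined map to $G/B$ (it maps to $G/B^-$); and Brion's finite-case construction of Richardson desingularizations uses $X^v=w_0X_{w_0v}$ and the projectivity of $X^v$, neither of which exists here. This is exactly the difficulty the paper flags in its introduction, and it is why its Theorem \ref{thm3} builds $Z^v_w$ out of $Z^v(S_w)$ rather than the other way around: Theorem \ref{thm2} cannot be fed by a BSDH construction. (A smaller point: your choice of $w$ with $S\subseteq\{x:x\le w\}$ needs the directedness of Bruhat order, which you should at least cite.)

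What the paper does instead is short and is the idea you are missing: since the $B^-$-action on the finite-type scheme $Y^v(S)$ factors through the finite-dimensional group $B^-/N^-_S$, one may invoke general equivariant resolution of singularities in characteristic zero (Bierstone--Milman [BM], Reichstein--Youssin [RY]) to get a projective $B^-$-equivariant $\theta:\bar{Z}^v(S)\to Y^v(S)$ which is an isomorphism over the open orbit $N^-_S\backslash C^v$ (contained in the smooth locus, as $X^v$ is normal) and for which $\theta^{-1}\bigl(N^-_S\backslash\partial X^v(S)\bigr)$ is a simple normal crossings divisor; then $Z^v(S):=\bar{Z}^v(S)\times_{Y^v(S)}X^v(S)$, exactly as in your pullback step. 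So your proposal is not a complete proof as written: either replace the BSDH sketch by this appeal to equivariant resolution, or supply an actual construction of the tower together with proofs of surjectivity, birationality, the isomorphism over $C^v$, and the SNC property --- none of which are currently argued.
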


\begin{proof}
Observe that the action of $B^-$ on $Y^v(S)$ factors through the action of
 the finite dimensional algebraic group $B^-/{N}^-_S$, where $Y^v(S)$ is as defined
 in Lemma \ref{basic}. Now, take a
 $B^-$-equivariant desingularization $\theta: \bar{Z}^v(S) \to
 Y^{v}(S)$ such that $\theta$ is a projective morphism, $\theta^{-1} \bigl({N}^-_S\setminus C^v\bigr) \to
 {N}^-_S\setminus C^v$ is an isomorphism and $\theta^{-1} \bigl({N}^-_S\setminus (\partial X^{v}(S))\bigr)$ is a
 divisor with simple normal crossings
 (cf. [Ko, Proposition 3.9.1]\footnote{I thank Zinovy Reichstein for this reference.}; also see [Bi], [RY]). Now, taking the fiber product
 $$ Z^v(S)= \bar{Z}^v(S) \times_{Y^v(S)}\, X^{v}(S)$$
 clearly proves the theorem.
 \end{proof}

For  $w\in W$, take the ideal $S_{w}=\{u\leq w\}$. Then,
by [K, Lemma 7.1.22(b)],
$$
X^{v}(S_w)\cap X_{w}=X^{v}_{w}.
$$
\begin{lemma}\label{lem4}
The map
$$
\mu_{w}:U^{-}\times Z_{w}\to \bar{X},\quad
(g,z)\mapsto g\cdot\theta_{w}(z)
$$
is a smooth morphism,
where $\theta_w:Z_w\to X_w$ is the $B$-equivariant  BSDH desingularization
corresponding to a fixed reduced decomposition $w=s_{i_1}\dots s_{i_n}$
(cf. proof of Proposition \ref{n3.1}).
\end{lemma}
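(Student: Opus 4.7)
\textbf{Proof plan for Lemma \ref{lem4}.} My plan is to factor $\mu_w$ through the multiplication map in $G$ and then descend through the principal bundle presentation of the Bott-Samelson variety. Concretely, let $\fw=(s_{i_1},\dots,s_{i_n})$ be the chosen reduced decomposition, so that $Z_w = P_{i_1}\times^B\dots\times^B P_{i_n}/B$ with canonical quotient
\[
q:P_{i_1}\times\dots\times P_{i_n}\longrightarrow Z_w,
\]
which is a principal $B^n$-bundle (for the iterated right action), and $\theta_w\bigl([p_1,\dots,p_n]\bigr)=p_1p_2\cdots p_n\cdot x_o$. Define
\[
M:U^-\times P_{i_1}\times\dots\times P_{i_n}\longrightarrow G,\quad (g,p_1,\dots,p_n)\mapsto gp_1\cdots p_n,
\]
and let $\pi:G\to \bar{X}=G/B$ be the canonical projection. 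Then the diagram
\[
\begin{array}{ccc}
U^-\times P_{i_1}\times\dots\times P_{i_n} & \xrightarrow{\pi\circ M} & \bar{X}\\
\downarrow{\mathrm{id}\times q} & & \|\\
U^-\times Z_w & \xrightarrow{\mu_w} & \bar{X}
\end{array}
\]
commutes by construction.

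The first step is to show that $\pi\circ M$ is smooth. The map $M$ is smooth because iterated group multiplication in $G$ is smooth: locally it differs from the projection $U^-\times\prod P_{i_j}\to\prod P_{i_j}$ by translation. The projection $\pi:G\to\bar{X}$ is smooth because it is a (pro-)principal $B$-bundle admitting Zariski-local sections, as recorded in [K, \S7.4]. Hence their composition is smooth.

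Next, the map $\mathrm{id}\times q$ is a principal $B^n$-bundle, hence faithfully flat and smooth (in fact Zariski-locally trivial since $B^n$ is connected solvable). By faithfully flat descent of smoothness (EGA IV 17.7.7), since $\pi\circ M = \mu_w\circ(\mathrm{id}\times q)$ is smooth and $\mathrm{id}\times q$ is faithfully flat smooth, it follows that $\mu_w$ is smooth. Alternatively, one can simply pick Zariski-local sections of $\mathrm{id}\times q$ and write $\mu_w$ locally as a composition of smooth morphisms.

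The only subtle point is the infinite-dimensional bookkeeping: $U^-$, $G$ and $\bar{X}$ are only pro/ind-schemes in the Kac-Moody setting, so ``smooth'' here must be interpreted in the sense made explicit after Theorem \ref{thm2}. This is handled, as in the proof of Lemma \ref{basic}, by restricting to the preimage of each open $V^S\subset\bar{X}$ (for a finite ideal $S\subset W$), and replacing $U^-$ by $U^-/N_S^-$ for a closed normal subgroup $N_S^-\trianglelefteq B^-$ of finite codimension contained in $U_S^-=\bigcap_{u\in S}(U^-\cap uU^-u^{-1})$; all maps in the diagram above are $N_S^-$-equivariant, and the quotient diagram consists of morphisms of finite-type $\bc$-schemes to which the previous paragraph applies. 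The main obstacle is therefore not the core smoothness argument, which is formal, but making sure the infinite-dimensional factorization through $G$ and the descent along $\mathrm{id}\times q$ are set up compatibly with the pro-ind structure; this is routine given the framework already developed in Lemma \ref{basic} and in [K, \S7.4].
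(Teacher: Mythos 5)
Your strategy (factor through $G$ and descend along the $B^{n}$-bundle $q:P_{i_1}\times\dots\times P_{i_n}\to Z_w$) is genuinely different from the paper's two arguments, but it has a real gap: the smoothness claims feeding into it are not established, and in the Kac--Moody setting they cannot be established by the cited tools. The central assertion that $M:U^-\times P_{i_1}\times\dots\times P_{i_n}\to G$ is smooth is not proved by the translation trick: that trick identifies $G\times\prod_j P_{i_j}\to G$, $(g,p_\bullet)\mapsto gp_1\cdots p_n$, with a projection, but your source has $U^-$ in place of $G$, and $U^-$ is not open in $G$ (it is a closed subgroup of positive, indeed infinite, codimension). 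The same reasoning would ``prove'' that the inclusion $U^-\hookrightarrow G$ (your $M$ when $n=0$) is smooth, which is false; so smoothness of $M$, of $\pi:G\to\bar{X}$ (whose fibers are the infinite-dimensional Borel $B$), and of the composite needs a genuine argument in an extended sense of smoothness, and that is essentially the same kind of problem as the lemma itself. Likewise EGA IV 17.7.7 concerns morphisms locally of finite presentation and does not apply to descent along $\mathrm{id}\times q$, whose structure group $B^{n}$ is an infinite-dimensional ind-group; and the parenthetical alternative of composing with local sections of $q$ does not write $\mu_w$ as a composition of smooth maps, since a section is an immersion.

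The final paragraph does not repair this: restricting over $V^S$ and dividing the $U^-$-factor by $N^-_S$ leaves $G$ (with its $B$-fibers over $\bar{X}$), the factors $P_{i_j}$, and the structure group $B^{n}$ untouched, so the quotient diagram is emphatically not a diagram of finite-type $\bc$-schemes; to make it one you would also have to quotient $G$ on the right by finite-codimension normal subgroups of $B$ and re-prove smoothness of all the induced maps, which is the crux rather than routine bookkeeping. The paper is arranged precisely to avoid this: its first proof is an induction on $\ell(w)$ whose base case is the open immersion $U^-\to\bar{X}$ and whose inductive step is a base change of $\hat{\mu}_{w'}=\pi_{i_n}\circ\mu_{w'}$ along the finite-type smooth $\mathbb{P}^1$-fibration $\pi_{i_n}:\bar{X}\to G/P_{i_n}$, using $Z_w\simeq Z_{w'}\times_{G/P_{i_n}}\bar{X}$; its second proof notes that $U^-\times Z_w$ is an open subset of $G\times^{B}Z_w$ and that the $G$-equivariant map $G\times^{B}Z_w\to\bar{X}$, $[g,z]\mapsto g\theta_w(z)$, is a locally trivial fibration with smooth fibers of finite type. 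If you want to keep your idea of ``computing in $G$,'' the correct implementation is exactly that second proof: replace the non-smooth inclusion $U^-\subset G$ by the open embedding $U^-\times Z_w\subset G\times^{B}Z_w$, so that all relative data stay of finite type.
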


\begin{proof}
Consider the map
$$
\bar{\mu}_w : G\mathop{\times}^{B}Z_{w}\to \bar{X},\quad [g,z]\mapsto
g\theta_{w}(z).
$$

Because of the $G$-equivariance, it is a locally trivial fibration. Moreover,
it has smooth  fibers of finite type over $\bc$ (isomorphic with $Z_{w^{-1}}$, for the 
decomposition $w^{-1}=s_{i_n}\dots s_{i_1}$):

To see this, let $Z'_w$  be the fiber product:
\[
\xymatrix@=.5cm{
Z'_w\ar[dd]_{\theta'_{w}}
\ar[rr]&&
Z_{w}\ar[dd]^{\theta_w}\\
 & \square & \\
G\ar[rr] &&
\bar{X}.
}
\]
Then, we have the fiber diagram:
\[
\xymatrix@=.5cm{
G \times^B \,Z'_w\ar[dd]_{\hat{\mu}_{w}}
\ar[rr]&&
G \times^B \,Z_{w}\ar[dd]^{\bar{\mu}_w}\\
 & \square & \\
G\ar[rr] &&
\bar{X}.
}
\]
In particular, the fibers of $\bar{\mu}_w$ are isomorphic with the fibers of $\hat{\mu}_w$. Now, it is easy to see that the map
\[Z'_{w^{-1}} \to G \times^B \,Z'_w, \,\,\, z'\mapsto [\theta'_{w^{-1}} (z'), i(z')]\]
gives an isomorphism of $Z_{w^{-1}}$ with the fiber of $\hat{\mu}_{w}$ over $1$, where $i: Z'_{w^{-1}} \to Z'_{w} $ is the 
isomorphism induced from the map $(p_n, \dots, p_1) \mapsto (p_1^{-1}, \dots, p_n^{-1}).$ 

 In particular,
$\bar{\mu}_w$ is a smooth morphism and hence so is its restriction to the open subset
$U^{-}\times Z_{w}$.
\end{proof}

  \begin{proposition} \label{n5.6}  For any symmetrizable Kac-Moody group $G$ and any $v\leq w\in W$,
  the Richardson variety $X^v_w := X_w\cap X^v \subset \bar{X}$ is irreducible,
  normal and CM (and, of course, of finite type over $\bc$ since so is $X_w$).

  Moreover, $C_w\cap C^v$ is an open dense subset of $X^v_w$.
    \end{proposition}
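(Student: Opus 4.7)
The strategy is to transfer the desired properties from the opposite Schubert variety $X^v$---which is irreducible, normal, and Cohen--Macaulay (a standard fact for Kac--Moody opposite Schubert varieties, recoverable locally from the desingularization $\pi^v_S:Z^v(S)\to X^v(S)$ of Theorem~\ref{thm2})---down to $X^v_w$, using the smoothness of $\mu_w:U^-\times Z_w\to\bar X$ (Lemma~\ref{lem4}) together with the BSDH desingularization $\theta_w:Z_w\to X_w$.

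The crucial identification, which rests on $U^-$-stability of $X^v$, is the scheme-theoretic equality
\[
\mu_w^{-1}(X^v)\;=\;U^-\times\theta_w^{-1}(X^v)\;=\;U^-\times\theta_w^{-1}(X^v_w),
\]
obtained by factoring $\mu_w$ through the action map $U^-\times\bar X\to\bar X$, which pulls the $U^-$-stable subscheme $X^v$ back to $U^-\times X^v$ as schemes. Since $\mu_w$ is smooth and $X^v$ is normal and CM, the smooth pullback $\mu_w^{-1}(X^v)$ inherits these two properties directly; irreducibility follows from the $U^-$-equivariance of $\mu_w$ (so that the restriction $\mu_w^{-1}(X^v)\to U^-X^v_w$, an open subset of the irreducible $X^v$, is a smooth surjection with geometrically connected fibers). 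Factoring off the smooth irreducible $U^-$, the same three properties transfer to $\theta_w^{-1}(X^v_w)\subseteq Z_w$; and a codimension count using smoothness of $\mu_w$ and $\codim_{\bar X}X^v=\ell(v)$ shows this is pure of dimension $\ell(w)-\ell(v)$.

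Since $v\leq w$, the intersection $C_w\cap C^v$ is a non-empty irreducible affine cell of dimension $\ell(w)-\ell(v)$; as $\theta_w$ is an isomorphism over $C_w$, the preimage $\theta_w^{-1}(C_w\cap C^v)\cong C_w\cap C^v$ sits as an open subset of maximal dimension inside the irreducible variety $\theta_w^{-1}(X^v_w)$, hence is dense there. Applying the proper surjection $\theta_w$ then yields at once that $X^v_w$ is irreducible and that $C_w\cap C^v$ is open and dense in $X^v_w$.

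The remaining and hardest step is to transfer normality and Cohen--Macaulay-ness from the resolution $\theta_w^{-1}(X^v_w)$ down to $X^v_w$ itself. For this I would verify that the restricted morphism is a rational resolution, establishing $\theta_{w*}\mathcal{O}_{\theta_w^{-1}(X^v_w)}=\mathcal{O}_{X^v_w}$ (via Zariski's main theorem, once a preliminary normality statement is in place) and the higher direct image vanishing $R^i\theta_{w*}\mathcal{O}_{\theta_w^{-1}(X^v_w)}=0$ for $i>0$. The vanishing is the main obstacle and would be established either via the relative Kawamata--Viehweg vanishing applied to the CM variety $\theta_w^{-1}(X^v_w)$ (using the dualizing-sheaf computations of the subsequent Section~\ref{sec4}), or by invoking the Kumar--Schwede result cited in the introduction that Richardson varieties have Kawamata log terminal singularities. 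This final transfer plays the role, in our Kac--Moody setting, of what Kleiman transversality provides for free in the finite-dimensional case.
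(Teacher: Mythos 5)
There is a genuine gap, and it sits exactly at the step you yourself flag as ``the remaining and hardest step.'' Your smooth pullback along $\mu_w:U^-\times Z_w\to\bar X$ (Lemma \ref{lem4}) only yields that $\theta_w^{-1}(X^v_w)\subset Z_w$ is normal and CM; this scheme is \emph{not} a resolution of $X^v_w$ (it is the smooth pullback of the possibly singular $X^v$, hence in general singular), so none of the tools you propose for descending to $X^v_w$ applies as stated: relative Kawamata--Viehweg vanishing needs a smooth (or at least klt) source, not merely a CM one; Zariski's main theorem gives $\theta_{w*}\co=\co_{X^v_w}$ only once $X^v_w$ is known to be normal, which is precisely what is to be proved; and the Kumar--Schwede theorem that Richardson varieties are klt presupposes normality (klt pairs live on normal varieties), so invoking it here is circular --- indeed in the paper [KuS] enters only later, for the dualizing-sheaf computations, after Proposition \ref{n5.6} is established, and only [KuS, Proposition 5.3] (Frobenius splitting, hence reducedness of the scheme-theoretic intersection) is used in this proof. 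In addition, your irreducibility argument rests on the unproved assertion that the fibers of $\mu_w$ over points of $X^v$ are connected; these fibers fiber over sets of the form $\{g\in U^-: g^{-1}x\in X_w\}$, whose connectedness is not obvious.

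The paper's proof avoids resolutions and cohomology vanishing altogether by a different mechanism: it pulls back the $G$-equivariant (hence locally trivial) fibration $\mu:G\times^B X_w\to\bar X$, $[g,x]\mapsto gx$, whose fiber is the Schubert variety $X_{w^{-1}}$ --- already known to be irreducible, normal and CM by [K, Theorem 8.2.2] --- over the inclusion $X^v\hookrightarrow\bar X$, where $X^v$ is irreducible, normal and CM by [KS]. The resulting total space $F^v_w$ therefore has all three properties, and the second projection $[g,x]\mapsto gB$, restricted over the big cell $B^-B/B$, is (by $B^-$-equivariance) a trivial fibration with fiber $X^v_w$; factoring off the cell transfers irreducibility, normality and CM-ness to $X^v_w$ in one stroke. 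In other words, the key idea you are missing is to keep the singular Schubert variety $X_w$ itself in the fiber direction (where its good local properties are already known) rather than replacing it by the BSDH resolution $Z_w$, which forces you into a birational descent problem that this proposition cannot yet support. Your argument for the density of $C_w\cap C^v$ also implicitly uses irreducibility; in the paper this reduces, once irreducibility is in hand, to the non-emptiness of $C_w\cap C^v$, quoted from the proof of [K, Lemma 7.3.10].
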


  \begin{proof}  Consider the multiplication map
  \[
\mu : G \times^{B} X_w \to \bar{X}, \quad [g,x] \mapsto gx.
  \]
Then, $\mu$ being $G$-equivariant, it is a fibration.  Consider the pull-back
fibration:
    \begin{alignat*}{2}
&F^v_w\; \overset{\hat{i}}{\hookrightarrow}  &&G \times^{B}X_w\\
\hat{\mu}&\downarrow &&\; \downarrow\mu\\
&X^v \; \underset{i}{\hookrightarrow} &&\,\,\,\bar{X}\,\,,
  \end{alignat*}
where $i$ is the inclusion map.

Also, consider the projection map
  \[
\pi : G \times^{B}X_w \to \bar{X}, \quad [g,x] \mapsto gB.
  \]
Let $\hat{\pi}$ be the restriction $\hat{\pi} := \pi \circ \hat{i}:
F^v_w \to \bar{X}$.  Observe that $i$ being $B^-$-equivariant (and $\mu$ is
$G$-equivariant) $\hat{i}$ is $B^-$-equivariant and hence so is $\hat{\pi}$.
 In particular, $\hat{\pi}$ is a fibration over the open cell
 $B^-B/B \subset \bar{X}$. Moreover, since $\hat{\pi}$  is $B^-$-equivariant (in 
particular, $U^-$-equivariant) and $U^-$ acts transitively on $B^-B/B $ with trivial isotropy, 
$\hat{\pi}$ is a trivial fibration restricted to $B^-B/B $.

Now, by [KS, Propositions 3.2, 3.4], $X^v$ is normal and CM (and, of course,
 irreducible).  Also, $X_{w^{-1}}$ is normal, irreducible and CM [K, Theorem 8.2.2].  Thus,
 $\hat{\mu}$ being a fibration with fiber $X_{w^{-1}}$ (as can be seen by considering the embedding 
$X_{w^{-1}} \hookrightarrow G\times^B\,{\tilde{X}}_w, \,\,gB \mapsto [g, g^{-1}],$ where $\tilde{X}_w$ 
is the inverse image of $X_w$ in $G$), $F^v_w$ is irreducible, normal and CM, and hence
 so is its open subset $\hat{\pi}^{-1}(B^-B/B)$.  But, $\hat{\pi}$ is a trivial fibration
 restricted to $B^-B/B$ with fiber over $1\cdot B$ equal to $X^v_w =
 X_w\cap X^v$.  Thus, we get that $X^v_w$ is irreducible, normal and CM under the
 scheme theoretic intersection. Moreover, since  $X^v_w$ is Frobenius split
 in char. $p>0$ (cf. [KuS, Proposition 5.3]), we get that it is reduced.

 Clearly, $C_w\cap C^v$ is an open subset of $X^v_w$. So, to prove that
 $C_w\cap C^v$ is dense in $X^v_w$, it suffices to show that it is nonempty, which
 follows from the proof of [K, Lemma 7.3.10].
  \end{proof}
\begin{remark} \label{newremark} By the same proof as above, applying Corollary \ref{9.5}, we see that
$X_w\cap \partial X^v$ is CM.
\end{remark}

\begin{theorem}\label{thm3}
For any $v\leq w$, consider the fiber product
$$
Z^{v}(S_w)\mathop{\times}_{\bar{X}} Z_{w},
$$
where $Z_{w}$ is the BSDH ($B$-equivariant)
desingularization of $X_{w}$ (corresponding to a fixed reduced decomposition
$w=s_{i_{1}}\ldots s_{i_{n}}$ of $w$) and $\pi^{v}_{S_w}:Z^{v}(S_w)\to X^{v}(S_w)$
is a $B^-$-equivariant
desingularization of $X^v(S_w)$ as in Theorem \ref{thm2}.

Then, $Z^{v}(S_w)\mathop{\times}_{\bar{X}} Z_{w}$ is  a smooth
 projective irreducible
$T$-variety (of finite type over $\bc$) with a canonical $T$-equivariant morphism
\[\pi^v_w: Z^{v}(S_w)\mathop{\times}_{\bar{X}} Z_{w} \to X^v_w.\]
Moreover,  $\pi^{v}_{w}$ is a $T$-equivariant desingularization which is an isomorphism 
restricted to the inverse image of the dense open subset $C^v\cap C_w$ of $X_w^v$. From now on,
we abbreviate
\[Z^v_w:=
Z^{v}(S_w)\mathop{\times}_{\bar{X}} Z_{w}.\]
\end{theorem}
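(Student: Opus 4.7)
The plan is to construct $Z^v_w$ as the stated fiber product and deduce all required properties by exploiting the smooth morphism $\mu_w$ from Lemma~\ref{lem4}. Since $\theta_w(Z_w)=X_w\subset V^{S_w}$, the fiber product $Z^v_w = Z^v(S_w)\times_{\bar X} Z_w$ equals $(\pi^v_{S_w})^{-1}(X^v_w)\times_{X^v_w}\theta_w^{-1}(X^v_w)$. The first factor is projective over $X^v_w$ (as the restriction of the projective $\pi^v_{S_w}$), and the second is a closed subscheme of the projective BSDH variety $Z_w$; so $Z^v_w$ is projective over $\bc$, in particular of finite type, and the canonical map $\pi^v_w\colon Z^v_w\to X^v_w$ is projective.

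The key step is smoothness. I would introduce the auxiliary scheme
\[ Q := Z^v(S_w)\times_{\bar X}(U^-\times Z_w), \]
with the second factor mapping to $\bar X$ via $\mu_w$. Since $\mu_w$ is smooth by Lemma~\ref{lem4}, its base change $Q\to Z^v(S_w)$ is smooth, and combined with the smoothness of $Z^v(S_w)$ from Theorem~\ref{thm2} this makes $Q$ smooth in the sense of that theorem. I would then write down the explicit isomorphism
\[ \phi\colon U^-\times Z^v_w \to Q,\qquad (g,z_1,z_2)\mapsto (g\cdot z_1,\, g,\, z_2), \]
which is well defined by the $B^-$-equivariance of $\pi^v_{S_w}$, with inverse $(z_1,g,z_2)\mapsto (g,\, g^{-1}z_1,\, z_2)$. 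The finite-codimensional normal subgroup $N^-_{S_w}\subset U^-$ acts freely on both sides of $\phi$ (by left translation on the $U^-$ factor) and $\phi$ is equivariant, so descending yields $(U^-/N^-_{S_w})\times Z^v_w \simeq Q/N^-_{S_w}$. The right side is smooth of finite type (smoothness descends from $Q\to Z^v(S_w)$ along the free $N^-_{S_w}$-quotient via the Cartesian square with $\bar Z^v(S_w)$, and finite-typeness is ensured by the first step together with the finite-dimensionality of $U^-/N^-_{S_w}$), and since $U^-/N^-_{S_w}$ is a non-empty smooth variety, it follows that $Z^v_w$ is smooth.

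For irreducibility, both $\pi^v_{S_w}$ and $\theta_w$ are proper birational morphisms onto the normal varieties $X^v(S_w)$ and $X_w$, so by Stein factorization they have connected fibers; the fiber of $\pi^v_w$ over any $x\in X^v_w$, being the product of these two connected fibers, is connected. Combined with surjectivity of $\pi^v_w$ (its image contains the open dense $C^v\cap C_w\subset X^v_w$ from Proposition~\ref{n5.6} and is closed by properness) and the irreducibility of $X^v_w$, this forces $Z^v_w$ to be connected, and smoothness then promotes this to irreducibility. The $T$-equivariance of $\pi^v_w$ is inherited from that of $\pi^v_{S_w}$ and $\theta_w$, and $\pi^v_w$ restricts to an isomorphism over the open dense $C^v\cap C_w$ (where both $\pi^v_{S_w}$ and $\theta_w$ are isomorphisms), so $\pi^v_w$ is a $T$-equivariant desingularization of $X^v_w$. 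The main obstacle will be making the smoothness step rigorous despite $Z^v(S_w)$, $U^-$ and $Q$ failing to be of finite type; this requires descending consistently to $N^-_{S_w}$-quotients throughout and checking that the various group actions and the isomorphism $\phi$ are all mutually compatible.
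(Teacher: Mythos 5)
Your proposal is correct and follows essentially the same route as the paper: your auxiliary scheme $Q$ is exactly the paper's $E^v_w$, your isomorphism $\phi$ is the paper's trivialization $E^{v}_{w}\simeq U^{-}\times \mathbb{F}$ (with $\mathbb{F}=Z^v_w$), and projectivity is obtained the same way from the projectivity of $\pi^v_{S_w}$ and the inclusion of the image of $\mu_w$ in $V^{S_w}$. The only minor deviation is that you deduce connectedness directly from the proper surjection $\pi^v_w\colon Z^v_w\to X^v_w$ with fibers $(\pi^v_{S_w})^{-1}(x)\times\theta_w^{-1}(x)$, whereas the paper runs the same connected-fiber argument on $f^v_w\colon E^v_w\to U^-\times\theta_w^{-1}(X^v_w)$; both rest on the same inputs (connected fibers of the two proper birational maps onto normal targets and the irreducibility of $X^v_w$ from Proposition \ref{n5.6}).
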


\begin{proof} Consider the commutative diagram:
\[
\xymatrix{
Z^{v}(S_w)\displaystyle\mathop{\times}_{\bar{X}}Z_{w}\ar[r]\ar[ddr]_{\pi^{v}_{w}}
& X^{v}(S_w)\displaystyle\mathop{\times}_{\bar{X}}X_{w}\ar@{=}[d]\\
 & X^{v}(S_w)\cap X_{w}\ar@{=}[d]\\
 & X^{v}_{w},
}
\]
where the horizontal map is the fiber product of the two desingularizations
and $\pi^v_w$ is the horizontal map under the above identification of
$X^{v}(S_w)\displaystyle\mathop{\times}_{\bar{X}}X_{w}$ with $X^v_w$.
Clearly, $\pi^{v}_{w}$ is $T$-equivariant and it is an isomorphism restricted
to the inverse image of the dense open subset
$C^v\cap C_w$ of $X^v_w$. In particular, $\pi^{v}_{w}$ is birational.

Define $E^{v}_{w}$ as the fiber product
\[
\xymatrix@=.5cm{
E^{v}_{w}\ar[dd]_{f^{v}_{w}}
\ar[rr]^{\hat{\mu}^{v}_{w}} &&
Z^{v}(S_{w})\ar[dd]^{\pi^{v}_{S_w}}\\
 & \square & \\
U^{-}\times Z_{w}\ar[rr]_{\mu_{w}} &&
\bar{X},
}
\]
where $\mu_w$ is as in Lemma \ref{lem4}.
Since $\mu_{w}$ is a smooth morphism by Lemma \ref{lem4},
so is $\hat{\mu}^{v}_{w}$. But
$Z^{v}(S_{w})$ is a smooth scheme and hence so is
$E^{v}_{w}$. Now, since both of $U^{-}\times
Z_{w}$ and $Z^{v}(S_{w})$ are $U^{-}$-schemes
(with $U^-$ acting on $U^{-}\times Z_{w}$ via the left multiplication
on the first factor)
and the morphisms $\pi^{v}_{S_w}$ and $\mu_{w}$ are
$U^{-}$-equivariant,
$E^{v}_{w}$ is a $U^{-}$-scheme (and $f^v_w$ is $U^-$-equivariant). Consider the
composite morphism
$$
E^{v}_{w}\xrightarrow{f^{v}_{w}}U^{-}\times
Z_{w}\xrightarrow{\pi_{1}} U^{-},
$$
where $\pi_1$ is the projection on the first factor.

It is $U^{-}$-equivariant with respect to the left multiplication of
$U^{-}$ on $U^{-}$. Let $\mathbb{F}$ be the fiber of $\pi_{1}\circ
f^{v}_{w}$ over $1$. Define the isomorphism
\[
\xymatrix{
E^{v}_{w}\ar[dr]_{\pi_{1}\circ f^{v}_{w}} & & U^{-}\times
\mathbb{F}\ar[ll]_{\sim}^{\theta}\ar[dl]^{\bar{\pi}_{1}}\\
 & U^{-} &
}
\]
\[
\theta (g,x) = g\cdot x, \,\,
\theta^{-1}(y) = \left((\pi_{1}\circ
f^{v}_{w})(y),\left(\pi_{1}\circ f^{v}_{w}(y)\right)^{-1}y\right).
\]

Since \ $E^{v}_{w}$ is a smooth scheme, so is
$\mathbb{F}$.

But,
$$
\mathbb{F}:=Z^{v}_w.
$$

Now, $\pi^{v}_{S_w}$ is a projective morphism onto $X^{v}(S_{w})$
and hence $\pi^{v}_{S_w}$ is a projective morphism considered as
a map  $Z^{v}(S_{w})\to V^{S_{w}}$. Also,
$\mu_{w}$ has its image inside $V^{S_{w}}$, since
$BuB/B\subset uB^{-}B/B$ for any $u\in W$.

Thus, $f^{v}_{w}$ is a projective morphism and
hence
$$
(f^{v}_{w})^{-1}(1\times
Z_{w})=Z^{v}_w
$$
is a projective variety.

Now, as observed by D. Anderson and independently by M. Kashiwara,  $Z^{v}_w$ is
irreducible:

Since $Z^{v}(S_{w})\to X^v(S_{w})$ is a proper desingularization, all its fibers
are connected and hence so are all the nonempty fibers of $f^v_w$. Now,
$\mu_w^{-1}(\OpIm \pi_{S_w}^v)= U^-\times Y,$ where $Y\subset Z_w$ is the closed
subvariety defined as the inverse image of the Richardson variety $X^v_w$ under the
BSDH desingularization $\theta_w:Z_w\to X_w$. Since $X^v_w$  is irreducible,
$\theta_w$ is proper,
and all the fibers of $\theta_w$ are connected, $Y=\theta_w^{-1}(X^v_w)$ is connected and hence
so is $\mu_w^{-1}(\OpIm  \pi_{S_w}^v)$. Since the pull-back of a proper morphism
is proper (cf. [H, Corollary 4.8, Chap. II]), the surjective morphism $f^v_w:E^v_w \to
U^-\times Y$ is proper. Now, $U^-\times Y$ being connected and all the fibers of
$f^v_w$ over $U^-\times Y$  are nonempty and connected, we get that $E^v_w$
is connected and hence so is $\mathbb{F}$. Thus, $\mathbb{F}$ being smooth,
it is irreducible.
This proves the theorem.
\end{proof}

The action of $B$ on $Z_w$ factors through the action of a finite dimensional
quotient group $\bar{B}= B_w$ containing the maximal torus
$H$. Let $\bar{U}$ be the image of $U$ in $\bar{B}$.

\begin{lemma}\label{lem7}
For any $u\leq w$, the map $\bar{\mu}:\bar{U}\times
Z^{u}_{w}\to Z_{w}$ is a smooth
morphisms and hence so is $\bar{B}\times
Z^{u}_{w}\to Z_{w}$, where $(b,z)\mapsto b\cdot \pi_2(z)$, for
$b\in \bar{B}, z\in Z^u_w$. (Here $\pi_2: Z^u_w\to Z_w$ is the canonical
projection map.)
\end{lemma}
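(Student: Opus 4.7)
Plan:

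The second assertion (smoothness of $\bar{B}\times Z^u_w\to Z_w$) will follow from the first, since $\bar{B}=H\cdot\bar{U}$ and the $H$-action on $Z_w$ is smooth: writing $\bar{B}\times Z^u_w\xrightarrow{\simeq}H\times(\bar{U}\times Z^u_w)\xrightarrow{\mathrm{id}_H\times\bar\mu}H\times Z_w\xrightarrow{\mathrm{act}}Z_w$, the last map becomes a projection after the change of variables $(h,z)\mapsto(h,hz)$, hence is smooth, and composition with the smooth $\mathrm{id}_H\times\bar\mu$ is smooth.

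For the main assertion, my plan is to exhibit $\bar\mu$ as the base change of a smooth morphism to $\bar{X}$. Concretely, I would set up the Cartesian square
\[
\xymatrix{
\bar{U}\times Z^u_w \ar[r]^-{\alpha}\ar[d]_-{\bar\mu} & \bar{U}\times Z^u(S_w)\ar[d]^-{\delta}\\
Z_w \ar[r]^-{\theta_w} & \bar{X}
}
\]
with $\alpha(b,(z',z))=(b,z')$ and $\delta(b,z')=b\cdot\pi^u_{S_w}(z')$. The $\bar{B}$-equivariance of $\theta_w$ makes the square commute, since $\theta_w(bz)=b\theta_w(z)=b\pi^u_{S_w}(z')$. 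To verify the square is Cartesian, I would write the explicit mutually inverse morphisms
\[(b,(z',z))\longleftrightarrow((b,z'),bz),\qquad ((b,z'),y)\longmapsto(b,(z',b^{-1}y)),\]
identifying $\bar{U}\times Z^u_w$ with $(\bar{U}\times Z^u(S_w))\times_{\bar{X}}Z_w$. Since smoothness is preserved under base change, this reduces the lemma to showing that $\delta$ is a smooth morphism.

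To establish smoothness of $\delta$, I would argue analogously to the \emph{Another Proof} of Lemma \ref{lem4}. Consider the $G$-equivariant morphism
\[
\hat\delta\colon G\times^{B^-}Z^u(S_w)\longrightarrow\bar{X},\qquad [g,z']\longmapsto g\cdot\pi^u_{S_w}(z'),
\]
which is well-defined by the $B^-$-equivariance of $\pi^u_{S_w}$ and is $G$-equivariant for the left translation on $G$. Since $G$ acts transitively on $\bar{X}=G/B$, $\hat\delta$ is a locally trivial fibration with smooth fibers (of finite type over $\bc$), and hence is smooth over each finite-type open $V^S\subset\bar{X}$. I would then identify $\bar{U}\times Z^u(S_w)$ with an open subvariety of $G\times^{B^-}Z^u(S_w)$ via a suitable (local) lift of $\bar{U}$ into $U\subset G$, mapping onto the big cell $U\cdot B^-/B^-\subset G/B^-$; the restriction of $\hat\delta$ to this open is $\delta$, which is therefore smooth.

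The main obstacle is the reconciliation of $\bar{U}$, which is a \emph{quotient} of $U$ rather than a subgroup of $G$, with the $G$-equivariant fibration picture on $\bar{X}$. Specifically, one must choose an appropriate finite-dimensional algebraic lift of $\bar{U}$ into $U$ so that the $\bar{U}$-action on $Z_w$ (which factors through $\bar{B}$) is compatible with the $U$-action on $\bar{X}$, and ensure that the relevant identifications behave well on each quasi-compact open $V^S$. This is delicate because of the infinite-dimensionality of $G$ and $\bar{X}$, but the finite-type nature of the Richardson intersections and their desingularizations confines all interesting behavior to such a finite-dimensional region.
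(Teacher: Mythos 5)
Your reduction to a base-change diagram is the right skeleton (it is essentially the paper's diagram $(\mathfrak{D})$), and your argument for passing from $\bar{U}$ to $\bar{B}$ is fine, but the step on which everything rests fails: you cannot identify $\bar{U}\times Z^{u}(S_w)$ with an open subvariety of $G\times^{B^{-}}Z^{u}(S_w)$. The preimage of the big cell $UB^{-}/B^{-}$ in $G\times^{B^{-}}Z^{u}(S_w)$ is $U\times Z^{u}(S_w)$ with the \emph{full} pro-unipotent group $U$; when $G$ is not of finite type, $\bar{U}$ is a proper finite-dimensional quotient $U/U_w$, so any lift $\sigma(\bar{U})\times Z^{u}(S_w)$ sits inside that open set as a subvariety of infinite codimension, and restricting the smooth fibration $\hat\delta$ to it proves nothing. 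Worse, there is no reason for your map $\delta\colon \bar{U}\times Z^{u}(S_w)\to\bar{X}$, $(b,z')\mapsto\sigma(b)\pi^{u}_{S_w}(z')$, to be smooth at all: at a general point of $X^{u}(S_w)$ the tangent space of $X^{u}$ has codimension $\ell(u)$ in $T\bar{X}$, and the only extra directions available are the images of the finite-dimensional space $\Lie\,\sigma(\bar{U})$; since $U_w$ is only required to act trivially on $Z_w$ (hence on $X_w$), not on $X^{u}$, its infinitesimal action at such points is nonzero and the chosen complement $\Lie\,\sigma(\bar{U})$ need not cover the missing directions. So the "delicate point" you flag at the end is precisely the gap, and the proposed fix (a lift of $\bar{U}$ into $U$ plus a finite-type confinement heuristic) does not close it. Moreover $\delta$ also depends on the choice of section $\sigma$, which is only a morphism of varieties, not of groups, so nothing canonical rescues the situation.

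The paper's proof avoids asserting smoothness of any morphism from a $\bar{U}$- (or even $U$-) product into $\bar{X}$. It keeps the full group $U$ in the Cartesian square $U\times Z^{u}_{w}\to U\times Z^{u}(S_w)$ over $Z_w\to\bar{X}$, pulls back the $G$-equivariant locally trivial fibration $\mu'\colon G\times^{B^{-}}Z^{u}(S_w)\to\bar{X}$ along $\theta_w$, and uses this only to conclude that the differential of $\mu\colon U\times Z^{u}_{w}\to Z_w$ is surjective on Zariski tangent spaces. Since $\mu$ factors through the finite-dimensional quotient $\bar\mu\colon\bar{U}\times Z^{u}_{w}\to Z_w$, the differential of $\bar\mu$ is again surjective, and then smoothness follows from [H, Chap.~III, Proposition 10.4] applied to the smooth finite-type varieties $\bar{U}\times Z^{u}_{w}$ and $Z_w$. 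In other words, the descent from $U$ to $\bar{U}$ must happen \emph{after} extracting surjectivity of differentials, not before; doing it before, as in your proposal, is exactly where the infinite-dimensionality bites.
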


\begin{proof}
First of all, the map
$$
\mu':G\times^{B^{-}}\,Z^{u}(S_{w})\to \bar{X},\quad [g,z]\mapsto
g\pi^{u}_{S_w}(z)
$$
being $G$-equivariant, is a locally trivial fibration. (It is trivial over the open subset $U^- \subset \bar{X}$.) 

We next claim that the following diagram is a Cartesian diagram:
\begin{equation*}
\vcenter{\xymatrix@=.7cm{
 U\times Z^{u}_{w}\ar[dd]_{\mu}\ar[rr] & &
U\times Z^{u}(S_w)\ar[dd]^{\hat{\mu}'}\\
 & \square & \\
 Z_{w}\ar[rr] && \bar{X},
}}\tag{$\mathfrak{D}$}
\end{equation*}
where $\mu(u,z)= u\cdot\pi_2(z)$ and
$\hat{\mu}'(u,z)= u\cdot \pi^u_{S_w}(z)$.
Define the map
\[
\theta: U\times Z^{u}_{w} \to
(U\times
Z^{u}(S_w))\mathop{\times}_{\bar{X}}  Z_{w},\,\,\,\,
(u,z) \mapsto ((u,\pi_{1}(z)), u\cdot\pi_2(z)),
\]
where $\pi_{2}:Z^{u}_{w}\to Z_{w}$
and $\pi_{1}:Z^{u}_{w}\to Z^{u}(S_w)$ are
the canonical morphisms.

Define the map
\[
\theta' : (U\times
Z^{u}(S_w))\mathop{\times}_{\bar{X}}  Z_{w} \to U\times
Z^{u}_{w},\quad
((u,z_{1}), z_{2})\mapsto (u,(z_{1},u^{-1}z_{2})).
\]

Clearly $\theta$ and $\theta'$  are inverses to each other and hence
$\theta$ is an isomorphism. Thus, the above diagram $(\mathfrak{D})$
is a Cartesian diagram:

Now, consider the pull-back diagram:
\[
\xymatrix@=.5cm{
\mathbb{E}\ar[dd]_{\beta}\ar[rr]^-{\alpha} & &
G\displaystyle\mathop{\times}^{B^{-}}Z^{u}(S_{w})\ar[dd]^{\mu'}\\
& \square & \\
Z_{w}\ar[rr] & & \bar{X}.
}
\]

Since $\mu'$ is a locally trivial fibration, so is the map
$\beta$. Moreover, since the diagram $(\mathfrak{D})$ is a Cartesian
 diagram, $\alpha^{-1}(U\times Z^{u}(S_{w}))\simeq
U\times Z^{u}_{w}$ and $\beta_{|U\times
  Z^{u}_{w}}=\mu$. Thus, the differential of
  $\mu$ is surjective at the Zariski tangent spaces.

Since the morphism $\mu:U\times
Z^{u}_{w}\to Z_{w}$ factors through a
finite dimensional quotient $\bar{\mu}:\bar{U}\times
Z^{u}_{w}\to Z_{w}$, the
differential of $\bar{\mu}$ continues to be surjective at the
Zariski tangent spaces. Since $\bar{U}$,
$Z^{u}_{w}$ and $Z_{w}$ are smooth
varieties, we see that $\bar{\mu}:\bar{U}\times
Z^{u}_{w}\to Z_{w}$ is a smooth
morphism (cf. [H, Chap. III, Proposition 10.4]). 

To prove that the map $\bar{B}\times
Z^{u}_{w}\to Z_{w}$ is a smooth morphism, it suffices to observe that 
$H\times Z_w \to Z_w, \,(h,z) \mapsto h\cdot z$ ,  is a smooth morphism. This proves the lemma.
\end{proof}

\begin{lemma}\label{lem9}
The map $\bar{B}\times X^{u}_{w}\to X_{w}, (b,x)\mapsto b\cdot x$, is a flat
morphism for any $u\leq w$.
\end{lemma}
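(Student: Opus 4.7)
The plan is to deduce flatness of $\nu:\bar{B}\times X^u_w\to X_w$ from the smoothness of a natural $G$-equivariant morphism, combined with a descent that exploits the semidirect product decomposition $\bar{B}=H\ltimes \bar{U}$ (where $\bar{U}\subset \bar{B}$ denotes the image of $U$).

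First, the $G$-equivariant morphism $\mu:G\times^{B^-}X^u\to\bar{X}$, $[g,x]\mapsto g\cdot x$, is a locally trivial fibration and hence smooth, since $G$ acts transitively on $\bar{X}$. Restricting to the open subset $U\subset G$---which embeds openly in $G/B^-$ because $U\cap B^-=\{1\}$---yields a smooth morphism $\hat{\mu}:U\times X^u\to\bar{X}$, $(u,x)\mapsto u\cdot x$. Since $X_w$ is $B$-invariant (and hence $U$-invariant), the condition $ux\in X_w$ reduces to $x\in X_w$, so $\hat{\mu}^{-1}(X_w)=U\times X^u_w$. Base-changing smoothness along the closed immersion $X_w\hookrightarrow \bar{X}$ therefore shows that the restricted map $U\times X^u_w\to X_w$, $(u,x)\mapsto ux$, is smooth.

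Next, since the action of $U$ on $X_w$ factors through the finite-dimensional unipotent quotient $\bar{U}$, this smooth map factors as $U\times X^u_w\twoheadrightarrow \bar{U}\times X^u_w\xrightarrow{\nu'} X_w$, where the first arrow is $(U\to \bar{U})\times \mathrm{id}$. Being the product with the identity of the quotient $U\to\bar{U}$ by a closed normal (pro-/ind-unipotent) subgroup, this first arrow is faithfully flat. By faithfully flat descent---for ring maps $R\to S\to T$ with $R\to T$ flat and $S\to T$ faithfully flat, one has $R\to S$ flat---the map $\nu'$ is flat.

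Finally, using the semidirect decomposition $\bar{B}=H\ltimes \bar{U}$, I will factor $\nu=\nu'\circ\psi$ with $\psi:\bar{B}\times X^u_w\to \bar{U}\times X^u_w$ given by $(h\bar{u},x)\mapsto (h\bar{u}h^{-1},h\cdot x)$. Under the identification $\bar{B}\times X^u_w=H\times(\bar{U}\times X^u_w)$, this $\psi$ is precisely the action map of $H$ on $\bar{U}\times X^u_w$, with $H$ acting on $\bar{U}$ by conjugation and on $X^u_w$ by the original action (which preserves $X^u_w$ because $H\subset B\cap B^-$). As the action map of the smooth algebraic group $H$, the map $\psi$ is smooth and hence flat; composing with flat $\nu'$ yields that $\nu$ is flat. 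The main technical point is the faithful flatness of $U\to\bar{U}$, which is standard for quotients by closed normal subgroups of pro- or ind-unipotent groups in the Kac-Moody setting.
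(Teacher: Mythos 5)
Your argument is in substance the paper's: the paper forms the $G$-equivariant (hence locally trivial, hence flat) map $G\times^{U^-}X^u(S_w)\to\bar X$, restricts to the open chart $B\times X^u(S_w)$ (using $B\cap U^-=\{1\}$), base-changes along $X_w\hookrightarrow\bar X$ to get $B\times X^u_w\to X_w$ flat, and then descends flatness along the faithfully flat map $B\times X^u_w\to\bar B\times X^u_w$ (citing [M, Chap.~3, \S 7]). You arrange the pieces slightly differently: quotienting by $B^-$ instead of $U^-$ makes your open chart $U$ rather than $B$, so after descending along $U\to\bar U$ you only have flatness of $\bar U\times X^u_w\to X_w$, and you must reinstate the torus via $\bar B=H\ltimes\bar U$ and the factorization $\nu=\nu'\circ\psi$ with $\psi$ the $H$-action map $(h\bar u,x)\mapsto(h\bar uh^{-1},hx)$, which is indeed smooth (shearing automorphism followed by projection) and well defined since $H$ normalizes $\bar U$ and stabilizes $X^u_w$. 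That extra step is correct; the paper's choice of $U^-$ simply avoids it by making the chart all of $B$ at once. Using all of $X^u$ instead of $X^u(S_w)$ is also harmless here, since $X_w\subset V^{S_w}$ so the base change gives the same $U\times X^u_w$.

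One assertion is wrong as written, though harmlessly so: $G\times^{B^-}X^u\to\bar X$ is a locally trivial fibration and therefore flat, but it is \emph{not} smooth, and neither is $U\times X^u_w\to X_w$. Transitivity of $G$ on $\bar X$ gives local triviality, not smoothness: a smooth morphism must have smooth fibers (equivalently, a smooth morphism onto the smooth $\bar X$ would force the total space, hence $X^u$, to be smooth), whereas the fibers here are Schubert-type varieties attached to $u$ (and, after base change, are modeled on translates of $X^u_w$), which are singular in general. This is exactly why the paper claims only flatness at this stage, in contrast with Lemmas \ref{lem4} and \ref{lem7}, where the sources are the smooth desingularizations $Z_w$, $Z^u_w$ and smoothness is genuinely available. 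Since every later step of your argument (the base change to $X_w$ and the faithfully flat descent, for which the standard statement you quote is the right one) uses only flatness, replacing ``smooth'' by ``flat'' in those two places repairs the write-up completely.
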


\begin{proof}
The map
$$
\mu:G {\times}^{U^{-}}\,X^{u}(S_w)\to \bar{X},\quad [g,x]\mapsto g\cdot x,
$$
being $G$-equivariant, is a fibration. In particular, it is a flat map
and hence its restriction (to an open subset) $\mu':B\times X^{u}(S_w)\to \bar{X}$
is a flat
map. Now, ${\mu'}^{-1} (X_{w})=B\times X^{u}_{w}$. Thus,
$\mu':B\times X^{u}_{w}\to X_{w}$ is a flat map. Now, since
$B\times X^{u}_w \to \bar{B}\times X^{u}_w$ is a locally trivial fibration
(in particular, faithfully flat), the map $\bar{B}\times X^{u}_w \to X_w$
is flat (cf. [M, Chap. 3, \S 7]). This proves the
lemma.
\end{proof}

The canonical action of $\Gamma= \Gamma_{B\times B}$ on $(Z_w^2)_\bp$ descends
to the action of a finite dimensional quotient group $\bar{\Gamma}=\Gamma_w:$
$$ \Gamma  \twoheadrightarrow  \bar{\Gamma}=\Gamma_w \twoheadrightarrow
 GL(N+1)^{r},$$
 where $(Z_w^2)_\bp$ and $\Gamma$ are as in Section \ref{sec6}. In fact, we can (and do)
 take
 $$\bar{\Gamma}= \bar{\Gamma}_0 \rtimes GL(N+1)^{r},$$
 where $\bar{\Gamma}_0 $ is the group of global sections of the bundle
 $E(T)_\bp
\displaystyle\mathop{\times}^{T}(\bar{B}^{2}) \to \bp$, where $\bar{B}=B_w$ is defined
just above Lemma \ref{lem7}.

\begin{lemma}\label{lem5}
For any ${\bf j}=(j_{1},\ldots,j_{r})\in [N]^r$ and $u, v \leq w$, the map
$$
\tilde{m}:\bar{\Gamma}\times
\left(Z^{u,v}_{w}\right)_{\bf j}\to
(Z_w^2)_\bp
$$
is a smooth morphism, where $Z^{u,v}_{w}:=Z^{u}_{w} \times Z^{v}_{w}$ under the
diagonal action of $T$, $\left(Z^{u,v}_{w}\right)_{\bf j}$ is the inverse image
of $\bp_{\bf j}$ under the map $E(T)_\bp
\displaystyle\mathop{\times}^{T} Z^{u,v}_w \to \bp$ and $\tilde{m}(\gamma, x)=
\gamma\cdot \pi_2(x)$. (Here $\pi_2: \left(Z^{u,v}_{w}\right)_{\bf j}\to
(Z_w^2)_\bp$ is the map induced from the canonical projection
$p:Z^{u}_{w} \times Z^{v}_{w} \to Z_w^2$.)
\end{lemma}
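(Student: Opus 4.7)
The plan is to apply the differential criterion for smoothness: since both the source $\bar\Gamma\times(Z^{u,v}_w)_{\bf j}$ and the target $\widetilde{Z_w^2}$ are smooth varieties of finite type over $\bc$ (the smoothness and finite-dimensionality of $\bar\Gamma$ follow from the structure of $\bar\Gamma_0$ as the global sections of the bundle $E(T)_\bp\times^T\bar B^2\to\bp$, whose unipotent part decomposes into globally generated line bundles $\co(-\alpha)$ as in the proof of Lemma \ref{lem6}), it suffices by [H, Chap.~III, Prop.~10.4] to check that $d\tilde m$ is surjective at every closed point. Since $\tilde m$ is $\bar\Gamma$-equivariant (left translation on the first factor, the standard action on $\widetilde{Z_w^2}$), the problem reduces to verifying surjectivity at points $(1,x)$ with $x\in (Z^{u,v}_w)_{\bf j}$ arbitrary.

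Fix such a point; set $\bar e=\pi(x)\in\bp_{\bf j}$ and $y=\pi_2(x)\in\widetilde{Z_w^2}$, both lying over $\bar e$. From the bundle projection $\pi:\widetilde{Z_w^2}\to\bp$, one has the tangent sequence
\[0\to T_{y}(Z_w^2)\to T_{y}\widetilde{Z_w^2}\to T_{\bar e}\bp\to 0,\]
and surjectivity of $d\tilde m$ reduces to surjecting onto each of the two outer terms. The $GL(N+1)^r$-component of $\bar\Gamma$ acts transitively on $\bp$, so restricting $d\tilde m$ to $\mathfrak{gl}(N+1)^r\subset T_1\bar\Gamma$ and composing with the projection $T_y\widetilde{Z_w^2}\twoheadrightarrow T_{\bar e}\bp$ already gives a surjection onto the base direction.

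For the vertical direction $T_y(Z_w^2)$, the image of $d\tilde m$ at $(1,x)$ contains $dp(T_x Z^{u,v}_w)$, coming from the fiber summand of $T_x(Z^{u,v}_w)_{\bf j}$, and also the infinitesimal orbit $\Lie(\bar B^2)\cdot y \subset T_y(Z_w^2)$, coming from $\Lie\bar\Gamma_0$; the latter uses that the evaluation map $\bar\Gamma_0\to E(T)_\bp\times^T\bar B^2$ at $\bar e$ is surjective on tangent spaces, which is the infinitesimal form of Lemma \ref{lem6} applied to the $B\times B$-bundle and again follows from global generation of the line bundles $\co(-\alpha)$ in our chosen identification $T\simeq(\bc^*)^r$. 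Together, $dp(T_x Z^{u,v}_w)+\Lie(\bar B^2)\cdot y$ is precisely the image at $(1,x)$ of the differential of the multiplication
\[\bar B^2\times Z^{u,v}_w \longrightarrow Z_w^2,\qquad (b_1,b_2,z_1,z_2)\mapsto \bigl(b_1\cdot p_1(z_1),\,b_2\cdot p_2(z_2)\bigr),\]
which is smooth by Lemma \ref{lem7} applied separately to the $u$- and $v$-factors. Its differential is therefore surjective onto $T_y(Z_w^2)$.

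Combining the horizontal and vertical surjectivities yields that $d\tilde m$ is surjective at $(1,x)$, and then at every point by $\bar\Gamma$-equivariance; hence $\tilde m$ is smooth. The principal obstacle here is not a deep difficulty but rather the careful bookkeeping for the scheme of sections $\bar\Gamma_0$, specifically the infinitesimal evaluation surjectivity at an arbitrary $\bar e\in\bp$; this is the only point where the particular identification $T\simeq(\bc^*)^r$ enters, and it is exactly what Lemma \ref{lem6} has been tailored to provide.
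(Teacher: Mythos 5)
Your argument is correct and is essentially the paper's own proof: the paper likewise splits $\tilde m$ into the base direction, handled by $GL(N+1)^{r}$-transitivity on $\bp$ (its map $m''$), and the fiber direction, handled by Lemma \ref{lem6} (surjectivity of evaluation of sections of the group bundle) combined with the smoothness of $\bar{B}^{2}\times Z^{u,v}_{w}\to Z^{2}_{w}$ from Lemma \ref{lem7}, and then concludes via [H, Chap.~III, Proposition~10.4]. You carry out the same reduction pointwise on tangent spaces using $\bar{\Gamma}$-equivariance rather than through the commutative diagram of fibrations, which is only a cosmetic difference.
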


\begin{proof}
Consider the following commutative diagram, where both the horizontal
right side maps are
fibrations with left side as fibers:
\[
\xymatrix{
\bar{\Gamma}_{0}\times Z^{u,v}_{w}\ar[d]^{m'}\ar[r] & \bar{\Gamma} \times
\left(Z^{u,v}_{w}
\right)_{{\bf j}}\ar[d]^{\tilde{m}}\ar[r] &
GL(N+1)^{r}\times \mathbb{P}_{{\bf j}}
\ar[d]^{m''}\\
Z^{2}_{w}\ar[r] & (Z_w^2)_\bp\ar[r] &
\mathbb{P}=(\mathbb{P}^{N})^{r} .
}
\]
Here $m'$ is the restriction of $\tilde{m}$ and $m''$ is the restriction of the
standard map $GL(N+1)^{r}\times \mathbb{P} \to \bp$ induced from the action of
$GL(N+1)$ on $\bp^N$. Thus, $m'$ takes $(\gamma, z)$ to $\gamma(*)\cdot p(z)$, where
$*$ is the base point in $\bp$.
Clearly, $m''$ is a smooth morphism since it is $GL(N+1)^{
  r}$-equivariant and $GL(N+1)^{r}$ acts transitively
on $\mathbb{P}$. We next claim that $m'$ is a
smooth morphism: By the analogue of Lemma \ref{lem6} for $\Gamma_B$ replaced by 
$\Gamma=\Gamma_{B\times B}$ (see the remark following Lemma \ref{connected}), it suffices to show that
$$
\bar{B}^{2}\times Z^{u,v}_{w}\to
Z^{2}_{w}
$$
is a smooth morphism, which follows from Lemma \ref{lem7} asserting that
$
\bar{B}\times Z^{u}_{w}\to Z_{w}
$
is a smooth morphism. Since $m'$ and $ m''$ are smooth morphisms, so is
$\tilde{m}$ by [H, Chap. III, Proposition 10.4].
\end{proof}

\begin{lemma}\label{lem8} Let $u,v \leq w$.
The map $m:\bar{\Gamma} \times (X^{u,v}_{w})_{{\bf j}}\to (X^{
  2}_{w})_\bp$ is flat, where $m$ is defined similarly to the map
  $\tilde{m}: \bar{\Gamma} \times (Z^{u,v}_{w})_{{\bf j}}\to (Z^{
  2}_{w})_\bp$ as in Lemma \ref{lem5}.

Similarly, its restriction  $m': \bar{\Gamma}\times \partial ((X^{u,v}_{w})_{{\bf j}})\to
(X_w^2)_\bp$ is flat, where $X^{u,v}_w:=X^{u}_w \times X^{u}_w$ ,
$$
\partial \left((X^{u,v}_{w})_{{\bf j}}\right):=\left((\partial X^{u,v})
\cap (X^{
  2}_{w})\right)_{{\bf j}}\cup (X^{u,v}_{w})_{\partial \mathbb{P}_{{\bf j}}},
  $$
  $(X^{u,v}_{w})_{\partial \mathbb{P}_{{\bf j}}}$ is the inverse image of
  $\partial \mathbb{P}_{{\bf j}}$ under the standard quotient map $E(T)_\bp\times^T
  X^{u,v}_{w}\to \bp$,
  and $\partial X^{u,v}:=
\left((\partial X^{u})\times
X^{v}\right)\cup \left(X^{u}\times (\partial X^{v})\right).$
\end{lemma}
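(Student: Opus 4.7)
The plan is to run the argument of Lemma \ref{lem5} verbatim, replacing ``smooth'' by ``flat'' throughout, with Lemma \ref{lem9} supplying the flat-fiber input in place of Lemma \ref{lem7}. Concretely, I would first apply Lemma \ref{lem9} in each factor to conclude that $\bar{B}^2 \times X^{u,v}_w \to X^2_w$ is flat, and then invoke Lemma \ref{lem6} (every point of a fiber of $\tilde{B}$ extends to a global section of $\Gamma_0$) to upgrade this to flatness of $m_0 : \bar{\Gamma}_0 \times X^{u,v}_w \to X^2_w$. Combined with the smoothness (hence flatness) of the base map $m'': GL(N+1)^{r} \times \mathbb{P}_{\bf j} \to \mathbb{P}$, which holds by transitivity of $GL(N+1)^{r}$ on $\mathbb{P}$, and using that the two horizontal rows of the diagram
\[
\xymatrix{
\bar{\Gamma}_0 \times X^{u,v}_w \ar[d]_{m_0} \ar[r] & \bar{\Gamma} \times (X^{u,v}_w)_{\bf j} \ar[d]_{m} \ar[r] & GL(N+1)^{r} \times \mathbb{P}_{\bf j} \ar[d]_{m''} \\
X^{2}_w \ar[r] & \widetilde{X^{2}_w} \ar[r] & \mathbb{P}
}
\]
are locally trivial fibrations with matching fibers, flatness of $m$ follows by trivializing over a small open of $\mathbb{P}$: in such a trivialization the middle column becomes the product of $m_0$ and a restriction of $m''$, hence flat.

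For the restricted map $m'$, I would write $\partial ((X^{u,v}_w)_{\bf j}) = A \cup B$ with $A := ((\partial X^{u,v}) \cap X^{2}_w)_{\bf j}$ and $B := (X^{u,v}_w)_{\partial \mathbb{P}_{\bf j}}$, and adapt the first step to each piece. For $A$, decompose $\partial X^{u,v} = ((\partial X^u) \times X^v) \cup (X^u \times (\partial X^v))$ and observe that $\partial X^u = \bigcup_{u'>u} X^{u'}$ is $B^-$-stable, so the proof of Lemma \ref{lem9}---the $G$-equivariance of $G \times^{U^-} (\partial X^u)(S_w) \to \bar{X}$ forces it to be a fibration, hence flat, hence so is its restriction to $\bar{B} \times (\partial X^u \cap X_w) \to X_w$---applies verbatim with $\partial X^u$ in place of $X^u$. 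For $B$, each irreducible component of $\partial \mathbb{P}_{\bf j}$ has the form $\mathbb{P}^{j_1} \times \cdots \times \mathbb{P}^{j_i - 1} \times \cdots \times \mathbb{P}^{j_r}$ and plays exactly the role $\mathbb{P}_{\bf j}$ did in the first step, so the same commutative diagram produces flatness on $\bar{\Gamma} \times B$.

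To pass from flatness on each closed sub-piece to flatness on the scheme-theoretic union, I plan to apply Mayer-Vietoris: for reduced closed subschemes $Y_1, Y_2$ with reduced scheme-theoretic intersection, the short exact sequence
\[
0 \to \co_{Y_1 \cup Y_2} \to \co_{Y_1} \oplus \co_{Y_2} \to \co_{Y_1 \cap Y_2} \to 0
\]
(tensored with $\co_{\bar{\Gamma}}$ and checked stalk-wise over $\widetilde{X^{2}_w}$) forces flatness of the leftmost term from flatness of the outer two. Reducedness of all the intersections arising is underwritten by the Frobenius splitness of (products of) Richardson variety boundaries [KuS, Proposition 5.3]. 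The main obstacle I foresee is the bookkeeping required to iterate this Mayer-Vietoris step across the several components of $\partial X^{u,v}$ and $\partial \mathbb{P}_{\bf j}$ and to verify that every iterated intersection remains a $B^-$-stable union of opposite Schubert pieces (possibly crossed with a coordinate sub-projective-space), so that the Lemma \ref{lem9}-style fibration argument continues to apply at every stage.
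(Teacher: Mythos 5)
Your proposal follows essentially the same route as the paper's proof: reduce via the three-column fibration diagram and Lemma \ref{lem6} to the flatness of $\bar{B}^{2}\times X^{u,v}_{w}\to X^{2}_{w}$ supplied by Lemma \ref{lem9}, and then handle $m'$ by proving flatness on the boundary pieces and their intersections and gluing with the exact sequence $0\to k[Y]\to k[Y_{1}]\oplus k[Y_{2}]\to k[Y_{1}\cap Y_{2}]\to 0$. The only differences are cosmetic: you iterate the gluing over the individual components of $\partial X^{u,v}$ and $\partial\mathbb{P}_{\bf j}$, whereas the paper applies it once to $\Gamma_{1}$ and $\Gamma_{2}$ (treating each piece and their intersection ``by the same proof as the first part''), and your phrase ``flatness of the outer two'' should read flatness of $Y_{1}$, $Y_{2}$ and $Y_{1}\cap Y_{2}$ -- which your plan does in fact arrange.
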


\begin{proof}
Consider the following diagram where both the horizontal right side  maps are
locally trivial fibrations with left side as fibers:
\[
\xymatrix{
\bar{\Gamma}_{0}\times X^{u,v}_{w}\ar[d]^{\hat{m}'}\ar[r] & \bar{\Gamma}\times
(X^{u,v}_{w})_{{\bf j}}\ar[d]^{{m}}\ar[r] & GL(N+1)^{
  r}\times \mathbb{P}_{{\bf j}}\ar[d]^{m''}\\
X^{ 2}_{w}\ar[r] & (X_w^2)_\bp\ar[r] &
\mathbb{P}=(\mathbb{P}^{N})^{r}
}
\]

Since the two horizontal maps are fibrations and $m''$ is a smooth morphism
(cf. proof of Lemma \ref{lem5}), to prove that $m$ is flat, it suffices to show that
$\hat{m}':\bar{\Gamma}_{0}\times X^{u,v}_{w}\to X^{2}_{w}$ is a flat
morphism. By the analogue of Lemma \ref{lem6} for $\Gamma$, it suffices to show that
$$
(\bar{B}^{2})\times X^{u,v}_{w}\to X^{ 2}_{w}
$$
is a flat morphism,  which folows from Lemma \ref{lem9}.

Observe first that, by the same proof as that of Lemma \ref{lem9}, the morphism $\bar{B} ^2\times ((\partial X^{u,v} \cap X_w^2) \to
X_w^2$ is flat. Now, to prove that the map $\bar{\Gamma} \times \partial((X^{u,v}_{w})_{{\bf j}})\to
(X_w^2)_\bp$ is flat, observe that
(by the same proof as that of the first part) it is flat
restricted to the components $\Gamma_{1}:=\bar{\Gamma} \times ((\partial
X^{u,v})\cap X^{2}_{w})_{{\bf j}}$ and $\Gamma_{2}:=\bar{\Gamma}\times
(X^{u,v}_{w})_{\partial \mathbb{P}_{{\bf j}}}$ and also restricted to the
intersection $\Gamma_{1}\cap \Gamma_{2}$. Thus, it is flat on
$\Gamma_{1}\cup \Gamma_{2}$, since for affine  scheme $Y=Y_{1}\cup
Y_{2}$, with closed subschemes $Y_{1}$, $Y_{2}$, and a morphism $f:Y \to X$ of schemes,  the sequence
$$0\to
k[Y]\to k[Y_{1}]\oplus k[Y_{2}]\to k[Y_{1}\cap Y_{2}]\to 0$$ is exact as a sequence of $k[X]$-modules.
\end{proof}

The following  Lemmas \ref{newlem1} and \ref{n7.3} are not used in the paper. However, we
have included them here for their potential usefulness. The following lemma is used in the proof of Lemma \ref{n7.3}.
\begin{lemma}  \label{newlem1} For any  $u\leq w$,
\[\co_{X^{u}}\left(-\partial
X^{u}\right)\otimes_{\co_{\bar{X}}}\,\co_{X_{w}}\left(-\partial
X_{w}\right)\simeq \co_{X^{u}_{w}}\left(- \left((\partial
X^{u}_{w}) \cup (X^{u}\cap \partial X_{w})\right)\right),\]
where recall that $\partial {X^{u}_{w}}:=(\partial X^u)\cap X_w$ taken as the scheme theoretic intersection inside $\bar{X}$.
\end{lemma}
\begin{proof} First of all
$$0\to \co_{X^{u}}\left(-\partial
X^{u}\right)\otimes_{\co_{\bar{X}}}\,\co_{X_{w}}\to \co_{X^{u}}\otimes_{\co_{\bar{X}}}\,\co_{X_{w}}=\co_{X^u_w}\to 
\co_{\partial X^{u}_w}\to 0$$
is exact since (by Corollary \ref{newcor5.5})
\beqn\label{n8.5.1}\tor_1^{\co_{\bar{X}}}\left(\co_{\partial
X^{u}}, \co_{X_w}\right)=0.
\eeqn
Thus, 
\beqn\label{n8.5.2} \co_{X^{u}}\left(-\partial
X^{u}\right)\otimes_{\co_{\bar{X}}}\,\co_{X_{w}}\simeq \co_{X^u_w}\left(
-\partial X^{u}_w\right).
\eeqn
Similarly,
\beqn\label{n8.5.3} 0\to \co_{X^{u}}\left(-\partial
X^{u}\right)\otimes_{\co_{\bar{X}}}\,\co_{X_{w}}\left(-\partial X_w\right)\to \co_{X^{u}}\left(-\partial X^u\right)\otimes_{\co_{\bar{X}}}\,\co_{X_{w}}
\to \co_{X^{u}}\left(-\partial X^u\right)\otimes_{\co_{\bar{X}}}\,\co_{\partial X_{w}}
\to 0
\eeqn
is exact since 
\beqn\label{n8.5.4}\tor_1^{\co_{\bar{X}}}\left(\co_{X^u}\left(-\partial
X^{u}\right), \co_{\partial X_w}\right)=0.
\eeqn
To prove \eqref{n8.5.4}, observe that, by a proof similar to that of Corollary \ref{newcor5.5},
\beqn\label{n8.5.5}\tor_j^{\co_{\bar{X}}}\left(\co_{X^u}, \co_{\partial X_w}\right)=0,\,\,\,\text{and}\,\,
\tor_j^{\co_{\bar{X}}}\left(\co_{\partial X^u}, \co_{\partial X_w}\right)=0,\,\,\,\text{for all} \,\, j>0.
\eeqn
Now,  \eqref{n8.5.2}, \eqref{n8.5.3}  and \eqref{n8.5.5} together prove the lemma.
\end{proof}

\begin{lemma} \label{n7.3} Let $u\leq w$. As $T$-equivariant sheaves, the dualizing sheaf
$$
\omega_{X^{u}_{w}}\simeq \co_{X^{u}_{w}}\left(-\left((\partial
X^{u}_{w})\cup (X^{u}\cap \partial X_{w})\right)\right),
$$
where $X^{u}\cap \partial (X_{w})$ is taken as the scheme theoretic intersection inside $\bar{X}$.
\end{lemma}
\begin{proof} Since $X^{u}_{w}$ is CM by Proposition \ref{n5.6} (in particular,
so is $X_w$) and the codimension of $X^{u}_{w}$ in $X_w$ is $\ell(u)$,
the dualizing sheaf
\beqn \label{e7.3.1}\omega_{X^{u}_{w}}\simeq \ext^{\ell (u)}_{\co_{X_w}}
\bigl(\co_{X^u_w}, \omega_{X_w}\bigr),
\eeqn
(cf. [E, Theorm 21.15]).
By the same proof as that of Lemma \ref{4.6} 
\beqn \label{e7.3.2}
\ext^{\ell (u)}_{\co_{X_w}}
\bigl(\co_{X^u_w}, \omega_{X_w}\bigr)\simeq \ext^{\ell (u)}_{\co_{\bar{X}}}
\bigl(\co_{X^u}, \co_{\bar{X}}\bigr)\otimes_{\co_{\bar{X}}} \, \omega_{X_w}.
  \eeqn
By [GK, Proposition 2.2], as $T$-equivariant sheaves,
\beqn \label{e7.3.3}
\omega_{X_w}\simeq e^{-\rho}\mathcal{L}(-\rho)\otimes \co_{X_w}(-\partial X_w).
\eeqn
(Even though we assume that $G$ is of finite type in [GK], but the same proof
 works for a general Kac-Moody group.)
Thus, the lemma follows by combining the isomorphisms \eqref{e7.3.1} -
\eqref{e7.3.3} together with Theorem \ref{prop:main} (due to Kashiwara)
and Lemma \ref{newlem1}.
\end{proof}

\section{$\mathcal{Z}$ has rational singularities}\label{sec2}

Recall the definition of $\bp$ and $\bp_{\bf j}$ and the embedding
$\tilde{\Delta}$ from Section \ref{sec6}.
 Fix $u,v \leq w$ and ${\bf j}$. Also recall the definition of the quotient group
 $\bar{\Gamma}=\Gamma_w$ of $\Gamma$ and the map $\tilde{m}$ from Lemma
 \ref{lem5} and the map $m$ from Lemma \ref{lem8}.

In the following commutative diagram, $\tilde{\mathcal{Z}}$ is defined as the fiber
product
$\bigl(\bar{\Gamma}\times (Z^{u,v}_{w})_{{{\bf j}}}\bigr)\times_{(Z^2_w)_\bp}\,
\tilde{\Delta}((Z_w)_\bp)$, and ${\mathcal{Z}}$ is defined as the fiber product
$\bigl(\bar{\Gamma}\times (X^{u,v}_{w})_{{{\bf j}}}\bigr)\times_{{(X^2_w)_\bp}}\,
\tilde{\Delta}((X_w)_{\bp})$.
In particular, both of ${\mathcal{Z}},  \tilde{\mathcal{Z}}$ are schemes
of finite type over $\bc$.
The map $f$ is the restriction of $\theta$ to $\tilde{\mathcal{Z}}$
(via $\tilde{i}$) with image inside ${\mathcal{Z}}$. The maps
$\tilde{\pi},$ and  $\pi$ are obtained from the projections
to the $\bar{\Gamma}$-factor via the maps $\tilde{i}$ and
$i$ respectively.

\begin{figure}[h]
\centering
\includegraphics[scale=.9]{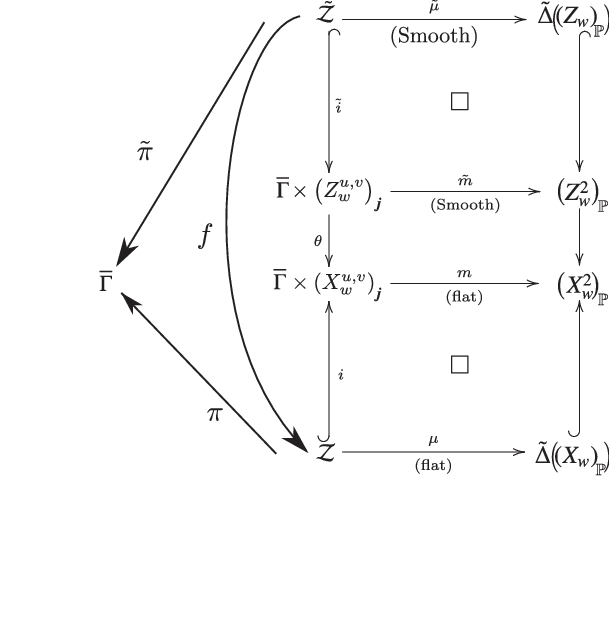}
\end{figure}

\begin{lemma}\label{lem10}
$\Pic  \bar{\Gamma}$ is trivial.
\end{lemma}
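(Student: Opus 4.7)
The plan is to identify $\bar{\Gamma}$, as a variety (ignoring its group structure), with an open subvariety of an affine space, from which $\Pic(\bar{\Gamma})=0$ follows immediately from the excision sequence for Picard groups.

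First I would analyze $\bar{\Gamma}_0$ using the decomposition $\bar{B}=H\ltimes\bar{U}$ (where $\bar{U}$ is the image of $U$ in $\bar{B}$), which is valid because $\bar{B}$ is a finite-dimensional quotient of $B$ containing $H$. Correspondingly $\bar{B}^2=H^2\ltimes\bar{U}^2$, with $T$ acting diagonally via inverse conjugation. Since $H$ is abelian and $T$ is a quotient of $H$, the $T$-action on $H^2$ is trivial, hence $E(T)_\bp\times^T H^2\cong \bp\times H^2$ and its sections form precisely the subgroup $H^2\subset\bar{\Gamma}_0$ of constant sections.

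For the unipotent part, after fixing an ordering of the (finitely many) positive real roots $\alpha$ occurring in $\bar{U}$, the product map gives a $T$-equivariant isomorphism of schemes $\bar{U}\cong\prod_\alpha U_\alpha$, where each $U_\alpha\cong\bc$ carries a single $T$-weight determined by $\alpha$. Hence, as a scheme over $\bp$,
\[
E(T)_\bp\times^T\bar{U}^2\cong \prod_\alpha (L_\alpha\oplus L_\alpha),
\]
a fiber product of line bundles. Taking global sections yields a finite-dimensional vector space $\bc^D$ (each $H^0(\bp,L_\alpha)$ is finite-dimensional, being a space of sections of a globally generated line bundle by the standing convention on $T\simeq(\bc^*)^r$). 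Combining the two pieces, $\bar{\Gamma}_0\cong H^2\times\bc^D$ as a variety.

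Finally, since any semidirect product of algebraic groups is a direct product as a scheme,
\[
\bar{\Gamma}=\bar{\Gamma}_0\ltimes GL(N+1)^r \cong (\bc^*)^{2\dim H}\times\bc^D\times GL(N+1)^r
\]
as a variety. Each factor is an open subvariety of an affine space (with complement cut out by coordinate hyperplanes for $(\bc^*)^{2\dim H}$ and by determinants for $GL(N+1)^r$), so $\bar{\Gamma}$ is itself an open subvariety of an affine space $\mathbb{A}^M$. From the standard excision exact sequence
\[
\bigoplus_i \bz\cdot[Z_i]\to \Pic(\mathbb{A}^M)\to \Pic(\bar{\Gamma})\to 0
\]
together with $\Pic(\mathbb{A}^M)=0$, we conclude $\Pic(\bar{\Gamma})=0$. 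The main (and only) point requiring care is that the two semidirect-product decompositions are used only at the scheme-theoretic level; once this bookkeeping of $T$-weights is carried out, the triviality of the Picard group follows for purely formal reasons.
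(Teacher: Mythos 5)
Your overall strategy is the paper's: identify $\bar{\Gamma}$, purely as a variety, with $GL(N+1)^r\times H^2\times\Gamma\bigl(E(T)_\bp\times^T\bar{U}^2\bigr)$, show the last factor is an affine space, conclude that $\bar{\Gamma}$ is an open subvariety of an affine space, and deduce $\Pic(\bar{\Gamma})=0$; your excision sequence is the same in substance as the paper's remark that every prime divisor of $\bar{\Gamma}$ extends to a prime divisor of $\mathbb{A}^M$ with principal ideal.

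The one step where you argue differently is also the one with a gap: the claim that the product map gives a $T$-equivariant isomorphism $\bar{U}\cong\prod_\alpha U_\alpha$ with $\alpha$ running over the positive \emph{real} roots occurring in $\bar{U}$. Here $U$ is the full (pro-)unipotent radical of $B$; although it is generated by the real root subgroups $U_\alpha$, its Lie algebra contains all of $\oplus_{\alpha>0}\mathfrak{g}_\alpha$, and imaginary root spaces (of dimension possibly $>1$) arise as brackets of real root vectors. There is no reason the finite-dimensional quotient $\bar{U}$ (the image of $U$ in $\bar{B}=B_w$) kills them, and if an imaginary weight space survives, then $\dim\bar{U}$ exceeds the number of real-root weights and no product decomposition indexed by real roots can exist, since the $T$-weights of such a product are exactly those real roots. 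The paper sidesteps this by using that $\bar{U}^2$, being unipotent in characteristic $0$, is $T$-equivariantly isomorphic to its Lie algebra via the exponential map; the Lie algebra splits into one-dimensional $T$-weight lines (weights being positive roots, real or imaginary, counted with multiplicity), so $E(T)_\bp\times^T\bar{U}^2$ is still a direct sum of line bundles over $\bp$ and its space of sections is a finite-dimensional vector space --- for which you only need properness of $\bp$, not the global generation you invoke. With that substitution your argument is complete and coincides with the paper's proof.
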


\begin{proof}
 First of all, by definition given above Lemma \ref{lem5}, $\bar{\Gamma}$ is the semi-direct
 product of $GL(N+1)^{
  r}$ with
$\bar{\Gamma}_{0}=\Gamma(E(T)_\bp\displaystyle\mathop{\times}^{T}(\bar{B}^2))\simeq
H^2\times
\Gamma(E(T)_\bp\displaystyle\mathop{\times}^T (\bar{U}^2))$.

Since $\bar{U}^2$ is $T$-isomorphic with its Lie algebra,
$\Gamma(E(T)_\bp\displaystyle\mathop{\times}^T(\bar{U}^2))$ is an affine space.
Thus, as a variety, $\bar{\Gamma}$ $\bigl($which is isomorphic with $GL(N+1)^r\times H^2\times
\Gamma(E(T)_\bp\displaystyle\mathop{\times}^T(\bar{U}^2))\bigr) $ is an open subset of
an affine space $\mathbb{A}^N$. In particular, any prime divisor of $\bar{\Gamma}$
extends to a prime divisor of  $\mathbb{A}^N$, and thus its ideal is principal. Hence,
$\Pic(\bar{\Gamma})=\{1\}$.
\end{proof}

The following result is a slight variant of  [FP, Lemma on page 108].

\begin{lemma} \label{n6.3} Let $f:W\to X$  be a flat morphism from a
 pure-dimensional CM
scheme $W$ of finite type over $\bc$  to a CM  irreducible variety $X$ and let $Y$ be a closed
CM
subscheme of $X$ of pure codimension $d$. Set
$Z:= f^{-1}(Y)$. If $\codim_Z(W)\geq d$, then equality holds and $Z$  is
CM.
\end{lemma}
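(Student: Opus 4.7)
The plan is to reduce the claim to local algebra at a stalk and invoke the fiber-wise criterion for Cohen-Macaulayness of flat local homomorphisms. I would work stalk-by-stalk: fix $w \in Z$, put $x := f(w) \in Y$, and set $A := \O_{X,x}$, $B := \O_{W,w}$, with $I \subset A$ the ideal defining $Y$ near $x$. Since $f$ is flat, $A \to B$ is a flat local homomorphism, and by flat base change so is the induced map $A/I \to B/IB$, which has the same closed fiber $B/\mathfrak{m}_A B$.

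First I would pin down the codimension from the standard dimension formula for flat local homomorphisms (cf.\ [M]), which gives simultaneously $\dim B = \dim A + \dim(B/\mathfrak{m}_A B)$ and $\dim(B/IB) = \dim(A/I) + \dim(B/\mathfrak{m}_A B)$. Subtracting these identities, and using that $Y$ has pure codimension $d$ in the CM (hence catenary) variety $X$, so $\dim A - \dim(A/I) = d$, one obtains $\dim B - \dim(B/IB) = d$ at every point $w \in Z$. Combined with the pure-dimensionality of $W$, this forces $\codim_Z(W) = d$, promoting the hypothesis to an equality.

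For the Cohen-Macaulay assertion, I would apply the fiber criterion for flat local homomorphisms twice. Since $A \to B$ is flat local and both $A$ and $B$ are CM (because $X$ and $W$ are), the closed fiber $B/\mathfrak{m}_A B$ is CM. The map $A/I \to B/IB$ is then flat local with CM base $A/I$ and the \emph{same} closed fiber, now known to be CM, so the converse direction of the criterion gives that $B/IB$ is CM. Hence $Z$ is CM at each of its points, as claimed.

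No serious obstacle is anticipated. The only points of care are that $A \to B$ is a flat \emph{local} homomorphism (immediate from $f$ flat and $w \mapsto x$) and that the closed fibers of $A \to B$ and $A/I \to B/IB$ coincide (immediate since $I \subset \mathfrak{m}_A$). The whole argument is a clean application of flat base change combined with the fiber CM criterion, very much in the spirit of [FP, Lemma on page~108] already cited in the statement.
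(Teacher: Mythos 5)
Your proposal is correct, and it is worth noting that it runs on a different engine than the paper's own argument. The paper's proof (due to N.\ Mohan Kumar) also reduces to a flat local homomorphism $A\to B$, but then handles the Cohen--Macaulayness of $B/PB$ by choosing a regular sequence on $A/P$ and using flatness to transport it to a regular sequence on $B/PB$ --- a direct depth count, stated very tersely, with the dimension bookkeeping (in particular the assertion that the codimension is exactly $d$) left implicit. You instead quote the two standard facts about flat local homomorphisms --- the dimension formula $\dim B=\dim A+\dim(B/\mathfrak{m}_AB)$ and the fiber criterion ``$B$ is CM iff $A$ and the closed fiber are CM'' (Matsumura, Thm.~15.1 and Thm.~23.3) --- and apply the fiber criterion twice, once to $A\to B$ to deduce that the closed fiber is CM, and once to $A/I\to B/IB$, which has the same closed fiber. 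This buys a complete, self-contained proof of both conclusions simultaneously, and it moreover shows that under flatness the hypothesis $\codim_Z(W)\ge d$ is automatically an equality (that hypothesis is genuinely needed only in the non-flat Lemma~\ref{n6.4} of Fulton--Pragacz), whereas the paper's sketch leaves the fiber-direction depth contribution and the codimension equality to the reader. Two small points of care, both of which you essentially address: the identity $\dim A-\dim(A/I)=d$ at every point of $Y$ uses that $X$ is an irreducible variety over $\bc$ (catenary, with $\dim\O_{X,x}=\dim X-\dim\overline{\{x\}}$) together with the pure codimension of $Y$; alternatively, the codimension statement follows even more quickly by localizing at a generic point $\eta$ of a component of $Z$, where $\dim(B/IB)=0$ forces $\dim\O_{W,\eta}=d$, without invoking the pure-dimensionality of $W$.
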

\begin{proof} (due to N. Mohan Kumar) The assertion (and the assumptions) of the lemma is clearly local,
so we have a local map $A\to B$ of local
rings with $B$ flat over $A$. If $P\subset A$ is a prime ideal of codimension $d$
with $PB$ of pure codimension $d$, we only need to check that $B/PB$ is CM. But,
$A/P$ is CM, so we can pick a regular sequence $\{a_1,\dots, a_d\}$ mod $P$. Flatness
of $f$
ensures that these remain a regular sequence in $B/PB$.
\end{proof}

We also need the following  original [FP, Lemma on page 108].

\begin{lemma} \label{n6.4} Let $f:W\to X$  be a  morphism from a pure-dimensional CM
scheme $W$  of finite type over $\bc$ to a  smooth irreducible variety $X$ and let $Y$ be a closed
CM
subscheme of $X$ of pure codimension $d$. Set
$Z:= f^{-1}(Y)$. If $\codim_Z(W)\geq d$, then equality holds and $Z$  is
CM.
\end{lemma}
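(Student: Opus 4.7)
The strategy is to reduce to a situation where the flat lemma (Lemma \ref{n6.3}) and the standard "height equals number of generators implies CM" criterion apply, via the graph of $f$. First I would factor $f$ as
\[
f : W \xrightarrow{\;\Gamma_f\;} W\times X \xrightarrow{\;p_2\;} X,
\]
where $\Gamma_f(x)=(x,f(x))$ is the graph, a closed immersion since $X$ is separated, and $p_2$ is the second projection. Because $X$ is smooth, the diagonal $\Delta_X\subset X\times X$ is a regular embedding of codimension $\dim X$; and since $\Gamma_f$ is the pullback of $\Delta_X$ along the smooth morphism $f\times\mathrm{id}_X:W\times X\to X\times X$, it follows that $\Gamma_f:W\hookrightarrow W\times X$ is itself a regular embedding of codimension $\dim X$. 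In particular, the ideal sheaf of $\Gamma_f(W)$ in $W\times X$ is, Zariski-locally, generated by a regular sequence of length $\dim X$.

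Next I would identify $Z = f^{-1}(Y)$ scheme-theoretically with $\Gamma_f(W)\cap (W\times Y)$ inside $W\times X$. Note that $W\times Y$ is CM of pure codimension $d$ in $W\times X$: this can either be obtained by applying Lemma \ref{n6.3} to the smooth (hence flat) projection $p_2 : W \times X \to X$ with closed subscheme $Y$, or simply from the fact that a product of CM schemes over $\bc$ is CM. Similarly $W\times X$ is CM. Thus $Z$ is defined inside the CM scheme $W\times Y$ by the restriction of the regular sequence cutting out $\Gamma_f(W)$, i.e.\ locally by $\dim X$ elements.

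Now I would carry out the codimension count. Since $Z$ is cut out in $W\times Y$ by $\dim X$ local equations, every irreducible component of $Z$ has codimension $\le \dim X$ in $W\times Y$, so
\[
\dim Z \;\ge\; \dim(W\times Y)-\dim X \;=\; \dim W - d.
\]
Combined with the hypothesis $\codim_Z(W)\ge d$ (i.e.\ $\dim Z\le \dim W-d$), this forces $\dim Z=\dim W-d$ and hence $\codim_{Z}(W\times Y)=\dim X$. So, locally, the ideal cutting $Z$ out of the CM scheme $W\times Y$ is generated by $\dim X$ elements and has height $\dim X$.

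The final step is to apply the standard criterion (Matsumura, \emph{Commutative Ring Theory}, Thm.~17.4 or Cor.~17.5): in a CM local ring, an ideal whose height equals the minimal number of its generators is generated by a regular sequence and the quotient is again CM. Applied locally on $W\times Y$, this yields that $Z$ is CM, completing the proof.

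The main obstacle, and the only nontrivial ingredient beyond bookkeeping, is the regular-embedding property of the graph $\Gamma_f$, which is exactly what the smoothness hypothesis on $X$ buys us; without it, $\Gamma_f$ need not be a regular immersion and the argument breaks. Everything else is dimension counting plus the CM criterion for ideals generated by the right number of elements.
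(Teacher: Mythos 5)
The paper itself gives no proof of this lemma: it is quoted verbatim from Fulton--Pragacz [FP, Lemma on p.~108], and only the flat variant (Lemma \ref{n6.3}) is proved in the text. Your argument is essentially the standard proof of that cited lemma: factor $f$ through its graph, identify $Z$ scheme-theoretically with $\Gamma_f(W)\cap(W\times Y)$ inside $W\times X$, note that $W\times Y$ is CM of pure codimension $d$, cut $Z$ out of it by $\dim X$ local equations coming from the diagonal of the smooth variety $X$, and conclude with Krull's height theorem together with the criterion that in a CM local ring an ideal of height $r$ generated by $r$ elements is generated by a regular sequence with CM quotient. So in substance your route coincides with the intended one, and the bookkeeping (dimension count, the CM-ness of $W\times Y$, the appeal to Matsumura) is correct.

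One justification is wrong as stated: $f\times\mathrm{id}_X\colon W\times X\to X\times X$ is \emph{not} smooth in general --- it is the base change of $f$ along the first projection, so it is smooth only when $f$ is (for instance $W=\{0\}\hookrightarrow X=\mathbb{A}^1$ makes it a closed immersion) --- and pulling back a regular immersion along an arbitrary morphism need not give a regular immersion. The conclusion you want is nevertheless true and standard: $\Gamma_f$ is a section of the projection $\mathrm{pr}_1\colon W\times X\to W$, which is smooth because $X$ is smooth over $\bc$, and a section of a smooth morphism is a regular immersion of codimension equal to the relative dimension. Alternatively --- and this is all your proof actually uses --- pulling back local generators of the ideal of $\Delta_X$ shows that the ideal of $\Gamma_f(W)$ is locally generated by $\dim X$ elements; the regularity of the sequence is then recovered at the final step anyway from the same height-equals-number-of-generators criterion applied in the CM scheme $W\times Y$. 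With that one sentence repaired, the proof is complete.
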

\begin{proposition} \label{propn6.2} The schemes $\mathcal{Z}$ and
 $\tilde{\mathcal{Z}}$ are irreducible and the map $f: \tilde{\mathcal{Z}}
 \to \mathcal{Z}$ is a proper birational map. Thus, $\tilde{\mathcal{Z}}$
 is a desingularization of $\mathcal{Z}$.

 Moreover, $\mathcal{Z}$ is CM with
 \beqn \label{neq6.2.1}
\dim \mathcal{Z}=|{\bf j}|+\ell(w)-\ell(u)-\ell(v)+\dim\bar{\Gamma},
\eeqn
where $|{\bf j}|:=\sum_i j_i$\, for \,${\bf j}=(j_1, \dots, j_r)$.
 \end{proposition}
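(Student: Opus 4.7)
The plan is to treat the three assertions—smoothness of $\tilde{\mathcal{Z}}$ together with proper birationality of $f$, Cohen--Macaulayness with the dimension formula for $\mathcal{Z}$, and irreducibility of both schemes—in this order, the last being the main difficulty.

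For the first: by Lemma \ref{lem5}, $\tilde{m}:\bar{\Gamma}\times (Z^{u,v}_w)_{\bf j}\to \widetilde{Z_w^2}$ is smooth, and the diagonal $\tilde{\Delta}(\tilde{Z}_w)\subset \widetilde{Z_w^2}=\tilde{Z}_w\times_\bp\tilde{Z}_w$ is smooth (being isomorphic to the smooth $\tilde{Z}_w$), so the pullback $\tilde{\mathcal{Z}}=\tilde{m}^{-1}(\tilde{\Delta}(\tilde{Z}_w))$ is smooth. The map $f$ is obtained by base change from the proper birational desingularizations $Z^u_w\times Z^v_w\to X^u_w\times X^v_w$ (from Theorem \ref{thm3}) and $\tilde{\theta}_w:\tilde{Z}_w\to\tilde{X}_w$ (BSDH), so $f$ is proper, and it is an isomorphism over the open dense locus in $\mathcal{Z}$ where both desingularizations are isomorphisms, hence birational.

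For the CM property and the dimension of $\mathcal{Z}$, I would apply Lemma \ref{n6.3} to the flat morphism (by Lemma \ref{lem8}) $m: W:=\bar{\Gamma}\times (X^{u,v}_w)_{\bf j}\to X:=\widetilde{X^2_w}$ with closed subscheme $Y:=\tilde{\Delta}(\tilde{X}_w)$. Here $W$ is irreducible and CM, since $\bar{\Gamma}$ is smooth irreducible (an open subset of an affine space by the proof of Lemma \ref{lem10}) and $(X^{u,v}_w)_{\bf j}$ is a Zariski locally trivial $X^u_w\times X^v_w$-bundle over the smooth $\bp_{\bf j}$ with irreducible CM fiber by Proposition \ref{n5.6}; the target $X$ is likewise irreducible and CM; and $Y\simeq \tilde{X}_w$ is irreducible and CM of pure codimension $\dim X_w=\ell(w)$ in $X$. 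Flatness of $m$ forces every irreducible component of $\mathcal{Z}=m^{-1}(Y)$ to have codimension exactly $\ell(w)$ in $W$, verifying the codimension hypothesis of Lemma \ref{n6.3}. Applying the lemma then yields that $\mathcal{Z}$ is CM with
\[\dim\mathcal{Z}=\dim W-\ell(w)=\dim\bar{\Gamma}+(\ell(w)-\ell(u))+(\ell(w)-\ell(v))+|{\bf j}|-\ell(w),\]
which simplifies to \eqref{neq6.2.1}.

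The main obstacle is irreducibility. Since $\tilde{\mathcal{Z}}$ is smooth, its irreducibility is equivalent to connectedness, and once $\tilde{\mathcal{Z}}$ is known to be irreducible, $\mathcal{Z}=f(\tilde{\mathcal{Z}})$ is also irreducible as the image of an irreducible scheme under the proper map $f$. I would establish connectedness of $\tilde{\mathcal{Z}}$ via the projection $\tilde{\pi}:\tilde{\mathcal{Z}}\to\bar{\Gamma}$: the base $\bar{\Gamma}$ is connected (Lemma \ref{connected}), and the fibers of $\tilde{\pi}$ admit an explicit description in terms of $\tilde{m}$ and the desingularization projections $Z^u_w\to Z_w$ and $Z^v_w\to Z_w$; their connectedness reduces via Zariski's main theorem to connectedness of the fibers of the proper birational desingularizations $\pi^u_{S_w}:Z^u(S_w)\to X^u(S_w)$ (and similarly for $v$) over the normal varieties $X^u(S_w)$ and $X^v(S_w)$. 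The technical crux is to control the locus in $\tilde{Z}_w$ over which the relevant fibers are nonempty and to carry out the connectedness argument with full rigor, making essential use of the smoothness of the auxiliary morphisms furnished by Lemmas \ref{lem7} and \ref{lem5}.
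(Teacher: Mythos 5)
Your first two steps are essentially sound and close to the paper's own argument: smoothness of $\tilde{\mathcal{Z}}$ from smoothness of $\tilde{m}$, properness and birationality of $f$ from the desingularizations, and the CM property plus the dimension formula from flatness of $m$ (Lemma \ref{lem8}) together with Lemma \ref{n6.3}, Proposition \ref{n5.6} and the codimension-$\ell(w)$ computation (the paper gets the pure-dimensionality/codimension statement from [H, Corollary 9.6, Chap.\ III] applied to the flat maps $\tilde m$, $\tilde\mu$ and $m$, which is the same substance as your going-down argument). Two small omissions there: you should note that $\Iim(m)\supset \widetilde{C^2_w}$ meets $\tilde{\Delta}(\tilde{X}_w)$, so that $\mathcal{Z}\neq\emptyset$ and the dimension formula is not vacuous; and your deduction ``$\mathcal{Z}=f(\tilde{\mathcal{Z}})$'' uses surjectivity of $f$, which is not automatic from properness and birationality before irreducibility of $\mathcal{Z}$ is known and needs a (short) lifting argument.

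The genuine gap is in your treatment of irreducibility, which is exactly the part you flag as the main difficulty. You propose to prove connectedness of $\tilde{\mathcal{Z}}$ by fibering it over $\bar{\Gamma}$ via $\tilde\pi$ and showing the fibers are connected. But the fibers of $\tilde\pi$ (equivalently, up to the desingularization, the fibers $N_\gamma=\pi^{-1}(\gamma)$ of Corollary \ref{coro12}) are precisely the general translates $(X^{u,v}_w)_{\bf j}\cap \gamma^{-1}\tilde{\Delta}(\tilde X_w)$, and these are \emph{not} connected in general: when $|{\bf j}|+\ell(w)-\ell(u)-\ell(v)=0$ they are, for general $\gamma$, finite sets of cardinality $|c^w_{u,v}({\bf j})|$, which is frequently $>1$ (this is the whole point of the structure constants). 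Your reduction ``via Zariski's main theorem to connectedness of the fibers of $\pi^u_{S_w}$'' only controls the fibers of $(Z^{u,v}_w)_{\bf j}\to\widetilde{X^{u}\times X^{v}}$-type maps over single points; the base of that fibration inside a fiber of $\tilde\pi$ is the translate intersection itself, which is the disconnected object. (Moreover $\tilde\pi$ need not even be surjective unless $c^w_{u,v}({\bf j})\neq0$, cf.\ the proof of Corollary \ref{coro12}, so ``connected base plus connected fibers'' would not apply anyway.) The paper's route is different: restrict $\tilde m$ over the open $\bar{\Gamma}$-orbit $\widetilde{C^2_w}\subset\widetilde{Z^2_w}$ (Lemma \ref{lem6}), where it is a locally trivial fibration whose total space $\tilde m^{-1}(\widetilde{C^2_w})$ is open in the irreducible $\bar{\Gamma}\times(Z^{u,v}_w)_{\bf j}$, hence connected; since $\pi_1(\widetilde{C^2_w})=\{1\}$, the homotopy exact sequence forces the fibers to be connected, so $\tilde{\mathcal{Z}}\cap\tilde m^{-1}(\widetilde{C^2_w})$ is connected (connected base $\tilde{\Delta}(\tilde Z_w)\cap\widetilde{C^2_w}$ and connected fibers), hence irreducible by smoothness; finally, pure-dimensionality rules out any further component, since such a component would map under $\tilde\mu$ into $\tilde{\Delta}\bigl((Z_w\setminus C_w)_\bp\bigr)$ and so would have strictly smaller dimension. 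Without an argument of this kind (fibering over the open orbit in the target rather than over $\bar{\Gamma}$), your proof of irreducibility does not go through.
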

\begin{proof} We first show that $\tilde{\mathcal{Z}}$ and $\mathcal{Z}$
are pure dimensional.

Since $\tilde{m}$ is a smooth (in particular, flat) morphism,
$\Iim (\tilde{m})$ is an open subset of $(Z^2_w)_\bp$ (cf. [H, Exercise 9.1,
Chap. III]). Moreover, clearly $\Iim (\tilde{m})\supset (C^2_w)_\bp$, thus
$\Iim (\tilde{m})$ intersects $\tilde{\Delta}((Z_w)_\bp)$. Applying
[H, Corollary 9.6, Chap. III] first to the morphism $\tilde{m}: \bar{\Gamma}\times
(Z^{u,v}_{w})_{{{\bf j}}} \to \Iim (\tilde{m})$ and then to its restriction
$\tilde{\mu}$ to $\tilde{\mathcal{Z}}$, we see that $\tilde{\mathcal{Z}}$
is pure dimensional. Moreover,
\begin{align}\label{neq6.2.2}
\dim \tilde{\mathcal{Z}}&=
  \dim \bar{\Gamma}+|{\bf j}|+ \dim \bigl(Z^{u,v}_{w}\bigr) - \dim
  \bigl((Z^2_w)_\bp\bigr) + \dim \bigl(\tilde{\Delta}((Z_w)_\bp)\bigr) \notag \\
  &=\dim \bar{\Gamma}+|{\bf j}|+ \ell(w)-\ell(u)-\ell(v).
\end{align}

By the same argument, we see that $\mathcal{Z}$ is also pure dimensional.

We now show that $\tilde{\mathcal{Z}}$ is irreducible:

The smooth morphism $\tilde{m}: \bar{\Gamma}\times
(Z^{u,v}_{w})_{{{\bf j}}} \to
(Z^2_w)_\bp$ is  $\bar{\Gamma}$-equivariant with respect to the left multiplication
of $\bar{\Gamma}$ on the first factor of $\bar{\Gamma}\times
(Z^{u,v}_{w})_{{{\bf j}}}$ and the standard action of $\bar{\Gamma}$ on
$(Z^2_w)_\bp$. Since $(C^2_w)_\bp$ is a single $\bar{\Gamma}$-orbit
(by the analogue of Lemma \ref{lem6} for $B$ replaced by $B\times B$), $ \tilde{m}^{-1}((C^2_w)_\bp) \to (C^2_w)_\bp$
is a locally trivial fibration in the analytic topology. Further, since the fundamental group
$\pi_1((C^2_w)_\bp)=\{1\}$ and, of course,  $ \tilde{m}^{-1}((C^2_w)_\bp)$
is irreducible (in particular, connected), from the long exact homotopy sequence
for the fibration $ \tilde{m}^{-1}((C^2_w)_\bp) \to (C^2_w)_\bp$, we get
that all its fibers are connected. Thus,
the open subset $\tilde{\mathcal{Z}} \cap \tilde{m}^{-1}((C^2_w)_\bp)$
is connected as the fibers and the base are connected. Hence, it is irreducible
(being smooth). Consider the closure
$\tilde{\mathcal{Z}}_1:= \overline{\bigl(\tilde{\mathcal{Z}} \cap
\tilde{m}^{-1}((C^2_w)_\bp)\bigr)}.$ Then, $\tilde{\mathcal{Z}}_1$
is an irreducible component of $\tilde{\mathcal{Z}}$. If possible, let
$\tilde{\mathcal{Z}}_2$ be another irreducible component of $\tilde{\mathcal{Z}}$.
Then, $\tilde{\mu}(\tilde{\mathcal{Z}}_2) \subset
\tilde{\Delta}\bigl((Z_w\setminus C_w)_\bp \bigr)$. Since
$\dim\bigl(\tilde{\Delta}\bigl((Z_w\setminus C_w)_\bp \bigr)\bigr) <
\dim\bigl(\tilde{\Delta}((Z_w)_\bp)\bigr)$ and each fiber of
$\tilde{\mu}_{|\tilde{\mathcal{Z}}_2}$ is of dimension at most that of any fiber of
$\tilde{\mu}$, we get that $\dim \bigl(\tilde{\mathcal{Z}}_2\bigr) <
\dim \bigl(\tilde{\mathcal{Z}}_1\bigr)$. This is a contradiction since
$\tilde{\mathcal{Z}}$ is of pure dimension. Thus, $\tilde{\mathcal{Z}}=
\tilde{\mathcal{Z}}_1$ and hence $\tilde{\mathcal{Z}}$ is irreducible.

The proof of the irreducibility of $\mathcal{Z}$ is similar. The only extra observation
we need to make is that $\tilde{\mathcal{Z}} \cap
\tilde{m}^{-1}((C^2_w)_\bp)$ maps surjectively onto ${\mathcal{Z}} \cap
{m}^{-1}((C^2_w)_\bp)$ under $f$; in particular, ${\mathcal{Z}} \cap
{m}^{-1}((C^2_w)_\bp)$ is irreducible.

The map  $f$ is clearly proper. Moreover, it is an isomorphism restricted
to the (nonempty) open subset
$$\tilde{\mathcal{Z}} \cap \bigl(\bar{\Gamma} \times \bigl((C^u\cap C_w)\times
(C^v\cap C_w)\bigr)_{\bf j}\bigr)$$ onto its image (which is an open subset
of $\mathcal{Z}$). (Here we have identified the inverse image $(\pi_w^u)^{-1}
(C^u\cap C_w)$ inside $Z^u_w$ with $C^u\cap C_w$ under the map $\pi_w^u$,
 cf. Theorem \ref{thm3}.)

The identity \eqref{neq6.2.1} follows from \eqref{neq6.2.2}
 since $\dim \mathcal{Z}= \dim \tilde{\mathcal{Z}}.$ Thus,
\[\codim_\mathcal{Z}\,\bigl(\bar{\Gamma}\times (X^{u,v}_{w})_{{{\bf j}}}\bigr)=
\codim_{\tilde{\Delta}((X_w)_{\bp})}\,\bigl({(X^2_w)}_\bp\bigr)=\ell(w).
\]

 Finally, $\mathcal{Z}$ is CM by Proposition \ref{n5.6} and  Lemmas \ref{lem8}
 and \ref{n6.3}.
This completes the proof of the proposition.
\end{proof}
\begin{lemma} \label{normal} The scheme $\mathcal{Z}$ is
normal,
 irreducible and CM.
\end{lemma}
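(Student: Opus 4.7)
By Proposition~\ref{propn6.2}, $\mathcal{Z}$ is already irreducible and CM, so only its normality remains; for $\mathcal{Z}'$ all three properties must be established. The plan exploits the flatness of $m$ (Lemma~\ref{lem8}) and the normality of Richardson varieties (Proposition~\ref{n5.6}), reducing everything via flat base change and Serre's criterion to a pointwise fiber analysis.

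For $\mathcal{Z}'$, I first rewrite
\[
\mathcal{Z}' \;=\; \bigl(\bar{\Gamma}\times (X^{u,v}_{w})_{\bf j}\bigr)\;\times_{\widetilde{X^2_w}}\;\tilde{\Delta}(\tilde{Z}_{w}),
\]
where $\tilde{\Delta}(\tilde{Z}_{w})\to\widetilde{X^2_w}$ factors as the BSDH desingularization $\tilde{Z}_{w}\to\tilde{X}_{w}$ followed by $\tilde{\Delta}$. Flatness of $m$ then gives a flat morphism $\mathcal{Z}'\to\tilde{Z}_{w}$ whose fibers coincide with those of the flat morphism $\mathcal{Z}\to\tilde{X}_{w}$; the flat-fiber criterion applied to the latter (CM source over CM base) makes the fibers CM, and smoothness of $\tilde{Z}_{w}$ then gives the CM property for $\mathcal{Z}'$. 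Irreducibility of $\mathcal{Z}'$ follows by exactly the argument used for $\mathcal{Z}$ in Proposition~\ref{propn6.2}: pure-dimensionality from flatness, together with the fact that $\mathcal{Z}'\cap m^{-1}(\widetilde{C^2_w})$ is a smooth connected open subset of full dimension (its connectedness coming from the simple connectivity of $\widetilde{C^2_w}$ via the smooth morphism $\tilde{m}$ of Lemma~\ref{lem5}), which forces it to be the unique irreducible component.

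Given that both $\mathcal{Z}$ and $\mathcal{Z}'$ are CM, Serre's criterion reduces normality to Serre's $R_1$, which I would obtain via EGA~IV, Theorem~6.5.4(i): a flat morphism locally of finite presentation with normal base and geometrically normal fibers has normal total space. Applied to $\mathcal{Z}\to\tilde{X}_{w}$ and $\mathcal{Z}'\to\tilde{Z}_{w}$, the bases are normal ($\tilde{X}_{w}$ because $X_w$ is normal by [K, Theorem~8.2.2], and $\tilde{Z}_{w}$ because $Z_w$ is smooth), so it remains to prove that the fibers of $m$ over diagonal points of $\widetilde{X^2_w}$ are geometrically normal. Working locally on $\bp$ to trivialize the bundle and the semi-direct product $\bar{\Gamma}=\bar{\Gamma}_0\rtimes GL(N+1)^r$, the fiber of $m$ over a diagonal point $(x,x)\in X_w\times X_w$ decomposes, up to a smooth factor arising from $GL(N+1)^r$ acting on $\bp_{\bf j}$, as a product $F_u(x)\times F_v(x)$, where
\[
F_u(x)=\{b\in\bar{B}:b^{-1}x\in X^u_w\}
\]
is precisely the fiber over $x$ of the flat morphism $\bar{B}\times X^u_w\to X_w$ of Lemma~\ref{lem9}. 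The orbit morphism $\psi_x:\bar{B}\to X_w$, $b\mapsto b^{-1}x$, is smooth onto $\bar{B}\cdot x=C_v$ (with $v\le w$ the unique index with $x\in C_v$), so $F_u(x)=\psi_x^{-1}(X^u\cap C_v)$, and $X^u\cap C_v$ is open in the normal Richardson variety $X^u_v$ (Proposition~\ref{n5.6}). Smooth base change preserves normality, so $F_u(x)$ is normal, the fiber of $m$ is normal, and Matsumura's criterion delivers the normality of $\mathcal{Z}$ and $\mathcal{Z}'$.

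The main obstacle is this pointwise fiber analysis. One must verify uniformly across every $B$-stratum $C_v\subset X_w$ (not only over the open cell) that the orbit morphism $\psi_x$ is smooth with smooth stabilizer fibers, that the product decomposition $F_u(x)\times F_v(x)$ is honest modulo the $\bp$-twist, and that the factor contributed by $GL(N+1)^r$ is genuinely smooth; once these are in place, the pointwise normality propagates to the total spaces via EGA~6.5.4 and delivers the lemma.
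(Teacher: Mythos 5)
Your proposal is correct in substance, but your normality argument follows a genuinely different route from the paper's, so let me compare. The CM property and irreducibility of $\mathcal{Z}'$ you obtain exactly as the paper does (flat base change, CM fibers of $\mu$ over the CM base $\tilde{X}_w$, smoothness of $\tilde{Z}_w$, and the argument of Proposition \ref{propn6.2}). For normality, the paper checks Serre's condition $R_1$ directly: using smoothness of $X^u$ in codimension one (the open sets $X^u(S_u')$) and the smooth locus of $X_w$, it produces a smooth open subset of $\mathcal{Z}$ whose complement has codimension $\ge 2$ (flatness from Lemma \ref{lem8} only controls the codimension of the bad loci), and CM supplies $S_2$. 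You instead ascend normality along the flat morphisms $\mu$ and $\mu'$ (normal bases) from geometric normality of the fibers, which you reduce to normality of Richardson varieties: the fiber of $m$ over a point $\tilde{y}\in\tilde{\Delta}(\tilde{X}_w)$ lying over $x\in C_{v'}$, $v'\le w$, is the scheme-theoretic preimage of $\bigl((X^u\cap C_{v'})\times(X^v\cap C_{v'})\bigr)_{\bf j}$ under the smooth orbit map $\bar{\Gamma}\to\widetilde{C_{v'}\times C_{v'}}$, $\gamma\mapsto\gamma^{-1}\tilde{y}$, and $X^u\cap C_{v'}$ is open in $X^u_{v'}$, which is normal by Proposition \ref{n5.6}; formulating the fiber through this single orbit map absorbs the $GL(N+1)^r$-factor and the $\bp_{\bf j}$-constraint in one step, and your $\psi_x$-computation is the same argument at the level of $\bar{B}$. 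Thus your route uses the normality of all Richardson varieties $X^u_{v'}$, $v'\le w$ (the full strength of Proposition \ref{n5.6}), where the paper needs only codimension-one smoothness of $X^u,X^v$ and normality of $X_w$; in exchange you get the stronger statement that every fiber of $\mu$ is normal and avoid the codimension bookkeeping. Two small repairs are needed: the ascent theorem (EGA IV 6.5.4, or Matsumura's Theorem 23.9) requires geometrically normal fibers over all points of the base, not just closed ones, which follows from the openness of the locus where the fibers are geometrically normal (EGA IV 12.1.6) together with the density of closed points; and the open subset of $\mathcal{Z}'$ lying over $\widetilde{C^2_w}$ is only normal, not smooth (its fibers there are open subsets of $X^u_w\times X^v_w$), so for its irreducibility you should argue, as the paper does, via the surjection from the corresponding open subset of the smooth scheme $\tilde{\mathcal{Z}}$.
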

\begin{proof} By Proposition \ref{propn6.2}, $\mathcal{Z}$ is irreducible and CM.

As in the proof of Lemma \ref{lem9}, the map
$$
\mu_o:G {\times}^{U^{-}}\, X^{u}(S_u')\to \bar{X},\quad [g,x]\mapsto g\cdot x,
$$
being $G$-equivariant, is a locally trivial fibration, where $S_u':=\{v\in W: \ell (v)\leq
\ell  (u)+1\}$. Moreover, its fibers are clearly isomorphic with
$F^u:=\cup_{u \leq v: \ell (v)\leq \ell (u)+1}\, Bv^{-1}U^-/U^-$. Now, since
$X^u$ is normal (cf. [KS, Proposition 3.2]) and any $B^-$-orbit in $X^{u}(S_u')$  is 
of codimension $\leq 1$ in $X^u$, $X^{u}(S_u')$ is smooth and similarly so is $F^u$. (Here
the smoothness of $F^u$ means that there exists a closed normal subgroup $B_1$ of
$B$ of finite codimension such that $B_1$ acts freely and properly on $F^u$
 such that the quotient $B_1\setminus F^u$ is a
smooth scheme of finite type over $\bc$, cf. Lemma \ref{basic}.) Thus, $\mu_o$ is a smooth morphism and hence so is its
restriction to the open subset $B\times X^{u}(S_u') \to \bar{X}$. Let
$\mu_o(w):B\times (X^{u}(S_u')\cap X_w) \to X_w$ be the restriction of the latter to
the inverse image of $X_w$.  The map $\mu_o(w)$ clearly factors through
a smooth morphism $\bar{\mu}_o(w): \bar{B}\times (X^{u}(S_u')\cap X_w)\to X_w $,
where $\bar{B}$ is a finite dimensional quotient group of $B$.
Hence,  $\bar{\mu}_o(w)^{-1}(X_w^o)= \bar{B}\times (X^{u}(S_u')\cap X_w^o)$ is a
smooth variety, where
$X_w^o:=X_w\setminus \Sigma_w $ and $\Sigma_w$ is the singular locus of $X_w$.

Following the same argument as in the proof of Lemma \ref{lem8}, we get that
the restriction of the map $m: \bar{\Gamma}\times (X^{u,v}_w)_{\bf j} \to
(X_w^2)_\bp$ to  the map $\bar{m}:\bar{\Gamma}\times \bigl((X^{u}(S_u')\times
X^v(S_v'))\cap X^{2}_{w} \bigr)_{\bf j} \to
(X_w^2)_\bp$ is a smooth morphism (with open image $Y$) and hence so is its
restricton
$\hat{m}: \bar{m}^{-1}(\tilde{\Delta}((X_w)_{\bp})) \to
\tilde{\Delta}((X_w)_{\bp})$. (Observe that $Y$ does intersect $
\tilde{\Delta}((X_w)_{\bp})$, for otherwise $\bigl(\bar{\Gamma}\cdot
\tilde{\Delta}((X_w)_{\bp})\bigr)\cap Y =\emptyset$, which would imply that
$\bigl((C^2_w)_\bp\bigr) \cap Y = \emptyset$. A contradiction.) Thus,
$\hat{m}^{-1}
(\tilde{\Delta}((X_{w}^o)_\bp))$ is a smooth variety,
which is open in $\mathcal{Z}=m^{-1}(\tilde{\Delta}((X_w)_{\bp}))$.
Let us denote the
complement of $\bar{\Gamma}\times \bigl((X^{u}(S_u')\times
X^v(S_v'))\cap X^{2}_{w}\bigr)_{\bf j}$ in  $\bar{\Gamma}\times (X^{u,v}_w)_{\bf j}$
by $F$ and  denote  $\hat{m}^{-1}
(\tilde{\Delta}((\Sigma_w)_\bp))$ by $F'$. Then, $F'$ is of codimension $\geq 2$
in $\bar{m}^{-1}(\tilde{\Delta}((X_w)_{\bp}))$ and hence in
$\mathcal{Z}$. Clearly, $F$ is of codimension $\geq 2$ in
 $\bar{\Gamma}\times (X^{u,v}_w)_{\bf j}$. Also, if $F$ is nonempty, the restriction of the map $m$ to
$F$ is again flat (by the same proof as that of Lemma \ref{lem8}) with image an
open subset of $(X_w^2)_\bp$ intersecting $\tilde{\Delta}((X_w)_{\bp})$. Thus,
the codimension of $F\cap \mathcal{Z}$ in $\mathcal{Z}$
is $\geq 2$.
This shows that the complement of the smooth locus of $\mathcal{Z}$  in
$\mathcal{Z}$ is of codimension
$\geq 2$. Moreover,  $\mathcal{Z}$ is CM by Proposition \ref{propn6.2}. Thus,
 by Serre's
criterion (cf. [H, Theorem 8.22(a), Chap. II]),  $\mathcal{Z}$ is normal.
\end{proof}

The following lemma and Proposition \ref{ratlsing} are taken from our recent joint work with S. Baldwin [BaK]. 
Proposition \ref{ratlsing}  is used to give a shorter proof (than  our original proof) of Theorem \ref{prop20} (b). 
\begin{lemma}
\label{lemact}
Let $G$ be a group acting on a set $X$ and let $Y\subset X$.  Consider the action map $m:G\times Y\to X$.  For $x\in X$ denote the orbit of $x$ by $O(x)$ and the stabilizer by $\text{Stab}(x)$.  Then, $\text{Stab}(x)$ acts on the fiber $m^{-1}(x)$ and $\text{Stab}(x)\backslash m^{-1}(x)\simeq O(x)\cap Y$.
\end{lemma}

\begin{proof}
It is easy to check that $$m^{-1}(x)=\left\{(g,h^{-1}x):h\in G, \text{ } h^{-1}x\in  Y,\text{ } g\in\text{Stab}(x)\cdot h\right\}.$$
Thus, $\text{Stab}(x)$ acts on $m^{-1}(x)$ by left multiplication on the left component.  Since every element of $O(x)\cap Y$ is of the form $h^{-1}x$ for some $h\in G$,  the second projection $m^{-1}(x)\to O(x)\cap Y$ is surjective.  This map clearly factors through the quotient to give a map $\text{Stab}(x)\backslash m^{-1}(x)\to O(x)\cap Y$.  To show this induced map is injective, note first that each class has a representative of the form $(h,h^{-1}x)$.  Now, if $(h_1,h_1^{-1}x)$ and $(h_2,h_2^{-1}x)$ satisfy $h_1^{-1}x=h_2^{-1}x$ then $h_2 h_1^{-1} x = x$, i.e., $h_2 h_1^{-1}\in \text{Stab}(x)$, i.e., $h_2 \in \text{Stab}(x)\cdot h_1$, i.e., $(h_1,h_1^{-1}x)$ and $(h_2,h_2^{-1}x)$ belong to the same class.
\end{proof}

\begin{proposition}
\label{ratlsing}
The scheme $\ZZ$ has rational singularities.  
\end{proposition}

\begin{proof}  Since $\mu$ is flat and $\tilde{\Delta}((X_w)_\bp)$ has rational singularities (cf. [K, Theorem 8.2.2(c)]), by [El, Theorem 5] it is sufficient to show that the fibers of $\mu$ are disjoint unions of irreducible varieties with rational singularities.  

Let $x\in \tilde{\Delta}((C_{w'})_\bp)$,  where $w'\leq w$.  Then, by Lemmas \ref{lemact} and \ref{lem6} (for $\Gamma_{B\times B}$), we have $\text{Stab}(x)\backslash\mu^{-1}(x)\simeq (X^u\cap C_{w'}\times X^v\cap C_{w'})_{\bf j}$, where $\text{Stab}(x)$ is taken with respect to the action of $\bar{\Gamma}$ on $(X^2_w)_\bp$.  By [Se, Proposition 3, \S2.5],  the quotient map $\bar{\Gamma}\to\text{Stab}(x)\backslash\bar{\Gamma}$ is locally trivial in the \'{e}tale topology.  

Consider the pullback diagram:
\[\xymatrix{\mu^{-1}(x) \ar[d] & \subseteq & \bar{\Gamma}\times (X^{u,v}_w)_{\bf j}\ar[d] \\
\text{Stab}(x)\backslash\mu^{-1}(x) & \subseteq & \left(\text{Stab}(x)\backslash\bar{\Gamma}\right)\times (X^{u,v}_w)_{\bf j}.}
\]
Since the right vertical map is a locally trivial fibration in the \'{e}tale topology, the left vertical map is too.  Now, $\text{Stab}(x)\backslash\mu^{-1}(x)\simeq (X^u\cap C_{w'}\times X^v\cap C_{w'})_{\bf j}$ has rational singularities by [KuS, Theorem 3.1].  Further, $\text{Stab}(x)$ being smooth and $\mu^{-1}(x)\to\text{Stab}(x)\backslash\mu^{-1}(x)$ being locally trivial in the \'{e}tale topology, we get that $\mu^{-1}(x)$ is a disjoint union of irreducible varieties with rational singularities by [KM, Corollary 5.11].

\end{proof}
\begin{proposition}\label{lem11}
The scheme $\partial\mathcal{Z}$ is pure of codimension $1$ in
$\mathcal{Z}$ and it is
CM, where the closed subscheme $\partial \mathcal{Z}$ of
$\mathcal{Z}$ is defined
as
$$
\partial \mathcal{Z}:=\left(\bar{\Gamma}\times
\partial\left((X^{u,v}_{w})_{\bf j}\right)\right)
\mathop{\times}_{{(X_w^2)}_\bp}\tilde{\Delta}((X_w)_{\bp}),
$$
where $\partial\left((X^{u,v}_{w})_{\bf j}\right)$ is defined in Lemma \ref{lem8}.
\end{proposition}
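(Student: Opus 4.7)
The plan is to apply Lemma \ref{n6.3} to the flat morphism $m':\bar{\Gamma}\times\partial\!\left((X^{u,v}_w)_{\bf j}\right)\to \widetilde{X^2_w}$ supplied by Lemma \ref{lem8}, taking the closed CM subscheme $\tilde{\Delta}(\tilde{X}_w)\subset \widetilde{X^2_w}$ of pure codimension $\ell(w)$. Since $\widetilde{X^2_w}$ is CM and irreducible (a locally trivial bundle over smooth $\bp$ with CM fiber $X_w^2$) and $\bar{\Gamma}$ is smooth, the only nontrivial hypothesis to verify is that $\partial\!\left((X^{u,v}_w)_{\bf j}\right)$ itself is CM of pure codimension $1$ in $(X^{u,v}_w)_{\bf j}$. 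Once this is done, the codimension estimate needed to invoke Lemma \ref{n6.3} follows by dimension counting exactly as in the proof of Proposition \ref{propn6.2}, and yields $\dim \partial\mathcal{Z}=\dim\mathcal{Z}-1$.

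\textbf{CM property of the boundary in $(X^{u,v}_w)_{\bf j}$.} By definition $\partial\!\left((X^{u,v}_w)_{\bf j}\right)=A_1\cup A_2$ with $A_1:=\bigl((\partial X^{u,v})\cap X^2_w\bigr)_{\bf j}$ and $A_2:=(X^{u,v}_w)_{\partial \bp_{\bf j}}$. To analyze $A_1$, write $(\partial X^{u,v})\cap X^2_w=\bigl((\partial X^u)\cap X_w\bigr)\times X^v_w\;\cup\; X^u_w\times \bigl((\partial X^v)\cap X_w\bigr)$; by Proposition \ref{n5.6} and Remark \ref{newremark} each factor is CM of the expected pure dimension, so each piece is CM of pure codim $1$ in $X^u_w\times X^v_w$ and their intersection $\bigl((\partial X^u)\cap X_w\bigr)\times\bigl((\partial X^v)\cap X_w\bigr)$ is CM of pure codim $2$. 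For $A_2$, an induction on $r$ shows $\partial \bp_{\bf j}$ is CM of pure codim $1$ in $\bp_{\bf j}$: writing
\[
\partial \bp_{\bf j}=\bigl(\bp^{j_1-1}\times \bp^{j_2}\times\cdots\times \bp^{j_r}\bigr)\cup\bigl(\bp^{j_1}\times \partial(\bp^{j_2}\times\cdots\times \bp^{j_r})\bigr),
\]
the first summand is smooth, the second is CM by induction, and the intersection is CM of pure codim $2$ by induction. Finally, $A_1\cap A_2=\bigl((\partial X^{u,v})\cap X^2_w\bigr)_{\partial \bp_{\bf j}}$ is CM of pure codim $2$ by the analogous reasoning.

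\textbf{Mayer--Vietoris gluing.} To assemble all of these pieces, I would invoke the following depth argument: for closed subschemes $Y_1, Y_2$ of a scheme $S$ with $Y_1, Y_2$ CM of pure dim $d$ and $Y_1\cap Y_2$ CM of pure dim $d-1$, the short exact sequence
\[
0\to \co_{Y_1\cup Y_2}\to \co_{Y_1}\oplus\co_{Y_2}\to \co_{Y_1\cap Y_2}\to 0
\]
and the depth lemma give $\depth\co_{Y_1\cup Y_2,y}\geq \min(d,(d-1)+1)=d$ for every $y\in Y_1\cup Y_2$; as $\dim\co_{Y_1\cup Y_2,y}=d$, the union is CM of pure dim $d$. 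Applying this successively (first to $(\partial X^{u,v})\cap X^2_w$, then through the inductive decomposition of $\partial \bp_{\bf j}$, and finally to $A_1\cup A_2$), and using that the bundle constructions $E(T)_\bp\times^T(-)$ and their inverse images over the smooth bases $\bp_{\bf j},\partial\bp_{\bf j}$ preserve CM-ness and pure dimension, we conclude $\partial\!\left((X^{u,v}_w)_{\bf j}\right)$ is CM of pure codim $1$. Feeding this into Lemma \ref{n6.3} via the flat map $m'$ from Lemma \ref{lem8} produces $\partial\mathcal{Z}$ CM of pure codim $1$ in $\mathcal{Z}$.

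\textbf{Main obstacle.} The delicate point is ensuring that each of the successive scheme-theoretic unions and intersections behaves as expected — i.e., is reduced, pure of the stated codimension, and has the required CM intersection — so that the Mayer--Vietoris short exact sequence really is exact and the depth lemma applies. Reducedness of the Richardson pieces and their boundaries is guaranteed by [KuS, Prop.~5.3] (cf.\ Section \ref{sec0}); the purity of the relevant codimensions and the CM-ness of the intersections must be tracked carefully through the several stages of the argument (boundary of $X^{u,v}_w$, boundary of $\bp_{\bf j}$, their common refinement, and pullback along $m'$), which is where the proof becomes most technical.
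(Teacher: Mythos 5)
Your proposal is correct and follows essentially the same route as the paper: flatness of $m'$ from Lemma \ref{lem8}, CM-ness of $\partial\left((X^{u,v}_{w})_{\bf j}\right)$ obtained from Remark \ref{newremark}, the CM-ness of $\partial\bp_{\bf j}$, and the union-of-CM-schemes argument for intersections of pure codimension $1$, followed by Lemma \ref{n6.3} and the same fiber-dimension count for pure codimension $1$ in $\mathcal{Z}$. The only difference is that you spell out the depth/Mayer--Vietoris gluing and the decomposition of $(\partial X^{u,v})\cap X^2_w$ more explicitly than the paper, which simply cites these facts.
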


\begin{proof}
By Lemma \ref{lem8}, the map $\bar{\Gamma}\times \partial
\left((X^{u,v}_{w})_{\bf j}\right)\xrightarrow{m'} (X_w^2)_\bp$ is a flat morphism.
Moreover, $\partial \left((X^{u,v}_{w})_{\bf j}\right)$ is pure of
codimension 1 in $(X^{u,v}_{w})_{\bf j}$. Further, $\Iim m'= \Iim m $, if
$\partial \mathbb{P}_{\bf j}\neq\emptyset$. If $\partial
\mathbb{P}_{{\bf j}}=\emptyset$,
\begin{align*}
\Iim m' &\supset \bigl(\left(\left(\cup_{u\to u'\leq \theta\leq
  w}\,C_\theta\right)\times \left(\cup_{v\leq \theta'\leq
  w}\, C_{\theta'} \right)\right)
\cup \left(\left(\cup_{u\leq \theta\leq w}\, C_\theta
\right)\times \left(\cup_{v\to v'\leq \theta'\leq
  w}\, C_{\theta'} \right)\right)\bigr)_{\mathbb{P}}.
\end{align*}

In particular, if non empty, $\Iim m'$ is open in $(X^2_w)_\bp$ (since $m'$ is flat) intersecting
$\tilde{\Delta}((X_w)_{\bp})$. Thus, by [H, Corollary 9.6, Chap. III], each
fiber of $m'$ (if non empty) is pure of dimension
$$
\dim \bar{\Gamma}+\dim (X^{u,v}_{w})_{\bf j}-\dim ((X^2_w)_\bp)
  -1 .
$$
Again applyig [H, Corollary 9.6, Chap. III], we get that $\partial\mathcal{Z}$
is pure of dimension
 $$
\dim \bar{\Gamma}+\dim (X^{u,v}_{w})_{\bf j}-\dim ((X^2_w)_\bp)
  -1+ \dim (\tilde{\Delta}((X_w)_{\bp})).
$$
Hence, by the identity \eqref{neq6.2.1}, $\partial\mathcal{Z}$ is pure of codimension 1 in
$\mathcal{Z}$. Further, both of $\left((\partial X^{u})\cap X_w \right)\times X^v_w$ and 
$X^u_w\times \left((\partial X^v)\cap X_w \right) $ are CM by Proposition \ref{n5.6} and 
Remark \ref{newremark} and so is their intersection. Moreover, their intersection is of pure 
codimension $1$ in both of them. Hence, their union is CM, which follows, e.g., by 
[K, Theorem A.36] and hence so is  $\left((\partial X^{u,v})\cap X^2_w\right)_{\bf j}$. Also, $(X^{u,v}_{w})_{\partial \bp_{\bf j}}$
and the intersection  $$\left((\partial X^{u,v})\cap X^2_w\right)_{\bf j}\cap
(X^{u,v}_{w})_{\partial \bp_{\bf j}} =
\left((\partial X^{u,v})\cap X^2_w\right)_{\partial \bp_{\bf j}}$$
are CM since $\partial \bp_{\bf j}$ is CM. Thus, their union
 $\partial \left((X^{u,v}_w)_{\bf j}\right)$ is CM since the intersection
 $\left((\partial X^{u,v})\cap X^2_w\right)_{\partial \bp_{\bf j}}$ is CM of pure
 codimension $1$ in both of $\left((\partial X^{u,v})\cap X^2_w\right)_{\bf j}$
 and $(X^{u,v}_{w})_{\partial \bp_{\bf j}}$.
 Thus, $\partial \mathcal{Z}$ is CM by
Lemma \ref{n6.3} applied to the morphism $\bar{\Gamma}\times
\partial ((X^{u,v}_{w})_{\bf j}) \to 
(X_w^2)_\bp$.
\end{proof}

As a consequence of Proposition \ref{lem11} and Lemma \ref{n6.4}, we get the following.
\begin{corollary} \label{coro12}  Assume that $c^w_{u,v}({\bf j}) \neq 0$, where
$c^w_{u,v}({\bf j})$ is defined by the identity \eqref{e0.0}.
Then, for general $\gamma\in \bar{\Gamma}$, the fiber $N_\gamma:={\pi}^{-1} (\gamma)\subset
 \mathcal{Z}$ is CM of pure dimension, where the morphism $\pi:\mathcal{Z}\to
\bar{\Gamma}$ is defined in the beginning of this section.

In fact, for any $\gamma\in \bar{\Gamma}$ such that $N_\gamma$ is pure of dimension
\begin{equation}
\dim N_\gamma=\dim \mathcal{Z}-\dim \bar{\Gamma}=|{\bf j}|+\ell(w)-\ell(u)-\ell(v),
\end{equation}
$N_\gamma$ is CM (and this condition is satisfied for general $\gamma$).

Similarly,  if $|{\bf j}|+\ell(w)-\ell(u)-\ell(v) > 0$, for general $\gamma\in
 \bar{\Gamma}$, the fiber $M_\gamma:={\pi_1}^{-1} (\gamma)\subset
 \partial\mathcal{Z}$ is CM of pure codimension $1$ in $N_\gamma$, where $\pi_1$ is the restriction of the map $\pi$ to
 $\partial \mathcal{Z}$. If $|{\bf j}|+\ell(w)-\ell(u)-\ell(v) =0$, for general $\gamma
 \in \bar{\Gamma}$, the fiber $M_\gamma$ is empty.

 In particular, for general $\gamma\in\bar{\Gamma}$, 
$$
\ext^{i}_{\co_{N_{\gamma}}}\left(\co_{N_{\gamma}}(-M_{\gamma}),\omega_{N_{\gamma}}\right)=0,\quad\text{for
  all}\quad i>0,$$
  where $\co_{N_{\gamma}}(-M_{\gamma})$ denotes the ideal sheaf of $M_\gamma$
  in $N_\gamma$ and $\omega_{N_{\gamma}}$ is the dualizing sheaf of $N_\gamma$.
\end{corollary}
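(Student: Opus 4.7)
The plan is to apply Lemma \ref{n6.4} to the proper projections $\pi\colon\mathcal{Z}\to\bar{\Gamma}$ and $\pi_1\colon\partial\mathcal{Z}\to\bar{\Gamma}$, using that the target is smooth (by Lemma \ref{lem10}, $\bar{\Gamma}$ is an open subset of an affine space) and that the sources are CM (Propositions \ref{propn6.2} and \ref{lem11}), then to analyze the ideal sheaf $\co_{N_\gamma}(-M_\gamma)$ via the depth lemma, and finish with local duality.

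First I observe that $\pi$ is proper because $(X^{u,v}_w)_{\mathbf{j}}$ is projective over $\bc$. The assumption $c^w_{u,v}(\mathbf{j})\ne 0$ forces $\pi$ to be dominant: indeed, combining Lemma \ref{lemma3.3} with Theorem \ref{thma14}(b), $c^w_{u,v}(\mathbf{j})$ is (up to sign) an Euler characteristic whose integrand is supported inside $\gamma\,\tilde{\Delta}(\tilde{X}_w)$ intersected with the relevant $\bar{\Gamma}$-orbit data; if $\pi$ were not dominant, this support would be empty for general $\gamma$ and the structure constant would vanish. Consequently, by the dimension formula for dominant morphisms of irreducible finite-type $\bc$-schemes combined with upper semi-continuity of fiber dimension, there is a dense open $U\subset\bar{\Gamma}$ such that for every $\gamma\in U$ each irreducible component of $N_\gamma$ has codimension exactly $\dim\bar{\Gamma}$ in $\mathcal{Z}$, i.e.\ $\dim N_\gamma=|\mathbf{j}|+\ell(w)-\ell(u)-\ell(v)$. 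Lemma \ref{n6.4} applied to $f=\pi$, $W=\mathcal{Z}$ and the CM point $Y=\{\gamma\}$ of codimension $\dim\bar{\Gamma}$ then yields that $N_\gamma$ is CM of the claimed pure dimension, and the same application works for any $\gamma$ realizing the dimension equality.

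For $\pi_1\colon\partial\mathcal{Z}\to\bar{\Gamma}$, Proposition \ref{lem11} gives that $\partial\mathcal{Z}$ is CM of pure dimension $\dim\bar{\Gamma}+|\mathbf{j}|+\ell(w)-\ell(u)-\ell(v)-1$. If $|\mathbf{j}|+\ell(w)-\ell(u)-\ell(v)=0$, then $\dim\partial\mathcal{Z}<\dim\bar{\Gamma}$, so $\pi_1$ cannot be dominant and $M_\gamma=\emptyset$ for general $\gamma$. If this integer is strictly positive, I run the dominance/upper-semicontinuity argument for each irreducible component of $\partial\mathcal{Z}$ to shrink $U$ further, then invoke Lemma \ref{n6.4} once more to conclude that $M_\gamma$ is CM of pure codimension $1$ in $N_\gamma$.

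For the final claim, when $M_\gamma\ne\emptyset$ I use the short exact sequence
\[
0\to \co_{N_\gamma}(-M_\gamma)\to \co_{N_\gamma}\to \co_{M_\gamma}\to 0
\]
together with the depth lemma. At any point $p\in M_\gamma$, the CM hypotheses give $\depth\co_{N_\gamma,p}=d$ and $\depth\co_{M_\gamma,p}=d-1$, with $d:=\dim_p N_\gamma$, so
\[
\depth\co_{N_\gamma}(-M_\gamma)_p\ \ge\ \min(d,\,d)\ =\ d,
\]
and the reverse inequality is automatic; hence $\co_{N_\gamma}(-M_\gamma)$ is a CM $\co_{N_\gamma}$-module of maximal dimension. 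Finally, since $N_\gamma$ is CM with dualizing sheaf $\omega_{N_\gamma}$, local duality (or the standard fact that for a maximal-dimensional CM module $F$ on a CM scheme, $\ext^i(F,\omega)$ vanishes for $i>0$) yields $\ext^i_{\co_{N_\gamma}}(\co_{N_\gamma}(-M_\gamma),\omega_{N_\gamma})=0$ for $i>0$. The main obstacle is the first step: converting the numerical non-vanishing of $c^w_{u,v}(\mathbf{j})$ into dominance of $\pi$ (and of $\pi_1$ in the relevant case); once that is extracted from Theorem \ref{thma14}(b), all remaining steps are routine applications of Lemma \ref{n6.4}, the depth lemma, and local duality.
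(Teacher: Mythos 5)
Your overall strategy is the paper's: get surjectivity of $\pi$ from $c^w_{u,v}(\mathbf{j})\neq 0$, apply the Fulton--Pragacz type Lemma \ref{n6.4} (with semicontinuity of fiber dimension) to $\pi$ and $\pi_1$, and then deduce that $\co_{N_\gamma}(-M_\gamma)$ is CM with the Ext-vanishing. But there are two genuine problems. First, a circularity: you derive dominance of $\pi$ by invoking Theorem \ref{thma14}(b), yet that statement is proved only in Section \ref{sec3} (Theorem \ref{thm13}), and its proof uses precisely this corollary (both the dimension formula for $\bar N_\gamma$ and the Ext-vanishing), while the vanishing theorems of Section \ref{sec4} feeding into it also quote the surjectivity of $\pi$ from this corollary's proof. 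The repair is the paper's argument, which needs neither part of Theorem \ref{thma14}: by Lemma \ref{lemma3.3} the constant $c^w_{u,v}(\mathbf{j})$ is a pairing whose Tor sheaves are supported in $(X^{u,v})_{\bf j}\cap\gamma\tilde{\Delta}(\tilde{X}_w)$, and since $\bar\Gamma$ is connected (Lemma \ref{connected}) the class $\gamma_*\tilde{\Delta}_*[\co_{\tilde{X}_w}]$, hence the pairing, is independent of $\gamma$; so an empty intersection for some $\gamma$ would force $c^w_{u,v}(\mathbf{j})=0$. This in fact gives surjectivity of $\pi$, not merely dominance.

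The second, more substantive gap is the boundary map. When $|\mathbf{j}|+\ell(w)-\ell(u)-\ell(v)>0$ you propose to ``run the dominance/upper-semicontinuity argument for each irreducible component of $\partial\mathcal{Z}$,'' but no such dominance argument is available: the nonvanishing of the structure constant controls $\pi$, not $\pi_1$ (and indeed $\pi_1$ is \emph{not} dominant when $|\mathbf{j}|+\ell(w)-\ell(u)-\ell(v)=0$). Without dominance of $\pi_1$ you cannot conclude that $M_\gamma$ is nonempty of pure codimension $1$ in $N_\gamma$ for general $\gamma$, which is exactly what the corollary asserts. The paper's missing ingredient is a separate argument: $\pi_1$ is projective, so if it were not surjective then $M_\gamma=\emptyset$ for general $\gamma$, i.e., the projective, positive-dimensional fiber $N_\gamma$ would be contained in the affine scheme $\mathcal{Z}\setminus\partial\mathcal{Z}$, a contradiction; only then does the semicontinuity theorem of Safarevic, applied to the components of $\partial\mathcal{Z}$, give codimension $\dim\bar\Gamma$ fibers and Lemma \ref{n6.4} gives CM-ness. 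Your final step (depth lemma to show $\co_{N_\gamma}(-M_\gamma)$ is maximal CM, then duality to kill $\ext^{i}$ for $i>0$) is fine and only cosmetically different from the paper's long-exact-sequence argument, but it rests on the unproved codimension-$1$ statement above.
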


\begin{proof}
We first show that $\pi$ is a surjective morphism under the assumption that
$c^{w}_{u,v}({\bf j})\neq 0$. By the definition,
\begin{equation}
\Iim \pi=\left\{\gamma\in
\bar{\Gamma}:\gamma\left((X^{u,v}_{w})_{\bf j}\right)\cap
\tilde{\Delta}((X_w)_{\bp})\neq \emptyset\right\}.\label{eq2}
\end{equation}

Since $\bar{\Gamma}$ is connected by Lemma \ref{connected}, by the expression of
$c^{w}_{u,v}({\bf j})$ as in Lemma \ref{lemma3.3},
$\gamma\left((X^{u,v})_{\bf j}\right)\cap
\tilde{\Delta}((X_w)_{\bp})\neq \emptyset$ for any $\gamma\in \bar{\Gamma}$. But,
$\gamma\left((X^{u,v})_{\bf j}\right)\cap \tilde{\Delta}((X_w)_{\bp})=
\gamma\left((X^{u,v}_{w})_{\bf j}\right)\cap \tilde{\Delta}((X_w)_{\bp})$,
for any $\gamma\in \bar{\Gamma}$. Thus, $\pi$ is surjective.

By Lemmas \ref{n6.4} and \ref{normal}  applied to the morphism
$\pi:\mathcal{Z}\to
\bar{\Gamma}$, we get that if $N_\gamma$ is pure and 
\begin{equation}\label{eq38}
\codim_{\mathcal{Z}} N_{\gamma} =\dim \bar{\Gamma},
\end{equation}
then $N_{\gamma}$ is CM.

Now the condition \eqref{eq38} is satisfied for $\gamma$ in a dense open subset of
$\bar{\Gamma}$ by [S, Theorem 1.25, \S 6.3, Chap. I]. Thus, $N_{\gamma}$ is CM for general
$\gamma$.

Similarly, we prove that  $M_{\gamma}$ is CM for general
$\gamma$:

We first show that  $\pi_{1}:\partial \mathcal{Z}\to\bar{\Gamma}$ is surjective if
$|{\bf j}|+\ell(w)-\ell(u)-\ell(v)>0$. For if $\pi_{1}$ is not surjective, its image
would be a proper closed subset of $\bar{\Gamma}$,
since $\pi_1$ is a projective morphism. Hence,  for
general $\gamma\in\bar{\Gamma}$, $M_\gamma=\emptyset$, i.e., $
N_\gamma\subset \mathcal{Z}\backslash \partial \mathcal{Z}$. But
$\mathcal{Z}\backslash \partial \mathcal{Z}$ is an affine scheme, and
$N_\gamma$ is a projective scheme of positive dimension
(because of the assumption $|{\bf j}|+\ell(w)-\ell(u)-\ell(v)>0$). This is a
contradiction, and hence $\pi_1$ is surjective. Thus, if
$|{\bf j}|+\ell(w)-\ell(u)-\ell(v)>0$,
we get that for general $\gamma\in\bar{\Gamma}$, by [S, Theorem 1.25,
\S 6.3, Chap. I] applied to the irreducible components of $\partial \mathcal{Z}$, $M_\gamma$ is pure and
\begin{equation}\label{neq39}
\codim_{\partial \mathcal{Z}} M_\gamma=\dim \bar{\Gamma}.
\end{equation}

Now, by the same argument as above, we get that for general $\gamma\in\bar{\Gamma}$,
$M_{\gamma}$ is CM. Moreover, since $\partial \mathcal{Z}$ is of pure codimension
$1$ in $\mathcal{Z}$, we get (by equations
\eqref{eq38}-\eqref{neq39}) that $M_{\gamma}$ is of pure codimension $1$ in
$N_{\gamma}$ (for general $\gamma$).

If $|{\bf j}|+\ell(w)-\ell(u)-\ell(v)=0$, then $\dim (\partial \mathcal{Z}) <
\dim \bar{\Gamma}$. So, in this case, $\Iim \pi_1$ is a proper closed subset of
$\bar{\Gamma}$.

Since (for general $\gamma$) $M_{\gamma}$ is of pure codimension $1$ in
$N_{\gamma}$ and both are CM, 
\[\ext^{i}_{\co_{N_{\gamma}}}\left(\co_{N_{\gamma}}(-M_{\gamma}),\omega_{N_{\gamma}}
\right)=0,\quad\text{for
  all}\quad i>0.\]
  To prove this, use the long exact $\ext$ sequence associated to the sheaf
  exact sequence:
  \[0\to \co_{N_{\gamma}}(-M_{\gamma}) \to \co_{N_{\gamma}} \to \co_{M_{\gamma}}\to
  0\]
  and the result that
  \[\ext^{i}_{\co_{N_{\gamma}}}\left(\co_{M_{\gamma}},\omega_{N_{\gamma}}
\right)=0,\quad\text{unless
  }\quad i=1,\]
(cf. [I, Proposition 11.33 and Corollary 11.43]).
  This completes the proof of the corollary.
  \end{proof}

\section{Study of $R^{p}f_{*}\left(\omega_{\tilde{\mathcal{Z}}}(\partial\tilde{\mathcal{Z}})\right)$}\label{sec4}

From now on we assume that $c^w_{u,v}({\bf j}) \neq 0,$ where $c^w_{u,v}({\bf j})$
is defined by the identity \eqref{e0.0}. We follow the notation from the big
diagram in Section
\ref{sec2}.

\begin{lemma}\label{lem16}
The line bundle $\mathcal{L}(\rho)_{|X^{u}}$ has a section with the
zero set precisely equal to $\partial X^{u}$.

In particular,
$$
\mathcal{L}(\rho)_{|X^{u}_{w}}\sim \sum_{i}b_{i}X_{i},\quad b_{i}>0,
$$
where $X_{i}$ are the irreducible components of $(\partial X^{u})\cap
X_{w}$.
\end{lemma}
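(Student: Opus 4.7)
The plan is to produce, for each $u \in W$, a section $\sigma_u \in H^0(\bar{X}, \mathcal{L}(\rho))$ whose restriction to $X^u$ has zero locus exactly $\partial X^u$. I will use the Borel--Weil identification (in the Kac--Moody setting, cf.\ [Ka] or [K, Chapter 8]) of $H^0(\bar{X}, \mathcal{L}(\rho))$ with the restricted dual $V(\rho)^\vee$ of the integrable highest weight module. Since $u\rho$ is an extremal weight, $\dim V(\rho)_{u\rho} = 1$; choosing an extremal vector $v_{u\rho}$, I let $\sigma_u$ be the dual functional, of $H$-weight $-u\rho$, characterized by $\langle \sigma_u, v_{u\rho}\rangle = 1$ and $\langle \sigma_u, v_\mu\rangle = 0$ for all other weights $\mu$.

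The heart of the argument is the explicit evaluation of $\sigma_u$ on Bruhat cells. For $gB \in C^v$ I write $g = h\eta v$ with $h \in H$, $\eta \in U^-$; then $g \cdot v_\rho = e^{v\rho}(h)\, v_{v\rho} + \sum_{\mu < v\rho} c_\mu v_\mu$ (the latter sum over weights strictly below $v\rho$ in the dominance order, produced by the $U^-$-triangularity on weight vectors), so $\sigma_u(gB)$ is, up to a non-vanishing trivialization scalar, the coefficient of $v_{u\rho}$ in this expansion. Because $\rho$ is regular dominant, the Bruhat order on $W$ is opposite the dominance order on the orbit $W\rho$: for $v > u$ in Bruhat one has $u\rho > v\rho$ in dominance, so in particular $u\rho \not\leq v\rho$. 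Hence for such $v$ the weight $u\rho$ does not appear among $\{\mu \leq v\rho\}$, and $\sigma_u$ vanishes identically on $C^v$, hence by closure on $X^v$. For $v = u$, by contrast, the coefficient at $v_{u\rho}$ is $e^{u\rho}(h) \neq 0$, so $\sigma_u$ is nowhere vanishing on $C^u$.

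Combining the two cases, $\sigma_u|_{X^u}$ is a nonzero section whose set-theoretic zero locus equals $\bigcup_{v > u,\,\ell(v)=\ell(u)+1} X^v = \partial X^u$. Since $X^u$ is normal by [KS, Proposition 3.2] and $\mathcal{L}(\rho)|_{X^u}$ is a line bundle, the associated Cartier/Weil divisor is effective and of pure codimension $1$, hence equal to $\sum_v n_v X^v$ with $n_v \geq 1$ for every $v$ indexing an irreducible component of $\partial X^u$, proving the first assertion. For the second, since $C^u \cap C_w$ is nonempty (by Proposition \ref{n5.6}) and disjoint from $\partial X^u$, the restriction $\sigma_u|_{X^u_w}$ is not identically zero; each nonempty $X^v \cap X_w = X^v_w$ is an irreducible Richardson subvariety of codimension $1$ in $X^u_w$ by Proposition \ref{n5.6} and [K, Lemma 7.3.10], and these are precisely the irreducible components $X_i$ of $(\partial X^u) \cap X_w$. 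Restricting the divisor then yields $\mathcal{L}(\rho)|_{X^u_w} \sim \sum_i b_i X_i$ with $b_i = n_{v(i)} > 0$, as claimed.

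The principal obstacle is the vanishing claim in the second paragraph, namely that $\sigma_u$ vanishes on the whole cell $C^v$ for each $v > u$, and not merely at the $H$-fixed point $vB$. This is what upgrades the easy set-theoretic containment (zero locus $\subseteq \partial X^u$, which would follow just from the $H$-weight mismatch at each $H$-fixed point by regularity of $\rho$) into the equality needed to conclude that every irreducible component of $\partial X^u$ appears with strictly positive multiplicity. It rests on the opposite comparison of Bruhat and dominance orders on $W\rho$ together with the $U^-$-triangularity of the action on weight vectors; the rest of the argument is routine given normality of $X^u$ and the geometric properties of Richardson varieties already established.
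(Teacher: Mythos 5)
Your proposal is correct and follows essentially the same route as the paper: the Borel--Weil identification $H^0(\bar{X},\mathcal{L}(\rho))\simeq L(\rho)^{\vee}$ and the section corresponding to the dual extremal weight functional $e^*_{u\rho}$, whose restriction to $X^u$ vanishes exactly on $\partial X^u$. The paper leaves the cell-by-cell vanishing computation as ``easy to see,'' which is precisely the Bruhat/dominance-order argument you spell out; the deduction of the second assertion is the same routine step in both.
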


\begin{proof}
Consider the Borel-Weil isomorphism
$\chi:L(\rho)^{\vee}\xrightarrow{\sim}H^{0}(\bar{X},\mathcal{L}(\rho))$ given by
$\chi(f)(gB) = [g, f(ge_\rho)]$,
where $e_\rho$ is a highest weight vector of the  irreducible highest weight
$G^{\min}$-module
$L(\rho)$ with highest weight $\rho$ and $L(\rho)^{\vee}$ is the restricted dual of $L(\rho)$
(cf. [K, $\S$8.1.21]). Then, it is easy to
see (using [K, Lemma 8.3.3]) that the section $\chi(e^{*}_{u\rho })_{|X^{u}}$ has the zero set
exactly equal to $\partial X^{u}$, where $e_{u\rho }$ is the extremal
weight vector of $L(\rho)$ with weight $u\rho $ and $e^{*}_{u\rho }\in
L(\rho)^{\vee}$
is the linear form which takes value 1 on $e_{u\rho }$ and 0 on any
weight vector of $L(\rho)$ of weight different from $u\rho $. This proves the
lemma.
\end{proof}

A $\Bbb Q$-Cartier $\Bbb Q$-divisor $D$ on an irreducible  projective variety $X$ is called
{\it nef} (resp. {\it big}) if $D$ has
nonnegative intersection with every irreducible curve in $X$ (resp. if dim $H^0(X, \co_X(mD))
>
c m^{\text{dim}\, X},$ for some $c>0$ and $m >> 1$). If $D$ is ample, it is nef and big
(cf. [KM, Proposition 2.61]).

Let  $\pi:X\to Y$ be a proper
morphism between schemes and let $D$ be a $\Bbb Q$-Cartier $\Bbb Q$-divisor on $X$.
Assume that $X$ is irreducible. Then, $D$
is said to be {\it $\pi$-nef} (resp. {\it $\pi$-big}) if $D$ has
nonnegative intersection with every irreducible curve in $X$ contracted by $\pi$ (resp.
if rank $\pi_*\co_X(mD)> c m^n$ for some  $c>0$ and $m >> 1$, where $n$ is the dimension
of a general fiber of $\pi$).

\begin{proposition}\label{prop17}
There exists a nef and big line bundle $\mathcal{M}$ on
$(Z^{u,v}_{w})_{\bf j}$ with a section
with support precisely equal to
$\partial\left((Z^{u,v}_{w})_{\bf j}\right)$, where
$\partial\left((Z^{u,v}_{w})_{\bf j}\right)$ is, by definition, the inverse image of
$\partial\left((X^{u,v}_{w})_{\bf j}\right)$ under the canonical map
$(Z^{u,v}_{w})_{\bf j} \to (X^{u,v}_{w})_{\bf j}$ induced from the $T$-equivariant
 map $\pi_w^{u,v} : Z^{u,v}_w := Z^u_w \times Z^v_w \to X^{u,v}_w := X^u_w \times X^v_w$
and $\partial\left((X^{u,v}_{w})_{\bf j}\right)$ is defined in Lemma \ref{lem8}.

Moreover, such a line bundle $\mathcal{M}$ can be chosen to be the pull-back of
an ample line bundle $\mathcal{M}'$ on $(X^{u,v}_{w})_{\bf j}$.

\end{proposition}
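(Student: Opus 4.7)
The plan is to construct first an ample line bundle $\mathcal{M}'$ on $(X^{u,v}_w)_{\bf j}$ with a section vanishing precisely on $\partial\bigl((X^{u,v}_w)_{\bf j}\bigr)$, and then take $\mathcal{M}$ to be the pullback of $\mathcal{M}'$ along the proper birational $T$-equivariant morphism $(Z^{u,v}_w)_{\bf j}\to (X^{u,v}_w)_{\bf j}$ induced by the product desingularization $\pi^{u,v}_w$. The pullback of an ample line bundle along a proper birational morphism to a normal variety is automatically nef and big (use the projection formula together with $f_*\co = \co$ for a proper birational morphism onto a normal target and equality of dimensions), and the pulled-back section vanishes on the preimage of $\partial\bigl((X^{u,v}_w)_{\bf j}\bigr)$, which is $\partial\bigl((Z^{u,v}_w)_{\bf j}\bigr)$ by its defining formula.

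Next, decompose $\partial\bigl((X^{u,v}_w)_{\bf j}\bigr) = \bigl((\partial X^{u,v})\cap X^2_w\bigr)_{\bf j} \cup (X^{u,v}_w)_{\partial\bp_{\bf j}}$ as in Lemma \ref{lem8}, and realise each piece as the zero locus of a section of an appropriate positive line bundle. For the first piece, Lemma \ref{lem16} supplies a section of $\mathcal{L}(\rho)$ on $X^u$ (resp.\ $X^v$) vanishing precisely on $\partial X^u$ (resp.\ $\partial X^v$); the box product of these is a section of $\mathcal{L}(\rho)\boxtimes \mathcal{L}(\rho)$ on $X^u\times X^v$ vanishing on $\partial X^{u,v}$, and its restriction to $X^{u,v}_w = X^u_w\times X^v_w$ has zero locus exactly $(\partial X^{u,v})\cap X^2_w$. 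Since $\mathcal{L}(\rho)$ restricts to an ample line bundle on the projective variety $X_w$ (hence on the closed subvariety $X^u_w$), this box product is ample on $X^{u,v}_w$, so its $E(T)$-associated version $\bar{\mathcal{L}}_{(X^{u,v}_w)_{\bf j}}(\rho\boxtimes\rho)$ is ample relative to the projection $p_1:(X^{u,v}_w)_{\bf j}\to \bp_{\bf j}$. For the second piece, $\co_{\bp_{\bf j}}(1,\dots,1) = \co_{\bp^{j_1}}(1)\boxtimes\cdots\boxtimes \co_{\bp^{j_r}}(1)$ is ample on $\bp_{\bf j}$ and admits the obvious product ``hyperplane'' section with zero set exactly $\partial\bp_{\bf j}$; its pullback under $p_1$ gives a line bundle on $(X^{u,v}_w)_{\bf j}$ with a section vanishing on $(X^{u,v}_w)_{\partial\bp_{\bf j}}$. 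Setting
\[
\mathcal{M}' := \bar{\mathcal{L}}_{(X^{u,v}_w)_{\bf j}}(\rho\boxtimes\rho)\otimes p_1^*\co_{\bp_{\bf j}}(n,\dots,n)
\]
for $n\gg 0$, $\mathcal{M}'$ is ample by the standard fact that a $p_1$-ample line bundle tensored with a sufficiently positive pullback from the base is ample (cf.\ [H, Chap.~II, Exercise 7.5(d)]), and the product of the two sections above is a global section of $\mathcal{M}'$ whose zero locus is precisely $\partial\bigl((X^{u,v}_w)_{\bf j}\bigr)$.

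The main obstacle is the one delicate point: the box product of the sections coming from Lemma \ref{lem16} is not $T$-invariant but rather a $T$-eigenvector of weight $-(u\rho+v\rho)$, so it does not descend \emph{as is} to a section of $\bar{\mathcal{L}}_{(X^{u,v}_w)_{\bf j}}(\rho\boxtimes\rho)$. One absorbs this weight by a careful choice of $T$-linearisation on $p_1^*\co_{\bp_{\bf j}}(n,\dots,n)$ (equivalently, by building the compensating character into the pullback from $\bp_{\bf j}$, in the same spirit as the twist $e^{-2\rho}$ is built into the definition of $\hat{\mathcal{L}}(\rho\boxtimes\rho)$ given just before Lemma \ref{lemma3.3}). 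Once this $T$-weight bookkeeping is arranged, the product section descends, its zero locus coincides on the nose with $\partial\bigl((X^{u,v}_w)_{\bf j}\bigr)$, and pulling back via the map induced by $\pi^{u,v}_w$ yields the required $\mathcal{M}$ and section on $(Z^{u,v}_w)_{\bf j}$.
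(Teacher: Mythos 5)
Your proposal is correct and takes essentially the same route as the paper: box the Borel--Weil sections of Lemma \ref{lem16}, absorb the $T$-weight $u\rho+v\rho$ by a character twist, tensor with a large power of an ample line bundle on $\bp_{\bf j}$ admitting a section vanishing exactly on $\partial\bp_{\bf j}$, obtain an ample $\mathcal{M}'$ on $(X^{u,v}_{w})_{\bf j}$, and pull back via the desingularization, with nef and big following because the pullback of an ample bundle under a proper birational morphism is nef and big. The only cosmetic difference is that the paper twists the fiber bundle by $e^{u\rho+v\rho}$ and constructs $\mathcal{M}$ and its section directly on $(Z^{u,v}_{w})_{\bf j}$ before recognizing it as a pullback, whereas you place the compensating character in the pullback from the base and build $\mathcal{M}'$ first; the two choices differ only by a line bundle pulled back from $\bp_{\bf j}$, which is harmless once $N\gg 0$.
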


\begin{proof}
Take an ample line bundle $\mathcal{H}$ on $\mathbb{P}_{{\bf j}}$ with a
section with support precisely equal to
$\partial\mathbb{P}_{{\bf j}}$. Also, let
${\mathcal{L}}_{Z^{u,v}_{w}}(\rho \boxtimes \rho)$
be the pull-back of the line bundle $\mathcal{L}(\rho)\boxtimes
\mathcal{L}(\rho)$ on $\bar{X}\times \bar{X}$ via the standard
morphism
$$
Z^{u,v}_{w}\to \bar{X}\times \bar{X}.
$$

Since, $e^{u\rho + v\rho}{\mathcal{L}}_{Z^{u,v}_{w}}(\rho \boxtimes \rho)$ is
a $T$-equivariant line bundle, we get the line bundle
$\tilde{\mathcal{L}}(-\rho\boxtimes
-\rho):=E(T)_{\bf j}\times^{T}\, \Bigl(e^{u\rho + v\rho}
{\mathcal{L}}_{Z^{u,v}_{w}}(\rho\boxtimes \rho)\Bigr)\to
(Z^{u,v}_{w})_{\bf j}$ over the base
space
$(Z^{u,v}_{w})_{\bf j}$. Now,
consider the line bundle (for some large enough $N>0$):
$$
\mathcal{M}:=\tilde{\mathcal{L}}(-\rho\boxtimes -\rho)\otimes
\pi^{*}(\mathcal{H}^{N}),
$$
where
$\pi:E(T)_{\bf j}\times^{T}\,Z^{u,v}_{w}\to
\mathbb{P}_{{\bf j}}$ is the canonical projection. Take the section
$\theta$ of $\tilde{\mathcal{L}}(-\rho\boxtimes -\rho)$ given by
 $[e,z]\mapsto [e, 1_{u\rho +v\rho}\otimes (\bar{\chi}(e^{*}_{u\rho })\boxtimes
    \bar{\chi}(e^{*}_{v\rho}))(z)]$ for $e\in E(T)_{\bf j}$ and
    $z\in Z^{u,v}_{w}$, where $1_{u\rho +v\rho}$ denotes the constant section 
of the trivial line bundle over $Z^{u,v}_w$ with the $T$-action on the fiber given 
by the $H$-weight ${u\rho +v\rho}$ and $\bar{\chi}\boxtimes
    \bar{\chi}$ is the pull-back of the Borel-Weil isomorphism
   ${\chi}\boxtimes
    {\chi}: L(\rho)^{\vee}\otimes L(\rho)^{\vee} \simeq H^0(\bar{X}^2,
    \mathcal{L}(\rho) \boxtimes \mathcal{L}(\rho))$ to $Z^{u,v}_{w}$
    (cf. proof of Lemma \ref{lem16}). Also, take any section $\sigma$ of
    $\mathcal{H}^{N}$ with its zero set precisely equal to $\partial
    \mathbb{P}_{{\bf j}}$ and let $\hat{\sigma}$ be its pull-back to
    $(Z^{u,v}_{w})_{\bf j}$. Then, the zero set of the tensor product of
    these sections $\theta$ and $\hat{\sigma}$ is precisely equal to
    $\partial\left((Z^{u,v}_{w})_{\bf j}\right)$ (cf. proof of Lemma
    \ref{lem16}).

     The line bundle  $\mathcal{M}$ is the pull-back of the line
bundle $\mathcal{M}':=\tilde{\mathcal{L}}'(-\rho \boxtimes -\rho)\otimes
\pi^{*}_{1}(\mathcal{H}^{N})$ on
$E(T)_{\bf j}\displaystyle\mathop{\times}^{T}X^{u,v}_{w}$ via the standard morphism
$$
E(T)_{\bf j}\mathop{\times}^{T}\,Z^{u,v}_{w}\to
E(T)_{\bf j}\mathop{\times}^{T}\,X^{u,v}_{w},
$$
where $\pi_{1}$ is the projection
$E(T)_{\bf j}\displaystyle\mathop{\times}^{T}X^{u,v}_{w}\to
\mathbb{P}_{{\bf j}}$ and $\tilde{\mathcal{L}}'(-\rho\boxtimes -\rho)$ is the
line bundle
$$
E(T)_{\bf j}\displaystyle\mathop{\times}^{T}\left(e^{u\rho+v\rho}(\mathcal{L}(\rho)\boxtimes
\mathcal{L}(\rho))_{|X^{u,v}_{w}}\right).
$$

Then, by [KM, Proposition 1.45 and Theorems 1.37 and 1.42],
$\mathcal{M}'$ is ample on $(X^{u,v}_{w})_{\bf j}$ for large
enough $N$. Since the pull-back of an ample line bundle via a birational
morphism is nef and big (cf. [D, $\S$1.29]), $\mathcal{M}$ is nef and big.
This proves
the proposition.
\end{proof}

We recall the following `relative Kawamata-Viehweg vanishing theorem'
(cf. [D, Exercise 2 on page 217]); replace Debarre's $D$ by $D'$ and take
$D':=\mathcal{L} - D/N$.

\begin{theorem}\label{thm15}
Let $\tilde{\pi}:\tilde{Z}\to \bar{\Gamma}$ be a proper surjective morphism
of irreducible varieties with $\tilde{Z}$ a smooth variety. Let $\mathcal{L}$ be a
line bundle on $\tilde{Z}$ such that $\mathcal{L}^{N}(-D)$ is
$\tilde{\pi}$-nef and $\tilde{\pi}$-big for a simple normal crossing divisor
$$
D=\sum_{i}a_{i}D_{i},\quad \text{where}\quad 0<a_{i}<N,\quad\text{for
  all}\quad i.
$$

Then,
$$
R^{p}\tilde{\pi}_{*}(\mathcal{L}\otimes
\omega_{\tilde{Z}})=0,\quad\text{for all}\quad p>0.
$$
\qed
\end{theorem}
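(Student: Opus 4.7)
The plan is to reduce the statement to the standard relative Kawamata-Viehweg vanishing theorem as stated in Debarre's book, via the substitution that the author has already hinted at. Debarre's version says: if $\tilde\pi:\tilde Z\to\bar\Gamma$ is proper with $\tilde Z$ smooth, and $D'$ is a $\mathbb{Q}$-Cartier $\mathbb{Q}$-divisor on $\tilde Z$ which is $\tilde\pi$-nef and $\tilde\pi$-big and whose fractional part has simple normal crossings support, then $R^p\tilde\pi_*\bigl(\co_{\tilde Z}(K_{\tilde Z}+\lceil D'\rceil)\bigr)=0$ for all $p>0$. So the proof is essentially just bookkeeping: find the right $D'$.

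First I would pick a Cartier divisor $L$ in the class of $\mathcal{L}$ whose support shares no irreducible component with the $D_i$ (possible by moving $L$ in its linear equivalence class) and set
\[
D':= L-\tfrac{1}{N}D \;=\; L-\sum_i \tfrac{a_i}{N}D_i.
\]
Then $ND'=NL-D$, so $\co_{\tilde Z}(ND')\cong \mathcal{L}^N(-D)$, which is $\tilde\pi$-nef and $\tilde\pi$-big by hypothesis; since nefness and bigness are preserved under positive scaling of $\mathbb{Q}$-divisors, $D'$ is itself $\tilde\pi$-nef and $\tilde\pi$-big.

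Next I would verify the rounding: because $0<a_i<N$, we have $-1<-a_i/N<0$, so $\lceil -a_i/N\rceil=0$, giving $\lceil D'\rceil=L$. Similarly $\lfloor D'\rfloor=L-\sum_i D_i$ and hence the fractional part equals $\{D'\}=\sum_i(1-a_i/N)D_i$, which is supported on the SNC divisor $D=\sum_i a_iD_i$. Thus the hypotheses of Debarre's theorem are satisfied for $D'$, and applying it yields
\[
R^p\tilde\pi_*\bigl(\omega_{\tilde Z}\otimes\co_{\tilde Z}(L)\bigr)
= R^p\tilde\pi_*\bigl(\omega_{\tilde Z}\otimes\mathcal{L}\bigr)=0
\qquad\text{for all } p>0,
\]
which is exactly the claim.

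There is really no hard step here; the only thing to be careful about is consistency of the rounding conventions and the choice of representative $L$ for $\mathcal{L}$ with support disjoint from the $D_i$, so that $\lceil D'\rceil$ makes sense component-wise and reproduces $\mathcal{L}$. Everything else is a formal translation between Debarre's formulation (where a $\mathbb{Q}$-divisor $D'$ encodes both the twist and the boundary) and the line-bundle-plus-SNC-divisor formulation used in the statement.
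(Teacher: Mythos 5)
Your proposal is correct and is exactly the route the paper takes: the paper does not prove Theorem \ref{thm15} but quotes it from Debarre's book with precisely the substitution $D'=\mathcal{L}-\frac{1}{N}D$, $\lceil D'\rceil-D'=\frac{1}{N}D$, which you have simply carried out in detail (and your extra step of moving $L$ off the $D_i$ is harmless but unnecessary, since $0<a_i/N<1$ makes the componentwise rounding give $L$ in any case).
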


\begin{proposition}\label{prop19}
For the morphism $\tilde{\pi}:\tilde{\mathcal{Z}}\to \bar{\Gamma}$
(cf. the big diagram in Section \ref{sec2}),
$$R^{p}\tilde{\pi}_{*}\bigl(\omega_{\tilde{\mathcal{Z}}}(\partial\tilde{\mathcal{Z}})
\bigr)=0,
\,\,\,
\text{for \,all}\,\,
p>0,$$
where $\partial \tilde{\mathcal{Z}} :=f^{-1} (\partial \mathcal{Z})$ ($\partial \mathcal{Z}$ being dfined in 
Proposition \ref{lem11} taken here with the reduced scheme structure) and $\omega_{\tilde{\mathcal{Z}}}(\partial\tilde{\mathcal{Z}})$ 
denotes the sheaf $\Hom_{\co_{\tilde{\mathcal{Z}}}}\left(\co_{\tilde{\mathcal{Z}}}(-\partial \tilde{\mathcal{Z}}),
\omega_{\tilde{\mathcal{Z}}}\right)$. 

(Observe that $f$ being a desingularization of a normal scheme $\mathcal{Z}$ and $\partial \mathcal{Z}$
being reduced, $\partial \tilde{\mathcal{Z}}$ is a reduced scheme.)
\end{proposition}

\begin{proof}
Fix a nef and big line bundle $\mathcal{M}$ on
$(Z^{u,v}_{w})_{\bf j}$ with
its divisor $\sum\limits_{i=1}^d b_{i}Z_{i}$ (with $b_{i}>0$) supported precisely
in
$\partial\left((Z^{u,v}_{w})_{\bf j}\right)$, which is the pull-back of an
ample line bundle $\mathcal{M}'$ on
$(X^{u,v}_{w})_{\bf j}$
(cf. Proposition \ref{prop17}). Choose an integer $N>b_{i}$, for all
$i$. Consider the line bundle $\mathcal{L}$ on the smooth scheme $\tilde{\mathcal{Z}}$
corresponding to the reduced divisor $\partial\tilde{\mathcal{Z}}$ (observe that $\partial\tilde{\mathcal{Z}}$ is a divisor of 
$\tilde{\mathcal{Z}}$, i.e., a pure scheme of codimension $1$ in $\tilde{\mathcal{Z}}$, since it is the zero set of a line 
bundle on $\tilde{\mathcal{Z}}$) and let $D$
be the divisor on $\tilde{\mathcal{Z}}$:
$$
D=\sum_{i}(N-b_{i})\tilde{Z}_{i},
$$
where
$$
\tilde{Z}_{i}:=(\bar{\Gamma}\times Z_{i})\mathop{\times}_{(Z^2_w)_\bp}
\tilde{\Delta}\left((Z_w)_\bp\right).
$$
Observe that each $\tilde{Z}_{i}$ is a smooth irreducible divisor of $\tilde{\mathcal{Z}}$ and, moreover, 
for any collection $\tilde{Z}_{i_1}, \dots, \tilde{Z}_{i_q}$, $1\leq i_1< \dots < i_q\leq d $,  the intersection $\cap_{p=1}^q \,\tilde{Z}_{i_p}$
 (if nonempty) is smooth of pure codimension $q$ in $\tilde{\mathcal{Z}}$. 
(To prove this, use Theorem \ref{thm2} and follow the proofs of Theorem \ref{thm3}, Lemmas \ref{lem7} and 
\ref{lem5} and Proposition \ref{propn6.2}.) In particular,  $\tilde{Z}_{i}$'s are distinct. It is easy to see that 
\[\partial \tilde{\mathcal{Z}}= \sum \tilde{{Z}}_i\]
and hence it is a simple normal crossing divisor. 
Then,
\begin{align*}
\mathcal{L}^{N}(-D) &= \co_{\tilde{\mathcal{Z}}}\left(\sum_{i}
b_{i}\tilde{Z}_{i}\right)\\[4pt]
&\simeq \tilde{i}^{*}\left(\co_{\bar{\Gamma}\times
  (Z^{u,v}_{w})_{\bf j}}(
\sum b_{i} (\bar{\Gamma}\times Z_{i}))\right).
\end{align*}
Moreover, since $\sum b_{i}Z_{i}$ is a nef divisor on
$(Z^{u,v}_{w})_{\bf j}$ and
$\tilde{i}$ is injective, $\mathcal{L}^{N}(-D)$ is $\tilde{\pi}$-nef (cf. [D, \S1.6]).

Observe further that, by definition, the line bundle $\mathcal{L}^{N}(-D)$ on
$\tilde{\mathcal{Z}}$ is the pull-back of the line bundle $\mathcal{S}:=
i^*(\epsilon\boxtimes \mathcal{M}')$ on $\mathcal{Z}$ via $f$, where $\epsilon$
is the trivial line bundle on $\bar{\Gamma}$. Now, $\mathcal{M}'$ being an
ample line bundle on $(X^{u,v}_{w})_{\bf j}$, $\mathcal{S}$ is $\pi$-big. But,
$f$ being birational, the general fibers of $\tilde{\pi}$ have the same
dimension as the general fibers of $\pi$ (use [S, Theorem 1.25, $\S$6.3, Chap. I]).
Hence, $\mathcal{L}^{N}(-D)$ is $\tilde{\pi}$-big.

The map  $f$ is surjective since it is proper and birational
by Proposition  \ref{propn6.2}. Also, the map $\tilde{\pi}$ is surjective since so is $\pi$ (cf. the proof of Corollary
\ref{coro12}).
Thus, by Theorem \ref{thm15}, the proposition follows.
\end{proof}

\begin{theorem}\label{prop20}
For the morphism $f:\tilde{\mathcal{Z}}\to \mathcal{Z}$,
\begin{itemize}
\item[\rm(a)]
  $R^{p}f_{*}\left(\omega_{\tilde{\mathcal{Z}}}(\partial\tilde{\mathcal{Z}})\right)=0$,
  for all $p>0$, and

\item[\rm(b)] $f_{*}(\omega_{\tilde{\mathcal{Z}}}(\partial
  \tilde{\mathcal{Z}}))= \omega_{\mathcal{Z}}(\partial \mathcal{Z})$.
\end{itemize}
\end{theorem}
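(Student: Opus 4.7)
\medskip
\noindent
The plan is to treat (a) and (b) separately, in each case reducing to machinery already set up earlier in the paper. For part (a), I would apply the relative Kawamata--Viehweg vanishing Theorem \ref{thm15} directly to the proper birational morphism $f:\tilde{\mathcal{Z}}\to\mathcal{Z}$, re-using verbatim the line bundle $\mathcal{L}=\co_{\tilde{\mathcal{Z}}}(\partial\tilde{\mathcal{Z}})$ and the normal crossing divisor $D=\sum_{i}(N-b_{i})\tilde{Z}_{i}$ from the proof of Proposition \ref{prop19}. There it was established that $\mathcal{L}^{N}(-D)\simeq f^{*}\mathcal{S}$, where $\mathcal{S}=i^{*}(\epsilon\boxtimes\mathcal{M}')$ is the pull-back of an ample line bundle on $(X^{u,v}_{w})_{\bf j}$. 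Since $\mathcal{L}^{N}(-D)$ is a pull-back under $f$, it is $f$-nef (degree zero on every curve contracted by $f$), and $f$-bigness is automatic: $f$ being birational, the general fiber is a point and $f_{*}((f^{*}\mathcal{S})^{m})\simeq\mathcal{S}^{m}\otimes f_{*}\co_{\tilde{\mathcal{Z}}}\simeq\mathcal{S}^{m}$ has rank one. Theorem \ref{thm15}, applied to $f$ in place of $\tilde{\pi}$, then yields $R^{p}f_{*}\bigl(\omega_{\tilde{\mathcal{Z}}}(\partial\tilde{\mathcal{Z}})\bigr)=0$ for $p>0$, which is (a).

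For part (b), I would factor $f=\varphi'\circ f'$ through $\mathcal{Z}'$ as in the big diagram of Section \ref{sec2}, and exploit the explicit formulas of Lemmas \ref{lem22} and \ref{lem24}. Using $\tilde{\mu}=\mu'\circ f'$, the identity $\bar{\mathcal{L}}_{\tilde{\mathcal{Z}}}(\rho\boxtimes\rho)=(f')^{*}\bar{\mathcal{L}}_{\mathcal{Z}'}(\rho\boxtimes\rho)$ and $\tilde{p}=p\circ\varphi'\circ f'$, these lemmas combine to give
\[
\omega_{\tilde{\mathcal{Z}}}(\partial\tilde{\mathcal{Z}})\simeq (f')^{*}\bigl(\omega_{\mathcal{Z}'}(\partial\mathcal{Z}')\bigr)\otimes\co_{\tilde{\mathcal{Z}}}\Bigl(\sum_{i_{2}} n_{i_{2}}Z'_{i_{2}}+\sum_{j_{2}} m_{j_{2}}W'_{j_{2}}\Bigr).
\]
The projection formula then yields $f'_{*}\omega_{\tilde{\mathcal{Z}}}(\partial\tilde{\mathcal{Z}})\simeq\omega_{\mathcal{Z}'}(\partial\mathcal{Z}')\otimes f'_{*}\co_{\tilde{\mathcal{Z}}}\bigl(\sum n_{i_{2}}Z'_{i_{2}}+\sum m_{j_{2}}W'_{j_{2}}\bigr)$. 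Since the divisors $Z'_{i_{2}}$ and $W'_{j_{2}}$ both originate from components of $\Exc(\pi^{u,v}_{w})$, they are $f'$-exceptional, and the proof of Lemma \ref{lem25} extends verbatim (it only uses effectiveness of the divisor and that its support lies in $\Exc(f')$, together with $\mathcal{Z}'$ being normal and the complement of the iso locus having codimension $\geq 2$) to give $f'_{*}\co_{\tilde{\mathcal{Z}}}\bigl(\sum n_{i_{2}}Z'_{i_{2}}+\sum m_{j_{2}}W'_{j_{2}}\bigr)=\co_{\mathcal{Z}'}$. Hence $f'_{*}\omega_{\tilde{\mathcal{Z}}}(\partial\tilde{\mathcal{Z}})=\omega_{\mathcal{Z}'}(\partial\mathcal{Z}')$, and applying $\varphi'_{*}$ together with Corollary \ref{coro23} delivers $f_{*}\omega_{\tilde{\mathcal{Z}}}(\partial\tilde{\mathcal{Z}})=\omega_{\mathcal{Z}}(\partial\mathcal{Z})$.

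The main obstacle is the book-keeping needed to confirm that $W'_{j_{2}}\subset \Exc(f')$: this requires unwinding the construction of $f'$ as the morphism induced by $\pi^{u,v}_{w}\times\mathrm{id}$ on the fiber product $\mathcal{Z}'=\mathcal{Z}\times_{\tilde{\Delta}(\tilde{X}_{w})}\tilde{\Delta}(\tilde{Z}_{w})$ and tracking compatibilities in the big diagram. A minor secondary subtlety in (a) is the verification of $f$-bigness for a pull-back line bundle under a proper birational morphism between normal varieties; this amounts to noting $f_{*}\co_{\tilde{\mathcal{Z}}}=\co_{\mathcal{Z}}$ via Zariski's main theorem, but the definitional check should be made explicit. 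Once these two items are settled, the argument is a straightforward concatenation of Proposition \ref{prop19}'s machinery with the dualizing-sheaf identifications of Section \ref{sec4}.
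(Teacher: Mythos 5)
Part (a) of your proposal is fine and is essentially the paper's argument: one re-uses $\mathcal{L}$ and $D$ from the proof of Proposition \ref{prop19}, notes that $\mathcal{L}^N(-D)$ is $f$-nef and $f$-big (the paper argues $f$-nefness from the inclusion of the fibers of $f$ in those of $\tilde{\pi}$, and $f$-bigness from birationality, which matches your check), and applies Theorem \ref{thm15} to $f$.

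Part (b), however, contains a genuine gap exactly at the point you flagged as ``book-keeping'': the claim that $W'_{j_2}\subset \Exc(f')$, and hence that the proof of Lemma \ref{lem25} extends verbatim to give $f'_{*}\co_{\tilde{\mathcal{Z}}}\bigl(\sum n_{i_2}Z'_{i_2}+\sum m_{j_2}W'_{j_2}\bigr)=\co_{\mathcal{Z}'}$, is not just unverified but false in general. The morphism $f'$ only contracts in the $Z^{u}(S_w)\times Z^{v}(S_w)$-direction, because $\mathcal{Z}'=\mathcal{Z}\times_{\tilde{\Delta}(\tilde{X}_w)}\tilde{\Delta}(\tilde{Z}_w)$ already records the $\tilde{\Delta}(\tilde{Z}_w)$-coordinate; by contrast, the divisors $\tilde{Y}'_{j_2}$ (whence $W'_{j_2}$) are exceptional components of $\pi^{u,v}_w$ lying over $X^{u,v}\cap\partial(X^2_w)$, whose exceptionality comes from the BSDH factor $\theta_w\times\theta_w:Z_w^2\to X_w^2$. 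Such divisors are not contracted by $f'$: the paper shows instead that $\co_{\tilde{\mathcal{Z}}}(W'_{j_2})\simeq f'^{*}\co_{\mathcal{Z}'}(V_{j_2})$ for a prime Cartier divisor $V_{j_2}\subset\mathcal{Z}'$ obtained by pulling back, under $\mu'$, a prime divisor in the exceptional locus of $\tilde{\Delta}\tilde{Z}_w\to\tilde{\Delta}\tilde{X}_w$. Consequently the projection formula together with Lemma \ref{lem25} (which applies only to the $Z'_{i_2}$, whose support does lie in $\Exc(f')$) gives
$f'_{*}\co_{\tilde{\mathcal{Z}}}\bigl(\sum n_{i_2}Z'_{i_2}+\sum m_{j_2}W'_{j_2}\bigr)\simeq\co_{\mathcal{Z}'}(F')$ with $F'=\sum_{j_2} m_{j_2}V_{j_2}$, not $\co_{\mathcal{Z}'}$. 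This forces an extra step your proposal omits entirely: one must prove
$\varphi'_{*}\bigl(\omega_{\mathcal{Z}'}(\partial\mathcal{Z}')\otimes\co_{\mathcal{Z}'}(F')\bigr)\simeq\omega_{\mathcal{Z}}(\partial\mathcal{Z})$ (identity \eqref{e7.10.2} in the paper), which is done by a Lemma \ref{lem25}-style sandwich on $\mathcal{Z}$ using $\Supp F'\subset\Exc(\varphi')$, Corollary \ref{coro23}, the normality of $\mathcal{Z}$ with the relevant locus of codimension $\geq 2$, and the fact that $\omega_{\mathcal{Z}}(\partial\mathcal{Z})$ is the line bundle computed in Lemma \ref{lem21}. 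Your identification $\omega_{\tilde{\mathcal{Z}}}(\partial\tilde{\mathcal{Z}})\simeq f'^{*}\bigl(\omega_{\mathcal{Z}'}(\partial\mathcal{Z}')\bigr)\otimes\co_{\tilde{\mathcal{Z}}}\bigl(\sum n_{i_2}Z'_{i_2}+\sum m_{j_2}W'_{j_2}\bigr)$ and the final application of $\varphi'_{*}$ with Corollary \ref{coro23} are correct, but without the corrected evaluation of $f'_{*}$ of the exceptional-type divisor and the additional isomorphism above, the proof of (b) is incomplete.
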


\begin{proof}
The map $f$ is surjective as observed above. Following the notation in the proof of
Proposition \ref{prop19}, $\mathcal{L}^{N}(-D)$ is $\tilde{\pi}$-nef
and $\tilde{\pi}$-big. Since the fibers of $f$ are contained in the
fibers of $\tilde{\pi}$, $\mathcal{L}^{N}(-D)$ is $f$-nef. Moreover, since $f$ is
birational, clearly $\mathcal{L}^{N}(-D)$ is $f$-big.
Now, applying Theorem \ref{thm15} to the morphism
$f:\tilde{\mathcal{Z}}\to \mathcal{Z}$, we get the (a) part of the
proposition.

\vskip1ex

(b) First, we claim 

\begin{equation}
\label{pullbackhom}
\OO_{\widetilde{\ZZ}}(\partial\widetilde{\ZZ})\simeq \Hom_{\OO_{\widetilde{\ZZ}}}(f^*\OO_\ZZ(-\partial\ZZ),\OO_{\widetilde{\ZZ}}),
\end{equation}
where $\OO_{\widetilde{\ZZ}} (\partial\widetilde{\ZZ}):= \Hom_{\OO_{\widetilde{\ZZ}}}(\OO_{\widetilde{\ZZ}}(-\partial\widetilde{\ZZ}),\OO_{\widetilde{\ZZ}}).$ 
To see this, first note that by [St, Tag 01HJ, Lemma 25.4.7], since $f^{-1}(\partial\ZZ)=\partial\widetilde{\ZZ}$ is the scheme-theoretic inverse image, the natural  morphism 
$$f^*\left(\OO_\ZZ(-\partial\ZZ)\right)\to\OO_{\widetilde{\ZZ}}(-\partial \widetilde{\ZZ})$$
 is surjective.  As $f$ is a desingularization (cf. Proposition \ref{propn6.2}), the kernel of this morphism is supported on a proper closed subset of $\widetilde{\ZZ}$ 
and hence is a torsion sheaf.  This implies that the dual map $\OO_{\widetilde{\ZZ}}(\partial\widetilde{\ZZ})\to 
\Hom_{\OO_{\widetilde{\ZZ}}}(f^*\left(\OO_\ZZ(-\partial\ZZ)\right),\OO_{\widetilde{\ZZ}})$ is an isomorphism, proving (\ref{pullbackhom}). 

To complete the proof of the (b)-part of the theorem, we compute:
\begin{align*}
f_*(\omega_{\widetilde{\ZZ}} (\partial\widetilde{\ZZ})) &= f_*(\omega_{\widetilde{\ZZ}}\otimes \Hom_{\OO_{\widetilde{\ZZ}}}(f^*\OO_\ZZ(-\partial\ZZ),\OO_{\widetilde{\ZZ}})), \,\,\, \text{by \eqref{pullbackhom}}\\
&=f_* \Hom_{\OO_{\widetilde{\ZZ}}}(f^*\OO_\ZZ(-\partial\ZZ),\omega_{\widetilde{\ZZ}})\\
&=\Hom_{\OO_{\ZZ}}(\OO_\ZZ(-\partial\ZZ),f_*\omega_{\widetilde{\ZZ}}),\,\,\, \text{by adjunction (cf. [H, Chapter II, \S5])}\\
&=\Hom_{\OO_{\ZZ}}(\OO_\ZZ(-\partial\ZZ),\omega_{\ZZ}), \,\,\, \text{by Proposition \ref{ratlsing} and [KM, Theorem 5.10]}\\
&=\omega_\ZZ(\partial\ZZ).
\end{align*}

This completes the proof of the (b)-part.
\end{proof}

As an immediate consequence of Proposition \ref{prop19}, Theorem \ref{prop20}
and the Grothendieck spectral sequence (cf. [J, Proposition 4.1, Part I]), we get
the following:
\begin{corollary}\label{lem14}
Let $\pi:\mathcal{Z}\to \bar{\Gamma}$ be the morphism as in the big diagram in
Section \ref{sec2}. Then,
$$
R^{p}\pi_{*}\bigl(\omega_{\mathcal{Z}}(\partial \mathcal{Z})\bigr)=0,\quad\text{for
  all}\quad p>0.$$
\end{corollary}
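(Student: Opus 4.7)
The plan is to invoke the Grothendieck (Leray) spectral sequence for the composition $\tilde{\pi} = \pi \circ f$, where $f: \tilde{\mathcal{Z}} \to \mathcal{Z}$ is the desingularization and $\pi: \mathcal{Z} \to \bar{\Gamma}$ is the projection, as depicted in the big diagram in Section \ref{sec2}. All three results we need are already in hand: Theorem \ref{prop20}(a) supplies the vanishing of the higher direct images $R^q f_*$, Theorem \ref{prop20}(b) identifies $f_* (\omega_{\tilde{\mathcal{Z}}}(\partial \tilde{\mathcal{Z}}))$ with $\omega_{\mathcal{Z}}(\partial \mathcal{Z})$, and Proposition \ref{prop19} gives the vanishing of $R^p \tilde{\pi}_*$ applied to $\omega_{\tilde{\mathcal{Z}}}(\partial \tilde{\mathcal{Z}})$.

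Concretely, I would set $\mathcal{F} := \omega_{\tilde{\mathcal{Z}}}(\partial \tilde{\mathcal{Z}})$ and write down the Grothendieck spectral sequence
\[
E_2^{p,q} = R^p \pi_* \bigl( R^q f_* \mathcal{F} \bigr) \Longrightarrow R^{p+q} \tilde{\pi}_* \mathcal{F}
\]
(the hypotheses for its existence are satisfied since $f$ is a proper morphism of noetherian schemes of finite type over $\bc$, so [J, Proposition 4.1, Part I] applies). By Theorem \ref{prop20}(a), $R^q f_* \mathcal{F} = 0$ for all $q > 0$, so all rows with $q > 0$ in the $E_2$-page vanish. The spectral sequence therefore degenerates at $E_2$ along the bottom row, yielding the edge isomorphism
\[
R^p \pi_* \bigl( f_* \mathcal{F} \bigr) \;\simeq\; R^p \tilde{\pi}_* \mathcal{F}, \qquad \text{for all } p \geq 0.
\]

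Substituting the identification $f_* \mathcal{F} = \omega_{\mathcal{Z}}(\partial \mathcal{Z})$ from Theorem \ref{prop20}(b) on the left-hand side and applying the vanishing $R^p \tilde{\pi}_* \mathcal{F} = 0$ (for $p > 0$) from Proposition \ref{prop19} on the right-hand side, we obtain
\[
R^p \pi_* \bigl( \omega_{\mathcal{Z}}(\partial \mathcal{Z}) \bigr) = 0, \qquad \text{for all } p > 0,
\]
which is precisely the desired conclusion. There is no genuine obstacle here; the proof is essentially a formal consequence of the three preceding results, and the only thing to double-check is that the hypotheses of the Grothendieck spectral sequence are met in our setting (properness of $f$, which follows from the construction of $\tilde{\mathcal{Z}}$ as a fiber product involving the proper morphism $Z^{u,v}_w \to X^{u,v}_w$).
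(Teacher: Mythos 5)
Your proposal is correct and is exactly the paper's argument: the corollary is deduced from Proposition \ref{prop19} and Theorem \ref{prop20} via the Grothendieck spectral sequence for $\tilde{\pi}=\pi\circ f$, with the collapse coming from Theorem \ref{prop20}(a) and the identification $f_*(\omega_{\tilde{\mathcal{Z}}}(\partial\tilde{\mathcal{Z}}))=\omega_{\mathcal{Z}}(\partial\mathcal{Z})$ from part (b). Nothing further is needed.
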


\section{Proof of Theorem \ref{thma14} (b)} \label{sec3}

By using Kashiwara's
result: $
\xi^{u}=\co_{X^{u}}(-\partial X^{u})
$ (cf. Theorem \ref{prop:main}) and the vanishing:
\[
\tor_1^{\co_{\bar{Y}_\bp}}\bigl( \gam_* \tilde{\Del}_*\co_{(X_w)_{\bp}}, \co_{\partial (X^{u,v}_{\bf j})}\bigr) = 0,\,\,\,\text{for general} \,\,
\gamma\in \bar{\Gamma},\]
(which can be proved by an argument similar to the proof of Theorem \ref{thma14} (a) using Corollary \ref{newcor5.5}),
Theorem \ref{thma14} (b) is clearly equivalent to the
following vanishing:
\begin{theorem}\label{thm13}
Assume that $c^w_{u,v}({\bf j}) \neq 0.$ For general $\gamma\in \bar{\Gamma}$,
 $$H^{p}\left(X^{u,v}_{\bf j}\cap
  \gamma\tilde{\Delta}((X_w)_{\bp}),\co(-\bar{M}_{\gamma})\right)=0,\,\,\,
  \text{for \,all}\,\,
p\neq |{\bf j}|+\ell(w)-\ell(u)-\ell(v),
$$
where $\bar{M}_{\gamma}:=M_{\gamma^{-1}}$ is the subscheme $\left(\partial
(X^{u,v}_{\bf j})\right)\cap \gamma\tilde{\Delta}((X_w)_{\bp})$
and (as earlier)
$$
\partial \left(X^{u,v}_{\bf j}\right):=\left(\partial X^{u}\times
X^{v}\right)_{\bf j}\cup \left(X^{u}\times \partial
X^{v}\right)_{\bf j}\cup \left(X^{u}\times
X^{v}\right)_{\partial\mathbb{P}_{{\bf j}}},
$$
and $\co(-\bar{M}_{\gamma})$ denotes the ideal sheaf of $\bar{M}_{\gamma}$
in $X^{u,v}_{\bf j}\cap
  \gamma\tilde{\Delta}((X_w)_{\bp})$.
\end{theorem}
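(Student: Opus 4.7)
The plan is to realize the cohomology on $X^{u,v}_{\bf j} \cap \gamma \tilde{\Delta}(\tilde{X}_w)$ as cohomology on a fiber of $\pi : \mathcal{Z} \to \bar{\Gamma}$, and then combine the relative vanishing already proved (Corollary \ref{lem14}) with Serre duality and the local $\ext$-vanishing from Corollary \ref{coro12}. First I would translate the set-up by $\gamma^{-1}$ to identify $X^{u,v}_{\bf j} \cap \gamma \tilde{\Delta}(\tilde{X}_w)$ with the fiber $N := N_{\gamma^{-1}}$, and $\bar{M}_\gamma$ with $M := M_{\gamma^{-1}}$. Set $d := |{\bf j}| + \ell(w) - \ell(u) - \ell(v)$. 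By Corollary \ref{coro12}, over a dense open $U \subset \bar{\Gamma}$ the proper morphism $\pi$ has CM fibers of pure dimension $d$; since $\mathcal{Z}$ is CM (Lemma \ref{normal}) and $\bar{\Gamma}$ is smooth (Lemma \ref{lem10}), miracle flatness gives that $\pi|_{\pi^{-1}(U)}$ is flat.

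Cohomology-and-base-change applied to this flat proper morphism, descending inductively from top degree via [H, Theorem 12.11, Chap. III] using the global vanishing of $R^p\pi_*$ in Corollary \ref{lem14}, yields for $\gamma \in U$
\[ H^p\bigl(N, \omega_{\mathcal{Z}}(\partial \mathcal{Z})|_{N}\bigr) \cong R^p \pi_* \bigl(\omega_{\mathcal{Z}}(\partial \mathcal{Z})\bigr) \otimes k(\gamma^{-1}) = 0, \quad p > 0. \]
Flatness of $\pi$ with CM fibers, together with $\omega_{\mathcal{Z}}(\partial \mathcal{Z})$ being a line bundle (Lemma \ref{lem21}) and the scheme-theoretic identity $\partial \mathcal{Z} \cap N = M$ coming from Proposition \ref{lem11} and Corollary \ref{coro12}, allows one to identify $\omega_{\mathcal{Z}}(\partial \mathcal{Z})|_N \cong \omega_N(M)$ (up to a one-dimensional twist from $\omega_{\bar{\Gamma}}|_{\gamma^{-1}}$). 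Hence $H^p(N, \omega_N(M)) = 0$ for $p > 0$.

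Next I would invoke Serre duality on the projective CM scheme $N$ of pure dimension $d$:
\[ H^p\bigl(N, \co_N(-M)\bigr)^{*} \cong \Ext^{d-p}_{\co_N}\bigl(\co_N(-M), \omega_N\bigr). \]
By Corollary \ref{coro12}, $\ext^{i}_{\co_N}(\co_N(-M), \omega_N) = 0$ for $i > 0$, so the local-to-global spectral sequence collapses and gives
\[ \Ext^{d-p}_{\co_N}\bigl(\co_N(-M), \omega_N\bigr) \cong H^{d-p}\bigl(N, \omega_N(M)\bigr). \]
The previous paragraph makes this vanish for $d - p > 0$; for $p > d$, $H^p(N, \co_N(-M))$ vanishes by dimension reasons. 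Altogether, $H^p(N, \co_N(-M)) = 0$ for all $p \neq d$, which is exactly Theorem \ref{thm13}.

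The main obstacle will be the scheme-theoretic bookkeeping at the fiber level: verifying that $\partial \mathcal{Z} \cap N = M$ as closed subschemes (not merely set-theoretically) for general $\gamma$, and that the line bundle $\omega_{\mathcal{Z}}(\partial \mathcal{Z})$ of Lemma \ref{lem21} restricts to the sheaf $\omega_N(M) := \Hom_{\co_N}(\co_N(-M), \omega_N)$ appearing on the dualizing side. Both should follow from the flatness and CM properties established in Sections \ref{sec1}--\ref{sec4}, but they must be checked carefully because Corollary \ref{coro12} plays a dual role here---both as the CM witness for $\co_N(-M)$ and as the collapse input for the local-to-global spectral sequence feeding into Serre duality.
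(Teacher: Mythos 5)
Your proposal is correct and follows essentially the same route as the paper: reduce to the fiber $\bar N_\gamma$ of $\pi$, use Serre duality plus the local-to-global $\ext$ spectral sequence with the $\ext$-vanishing of Corollary \ref{coro12} to convert the statement into $H^{p}(\bar N_\gamma,\omega_{\bar N_\gamma}(\bar M_\gamma))=0$ for $p>0$, identify $\omega_{\mathcal{Z}}(\partial\mathcal{Z})\vert_{\bar N_\gamma}\simeq\omega_{\bar N_\gamma}(\bar M_\gamma)$ via flatness and the CM properties, and deduce the fiberwise vanishing from Corollary \ref{lem14} by semicontinuity/base change. The only cosmetic differences are that the paper gets generic flatness of $\pi$ from [S, Theorem 7] and [H, Exercise 10.9, Chap.\ III] (rather than miracle flatness) and phrases the last step via the semicontinuity theorem, but these yield the same intermediate statements.
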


\begin{proof}
By Lemma \ref{normal} and Proposition \ref{lem11},
$\mathcal{Z}$ and $\partial \mathcal{Z}$ are  CM
and  $\partial\mathcal{Z}$ is pure of codimension $1$ in
$\mathcal{Z}$. Thus,
 we get  the vanishing (cf. the proof of Corollary \ref{coro12}):
\begin{equation}
\ext^{i}_{\co_{\mathcal{Z}}}\left(\co_{\mathcal{Z}}(-\partial \mathcal{Z}),
\omega_{\mathcal{Z}}\right)=0,\quad \text{for all}\quad i\geq 1.\label{eq303}
\end{equation}

Also, by Corollary \ref{coro12},
 for general $\gamma\in\bar{\Gamma}$,
$$
\ext^{i}_{\co_{\bar{N}_{\gamma}}}\left(\co_{\bar{N}_{\gamma}}(-\bar{M}_{\gamma}),
\omega_{\bar{N}_{\gamma}}\right)=0,\quad\text{for
  all}\quad i>0,$$
  where
 $\bar{N}_\gamma:= N_{\gamma^{-1}}$  is the subscheme $
(X^{u,v}_{\bf j})\cap \gamma\tilde{\Delta}((X_w)_{\bp})$.

Hence, by the Serre duality (cf. [H, Theorem 7.6, Chap. III]) applied to $\bar{N}_{\gamma}$ and the local to global
Ext spectral sequence (cf. [Go, Th\'eor\`eme 7.3.3, Chap. II]), the theorem
 is equivalent to the vanishing (for general $\gamma\in \bar{\Gamma}$) :
\begin{equation}
H^{p}\left(\bar{N}_{\gamma},\Hom_{\co_{\bar{N}_{\gamma}}}
\left(\co_{\bar{N}_{\gamma}}(-\bar{M}_{\gamma}),\omega_{\bar{N}_{\gamma}}\right)\right)
=0,\,\,\,\text{for \, all}\,\, p>0,\label{eq5}
\end{equation}
since (for general $\gamma\in \bar{\Gamma}$) $\bar{N}_{\gamma}$ is CM and $\dim \bar{N}_{\gamma}=
|{\bf j}|+\ell(w)-\ell(u)-\ell(v)$ (cf. Corollary \ref{coro12}).

For general $\gamma\in \bar{\Gamma}$,
\begin{align}
\omega_{\mathcal{Z}}(\partial \mathcal{Z})_{|\pi^{-1}(\gamma^{-1})} &\simeq
\omega_{\pi^{-1}(\gamma^{-1})}(\partial\mathcal{Z}\cap
\pi^{-1}(\gamma^{-1}))\notag\\[3pt]
&= \omega_{\bar{N}_{\gamma}}(\bar{M}_{\gamma}), \label{eq6}
\end{align}
where $\omega_{\bar{N}_{\gamma}}(\bar{M}_{\gamma}) := 
\Hom_{\co_{\bar{N}_{\gamma}}}
\left(\co_{\bar{N}_{\gamma}}(-\bar{M}_{\gamma}),\omega_{\bar{N}_{\gamma}}\right).$ 
To prove the above, observe first that by [S, Theorem 1.25, $\S$6.3, Chap. I] and [H,
Exercise 10.9, Chap. III] applied to $\pi$, there exists an open nonempty subset $\bar{\Gamma}_o
\subset \bar{\Gamma}$ such that $\pi: \pi^{-1}(\bar{\Gamma}_o) \to \bar{\Gamma}_o$
is a flat morphism. (By the proof of Corollary \ref{coro12}, $\pi$ is surjective.) Now, since  $\bar{\Gamma}_o$ is smooth and
$\mathcal{Z}$ and $\partial\mathcal{Z}$ are CM,  and the assertion is
 local in $\bar{\Gamma}$, it suffices
to observe (cf. [I, Corollary 11.35]) that for a  nonzero function $\theta$ on  $\bar{\Gamma}_o$, the sheaf of $\co_{\mathcal{Z}_\theta}$-module
$$\mathcal{S}/\theta \cdot\mathcal{S}\simeq \Hom_{\co_{\mathcal{Z}_\theta}}
\bigl(\co_{\mathcal{Z}}(-\partial \mathcal{Z})/
\theta \cdot\co_{\mathcal{Z}}(-\partial \mathcal{Z}),
\omega_{\mathcal{Z}_\theta}\bigr),$$
 where $\mathcal{Z}_\theta$ denotes the zero scheme of $\theta$ in
 $\mathcal{Z}$ and the sheaf $\mathcal{S}:=\Hom_{\co_{\mathcal{Z}}}
 \left(\co_{\mathcal{Z}}(-\partial \mathcal{Z}),
\omega_{\mathcal{Z}}\right).$  Choosing $\theta$ to be in  a
local coordinate system, we can continue and get \eqref{eq6}.

Now, the vanishing of
$R^{p}\pi_{*}\bigl(\omega_{\mathcal{Z}}(\partial\mathcal{Z})\bigr)$ for
$p>0$ (cf. Corollary \ref{lem14}) implies the following vanishing, for general $\gamma\in \bar{\Gamma}$,
\begin{equation}\label{eq7}
H^{p}\left(\bar{N}_\gamma,\omega_{\bar{N}_{\gamma}}(\bar{M}_{\gamma})\right)=0,
\quad\text{for
  all}\quad p>0.
\end{equation}
To prove this, since $\mathcal{Z}$ and $\partial\mathcal{Z}$ are CM,
$\bar{\Gamma}_o$ is smooth
and $\pi: \pi^{-1}(\bar{\Gamma}_o) \to \bar{\Gamma}_o$ is flat, observe that
$\omega_{\mathcal{Z}}(\partial\mathcal{Z})$ is flat over the base $\bar{\Gamma}_o$:

To show this, let $A=\OO_{\bar{\Gamma}_o}$, $B=\OO_{\pi^{-1}(\bar{\Gamma}_o)}$, and $M=\omega_\ZZ(\partial\ZZ)|_{\pi^{-1}(\bar{\Gamma}_o)}$.  By 
taking stalks, we immediately reduce to showing that for an embedding of local rings $A\subset B$ such that $A$ is regular and $B$ is flat over $A$, we 
have that $M$ is flat over $A$.  Now, to prove this, let $\{x_1,\dots,x_d\}$ be a minimal set of generators of the maximal ideal of $A$.  Let 
$K_\bullet=K_\bullet(x_1,\dots,x_d)$ be the Koszul complex of the $x_i$'s over $A$.  Then, recall that a $B$-module $N$ is flat over $A$ 
iff $K_\bullet\otimes_A N$ is exact except at the extreme right, i.e., $H^i(K_\bullet\otimes_A N)=0$ for $i<d$ (cf. [E, Theorem 6.8].  
Thus, by hypothesis, $K_\bullet\otimes_A B$ is exact except at the extreme right and hence  the $x_i$'s form a $B$-regular sequence 
by  [E, Theorem 17.6].  Now, since $\OO_\ZZ$ and $\OO_{\partial\ZZ}$ are CM and $\partial \ZZ$ is pure of codimension $1$ in $\ZZ$, 
we have that $\OO_\ZZ(-\partial\ZZ)$ is a CM $\OO_\ZZ$-module.  Thus, by [I, Proposition 11.33], we have that $M$ is a CM $B$-module 
of dimension equal to $\dim B$.  Therefore, by \cite[Exercise 11.36]{I}, the $x_i$'s form a regular sequence on the $B$-module $M$.  
Hence, $(K_\bullet\otimes_A B)\otimes_B M\simeq K_\bullet\otimes_A M$ is exact except at the extreme right by [E, Corollary 17.5].  
This proves that $M$ is flat over $A$, as desired.

Hence, \eqref{eq7} folows from the semicontinuity theorem
(cf. [H, Theorem 12.8 and Corollary 12.9,
Chap. III] or [Ke, Theorem 13.1]).

Thus,  \eqref{eq5} (which is nothing but \eqref{eq7}) is established. Hence, the theorem follows and thus Theorem \ref{thma14} (b)
is established.
\end{proof}

\newpage

\begin{center}
{\bf Appendix by Masaki Kashiwara}
\end{center}

\section{ Determination of the dualizing sheaf of $X^v$} \label{appendix}

Let  $v \in W$.
Set $\mathfrak{C}^v\seteq\bigcup_{y\in W, \,\ell(y)\le \ell(v)+1}\,C^y$,
where $C^y:= B^-yB/B \subset \bar{X}$. (By definition, $\mathfrak{C}^v$ only depends upon $\ell(v)$.)
Then, $\mathfrak{C}^v$ is an open subset of $\bar{X}$.
Moreover, $X^v\cap \mathfrak{C}^v$ is a smooth scheme, since $X^v$ is normal
([KS, Proposition~3.2]) and any $B^-$-orbit in $X^v\cap \mathfrak{C}^v$ is of codimension $\leq 1$.
Recall from Section 3, the definition of
$$\xi^v:=
e^{-\rho} \cl (\rho )\om_{X^v}
= e^{-\rho} \cl (-\rho ) \ext^{\ell (v)}_{\co_{\bar{X}}}
(\co_{X^v}, \co_{\bar{X}} ).$$
Since $\co_{X^v}$ is a CM ring (cf. [KS, Proposition 3.4]), we have
 that $\xi^v$ is a CM
$\co_{X^v}$-module. Also, since $X^v\cap \mathfrak{C}^v$ is a smooth scheme,
$\xi^v\vert_{\mathfrak{C}^v}$ is an invertible $(\O_{X^v}\vert_{\mathfrak{C}^v})$-module.

For any $y\in W$, let $i_y:  \{\pt\}\to {\bar{X}}$ be
the morphism given by $\pt\mapsto yx_o$.
Then, we have (as an $H$-module):
\beqn \label{e9.1}
i_y^*\mathcal{L}(\lambda)\simeq\C_{-y\lambda},\quad
\text{for any character $\lambda$ of $H$.}
\eeqn

Let $\pi_i: {\bar{X}}\to {\bar{X}}_i$ be the projection as in the proof of
Proposition \ref{prop2.6}.

\begin{lemma} \label{lem1}
On some $B^-$-stable neighborhood of $C^v$,
we have a  $B^-$-equivariant  isomorphism
$\xi^v\simeq\O_{X^v}$.
\end{lemma}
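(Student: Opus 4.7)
The strategy is to choose the $B^-$-stable neighborhood $\Omega := \bar X \setminus \partial X^v$, which is open in $\bar X$ (because $\partial X^v = X^v \setminus C^v$ is closed in $X^v$ hence in $\bar X$) and contains $C^v$. The crucial feature is $X^v \cap \Omega = C^v$, so that $C^v$ is a $B^-$-stable, smooth, closed subscheme of $\Omega$ of codimension $\ell(v)$, and both $\xi^v|_\Omega$ and $\mathcal O_{X^v}|_\Omega = \mathcal O_{C^v}$ are $B^-$-equivariant coherent $\mathcal O_\Omega$-modules supported on $C^v$. The goal is to show $\xi^v|_\Omega$ is a $B^-$-equivariant line bundle on $C^v$ whose fiber at $vx_o$ has trivial $H$-character, and then invoke classification of $B^-$-equivariant line bundles on the orbit $C^v \simeq B^-/S$.

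The first step is the local structure. Because $C^v \hookrightarrow \Omega$ is a regular closed immersion of codimension $\ell(v)$, the Koszul / adjunction formula gives $\mathscr{E}xt^{i}_{\mathcal O_\Omega}(\mathcal O_{C^v},\mathcal O_\Omega)=0$ for $i\ne \ell(v)$ and $\mathscr{E}xt^{\ell(v)}_{\mathcal O_\Omega}(\mathcal O_{C^v},\mathcal O_\Omega)\simeq \det N_{C^v/\Omega}$. Since $\mathscr{E}xt$ localizes to open subschemes and $\mathcal O_{X^v}|_\Omega=\mathcal O_{C^v}$, the definition of $\xi^v$ yields, $B^-$-equivariantly,
\[
\xi^v|_\Omega \;\simeq\; e^{-\rho}\,\mathcal L(-\rho)|_{C^v}\otimes \det N_{C^v/\Omega}.
\]

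The second step is the character computation at $vx_o$. By \eqref{e9.1}, $i_v^*\mathcal L(-\rho)\simeq \mathbb C_{v\rho}$, hence $i_v^*(e^{-\rho}\mathcal L(-\rho))$ has $H$-weight $v\rho-\rho$. For the normal bundle, identifying $T_{vx_o}\bar X\simeq \mathfrak g/\mathrm{Ad}(v)\mathfrak b$ and $T_{vx_o}C^v\simeq \mathfrak b^-/(\mathfrak b^-\cap \mathrm{Ad}(v)\mathfrak b)$ as $H$-modules, the weights of $T_{vx_o}\bar X$ are $v(-\Delta^+)$ and those of $T_{vx_o}C^v$ are $(-\Delta^+)\cap v(-\Delta^+)$; hence the normal weights are $v(-\Delta^+)\cap \Delta^+=\{\beta>0:v^{-1}\beta<0\}$. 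The standard identity
\[
\sum_{\beta\in\Delta^+\cap v(-\Delta^+)} \beta \;=\; \rho - v\rho,
\]
proved by induction on $\ell(v)$ from the base case $\rho-s_i\rho=\alpha_i$, shows $\det N_{C^v/\bar X}|_{vx_o}$ has weight $\rho-v\rho$. Summing, $\xi^v|_{vx_o}$ has $H$-weight $(v\rho-\rho)+(\rho-v\rho)=0$.

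The final step invokes the classification of $B^-$-equivariant line bundles on the homogeneous space $C^v=B^-/S$, where $S=B^-\cap vBv^{-1}=H\ltimes(U^-\cap \mathrm{Ad}(v)U)$ is connected solvable with maximal torus $H$, so $X(S)=X(H)$. Every $B^-$-equivariant line bundle on $B^-/S$ is of the form $B^-\times^S\mathbb C_\chi$ for a unique $\chi\in X(H)$, read off as the $H$-character of the fiber at the base point. Since this character for $\xi^v|_\Omega$ is trivial, $\xi^v|_\Omega\simeq \mathcal O_{C^v}=\mathcal O_{X^v}|_\Omega$ as $B^-$-equivariant $\mathcal O_\Omega$-modules. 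The main obstacle is the character bookkeeping—particularly confirming the weight identity $\sum_{\beta\in \Delta^+\cap v(-\Delta^+)}\beta=\rho-v\rho$ in the general (possibly non-finite) Kac-Moody setting, and verifying that imaginary roots cause no contribution to the normal bundle (they lie in both $\mathfrak b^-$ and $\mathrm{Ad}(v)\mathfrak b^-$ since $W$ fixes imaginary roots), so the sum only runs over real positive roots inverted by $v^{-1}$.
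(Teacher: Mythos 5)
Your proposal is correct and follows essentially the same route as the paper: establish that $\xi^v$ is a $B^-$-equivariant invertible sheaf on (the trace of $X^v$ in) a $B^-$-stable neighborhood of $C^v$, and then reduce to checking that the isotropy $H$-character of the fiber at $vx_o$ is trivial, computed exactly as in the paper via $i_v^*\mathcal L(-\rho)\simeq\C_{v\rho}$ and $\det\bigl(T_{vx_o}\bar X/T_{vx_o}X^v\bigr)\simeq\C_{\rho-v\rho}$. The only differences are presentational (your explicit choice $\Omega=\bar X\setminus\partial X^v$ with the Koszul identification of the $\ext$ sheaf, versus the paper's prior observation that $\xi^v\vert_{\mathfrak C^v}$ is invertible because $\xi^v$ is CM and $X^v\cap\mathfrak C^v$ is smooth, plus your written-out proof of $\sum_{\beta\in\Delta^+\cap v\Delta^-}\beta=\rho-v\rho$).
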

\begin{proof}
Since $\xi^v\vert_{\mathfrak{C}^v}$ is an invertible $B^-$-equivariant $\O_{X^v}\vert_{\mathfrak{C}^v}$-module,
it is enough to show that
$i_v^*\xi^v\simeq \C$ as an $H$-module.
This follows from $i_v^*\bl(\ext_{\co_{\bar{X}}}^{\ell(v)}(\O_{X^v},\O_{\bar{X}}))
\simeq \det(T_{vx_o}{\bar{X}}/T_{vx_o}X^v)\simeq \C_{\rho-v\rho}$
and $i_v^*\mathcal{L}(-\rho)\simeq\C_{v\rho}$ by \eqref{e9.1}.
\end{proof}

Set $A_v\seteq\{y\in W : y>v\,\,\text{and}\, \ell(y)=\ell(v)+1\}$.
The above lemma implies that, as  $B^-$-equivariant $\O_{\bar{X}}$-modules,
\beqn \label{e9.2}
\xi^v\vert_{\mathfrak{C}^v}\simeq\O_{X^v}(\ssum_{y\in A_v}m_yX^y)\vert_{\mathfrak{C}^v},
\eeqn
for some $m_y\in\Z$.
Recall that  $\partial X^v=\bigcup_{y\in A_v}X^y$.

\begin{lemma} \label{9.2} We have
$\xi^v\vert_{\mathfrak{C}^v}\simeq \O_{X^v}(-\partial X^v)\vert_{\mathfrak{C}^v}$, where
 $\O_{X^v}(-\partial X^v)\subset \O_{X^v}$
is the ideal sheaf of the reduced subscheme $\partial X^v$ of $X^v$.
\end{lemma}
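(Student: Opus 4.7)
The plan is to pin down each integer $m_y$ appearing in \eqref{e9.2} by restricting both sides to the $H$-fixed point $yx_o$ and comparing one-dimensional $H$-modules through the adjunction formula for a smooth divisor. Since $X^v\cap\mathfrak{C}^v$ is smooth and, for each $y\in A_v$, the closed subscheme $X^y\cap\mathfrak{C}^v$ equals $C^y$ (any $w\ge y$ with $wx_o\in\mathfrak{C}^v$ satisfies $\ell(w)\ge\ell(y)=\ell(v)+1$, forcing $w=y$), which is a smooth divisor in $X^v\cap\mathfrak{C}^v$, the standard adjunction $\omega_{X^y}\simeq \omega_{X^v}|_{X^y}\otimes N_{X^y/X^v}$ is available on this open locus.

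Fix $y\in A_v$. First I would observe that on a neighborhood of $yx_o$ no other $X^z$ with $z\in A_v\setminus\{y\}$ appears, since distinct elements of equal length are incomparable in Bruhat order and $X^z$ contains the $H$-fixed point $yx_o$ iff $y\ge z$. This reduces \eqref{e9.2} at $yx_o$ to the $H$-equivariant identification $\xi^v|_{yx_o}\simeq\bigl(N_{X^y/X^v}|_{yx_o}\bigr)^{\otimes m_y}$. Writing $\C_\delta:=N_{X^y/X^v}|_{yx_o}$ and unwinding the definition of $\xi^v$ together with \eqref{e9.1}, one obtains $\omega_{X^v}|_{yx_o}\simeq \C_{\rho+y\rho+m_y\delta}$, while Lemma~\ref{lem1} applied with $v$ replaced by $y$ gives $\omega_{X^y}|_{yx_o}\simeq \C_{\rho+y\rho}$. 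Substituting these into the adjunction identification at $yx_o$ yields
\[
\C_{\rho+y\rho}\;\simeq\;\C_{\rho+y\rho+(m_y+1)\delta},
\]
forcing $(m_y+1)\delta=0$.

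To finish, it suffices to note that $\delta\neq 0$, which is automatic: the quotient $T_{yx_o}X^v/T_{yx_o}X^y$ is an $H$-stable line inside $T_{yx_o}\bar{X}=\bigoplus_{\alpha>0}\C_{-y\alpha}$, and every weight occurring there is a (nonzero) root. Hence $m_y=-1$ for every $y\in A_v$, so \eqref{e9.2} becomes $\xi^v|_{\mathfrak{C}^v}\simeq \O_{X^v}(-\partial X^v)|_{\mathfrak{C}^v}$. No serious obstacle is expected; the whole argument is just $H$-equivariant bookkeeping at the codimension-one fixed point $yx_o$, combining \eqref{e9.2}, Lemma~\ref{lem1} applied to $y$, and the adjunction formula on the smooth locus.
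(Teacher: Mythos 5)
Your proposal is correct and takes essentially the same route as the paper: both determine the integers $m_y$ in \eqref{e9.2} by comparing $H$-weights of the fibers at the fixed point $yx_o$, and your adjunction step is exactly the paper's determinant identity $\det(T_{yx_o}\bar{X}/T_{yx_o}X^v)\simeq \det(T_{yx_o}\bar{X}/T_{yx_o}X^y)\otimes (T_{yx_o}X^v/T_{yx_o}X^y)^{\otimes(-1)}$ in different packaging. The only (harmless) divergence is at the final step: you get nontriviality of the normal weight directly from the fact that every $H$-weight of $T_{yx_o}\bar{X}$ is of the form $-y\alpha$ with $\alpha$ a positive root, whereas the paper invokes the subsequent lemma identifying this weight as $\C_{\beta}$ with $yv^{-1}=s_\beta$.
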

\begin{proof}
The proof is similar to the one of Lemma~\ref{lem1}.
For $y\in A_v$, $y$ is a smooth point of $X^v$ (since $X^v\cap \mathfrak{C}^v$ is smooth). Hence, we have
\eqn
i_y^*\bl(\ext_{\co_{\bar{X}}}^{\ell(v)}(\O_{X^v},\O_{\bar{X}}))
&\simeq &\det(T_{yx_o}{\bar{X}}/T_{yx_o}X^v)\\
&\simeq&
  \det(T_{yx_o}{\bar{X}}/T_{yx_o}X^y)\otimes \det(T_{yx_o}X^v/T_{yx_o}X^y)^{\otimes (-1)}\\
&\simeq& \C_{\rho-y\rho}\otimes \det(T_{yx_o}X^v/T_{yx_o}X^y)^{\otimes (-1)}.
\eneqn
Hence, we obtain
$i_y^*\xi^v\simeq  (T_{yx_o}X^v/T_{yx_o}X^y)^{\otimes (-1)}$ as an $H$-module by \eqref{e9.1}.
On the other hand, we have
$$i_y^*\bl(\O_{X^v}(\ssum_{z\in A_v}m_zX^z))
\simeq (T_{yx_o}X^v/T_{yx_o}X^y)^{\otimes m_y},\quad\text{as an $H$-module.}$$
Hence, by \eqref{e9.2}, we have $m_y=-1$.
Note that $T_{yx_o}X^v/T_{yx_o}X^y$ is not a trivial $H$-module
by the following lemma.
\end{proof}

\begin{lemma} Let $v$, $y\in W$ satisfy
$v<y$ and $\ell(y)=\ell(v)+1$. Then,
$$T_{yx_o}X^v/T_{yx_o}X^y\simeq \C_{\beta}$$
as $H$-modules,
where $\beta$ is the positive real root such that
$yv^{-1}=s_\beta$.
\end{lemma}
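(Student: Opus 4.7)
The plan is to exhibit a concrete one-dimensional $H$-stable tangent direction to $X^v$ at $yx_o$, of weight $\beta$, which is not tangent to $X^y$.

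First, I would set up and reduce. The hypothesis $y = s_\beta v$ with $\ell(y) = \ell(v)+1$ gives, via the standard length criterion, $v^{-1}\beta > 0$, whence $y^{-1}\beta = v^{-1}s_\beta\beta = -v^{-1}\beta < 0$. Since $yx_o \in C^y \subset \mathfrak{C}^v$, the scheme $X^v$ is smooth at $yx_o$ by the remark preceding Lemma~\ref{lem1}, and $X^y$ is a prime divisor in $X^v$ locally at $yx_o$ (its codimension in $X^v$ equals $\ell(y)-\ell(v)=1$). Consequently $T_{yx_o}X^v/T_{yx_o}X^y$ is a $1$-dimensional $H$-module, and it suffices to produce a nonzero $\xi \in T_{yx_o}X^v$ of $H$-weight $\beta$ not lying in $T_{yx_o}X^y$.

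Next I would construct a rational curve in $X^v$ through $yx_o$ using the rank-one subgroup $SL_2^\beta \subset G^{\min}$ attached to the real root $\beta$ (cf. [K, \S7.3]), with Borel subgroups $B^\beta_\pm = T^\beta \cdot U_{\pm\beta}$. Because $y^{-1}\beta < 0$, the conjugate $y^{-1}U_{-\beta}y = U_{-y^{-1}\beta}$ lies in $U \subset B$, so $B^\beta_-$ fixes $yx_o$. Hence $C := SL_2^\beta \cdot yx_o \cong SL_2^\beta/B^\beta_-$ is a rational curve whose two $T$-fixed points are $yx_o$ and $s_\beta \cdot yx_o = vx_o$, with open $B^\beta_-$-Schubert cell $U_{-\beta} \cdot vx_o$. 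Since $-\beta$ is a negative root, $U_{-\beta} \subset U^-$, so $U_{-\beta} \cdot vx_o \subset U^- \cdot vx_o \subset C^v$; taking closures gives $C \subset \overline{C^v} = X^v$.

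Finally I would identify the weight and conclude. The tangent space at $yx_o = eB^\beta_-$ to $C \cong SL_2^\beta/B^\beta_-$ is the one-dimensional $H$-weight space $\mathfrak{sl}_2^\beta/\mathfrak{b}^\beta_- \simeq \mathbb{C}\,e_\beta$ of weight $\beta$. The inclusion $C \hookrightarrow X^v$ yields a nonzero $\xi \in T_{yx_o}X^v$ of $H$-weight $\beta$. On the other hand, $T_{yx_o}X^y = T_{yx_o}C^y$ is the image of $\mathfrak{b}^-$ under the orbit map $\mathfrak{g} \to T_{yx_o}\bar{X}$, so (since $\mathfrak{h}$ maps to zero) its $H$-weights are all \emph{negative} roots; in particular $\xi \notin T_{yx_o}X^y$. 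Hence $\xi$ represents a nonzero weight-$\beta$ vector in the one-dimensional quotient, forcing $T_{yx_o}X^v/T_{yx_o}X^y \simeq \mathbb{C}_\beta$. The only step requiring a moment's care is pinning down which of $B^\beta_\pm$ stabilizes $yx_o$ (it is $B^\beta_-$, by the sign of $y^{-1}\beta$); after that, all remaining verifications are routine consequences of the Bruhat decomposition of $SL_2^\beta$.
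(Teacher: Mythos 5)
Your proof is correct, but it takes a genuinely different route from the paper. Kashiwara's proof is an induction on $\ell(y)$: he picks a simple reflection with $ys_i<y$ and either identifies the normal direction with the tangent to the fiber of $\pi_i$ (case $vs_i>v$, giving weight $-y\alpha_i=v\alpha_i=\beta$), or uses that $\pi_i$ is a local embedding of $X^v$ at $yx_o$ (via normality of $X^{vs_i}=$ the relevant opposite Schubert variety and smoothness of $\pi_i(X^v)$ along the codimension-one orbit) to reduce to the pair $(vs_i,ys_i)$. You instead argue directly: you produce the $T$-stable rational curve $C=SL_2^\beta\cdot yx_o\cong SL_2^\beta/B^\beta_-$ joining $yx_o$ to $vx_o$, check via $y^{-1}\beta<0$ that its open $B^\beta_-$-cell $U_{-\beta}\cdot vx_o$ lies in $C^v$ so that $C\subset X^v$, read off the tangent weight $\beta$ at $yx_o$ from $\mathfrak{sl}_2^\beta/\mathfrak{b}^\beta_-$, and rule out membership in $T_{yx_o}X^y$ because all $H$-weights of $T_{yx_o}C^y$ (image of $\operatorname{Lie}B^-$) are negative roots, while the quotient is one-dimensional by the smoothness of $X^v$ and $X^y$ at $yx_o$ and the codimension count. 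Your argument is non-inductive and more transparently geometric (it is the standard GKM-type identification of the normal direction along the $T$-curve joining the two fixed points), and it avoids the normality/local-embedding input needed in the paper's case $vs_i<v$; the paper's induction, on the other hand, stays entirely within the $\pi_i$-fibration machinery already in use and sidesteps working with rank-one subgroups and orbit-map differentials in the thick flag variety, where your weight argument for $T_{yx_o}X^y$ tacitly uses that the completed negative-Borel direction contains no positive-root eigenvectors — true, but worth a sentence in the infinite-dimensional setting.
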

\begin{proof}
We prove this by induction on  $\ell(y)$.
Take a simple reflection $s_i$ such that $ys_i<y$.

\smallskip
\noindent
(i) If $vs_i>v$, then we have $y=vs_i$. Thus,
 $T_{yx_o}X^v/T_{yx_o}X^y\simeq T_{yx_o}\pi_i^{-1}\pi_i(yx_o)$
and hence we have $T_{yx_o}X^v/T_{yx_o}X^y\simeq \C_{-y\alpha_i}
\simeq\C_{v\alpha_i}= \C_{\beta}$.

\smallskip
\noindent
(ii) If $vs_i<v$, then $\pi_i:  X^v\to {\bar{X}}_i$ is a local embedding at
$yx_o$ since $C^v\cup C^y$ is open in $X^v$, $\pi_i\vert_{C^v\cup C^y}$ is an
injective map onto an open subset of $\pi_i(X^v)$, and
$\pi_i(X^v)=\pi_i(X^{vs_i})$ is normal (since $X^{vs_i} \to \pi_i(X^{vs_i})$ is
a $\mathbb P^1$-fibration and $X^{vs_i}$ is normal by [KS, Proposition 3.2]). Moreover,
$\pi_i(X^v)$ is smooth at $\pi_i(yx_o)$ since the $B^-$-orbit of $\pi_i(yx_o)$ is of
codimension $1$ in $\pi_i(X^v)$.
Hence, we have $T_{yx_o}X^v/T_{yx_o}X^y\simeq
T_{\pi_i(yx_o)}\bigl(\pi_i(X^v)\bigr)/T_{\pi_i(yx_o)}\bigl(\pi_i(X^y)\bigr)
\simeq T_{ys_ix_o}X^{vs_i}/T_{ys_ix_o}X^{ys_i}$.
By the induction hypothesis, it is isomorphic to
$\C_{\beta}$.
\end{proof}

Let $j: \mathfrak{C}^v\hookrightarrow {\bar{X}}$ be the open embedding.

\begin{theorem} \label{prop:main} For any $v\in W$,  we have a $B^-$-equivariant
isomorphism:
$$\xi^v\simeq
\O_{X^v}(-\partial X^v).$$

Hence, the dualizing sheaf $\omega_{X^v}$ of $X^v$ is $T$-equivariantly
isomorphic with
$$ \C_{\rho}\otimes\mathcal{L}(-\rho)\otimes\O_{X^v}(-\partial X^v).$$

\end{theorem}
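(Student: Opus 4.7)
The plan is to extend the $B^-$-equivariant isomorphism established in Lemma \ref{9.2} from the open subset $\mathfrak{C}^v \cap X^v$ to all of $X^v$. The key geometric observation is that the complement $X^v \setminus \mathfrak{C}^v$ has codimension at least $2$ in $X^v$: it is the disjoint union of cells $C^y$ with $y \geq v$ and $\ell(y) \geq \ell(v) + 2$, and each such $C^y$ has codimension $\ell(y) - \ell(v) \geq 2$ in $X^v$. Consequently, any coherent $\O_{X^v}$-module whose depth along $X^v \setminus \mathfrak{C}^v$ is $\geq 2$ (equivalently, which is $S_2$ on each quasi-compact $V^S$) is reconstructed from its restriction to $\mathfrak{C}^v \cap X^v$ by $j_{*}$, where $j\colon \mathfrak{C}^v \cap X^v \hookrightarrow X^v$ is the open inclusion.

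Both sides of the sought isomorphism satisfy this $S_2$-type property. The sheaf $\xi^v$ is CM of maximal dimension as a coherent $\O_{X^v}$-module, as recorded at the beginning of the appendix. The sheaf $\O_{X^v}(-\partial X^v)$ is the ideal sheaf of a reduced Weil divisor on the normal CM variety $X^v$ (normality is [KS, Proposition~3.2]), hence is a reflexive $\O_{X^v}$-module of rank $1$ and therefore satisfies $S_2$. Applying $j_{*}$ to the $B^-$-equivariant isomorphism of Lemma \ref{9.2} thus yields a global $B^-$-equivariant isomorphism
\[
\xi^v \simeq \O_{X^v}(-\partial X^v),
\]
since $j_{*}$ is functorial and preserves $B^-$-equivariance (the subset $\mathfrak{C}^v$, being a union of $B^-$-orbits, is $B^-$-stable).

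The formula for the dualizing sheaf then follows at once from the definition $\xi^v := e^{-\rho}\mathcal{L}(\rho)\omega_{X^v}$: tensoring the isomorphism above by the line bundle $e^{\rho}\mathcal{L}(-\rho)$ gives the desired $T$-equivariant isomorphism $\omega_{X^v} \simeq \C_{\rho} \otimes \mathcal{L}(-\rho) \otimes \O_{X^v}(-\partial X^v)$. The main technical subtlety to attend to is the infinite-dimensionality of $\bar{X}$: the depth/local-cohomology argument underlying the $S_2$-extension principle must be carried out on each quasi-compact open $V^S \subset \bar{X}$ separately and then assembled. However, since the $B^-$-orbit stratification, the ideal sheaf $\O_{X^v}(-\partial X^v)$, the $\mathrm{Ext}$-sheaf defining $\xi^v$, and the complement $X^v\setminus \mathfrak{C}^v$ all restrict compatibly to the quasi-compact cover $\{V^S\}$, this assembly is routine and presents no real obstruction.
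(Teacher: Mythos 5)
Your proposal is correct and follows essentially the same route as the paper's proof: restrict to the open set $\mathfrak{C}^v$, invoke Lemma \ref{9.2} there, and extend across the codimension $\ge 2$ complement using that $\xi^v$ is a CM $\co_{X^v}$-module and that $\co_{X^v}(-\partial X^v)$ is recovered by $j_*$ from its restriction. The only (harmless) difference is how the latter extension is justified: you use reflexivity of the ideal sheaf of a reduced divisor on the normal variety $X^v$, whereas the paper argues via the exact sequence $0\to \co_{X^v}(-\partial X^v)\to \co_{X^v}\to \co_{\partial X^v}\to 0$, normality of $X^v$, and the density of $\partial X^v\cap \mathfrak{C}^v$ in $\partial X^v$.
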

\begin{proof}
We have a commutative diagram with exact rows:

\[ \xymatrix{0\ar[r]&
\O_{X^v}(-\partial X^v)\ar[d]\ar[r]
 & \O_{X^v}\ar[d]^{\bwr} \ar[r]
& \db{\O_{\partial X^v}}\ar@{>->}[d]  \ar[r]& 0\\
0\ar[r]&  j_*j^{-1}\O_{X^v}(-\partial X^v)\ar[r]&  j_*j^{-1}\O_{X^v}\ar[r]&
j_*j^{-1}\O_{\partial X^v}, }
\]
where the middle vertical arrow is an isomorphism because $X^v$ is normal and
$X^v\setminus \mathfrak{C}^v$ is of codimension $\geq 2$ in $X^v$,
and the right vertical arrow is a monomorphism because the closure of
$\partial X^v\cap \mathfrak{C}^v$ coincides with $\partial X^v$. Hence, we have
$j_*j^{-1}\O_{X^v}(-\partial X^v)\simeq \O_{X^v}(-\partial X^v)$.
On the other hand, since $\xi^v$ is a CM
$\O_{X^v}$-module,
we have
$$\xi^v\simeq j_*j^{-1}\xi^v\simeq j_*j^{-1}\O_{X^v}(-\partial X^v)
\simeq \O_{X^v}(-\partial X^v),$$
where the second isomorphism is due to Lemma \ref{9.2}.
\end{proof}

\begin{corollary}\label{9.5}
$\O_{X^v}(-\partial X^v)$ is a CM $\O_{X^v}$-module and
 $\O_{\partial {X^v}}$ is a CM ring.
\end{corollary}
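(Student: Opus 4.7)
The plan is to deduce both statements directly from Theorem~\ref{prop:main} together with the fact, noted at the beginning of the appendix, that $\xi^v$ is a CM $\O_{X^v}$-module (since $\xi^v = e^{-\rho}\mathcal{L}(\rho)\omega_{X^v}$ is an invertible twist of the dualizing sheaf of the CM scheme $X^v$, cf.\ [KS, Proposition 3.4]).

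For the first assertion, Theorem~\ref{prop:main} gives a $B^-$-equivariant $\O_{X^v}$-module isomorphism $\xi^v \simeq \O_{X^v}(-\partial X^v)$. Since $\xi^v$ is CM, the isomorphic module $\O_{X^v}(-\partial X^v)$ is CM as an $\O_{X^v}$-module.

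For the second assertion, I would consider the short exact sequence of $\O_{X^v}$-modules
\[
0 \to \O_{X^v}(-\partial X^v) \to \O_{X^v} \to \O_{\partial X^v} \to 0,
\]
and argue locally. Fix a point $x \in \partial X^v$ and set $R = \O_{X^v,x}$, $d = \dim R$. By the first part of the corollary and the fact that $X^v$ is CM, both $\O_{X^v}(-\partial X^v)_x$ and $R$ are CM $R$-modules with $\mathrm{depth}_R = d$. The standard depth lemma applied to the above exact sequence then gives
\[
\mathrm{depth}_R (\O_{\partial X^v, x}) \;\geq\; \min\bigl(\mathrm{depth}_R R,\; \mathrm{depth}_R(\O_{X^v}(-\partial X^v)_x) - 1\bigr) \;=\; d-1.
\]
On the other hand, $\partial X^v = \bigcup_{y \in A_v} X^y$ is equidimensional of codimension $1$ in $X^v$ (each $X^y$ has $\ell(y) = \ell(v)+1$), so $\dim \O_{\partial X^v, x} = d-1$. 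Since depth is bounded above by Krull dimension and equals $d-1$ computed either over $R$ or over $\O_{\partial X^v, x}$, we obtain $\mathrm{depth}\,\O_{\partial X^v, x} = \dim \O_{\partial X^v,x}$, i.e., $\O_{\partial X^v}$ is CM at $x$. This holds for every $x \in \partial X^v$, proving the second assertion.

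There is no real obstacle: the entire content of the corollary is packaged into Theorem~\ref{prop:main}, and what remains is an elementary depth computation. The only small point requiring verification is the equidimensionality of $\partial X^v$ in codimension $1$, but this is immediate from the description $\partial X^v = \bigcup_{y \in A_v} X^y$ with $\ell(y) = \ell(v)+1$.
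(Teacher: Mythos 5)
Your proof is correct, and the first assertion is handled exactly as in the paper (Theorem~\ref{prop:main} plus the CM-ness of $\xi^v$ from [KS, Proposition 3.4]). For the second assertion you take a genuinely different route: you apply the depth lemma to $0\to \O_{X^v}(-\partial X^v)\to \O_{X^v}\to \O_{\partial X^v}\to 0$ locally and compare depth with dimension, using the purity of $\partial X^v=\bigcup_{y\in A_v}X^y$ in codimension one. The paper instead applies $\home_{\O_{\bar X}}(\,\cdot\,,\O_{\bar X})$ to the same exact sequence: since both $\O_{X^v}$ and $\O_{X^v}(-\partial X^v)$ have $\ext$ concentrated in degree $\ell(v)$, the long exact sequence forces $\ext^k_{\O_{\bar X}}(\O_{\partial X^v},\O_{\bar X})=0$ for $k\neq \ell(v),\ell(v)+1$, and the degree-$\ell(v)$ term dies because $\partial X^v$ has codimension $\ell(v)+1$; concentration of $\ext$ in the codimension degree is then the CM criterion. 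The paper's formulation has the advantage of living entirely in the framework it has set up for the infinite-dimensional scheme $\bar X$ (finite free resolutions on the opens $V^u$, Lemma~\ref{2.4}, and the $\ext$-vanishing characterization as in \eqref{eq1.0}), whereas your local rings $\O_{X^v,x}$ are local rings of a finite-codimension subscheme of the thick flag variety and are not finite-dimensional Noetherian local rings on the nose: to make "$d=\dim R$" and the depth count literal you should pass to the finite-dimensional quotients $Y^v(S)=N^-_S\backslash X^v(S)$ of Lemma~\ref{basic} (local properties of $X^v$, including CM-ness, are defined there anyway, and $\partial X^v$ being $B^-$-stable descends), after which your elementary depth argument goes through verbatim. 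So the content is the same — everything is packaged in Theorem~\ref{prop:main} — but your argument trades the paper's $\ext$-duality bookkeeping for a more elementary depth computation at the (minor) cost of this reduction to the finite-dimensional quotients.
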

\begin{proof}
Since $\xi^v$ is a CM $\O_{X^v}$-module, so is $\O_{X^v}(-\partial X^v)$ by the above theorem.

Applying the functor
$\home_{\O_{\bar{X}}}(\scbul,\O_{\bar{X}})$ to
the exact sequence:
 $$0\to \O_{X^v}(-\partial X^v)\to \O_{X^v}\to \O_{\partial{X^v}}\to0,$$
we obtain
$\ext_{\co_{\bar{X}}}^k( \O_{\partial{X^v}},\O_{\bar{X}})=0$,  for
$k\not=\ell(v),\ell(v)+1$. We  also have an exact sequence:
$$0\to\ext_{\co_{\bar{X}}}^{\ell(v)}( \O_{\partial{X^v}},\O_{\bar{X}})
\to \ext_{\co_{\bar{X}}}^{\ell(v)}( \O_{X^v},\O_{\bar{X}}).$$
Since $\partial X^v$ has codimension $\ell(v)+1$, we have
$\ext_{\co_{\bar{X}}}^{\ell(v)}( \O_{\partial X^v},\O_{\bar{X}})=0$.
Hence, $\O_{\partial{X^v}}$ is a  CM ring.
\end{proof}

\vskip3ex
\noindent
S.K.: Department of Mathematics, University of North Carolina,
Chapel Hill, NC 27599-3250, USA (email: shrawan$@$email.unc.edu)

\end{document}